\newcommand\pdfinfty{\texorpdfstring{$\infty$}{oo}}
\def\@begintheorem#1#2[#3]{%
  \def\@theoremhead{\normalfont\the\thm@headfont
    \@ifempty{#1}{\let\thmname\@gobble}{\let\thmname\@iden}%
    \@ifempty{#2}{\let\thmnumber\@gobble}{\let\thmnumber\@iden}%
    \@ifempty{#3}{\let\thmnote\@gobble}{\let\thmnote\@iden}%
    \thm@swap\swappedhead\thmhead{#1}{#2}{#3}}%
  \sbox\@tempboxa{\@theoremhead}%
  \ifdim\wd\@tempboxa>0.7\linewidth \smf@skippttrue\fi
  \ifsmf@skippt
    \global\smf@skipptfalse
    \item [\thm@indent]%
          {\sloppy\@theoremhead\parskip\z@\@@par}%
    \nobreak\everypar{}%
    \let\thmheadnl\relax
  \else
    \item[\hskip\labelsep\thm@indent\unhbox\@tempboxa\the\thm@headpunct]%
  \fi
  \@restorelabelsep
  \thmheadnl 
  \hypertarget{\csname @currentHref\endcsname}{}
  \ignorespaces}
\newcommand{\gmrelax}[1]{\relax}
\newcommand{\gmvieux}[1]{\relax}
\newcommand{\nbd}{\nobreakdash}
\renewcommand{\leq}{\leqslant}
\renewcommand{\geq}{\geqslant}
\theoremstyle{plain} 
\newtheorem{thm}{Th\'eor\`eme}[section]
\newtheorem{prop}[thm]{Proposition}
\newtheorem{lemme}[thm]{Lemme}
\newtheorem{cor}[thm]{Corollaire}
\newtheorem{conj}[thm]{Conjecture}
\theoremstyle{remark}                                             
\newtheorem{rem}[thm]{Remarque}
\newtheorem{exemple}[thm]{Exemple}
\newtheorem{sch}[thm]{Scholie}
\theoremstyle{definition}                                         
\newtheorem{paragr}[thm]{}
\theoremstyle{plain} 
\numberwithin{equation}{thm}
\newcommand{\e}{\varepsilon}
\newcommand{\W}{\mathcal{W}}
\newcommand{\cat}{{\mathcal{C} \mspace{-2.mu} \it{at}}}
\newcommand{\ord}{{\mathcal{O} \mspace{-2.mu} \it{rd}}}
\newcommand{\ens}{{\mathcal{E} \mspace{-2.mu} \it{ns}}}
\newcommand{\ob}{\operatorname{\mathsf{Ob}}}
\newcommand{\fl}[1]{\operatorname{\mathsf{Fl}}_{#1}}
\newcommand{\sHom}{\operatorname{\kern.5truept\underline{\kern-.5truept\mathsf{Hom}\kern-1truept}\kern1.5truept}}
\newcommand{\Hom}{\operatorname{\mathsf{Hom}}}
\newcommand{\op}[1]{{#1}^{\circ}}
\newcommand{\pref}[1]{{\widehat{ #1 }}}
\newcommand{\cats}{\mathbf{\Delta}}
\newcommand{\simpl}{\pref{\cats}}
\newcommand{\smp}[1]{ \varDelta_{#1}}
\newcommand{\face}[2]{\delta^{#1}_{#2}}
\newcommand{\dgn}[2]{\sigma^{#1}_{#2}}
\newcommand{\sdeg}[2]{{#1}^{\mathrm{deg}}_{#2}}
\newcommand{\sndeg}[2]{{#1}^{\mathrm{ndeg}}_{#2}}
\newcommand{\cs}{\mathcal{CS}}
\newcommand{\Hot}{\mathsf{Hot}}
\newcommand{\loc}[1]{\overline{#1}}
\newcommand{\CW}{\mathrm{CW}}
\newcommand{\card}{\mathsf{card}}
\newcommand{\sauf}{\mathchoice{\raise 1.8pt\hbox{${\scriptstyle\kern
2.5pt\smallsetminus\kern 2.5pt}$}}{\raise 1.8pt\hbox{${\scriptstyle\kern
2.5pt\smallsetminus\kern 2.5pt}$}}{\raise
1.8pt\hbox{${\scriptscriptstyle\kern 1.5pt\smallsetminus\kern
1.5pt}$}}{\raise 1.8pt\hbox{${\scriptscriptstyle\kern
1.5pt\smallsetminus\kern 1.5pt}$}}}
\newcommand{\toto}{{\hskip -2.5pt\xymatrixcolsep{1.3pc}\xymatrix{\ar[r]&}\hskip -2.5pt}}
\renewcommand{\to}{{\hskip -2.5pt\xymatrixcolsep{1pc}\xymatrix{\ar[r]&}\hskip -2.5pt}}
\newcommand{\todouble}{\xymatrixcolsep{1pc}\xymatrix{\ar@<.5ex>[r]\ar@<-.5ex>[r]&}}
\newcommand{\todoubleop}{\xymatrixcolsep{1pc}\xymatrix{\ar@<.5ex>[r]&\ar@<.5ex>[l]}}
\renewcommand{\hookrightarrow}{{\hskip -1.5pt\raise 1.5pt\vbox{\xymatrixcolsep{.9pc}\xymatrix{\ar@{^{(}->}[r]&}}\hskip -3.5pt}}
\renewcommand{\longmapsto}{{\hskip -2.5pt\xymatrixcolsep{1.3pc}\xymatrix{\ar@{|->}[r]&}\hskip -2.5pt}}
\renewcommand{\mapsto}{{\hskip -2.5pt\xymatrixcolsep{.9pc}\xymatrix{\ar@{|->}[r]&}\hskip -2.5pt}}
\def\limproj{\mathop{\oalign{\rm lim\cr
\hidewidth$\longleftarrow$\hidewidth\cr}}}%
\def\limind{\mathop{\oalign{\rm lim\cr
\hidewidth$\longrightarrow$\hidewidth\cr}}}%
\newcommand{\cm}[2]{\mathchoice {#1\raise -1.8pt\vbox{\hbox{$\kern -.8pt/#2$}}} {#1\raise -1.8pt\vbox{\hbox{$\kern -.8pt/#2$}}\kern .8pt} {#1\raise -1.8pt\vbox{\hbox{$\scriptstyle\kern -.8pt /#2$}}} {#1\raise -1.8pt\vbox{\hbox{$\scriptscriptstyle\kern -.8pt /#2$}}} }
\newcommand{\mc}[2]{\mathchoice {\raise -1.8pt\vbox{\hbox{$#1\backslash$}}#2} {\raise -1.8pt\vbox{\hbox{$#1\backslash$}}#2} {\raise -1.8pt\vbox{\hbox{$\scriptstyle#1\backslash$}}#2} {\raise -1.8pt\vbox{\hbox{$\scriptscriptstyle#1\backslash$}}#2} }
\newcommand{\nCat}[1]{{#1}\hbox{\protect\nbd-}\kern1pt\cat}
\newcommand{\ooCat}{\nCat{\infty}}
\newcommand{\comp}[1]{\mathop{\kern 1.5pt*^{}_{#1}\kern 1.5pt}}
\newcommand{\Par}{\operatorname{\mathsf{Par}}}
\newcommand{\transf}[2]{{#1}^{}_{#2}}                  
\newcommand{\incl}[1]{i_{#1}}                          
\newcommand{\tb}[1]{\tau_{\leq #1}^{\mathrm b}}
\newcommand{\ti}[1]{\tau_{\leq #1}^{\mathrm i}}
\newcommand{\KDual}[1]{\mathcal{D}_{#1}}
\newcommand{\Dual}{\KDual{}}
\newcommand{\opp}[1]{#1^{\vee}}
\newcommand{\opi}[1]{{}^{\vee}#1}
\newcommand{\K}{J}
\newcommand{\augm}{e}                                                     
\newcommand{\ADC}{\mathcal{C}_{\mathrm{da}}}                              
\newcommand{\ADCfin}{\underline{\mathbb{Z}}}                              
\newcommand{\ADCdescfin}{\underline{\mathbb{Z'}\kern -2.5pt}\kern 2pt}    
\newcommand{\KDualADC}[1]{{D}_{#1}}
\newcommand{\Stlambda}{\lambda}                                           
\newcommand{\Stnu}{\nu}                                                   
\newcommand{\ch}[1]{[\,#1\,]}                                             
\newcommand{\cellule}[1]{#1}
\newcommand{\Pos}[1]{{#1}_+}                                              
\newcommand{\Neg}[1]{{#1}_-}                                              
\newcommand{\leqb}[1]{\mathrel{\leq_{#1}}}                                
\newcommand{\leqbb}{\mathrel{\leq_{\mathbb{N}}}}                          
\newcommand{\leqbbs}{\mathrel{<_{\mathbb{N}}}}                            
\newcommand{\atom}[1]{\langle{#1}\rangle}
\newcommand{\tablg}[2]{\begin{pmatrix}#1^0_0 &#1^0_1 &\dots &#1^0_{#2}\cr\noalign{\vskip 3pt} #1^1_0 &#1^1_1 &\dots &#1^1_{#2}\end{pmatrix}}
\newcommand{\tabll}[2]{\begin{pmatrix}#1^0_0 &#1^0_1 &\dots &#1^0_{#2-1} &#1^0_{#2}\cr\noalign{\vskip 3pt} #1^1_0 &#1^1_1 &\dots &#1^1_{#2-1} &#1^1_{#2}\end{pmatrix}}
\newcommand{\tabld}[2]{\begin{pmatrix}#1^0_0 &\dots &#1^0_{#2-1} &#1^0_{#2}\cr\noalign{\vskip 3pt} #1^1_0 &\dots &#1^1_{#2-1} &#1^1_{#2}\end{pmatrix}}
\newcommand{\tablnu}[3]{\begin{pmatrix}#1^0_0 &\dots &#1^0_{#2} &#3\cr\noalign{\vskip 3pt} #1^1_0 &\dots &#1^1_{#2} &#3\end{pmatrix}}
\newcommand{\cnb}[1]{\widetilde{#1}}   
\newcommand{\Ch}[1]{C#1}               
\newcommand{\Chdeg}[1]{D#1}            
\newcommand{\Chnorm}[1]{c#1}           
\newcommand{\NerfADC}{N}               
\newcommand{\Wsimpl}{\W_{\simpl}}
\newcommand{\WADC}{\W_{\ADC}}
\newcommand{\Chcs}{K}                  
\newcommand{\ADCO}[1]{(\Chnorm{\smp{#1}})}
\newcommand{\Or}[1]{\mathcal{O}_{#1}}     
\newcommand{\NStr}[1]{N_{#1}}             
\newcommand{\cStr}[1]{c_{#1}}             
\newcommand{\NStreet}{\NStr{\infty}}      
\newcommand{\cStreet}{\cStr{\infty}}      
\newcommand{\WooCat}{\W_{\ooCat}}
\newcommand{\WnCat}[1]{\W_{\nCat{#1}}}
\newcommand{\OrCS}[1]{\mathcal{O}({#1})}
\newcommand{\OrCs}{\mathcal{O}}           
\author{Dimitri Ara}
\address{%
Institut de Mathématiques de Marseille\\
Université d'Aix-Marseille\\
Case Postale 907\\
163 avenue de Luminy\\
13288 Marseille Cedex 9\\
France}
\email{dimitri.ara@univ-amu.fr}
\urladdr{http://iml.univ-mrs.fr/\raise -3.3pt\vbox{\hbox{$\widetilde{ \ }\,$}}ara/}
\author{Georges Maltsiniotis}
\address{CNRS, Institut de Math\'ematiques de Jussieu\\
Universit\'e Paris 7 Diderot\\
Case Postale 7012\\
B\^atiment Sophie Germain\\
75205 Paris Cedex 13\\
France}
\email{georges.maltsiniotis\at imj-prg.fr}
\urladdr{http://webusers.imj-prg.fr/\raise -3.3pt\vbox{\hbox{$\widetilde{ \ }\,$}}georges.maltsiniotis/}
\title[Le type d'homotopie de l'oriental d'un complexe simplicial]{Le type d'homotopie de la $\infty$-catégorie associée à un complexe simplicial}
\begin{document}

\frontmatter

\begin{abstract} 
Cet article fait partie d'une série d'articles sur la théorie de l'homotopie des $n$\nbd-catégories strictes. Dans le premier article de cette série, nous avons dégagé des conditions suffisantes assurant l'existence d'une structure de catégorie de modèles à la Thomason sur la catégorie des $n$\nbd-catégories strictes. Le but principal du présent article est de démontrer une de ces conditions. Pour ce faire, on associe à tout complexe simplicial 
une $\infty$\nbd-catégorie stricte engendrée librement au sens des polygraphes. On conjecture que cette $\infty$\nbd-catégorie a le même type d'homotopie que le complexe simplicial de départ et on démontre cette conjecture lorsque le complexe simplicial provient d'un ensemble ordonné. On introduit une notion d'objet quasi-initial d'une $\infty$\nbd-catégorie et on prouve que les orientaux de Street admettent un tel objet. Un des outils essentiels utilisé dans ce texte est la théorie des complexes dirigés augmentés de Steiner.
\end{abstract}

\begin{altabstract}
This paper is part of a series of papers about homotopy theory of strict $n$\nbd-categories. In the first paper of this series, we gave conditions that guarantee the existence of a Thomason model category structure on the category of strict $n$\nbd-categories. The main goal of our paper is to show one of these conditions. To do so, we associate to any simplicial complex a strict $\infty$\nbd-category generated by a computad. We conjecture that this $\infty$\nbd-category has the same homotopy type as the corresponding simplicial complex and we prove this conjecture when the simplicial complex comes from a poset. We introduce the notion of a quasi-initial object of an $\infty$\nbd-category and we show that Street's orientals admit such an object. One of the main tools used in this text is Steiner's theory of augmented directed complexes.
\end{altabstract}

\subjclass{18D05, 18G35, 18G55, 55P15, 55U10, 55U15}

\keywords{$\infty$-catégories strictes, complexes augmentés dirigés,
  complexes simpliciaux, contraction, équivalences de Thomason de
  $\infty$-catégories, objets quasi-initiaux, orientaux, types d'homotopie}

\altkeywords{augmented directed complexes, contractions, homotopy types,
  orientals, quasi-initial objects, simplicial complexes, strict
  $\infty$-categories, Thomason equivalences of
 $\infty$\nobreakdash-categories}

\maketitle

\newpage

\tableofcontents

\mainmatter

\section*{Introduction}

Cet article fait partie d'un projet consacré à la théorie de l'homotopie des $n$\nbd-catégories strictes (avec $n$ éventuellement infini), composé actuellement de \cite{DG}, \cite{Dapp}, \cite{DG3},  \cite{DG2} et \cite{DG1}, dont le but est de généraliser la théorie de l'homotopie des catégories développée par Grothendieck, Quillen et Thomason aux $n$\nbd-catégories strictes. Ainsi, dans ce texte, toutes les $n$\nbd-catégories et $n$\nbd-foncteurs seront supposés stricts.
\smallbreak

La théorie de l'homotopie des petites catégories naît avec l'introduction par \hbox{Grothendieck} dans~\cite{Nerf} du foncteur nerf de source la ca\-tégorie $\cat$ des petites catégories et de but $\simpl$, celle des ensembles simpliciaux. Ce foncteur permet de définir une classe $\W_\cat$ d'équivalences faibles dans $\cat$, en définissant $\W_\cat$ comme la classe des foncteurs entre petites catégories dont l'image par le nerf est une équivalence d'homotopie faible d'ensembles simpliciaux. En vertu d'un résultat de Quillen (exposé dans la thèse d'Illusie~\cite{IL}), le foncteur nerf induit une équivalence de catégories
$$\mathrm{Ho}(\cat)\toto\mathrm{Ho}(\simpl)$$
entre la localisation de $\cat$ par $\W_\cat$ et la catégorie homotopique usuelle des ensembles simpliciaux, elle-même équivalente à la catégorie homotopique des $\CW$\nbd-complexes, grâce à un théorème de Milnor~\cite{Mil}. Par ailleurs, Thomason démontre l'existence d'une structure de catégorie de modèles de Quillen sur $\cat$ dont la classe des équivalences faibles est égale à $\W_\cat$~\cite{Th}.
\smallbreak

Dans un article séminal~\cite{S1}, Street a introduit un foncteur nerf  
$$\NStreet:\ooCat\toto\simpl$$
de la catégorie des petites $\infty$\nbd-catégories vers celle des ensembles simpliciaux, dont la restriction à $\cat$ (considérée comme sous-catégorie pleine de $\ooCat$) est le nerf de Grothendieck. Ce foncteur permet de définir une classe $\WooCat$ d'équivalences faibles dans $\ooCat$, en posant $\WooCat=\NStreet^{-1}(\Wsimpl)$, où $\Wsimpl$ désigne la classe des équivalences d'homotopie faibles d'ensembles simpliciaux. De même, pour tout entier $n>0$, la restriction $\NStr{n}$ du nerf de Street à la catégorie $\nCat{n}$ des petites $n$\nbd-catégories (considérée comme sous-catégorie pleine de $\ooCat$) définit une classe $\WnCat{n}$ d'équivalences faibles dans $\nCat{n}$ par la formule $\WnCat{n}=\NStr{n}^{-1}(\Wsimpl)$. On conjecture que pour tout~$n$, $1\leq n\leq\infty$, il existe une structure de catégorie de modèles sur $\nCat{n}$ dont la classe des équivalences faibles est égale à $\WnCat{n}$ et qui est équivalente, au sens de Quillen, avec la structure de catégorie de modèles classique sur les ensembles simpliciaux, généralisant ainsi les théorèmes de Thomason et Quillen cités précédemment concernant le cas $n=1$.
\smallbreak

Cette conjecture est également établie pour $n=2$. Dans~\cite{Chiche}, extrait de sa thèse~\cite{ChicheThese}, Jonathan Chiche prouve que le foncteur $\NStr{2}$ induit une équivalence de catégories
$$\mathrm{Ho}(\nCat{2})\toto\mathrm{Ho}(\simpl)$$
entre la localisation de $\nCat{2}$ par $\WnCat{2}$ et la catégorie homotopique des ensembles simpliciaux. Dans~\cite{DG}, les auteurs du présent article construisent une structure de catégorie de modèles sur $\nCat{2}$ dont la classe des équivalences faibles est égale à~$\WnCat{2}$, et une adjonction de Quillen entre celle-ci et la catégorie de modèles des ensembles simpliciaux. Il résulte alors facilement du théorème de Jonathan Chiche que cette adjonction est une équivalence. Mentionnons que le texte antérieur~\cite{Wor} traite également la question d'une généralisation $2$\nbd-catégorique du théorème de Thomason mais que celui-ci comporte de sérieuses erreurs (voir l'introduction de~\cite{DG} pour plus de détails).
\smallbreak

En vue d'une preuve de cette conjecture dans le cas général, les auteurs ont démontré dans~\cite{DG} un \og théorème de Thomason abstrait\fg{}, permettant de dégager deux conditions techniques impliquant, pour $1\leq n\leq\infty$ arbitraire, l'existence d'une catégorie de modèles sur $\nCat{n}$ dont la classe des équivalences faibles est égale à $\WnCat{n}$, et d'une adjonction de Quillen de cette dernière avec la catégorie des modèles des ensembles simpliciaux. La motivation initial du présent article est de démontrer la première de ces conditions (condition (e) de~\cite[scholie~5.14]{DG}), à savoir que si $\cStr{n}$ désigne l'adjoint à gauche de $\NStr{n}$, pour tout ensemble simplicial $X$ qui est le nerf d'un ensemble ordonné, le morphisme d'adjonction $X\to\NStr{n}\cStr{n}(X)$ est une équivalence faible simpliciale (corollaire~\ref{corconde}).
\smallbreak

Afin d'établir ce résultat, on est conduit à introduire un certain nombre de notions importantes et à démontrer plusieurs théorèmes présentant un intérêt indépendant. On rappelle que le couple de foncteurs adjoints 
$$\cStreet:\simpl\toto\ooCat\ ,\qquad\NStreet:\ooCat\toto\simpl$$
est obtenu par le procédé de Kan à partir de l'objet cosimplicial défini par les orientaux de Street~\cite{S1,S2,S2cor}. Pour $n\geq0$, cet objet cosimplicial associe au simplexe standard~$\smp{n}$ le $n$\nbd-ième oriental $\Or{n}$, qui est une $n$\nbd-catégorie librement engendrée au sens des polygraphes par les simplexes non dégénérés de $\smp{n}$. La définition formelle de $\Or{n}$ par Street est combinatoire et assez difficile à utiliser. Dans ce texte, on se servira d'une définition équivalente due à Steiner~\cite{Steiner,SteinerOr}, plus proche de l'algèbre homologique et plus maniable (une description analogue des orientaux figure aussi dans~\cite{B-P}).
\smallbreak

Dans~\cite{Steiner}, Steiner définit un couple de foncteurs adjoints
$$\Stlambda:\ooCat\toto\ADC\ ,\qquad\Stnu:\ADC\toto\ooCat\ ,$$
où $\ADC$ est la catégorie des complexes dirigés augmentés, catégorie dont les objets sont les complexes de chaînes de groupes abéliens en degrés positifs, augmentés, et munis d'un sous-monoïde gradué \og de positivité\fg{} (sans aucune compatibilité entre ce sous-monoïde et la différentielle ou l'augmentation du complexe). La catégorie des complexes de chaînes de groupes abéliens en degrés positifs est équivalente à celle des $\infty$\nbd-catégories en groupes abéliens~\cite{Bourn}. En oubliant la structure de groupe abélien, on associe ainsi à un tel complexe une $\infty$\nbd-catégorie. Le foncteur $\Stnu$ de Steiner associe à un complexe dirigé augmenté une sous-$\infty$\nbd-catégorie de cette $\infty$\nbd-catégorie, déterminée par le sous-monoïde de positivité et l'augmentation (voir~\ref{defStnu}). En associant à un ensemble simplicial $X$ son complexe de chaînes normalisé, muni de l'augmentation définie par le morphisme de $X$ vers le point, et du sous-monoïde de positivité défini par les combinaisons linéaires à coefficients positifs de simplexes non dégénérés de $X$, on obtient un foncteur $\Chnorm{}:\simpl\to\ADC$. Par définition, l'oriental $\Or{n}$, $n\geq0$, est l'image du simplexe standard $\smp{n}$ par le foncteur composé $\Stnu\Chnorm$. 
\smallbreak

Le foncteur $\Chnorm$ commute aux limites inductives. On dispose ainsi d'un couple de foncteurs adjoints 
$$\Chnorm:\simpl\toto\ADC\ ,\qquad\NerfADC:\ADC\toto\simpl\ ,$$
obtenu par le procédé de Kan à partir de l'objet cosimplicial de $\ADC$ défini par la restriction du foncteur $\Chnorm$ aux représentables, et on démontre que $\NerfADC$ est canoniquement isomorphe au composé $\NStreet\Stnu$. Le foncteur $\NerfADC$ permet de définir une classe d'équivalences faibles $\WADC$ dans $\ADC$ par la formule $\WADC=\NerfADC^{-1}(\Wsimpl)=\Stnu^{-1}(\WooCat)$. On conjecture que le foncteur $\NerfADC$ induit une équivalence des catégories localisées. En tenant compte de la conjecture mentionnée précédemment, on obtiendrait donc un triangle commutatif à isomorphisme près d'équivalences de catégories
$$
\xymatrixcolsep{1pc}
\xymatrix{
&\mathrm{Ho}(\ADC)\ar[rr]^{\loc{\Stnu}}\ar[rd]_{\loc{\NerfADC}}
&&\mathrm{Ho}(\ooCat)\ar[ld]^{\loc{\NStreet}}
\\
&&\mathrm{Ho}(\simpl)
&&&\hskip -20pt,\hskip 20pt
}$$
induites par les foncteurs $\Stnu$, $\NerfADC$ et $\NStreet$.
\smallbreak

Une notion cruciale introduite dans cet article est celle d'objet quasi-initial d'une $n$\nbd-catégorie, généralisant la notion d'objet initial d'une catégorie. Pour $n$ fini, cette notion est définie par récurrence sur $n$. Un objet d'une $1$\nbd-catégorie (autrement dit d'une catégorie) est quasi-initial s'il est initial. Pour $n\geq2$, un objet $x$ est quasi-initial si pour tout objet $y$, la $(n-1)$\nbd-catégorie $\sHom(x,y)$ des flèches de source itérée~$x$ et but itéré~$y$ admet un objet quasi-initial. Pour $n=\infty$, on peut poser une définition analogue par \og coïnduction\fg. Une définition dans un style plus \og classique\fg{} est présentée dans l'appendice~\ref{AppB}. La notion d'objet quasi-final est définie de façon duale. Un résultat,  très facile à établir mais important, est que ces notions sont stables par troncation intelligente. (Si $m<n$, le foncteur de troncation intelligente $\ti{m}$ est l'adjoint à gauche de l'inclusion $\nCat{m}\to\nCat{n}$.) Du point de vue homotopique, la propriété importante d'une $n$\nbd-catégorie, $1\leq n\leq\infty$, admettant un objet quasi-initial ou quasi-final est qu'elle est faiblement contractile (le morphisme canonique vers l'objet final de $\nCat{n}$ appartient à $\WnCat{n}$). Cette propriété résulte d'un théorème~A $n$\nbd-catégorique~\cite{DG3}.
\smallbreak

Une deuxième notion importante est celle de contraction d'une $\infty$\nbd-catégorie. Il s'agit d'une transformation (oplax) satisfaisant à une condition de \og normalisation\fg{} et une condition \og d'idempotence\fg{} (voir~\ref{defoocontraction}). On démontre qu'une $\infty$\nbd-catégorie admettant une contraction admet un objet quasi-initial (la réciproque n'étant pas vraie), ainsi qu'un résultat dual concernant l'existence d'un objet quasi-final.
\smallbreak

Un des principaux résultats de cet article est que les orientaux de Street admettent une contraction et une contraction duale, et en particulier un objet quasi-initial et un objet quasi-final. Pour obtenir ce résultat, on introduit une notion de contraction (et de contraction duale) pour un complexe dirigé augmenté, et on montre que le foncteur~$\Stnu$ de Steiner transforme une telle contraction (ou contraction duale) en une contraction (ou contraction duale) $\infty$\nbd-catégorique. Pour conclure, on prouve que pour $n\geq0$, le complexe dirigé augmenté $\Chnorm{\smp{n}}$ associé au simplexe standard $\smp{n}$ admet à la fois une contraction et une contraction duale. Il est à noter que cette contraction duale ne s'obtient \emph{pas} par une dualité à partir de la contraction considérée, ni par inversion de l'ordre des sommets du simplexe $\smp{n}$. Les deux ont une origine simpliciale.
\smallbreak

Plus généralement, on conjecture que pour tout couple d'entiers $i,n$ tel que \hbox{$0<i<n$} et tout couple $x,y$ de $i$\nbd-flèches parallèles de $\Or{n}$, la $(n-i-1)$\nbd-catégorie des flèches de $\Or{n}$ de source itérée $x$ et de but itéré $y$ ou bien est vide ou bien admet à la fois un objet quasi-initial et un objet quasi-final. On démontre cette conjecture pour $n$ arbitraire et $i=1$. Un cas particulier de ce résultat joue un rôle crucial dans la preuve de la \og condition~(e)\fg{} de~\cite{DG}, motivation initiale de cet article, mentionnée précédemment dans cette introduction.
\smallbreak

Dans~\cite{Steiner}, Steiner présente des conditions suffisantes pour que l'image d'un complexe dirigé augmenté par le foncteur $\Stnu$ soit une $\infty$\nbd-catégorie librement engendrée au sens des polygraphes (autrement dit un objet cofibrant pour la structure de catégorie de modèles \og catégorique\fg, \og canonique\fg{} ou \og folklorique\fg{} sur $\ooCat$~\cite{LMW}), obtenant ainsi une variante de résultats antérieures de Street~\cite{S1,S2,S2cor}. On démontre que si $X$ est l'ensemble simplicial associé à un complexe simplicial, le complexe dirigé augmenté $\Chnorm{X}$ satisfait à ces conditions. Cela permet d'associer à tout complexe simplicial une $\infty$\nbd-catégorie engendrée librement au sens des polygraphes. De plus, on conjecture que le type d'homotopie de cette $\infty$\nbd-catégorie est le même que celui du complexe simplicial de départ. De façon plus précise, si $X$ est l'ensemble simplicial associé à un complexe simplicial, on démontre que $\Stnu\Chnorm{X}$ est canoniquement isomorphe à $\cStreet(X)$, et la conjecture affirme que le morphisme d'adjonction $X\to\NStreet\cStreet(X)$ est une équivalence d'homotopie faible simpliciale. On établie cette conjecture dans le cas particulier où le complexe simplicial est celui associé à un ensemble ordonné~$E$. L'ensemble de ses simplexes est alors l'ensemble des parties finies totalement ordonnées non vides de~$E$, et l'ensemble simplicial correspondant $X$ est le nerf de $E$. En fait, pour un tel~$X$, on a mieux: pour \emph{tout} $n\geq1$, le morphisme d'adjonction $X\to\NStr{n}\cStr{n}(X)$ est une équivalence faible simpliciale (condition (e) de~\cite{DG}).
\smallbreak

La stratégie pour démontrer ce théorème se déroule comme suit. On remarque d'abord que grâce à l'égalité du triangle pour une adjonction, il suffit de prouver que pour tout ensemble ordonné $E$, le morphisme d'adjonction $\cStr{n}\NStr{n}(E)\to E$ est une équivalence faible $n$\nbd-catégorique, autrement dit que ce morphisme appartient à~$\WnCat{n}$. Pour $n=1$, le nerf de Grothendieck étant pleinement fidèle, ce morphisme est un isomorphisme, on peut donc supposer que $n\geq2$. Ensuite, on observe que $\cStr{n}\NStr{n}(E)$ est le $n$\nbd-tronqué intelligent de $\cStreet\NStreet(E)$ qui est isomorphe à l'image du nerf de $E$ par le foncteur $\Stnu\Chnorm$. Grâce à la description du foncteur $\Stnu$ de Steiner, on en déduit que les objets de $\cStr{n}\NStr{n}(E)$ s'identifient aux éléments de $E$, et les $1$-flèches aux parties finies non vides totalement ordonnées $S$ de $E$, la source (resp. le but) de $S$ étant son minimum (resp. son maximum). En particulier, si $x,y$ sont deux éléments de~$E$, pour qu'il existe une $1$\nbd-flèche de $\cStr{n}\NStr{n}(E)$ de source $x$ et but $y$, il faut et il suffit que $x\leq y$. De plus, le morphisme d'adjonction induit l'identité sur les objets. 
\smallbreak

Or, en vertu d'une conséquence de la comparaison du nerf de Street avec une variante bisimpliciale dudit nerf~\cite{DG2}, pour montrer qu'un $n$\nbd-foncteur $F$ est une équivalence faible, il suffit de montrer que pour tout couple d'objets $x,y$ de sa source, le \hbox{$(n-1)$}\nbd-foncteur induit
$$\sHom(x,y)\toto\sHom(Fx,Fy)$$
est une équivalence faible. Comme la $n$\nbd-catégorie but du morphisme d'adjonction considéré est un ensemble ordonné, pour montrer que ce morphisme est une équivalence faible, il suffit donc de montrer que pour tout couple $x,y$ d'éléments de $E$ tel que $x\leq y$, la $(n-1)$\nbd-catégorie $\sHom_{\cStr{n}\NStr{n}(E)}(x,y)$ est faiblement contractile, et pour cela, il suffit de voir qu'elle admet un objet quasi-initial.
\smallbreak

Or, en vertu de la description des $1$\nbd-flèches de $\cStr{n}\NStr{n}(E)$, l'ensemble $S_0=\{x,y\}$ représente un objet de $\sHom_{\cStr{n}\NStr{n}(E)}(x,y)$, et un objet arbitraire est représenté par une partie finie non vide totalement ordonnée $S$ de $E$, telle que $x=\min(S)$ et $y=\max(S)$. En utilisant à nouveau le fait que $\cStr{n}\NStr{n}(E)$ est le $n$\nbd-tronqué de l'image du nerf de $E$ par le foncteur $\Stnu\Chnorm$, ainsi que la description du foncteur $\Stnu$ de Steiner et un argument de commutation aux limites inductives, on prouve que la $(n-2)$\nbd-catégorie $\sHom_{\cStr{n}\NStr{n}(E)}(S_0,S)$ des flèches de $\cStr{n}\NStr{n}(E)$ de $1$\nbd-cellule source itérée $S_0$ et but itéré $S$, s'identifie au $(n-2)$\nbd-tronqué intelligent de $\sHom_{\Or{p}}(\{0,p\},\{0,1,\dots,p\})$, où \hbox{$p=\card(S)-1$}, qui admet un objet quasi-initial. On en déduit que $S_0$ est un objet quasi-initial de $\sHom_{\cStr{n}\NStr{n}(E)}(x,y)$, ce qui prouve l'assertion.
\smallbreak

La preuve esquissée ci-dessus diffère très légèrement de la démonstration exposée dans le corps de l'article. En effet, pour abréger les préliminaires $n$\nbd-catégoriques, la notion d'objet quasi-initial ou quasi-final d'une $n$\nbd-catégorie n'y est définie que pour $n$ \emph{fini}, le cas $n=\infty$, ainsi que la notion de contraction $n$\nbd-catégorique, étant laissés pour l'appendice~\ref{AppB}. En contrepartie il en résulte parfois des énoncés moins naturels ou des preuves plus techniques, bien que plus directes.
\smallbreak

Terminons cette introduction en mentionnant quelques textes en rapport
avec le présent article. Dans~\cite{Bourn}, Dominique Bourn établit un
dictionnaire entre les homotopies des complexes de chaînes en degrés
positifs et les pseudo-transformations des $\infty$\nbd-groupoïdes abéliens
correspondant. Dans un texte non publié~\cite{Albert}, dont nous avons pris
connaissance après une première rédaction de ce travail, Albert Burroni
introduit une notion de \og structure initiale\fg{} essentiellement
équivalente à notre notion de contraction. Dans \cite{MetResPol}, François
Métayer, et dans \cite{LafMetPolRes}, ce dernier et Yves Lafont
étudient la notion de $n$-cylindre dans une $\infty$\nbd-catégorie, ce qui
permet de dégager les formules explicites de la définition d'une
transformation entre $\infty$\nbd-foncteurs. Enfin, dans~\cite{LS}, Ross
Street et Stephen Lack étudient, entre autres, les tronqués en basse
dimension des orientaux.

\subsection*{Plan de l'article}
Dans la première section, après quelques rappels sur les $n$\nbd-catégories, $0\leq n\leq\infty$, et les foncteurs de troncation bête et intelligente, on introduit, pour $n<\infty$, la notion de $n$\nbd-catégorie admettant un objet quasi-initial ou quasi-final. On rappelle les notions d'ensemble de cellules qui engendre par compositions, ou qui engendre librement au sens des polygraphes une $\infty$\nbd-catégorie, et on établit quelques résultats relatifs à ces deux notions. On termine la section par des considérations sur les divers endofoncteurs de dualité dans la catégorie des $\infty$\nbd-catégories.
\smallbreak

La deuxième section est consacrée à la catégorie des complexes dirigés augmentés, introduite par Steiner. On décrit les limites inductives et projectives dans cette catégorie, on introduit la notion de complexe dirigé augmenté décent et celle de morphisme constant entre deux tels complexes. On considère le concept d'homotopie entre deux morphismes de complexes dirigés augmentés et les divers endofoncteurs de dualité dans cette catégorie.
\smallbreak

Dans la troisième section, on rappelle la théorie de Steiner donnant naissance à un couple de foncteurs adjoints entre la catégorie des complexes dirigés augmentés et celle des petites $\infty$\nbd-catégories. On commence par la description de ces deux foncteurs. On rappelle les notions de base d'un complexe dirigé augmenté, de base unitaire, de base sans boucles et de base fortement sans boucles. On énonce le principal résultat de Steiner: la sous-catégorie pleine des complexes dirigés augmentés formée de ceux qui admettent une base unitaire sans boucles se plonge de façon pleinement fidèle dans la catégorie des $\infty$\nbd-catégories, et l'image d'un tel complexe est une $\infty$\nbd-catégorie librement engendrée au sens des polygraphes.
\smallbreak

Dans la quatrième section, on dégage des conditions suffisantes pour que la $\infty$\nbd-catégorie associée à un complexe dirigé augmenté par le foncteur de Steiner soit une $n$\nbd-catégorie admettant un objet quasi-initial ou quasi-final. On introduit les notions de contraction et contraction duale d'un complexe dirigé augmenté. Dans le cas particulier où le complexe dirigé augmenté est décent, on interprète ces notions en termes d'homotopies entre l'identité de ce complexe et un endomorphisme constant.
\smallbreak

Dans la cinquième section, on associe fonctoriellement à chaque ensemble simplicial trois complexes dirigés augmentés décents admettant une base, le complexe dirigé augmenté des chaînes, celui des chaînes dégénérées et le complexe dirigé augmenté normalisé des chaînes, définissant ainsi trois foncteurs commutant aux limites inductives. La restriction du dernier de ces trois foncteurs aux ensembles simpliciaux représentables fournit un objet cosimplicial qui définit par le procédé de Kan un foncteur nerf de la catégorie des complexes dirigés augmentés vers celle des ensembles simpliciaux. À l'aide de ce foncteur nerf, on définit des équivalences faibles dans la catégorie des complexes dirigés augmentés, et on conjecture que la catégorie localisée correspondante est équivalente à la catégorie homotopique des $\CW$\nbd-complexes.
\smallbreak

Dans la sixième section, on rappelle la définition des orientaux de Street. On démontre que le complexe dirigé augmenté qui les définit \emph{via} le foncteur de Steiner admet une contraction et une contraction duale. On en déduit en particulier que les orientaux de Street admettent un objet quasi-initial et un objet quasi-final.
\smallbreak

Dans la septième section, on rappelle la définition du nerf de Street obtenu par le procédé de Kan à partir de l'objet cosimplicial fourni par les orientaux. À l'aide du nerf de Street, on définit des équivalences faibles dans la catégorie des $\infty$\nbd-catégories, et on conjecture que la catégorie localisée correspondante est équivalente à la catégorie homotopique des $\CW$\nbd-complexes. On énonce des conséquences d'un théorème A $\infty$\nbd-catégorique
et d'un théorème de comparaison du nerf de Street avec une variante bisimpliciale dudit nerf, théorèmes obtenus dans deux articles en préparation.
\smallbreak

Dans la huitième section, après quelques rappels sur les complexes simpliciaux, on montre que le complexe dirigé augmenté normalisé des chaînes d'un ensemble simplicial défini par un complexe simplicial admet une base unitaire fortement sans boucles. En particulier, la $\infty$\nbd-catégorie associée est librement engendrée au sens des polygraphes.
\smallbreak

La neuvième section est consacrée à la définition d'un isomorphisme fonctoriel entre la $\infty$\nbd-catégorie associée au complexe dirigé augmenté normalisé des chaînes d'un ensemble simplicial défini par un complexe simplicial et l'image par l'adjoint à gauche du nerf de Street de cet ensemble simplicial.
\smallbreak

La dernière section est consacrée à l'étude de la $\infty$\nbd-catégorie image par l'adjoint à gauche du nerf de Street du nerf d'un ensemble ordonné, ainsi que du $n$-tronqué intelligent de cette $\infty$\nbd-catégorie. On montre que, pour $n>0$, le morphisme d'adjonction correspondant est une équivalence faible $n$\nbd-catégorique. En particulier, pour tout~$n>0$, le type d'homotopie de cette $n$\nbd-catégorie est isomorphe à celui de l'ensemble ordonné de départ.
\smallbreak

Le but de l'appendice~\ref{AppA} est de donner une description explicite du $2$\nbd-tronqué intelligent de la $\infty$\nbd-catégorie image par l'adjoint à gauche du nerf de Street du nerf d'un ensemble ordonné.
\smallbreak

Dans l'appendice~\ref{AppB}, on généralise les notions d'objet quasi-initial et d'objet quasi-final d'une $n$\nbd-catégorie, introduites pour $n$ fini dans la première section, au cas où $n=\infty$. On introduit la notion de contraction d'une $\infty$\nbd-catégorie, on montre qu'une $\infty$\nbd-catégorie admettant une contraction admet un objet quasi-initial et que la $\infty$\nbd-catégorie associée à un complexe dirigé augmenté admettant une contraction admet une contraction. Ces considérations permettent de retrouver la plupart des résultats de la section 4 et des propriétés des orientaux démontrées dans la section~6.

\section{Notations et terminologie pour les \pdfinfty-catégories}\label{notoocat}

\begin{paragr}\label{notooCat}
Toutes les $\infty$\nbd-catégories considérées dans ce texte sont des $\infty$\nbd-catégories \emph{strictes} et tous les $\infty$\nbd-foncteurs des $\infty$\nbd-foncteurs \emph{stricts}. On note $\ooCat$ la catégorie des petites $\infty$\nbd-catégories et $\infty$\nbd-foncteurs entre celles-ci. Si $C$ est une $\infty$\nbd-catégorie on note $\ob(C)$, ou plus simplement $C_0$, l'ensemble de ses objets et pour $i>0$, on note $\fl{i}(C)$ ou $C_i$ l'ensemble de ses $i$\nbd-flèches. Pour toute $i$\nbd-flèche $x$ de $C$, on note $s(x)$ sa source, $t(x)$ son but et $1_x$ la $(i+1)$\nbd-flèche unité de $x$. Pour $j$ un entier tel que $0\leq j<i$, on note $s_j(x)$ (resp. $t_j(x)$) la $j$\nbd-cellule source itérée (resp. but itéré) de~$x$. Pour $0\leq j<i$, si $x$ et $y$ sont deux $i$\nbd-flèches \emph{$j$\nbd-composables}, autrement dit si la $j$\nbd-cellule source itérée de $x$ est égale à la $j$\nbd-cellule but itéré de $y$, on note \smash{$x\comp{j}y$} le composé correspondant. 
\smallbreak

Plus généralement, pour $i,j,k$ tels que $0\leq k<\min\{i,j\}$, si $x$ est une $i$\nbd-flèche et $y$ une $j$\nbd-flèche telles que $s_k(x)=t_k(y)$, on notera \smash{$x\comp{k}y$} la $(\max\{i,j\})$\nbd-flèche de $C$ obtenue en composant $x$ avec la $i$\nbd-flèche identité itérée de $y$ ou la $j$\nbd-flèche identité itérée de $x$ avec $y$ selon que $i\geq j$ ou $i\leq j$. Par convention, si $i<j$, l'opération \smash{$\comp{i}$} sera prioritaire sur l'opération \smash{$\comp{j}$}. Par exemple:
$$u\comp{0}v\comp{1}w\comp{2}x\comp{1}y\comp{0}z=\bigl((u\comp{0}v)\comp{1}w\bigr)\comp{2}\bigl(x\comp{1}(y\comp{0}z)\bigr)\ .$$
Cette convention combinée avec la propriété d'associativité des opérations \smash{$\comp{i}$} permet de supprimer beaucoup de parenthèses. Néanmoins, pour faciliter la lecture, on n'abusera pas de cette règle.
\smallbreak

Pour tout $\infty$\nbd-foncteur $F:C\to D$ et tout $i\geq0$, on note $F_i:C_i\to D_i$ la restriction de $F$ aux $i$\nbd-cellules.
\end{paragr}

\begin{paragr}\label{defcelparal}
On dit que deux $i$\nbd-cellules, $i\geq0$, d'une $\infty$\nbd-catégorie $C$, sont \emph{parallèles} si ou bien $i=0$, ou bien $i>0$ et elles ont même source et même but. Pour tout couple de $i$\nbd-cellules parallèles $x,y$, on note $\sHom_C(x,y)$ la $\infty$\nbd-catégorie dont les $j$\nbd-cellules, $j\geq0$, sont les $(i+j+1)$-flèches de $C$ de $i$\nbd-cellule source itérée $x$ et de $i$\nbd-cellule but itéré~$y$ (les sources, buts, unités et compositions étant induits de ceux de $C$). On note $\Hom_C(x,y)$ l'ensemble des objets de cette $\infty$\nbd-catégorie, autrement dit, l'ensemble des $(i+1)$\nbd-flèches de $C$ de source $x$ et de but $y$. Si $x$ est une $i$\nbd-cellule de $C$, la \emph{$\infty$\nbd-catégorie des $i$\nbd-cellules parallèles à $x$}, notée $\Par_C(x)$, est la $\infty$\nbd-catégorie dont les objets sont les $i$\nbd-cellules de $C$ parallèles à $x$ et dont les $j$\nbd-flèches sont les $(i+j)$\nbd-flèches de $C$ de $i$\nbd-cellule source itérée et de $i$\nbd-cellule but itéré des $i$\nbd-cellules parallèles à $x$. En particulier, si $i=0$, autrement dit si $x$ est un objet de $C$, on a $\Par_C(x)=C$ et si $i>0$, on a $\Par_C(x)=\sHom_C(s(x),t(x))$. Si $y$ et $z$ sont deux $i$\nbd-cellules de $C$ parallèles à $x$, on a $\sHom_{\Par_C(x)}(y,z)=\sHom_C(y,z)$.
\end{paragr}

\begin{paragr}\label{defnCat}\label{tronq}
Soit $n\geq0$ un entier. Une $n$\nbd-catégorie est une $\infty$\nbd-catégorie $C$ dont toute $i$\nbd-flèche, pour $i>n$, est une identité. En particulier, une $0$\nbd-catégorie n'est rien d'autre qu'un ensemble et une $1$\nbd-catégorie une catégorie ordinaire. Ainsi, les ensembles, les catégories ou les ensembles ordonnés seront souvent considérés comme des $\infty$\nbd-catégories. Pour revenir au cas général, si $i$ est un entier tel que \hbox{$0\leq i<n$}, et $x,y$ deux $i$\nbd-cellules d'une $n$\nbd-catégorie $C$, la $\infty$\nbd-catégorie $\sHom_C(x,y)$ est une \hbox{$(n-i-1)$}\nbd-catégorie. Pour $i$ un entier tel que $0\leq i\leq n$, et $x$ une $i$\nbd-cellule de $C$, la $\infty$\nbd-catégorie $\Par_C(x)$ est une $(n-i)$\nbd-catégorie.
\smallbreak

On note $\nCat{n}$ la sous-catégorie pleine de $\ooCat$ formée des $n$\nbd-catégories. Le foncteur d'inclusion $\incl{n}:\nCat{n}\to\ooCat$ admet à la fois un adjoint à gauche et un adjoint à droite. L'adjoint à droite $\tb{n}$ associe à toute petite $\infty$\nbd-catégorie $C$ le $n$\nbd-\emph{tronqué bête} de $C$, $n$\nbd-catégorie dont les $i$\nbd-cellules, $0\leq i\leq n$, sont les $i$\nbd-cellules de $C$, les sources, buts, unités et compositions venant de ceux dans $C$. L'adjoint à gauche $\ti{n}$ associe à $C$ le $n$\nbd-\emph{tronqué intelligent} de $C$, $n$\nbd-catégorie dont les $i$\nbd-cellules, $0\leq i<n$, sont les $i$\nbd-cellules de $C$ et les $n$\nbd-cellules sont les $n$\nbd-cellules de $C$, modulo la relation d'équivalence identifiant deux $n$\nbd-cellules de $C$ s'il existe un zigzag de $(n+1)$\nbd-flèches de $C$ les reliant. Les sources, buts, unités et compositions sont induits par ceux de $C$. Les morphismes d'adjonction
$$\tb{n}(C)=\incl{n}\tb{n}(C)\toto C\qquad\hbox{et}\qquad C\toto\incl{n}\ti{n}(C)=\ti{n}(C)$$
induisent, pour tout $i\geq0$, respectivement une injection et une surjection des ensembles des $i$\nbd-cellules. En considérant $\tb{n}(C)$ comme une sous-$\infty$\nbd-catégorie de $C$, on a une suite d'inclusions
$$\tb{0}(C)\hookrightarrow\tb{1}(C)\hookrightarrow\cdots\hookrightarrow\tb{n}(C)\hookrightarrow\tb{n+1}(C)\hookrightarrow\cdots\hookrightarrow C$$
faisant de $C$ une limite inductive des $\tb{n}(C)$. Le foncteur $\tb{n}$ admet lui-même un adjoint à droite qui associe à une $n$\nbd-catégorie $C$ la $(n+1)$\nbd-catégorie dont le $n$\nbd-tronqué bête est $C$ et qui  pour tout couple $(x,y)$ de $n$\nbd-cellules parallèles admet exactement une $(n+1)$\nbd-flèche de source $x$ et but $y$. En particulier, le foncteur de troncation bête commute à la fois aux limites projectives et aux limites inductives.
\end{paragr}

\begin{paragr}\label{pseudofinal}
On définit la notion de \emph{$n$\nbd-catégorie admettant un objet quasi-final} (resp. \emph{quasi-initial}) ainsi que celle d'\emph{objet quasi-final} (resp. \emph{quasi-initial}) d'une telle $n$\nbd-catégorie par récurrence sur $n$.
\begin{itemize}
\item Une $0$\nbd-catégorie admettant un objet quasi-final (resp. quasi-initial) est une $0$\nbd-catégorie ayant exactement un objet (autrement dit, un ensemble à un élément) et cet unique objet en est alors un objet quasi-final (resp. quasi-initial).
\item Pour $n>0$, une $n$\nbd-catégorie $C$ admet un objet quasi-final (resp. quasi-initial) s'il existe un objet $x$ de $C$ tel que pour tout objet $y$ de $C$ la $(n-1)$\nbd-catégorie $\sHom_C(y,x)$ (resp. $\sHom_C(x,y)$) admette un objet quasi-final (resp. quasi-initial), et un tel objet $x$ est alors un objet quasi-final (resp. quasi-initial) de~$C$.
\end{itemize}
Si on déroule cette définition, on remarque que, pour $n=1$, une catégorie admet un objet quasi-final (resp. quasi-initial) si et seulement si elle admet un objet final (resp. initial) et qu'alors un objet quasi-final (resp. quasi-initial) est exactement un objet final (resp. initial). Pour $n=2$, une $2$\nbd-catégorie admet un objet quasi-final (resp. quasi-initial) si et seulement si, dans la terminologie de~\cite{ChicheThese}, elle admet un objet admettant un objet final (resp. coadmet un objet admettant un objet initial). La notion d'objet quasi-final d'une $2$\nbd-catégorie a été introduite dans~\cite{Jay}, sous le nom d'objet final local.
\end{paragr}

\begin{paragr}\label{prodpseudo}
On vérifie immédiatement qu'un produit de $n$\nbd-catégories admettant un objet quasi-final (resp. quasi-initial) admet un objet quasi-final (resp. quasi-initial).
\end{paragr}

\begin{prop}\label{tronqpseudofinal}
Soient $n\geq0$ un entier, et $C$ une $n$\nbd-catégorie admettant un objet quasi-final \emph{(resp.} quasi-initial\emph{).} Alors pour tout $m\geq0$, la $m$\nbd-catégorie $m$\nbd-tronqué intelligent de $C$ admet un objet quasi-final \emph{(resp.} quasi-initial\emph{).}
\end{prop}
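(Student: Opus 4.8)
The plan is to argue by induction on $n$, treating the "quasi-final" case (the "quasi-initial" case being dual). The base case $n=0$ is immediate: a $0$-category admitting a quasi-final object is a singleton, and its $m$-truncation is itself a singleton for every $m$. For the inductive step, fix $n>0$ and an $n$-category $C$ with quasi-final object $x$. There are two regimes to distinguish according to whether $m\geq n$ or $m<n$. If $m\geq n$, then $\ti{m}(C)=C$ already (the intelligent truncation does nothing to an $n$-category in degrees $\geq n$), so there is nothing to prove. The real content is the case $0\leq m<n$.

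So suppose $m<n$, and write $C'=\ti{m}(C)$. First I would record the effect of $\ti{m}$ on objects and on the $\sHom$ categories: by the description in~\ref{tronq}, the objects of $C'$ are exactly the objects of $C$ (the adjunction morphism $C\to\ti{m}(C)$ is the identity on objects), and for two objects $y,z$ of $C$ the $(m-1)$-category $\sHom_{C'}(y,z)$ is obtained from $\sHom_C(y,z)$ by the intelligent $(m-1)$-truncation. The key point here — which is the main obstacle I expect — is precisely this commutation: that $\sHom_{\ti{m}(C)}(y,z)$ is canonically $\ti{m-1}\bigl(\sHom_C(y,z)\bigr)$. This should follow by unwinding the definition of $\ti{m}$ in~\ref{tronq}: for $i<m$ the $i$-cells of $\ti{m}(C)$ are those of $C$, and the $m$-cells are the $m$-cells of $C$ modulo the zigzag relation coming from $(m+1)$-cells; restricting to cells with iterated source $y$ and iterated target $z$ shifts all dimensions down by one and turns this into exactly the description of $\ti{m-1}$ applied to $\sHom_C(y,z)$. (One must check that the zigzag relation matches up, which it does since an $(m+1)$-cell of $C$ with iterated $0$-source $y$, $0$-target $z$ is the same thing as an $m$-cell of $\sHom_C(y,z)$.)

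Granting this, the inductive step is routine. Let $x$ be a quasi-final object of $C$. For every object $y$ of $C$, the $(n-1)$-category $\sHom_C(y,x)$ admits a quasi-final object by hypothesis. Since $m-1<n-1$, the induction hypothesis applied to $\sHom_C(y,x)$ gives that $\ti{m-1}\bigl(\sHom_C(y,x)\bigr)$ admits a quasi-final object; by the commutation established above this $(m-1)$-category is $\sHom_{C'}(y,x)$. As $y$ ranges over the objects of $C'$ (= objects of $C$), we conclude that $x$ — viewed as an object of $C'$ — has the property that $\sHom_{C'}(y,x)$ admits a quasi-final object for every $y$, which by the definition in~\ref{pseudofinal} is exactly the statement that $x$ is a quasi-final object of $C'=\ti{m}(C)$. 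The dual argument with $\sHom_C(x,y)$ in place of $\sHom_C(y,x)$ handles the quasi-initial case, completing the induction.
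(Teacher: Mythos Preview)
Your approach is exactly the paper's: induction on $n$, reduce via the identification $\sHom_{\ti{m}(C)}(y,x)\simeq\ti{m-1}(\sHom_C(y,x))$, and apply the inductive hypothesis to the $(n-1)$-category $\sHom_C(y,x)$.

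There is, however, one small gap. Your inductive step assumes $m<n$ and then immediately asserts that ``the objects of $C'$ are exactly the objects of $C$'' and forms the $(m-1)$-category $\sHom_{C'}(y,z)\simeq\ti{m-1}(\sHom_C(y,z))$. Both statements fail when $m=0$: the functor $\ti{0}$ quotients the set of objects by the zigzag relation generated by $1$-arrows, so the adjunction map is \emph{not} the identity on objects, and $\ti{-1}$ is undefined. The paper handles $m=0$ separately with a one-line argument: since $x$ is quasi-final, $\sHom_C(y,x)$ is nonempty for every object $y$ (a $(n-1)$-category with a quasi-final object has in particular an object), hence every $y$ admits a $1$-arrow to $x$, hence $\ti{0}(C)$ is a singleton. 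Once you insert this case, your proof matches the paper's verbatim.
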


\begin{proof}
On raisonne par récurrence sur $n$. Pour $m\geq n$, il n'y a rien à
démontrer puisque $\ti{m}(C)=C$, ce qui prouve en particulier l'assertion
pour~$n=0$. Si $n>0$, l'hypothèse sur $C$ implique qu'il existe un objet $x$
de $C$ tel que pour tout objet $y$ de $C$ la $(n-1)$\nbd-catégorie
$\sHom_C(y,x)$ (resp. $\sHom_C(x,y)$) admette un objet quasi-final (resp.
quasi-initial). Si $m=0$, l'assertion est évidente puisque l'ensemble
$\ti{0}(C)$ a alors exactement un élément. Si $m>0$, l'hypothèse de
récurrence appliquée à la $(n-1)$\nbd-catégorie $\sHom_C(y,x)$ (resp.
$\sHom_C(x,y)$) implique que la
$(m-1)$\nbd-catégorie~\hbox{$\sHom_{\ti{m}(C)}(y,x)=\ti{m-1}(\sHom_C(y,x))$}
(resp. \hbox{$\sHom_{\ti{m}(C)}(x,y)=\ti{m-1}(\sHom_C(x,y))$}) admet un objet
quasi-final (resp. quasi-initial), ce qui prouve l'assertion.
\end{proof}

\begin{paragr}\label{engparcomp}
Soit $C$ une $\infty$\nbd-catégorie. Un \emph{ensemble multiplicatif} de cellules de $C$ est un ensemble $M$ de cellules de $C$ satisfaisant aux deux conditions suivantes:
\begin{itemize}
\item[a)] pour tout $i\geq0$, si $x$ est une $i$\nbd-cellule de $C$ appartenant à $M$, la \hbox{$(i+1)$}\nbd-flèche identité de $x$ appartient aussi à $M$;
\item[b)] pour tout couple d'entiers $i,j$ tels que $0\leq j<i$, si $x$ et $y$ sont deux $i$\nbd-flèches $j$\nbd-composables de $C$ appartenant à $M$, alors la $i$\nbd-flèche composée $x\comp{j}y$ appartient à $M$.
\end{itemize}
On dit qu'un ensemble $E$ de cellules d'une $\infty$\nbd-catégorie $C$ \emph{engendre $C$ par compositions} si l'ensemble de toutes les cellules de $C$ est le plus petit ensemble multiplicatif de cellules de $C$ contenant $E$. Dans ce cas, $E$ contient forcément l'ensemble des objets de~$C$, et pour tout $i>0$, toute $i$\nbd-flèche de $C$ est un composé d'un nombre fini de $i$\nbd-flèches, qui sont dans $E$ ou sont des unités itérées de $j$\nbd-cellules, $0\leq j<i$, appartenant à $E$.
\end{paragr}

\begin{paragr}\label{englibr}
Soient $C$ une $\infty$\nbd-catégorie et $E$ un ensemble de cellules de $C$. Pour $i\geq0$, on pose $E_i=E\cap C_i$. On dit que l'ensemble de cellules $E$ \emph{engendre librement $C$ au sens des polygraphes} si les deux conditions suivantes sont satisfaites:
\begin{itemize}
\item[a)] $E_0=C_0$;
\item[b)] pour toute $\infty$\nbd-catégorie $D$, tout $i\geq0$, tout $\infty$\nbd-foncteur $F:\tb{i}(C)\to D$, et toute application $f:E_{i+1}\to D_{i+1}$ tels que pour tout $x\in E_{i+1}$, on ait $s(f(x))=F(s(x))$ et $t(f(x))=F(t(x))$, il existe un et un seul $\infty$\nbd-foncteur $F':\tb{i+1}(C)\to D$ tel que 
$$F'|\tb{i}(C)=F\qquad\hbox{et}\qquad F'|E_{i+1}=f$$ 
($\tb{i}(C)$ et $\tb{i+1}(C)$ étant considérées comme des sous-$\infty$\nbd-catégories de $C$ par les morphismes d'adjonction (cf.~\ref{tronq})).
\end{itemize}
\end{paragr}

\begin{prop}
Soient $C$ une $\infty$\nbd-catégorie et $E$ un ensemble de cellules de $C$ qui l'engendre librement au sens des polygraphes. Alors $E$ engendre $C$ par compositions.
\end{prop}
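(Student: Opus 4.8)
Le plan est de montrer, par r\'ecurrence sur l'entier $i\geq 0$, que toute $i$\nbd-cellule de $C$ appartient au plus petit ensemble multiplicatif $M$ de cellules de $C$ contenant $E$ (au sens de~\ref{engparcomp}); l'\'enonc\'e en r\'esultera aussit\^ot. Le cas $i=0$ est trivial: la condition~(a) de~\ref{englibr} donne $E_0=C_0$, d'o\`u $C_0\subseteq E\subseteq M$. Pour l'h\'er\'edit\'e, je supposerai que toute $j$\nbd-cellule de $C$ appartient \`a $M$ pour tout $j\leq i$, et je chercherai \`a \'etablir que $C_{i+1}\subseteq M$.

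L'id\'ee centrale est d'introduire la sous-$\infty$\nbd-cat\'egorie $D$ de $\tb{i+1}(C)$ dont les cellules sont, en dimension $\leq i$, \emph{toutes} les cellules de $C$, et, en dimension $i+1$, les $(i+1)$\nbd-cellules de $C$ appartenant \`a $M$. Le principal point \`a v\'erifier --- et c'est l\`a l'obstacle essentiel --- est que ces donn\'ees d\'efinissent bien une sous-$\infty$\nbd-cat\'egorie de $\tb{i+1}(C)$, autrement dit que $M\cap C_{i+1}$ est stable par les sources, buts, unit\'es et compositions induits (en dimension $>i+1$, rien n'est \`a v\'erifier, $\tb{i+1}(C)$ \'etant une $(i+1)$\nbd-cat\'egorie). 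Or la source et le but d'une $(i+1)$\nbd-cellule de $M$ sont des $i$\nbd-cellules, donc des \'el\'ements de $M$ par hypoth\`ese de r\'ecurrence; l'unit\'e d'une $i$\nbd-cellule de $C$ est dans $M$ par la condition~(a) de~\ref{engparcomp} (cette $i$\nbd-cellule \'etant elle-m\^eme dans $M$); le compos\'e $x\comp{j}y$ de deux $(i+1)$\nbd-cellules $j$\nbd-composables de $M$ est dans $M$ par la condition~(b) de~\ref{engparcomp}; et tout compos\'e $x\comp{k}y$ dont l'un des arguments est de dimension $\leq i$ se r\'e\'ecrit, au moyen d'unit\'es it\'er\'ees de cellules de dimension $\leq i$ (qui appartiennent \`a $M$ par hypoth\`ese de r\'ecurrence et par stabilit\'e de $M$ par passage aux unit\'es), comme le compos\'e de deux $(i+1)$\nbd-cellules de $M$, donc appartient \`a $M$. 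On disposera ainsi d'une sous-$\infty$\nbd-cat\'egorie $D$ v\'erifiant $\tb{i}(D)=\tb{i}(C)$, $D_{i+1}=M\cap C_{i+1}$ et $E_{i+1}\subseteq D_{i+1}$, cette derni\`ere inclusion r\'esultant de $E_{i+1}\subseteq E\cap C_{i+1}\subseteq M\cap C_{i+1}$.

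Il restera \`a exploiter deux fois la propri\'et\'e universelle~\ref{englibr}(b). Appliqu\'ee \`a la $\infty$\nbd-cat\'egorie $D$, au $\infty$\nbd-foncteur d'inclusion $F\colon\tb{i}(C)=\tb{i}(D)\to D$ et \`a l'inclusion ensembliste $f\colon E_{i+1}\to D_{i+1}$ (les compatibilit\'es $s(f(x))=F(s(x))$ et $t(f(x))=F(t(x))$ pour $x\in E_{i+1}$ \'etant imm\'ediates, $f$ et $F$ \'etant des inclusions), elle fournira un $\infty$\nbd-foncteur $F'\colon\tb{i+1}(C)\to D$ prolongeant $F$ et $f$. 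En notant $j\colon D\to\tb{i+1}(C)$ l'inclusion, les deux $\infty$\nbd-foncteurs $j\circ F'$ et $\mathrm{id}_{\tb{i+1}(C)}$ de $\tb{i+1}(C)$ dans elle-m\^eme ont m\^eme restriction \`a $\tb{i}(C)$ (\`a savoir l'inclusion $\tb{i}(C)\to\tb{i+1}(C)$) et \`a $E_{i+1}$ (\`a savoir l'inclusion $E_{i+1}\to C_{i+1}$); la partie \emph{unicit\'e} de~\ref{englibr}(b), appliqu\'ee cette fois avec $\tb{i+1}(C)$ au lieu de $D$, imposera alors $j\circ F'=\mathrm{id}_{\tb{i+1}(C)}$. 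Par cons\'equent, pour toute $(i+1)$\nbd-cellule $z$ de $C$, on aura $z=(j\circ F')(z)=j(F'(z))$; comme $F'(z)\in D_{i+1}$ et que $j$ est l'inclusion, il viendra $z\in D_{i+1}=M\cap C_{i+1}$, donc $z\in M$. Ceci ach\`evera la r\'ecurrence, montrant que $M$ est l'ensemble de toutes les cellules de $C$, c'est-\`a-dire que $E$ engendre $C$ par compositions.
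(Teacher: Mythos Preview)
Your proof is correct and follows essentially the same approach as the paper's: induction on the dimension, construction of a sub-$(i+1)$-cat\'egorie $D$ of $\tb{i+1}(C)$ agreeing with $C$ below dimension $i+1$ and containing $E_{i+1}$, then a double application of~\ref{englibr}(b) (existence to produce $F'\colon\tb{i+1}(C)\to D$, uniqueness to force $j\circ F'=\mathrm{id}$). The only cosmetic difference is that you set $D_{i+1}=M\cap C_{i+1}$ using the multiplicative closure $M$, whereas the paper defines $D_{i+1}$ directly as the set of composites of elements of $E_{i+1}$ and iterated identities of lower cells; both choices yield a valid sub-cat\'egorie and the argument proceeds identically.
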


\begin{proof}
Pour $i\geq0$, on pose $E_i=E\cap C_i$ et \smash{$E_{\leq i}=\mathop{\cup}\nolimits_{\,0\leq j\leq i}E_j$}. On va montrer par récurrence sur $i\geq0$ que l'ensemble $E_{\leq i}$ engendre $\tb{i}(C)$ par compositions, ce qui prouvera la proposition. Pour $i=0$, l'assertion est évidente grâce à la condition (a) de~\ref{englibr}. Supposons donc l'assertion démontrée pour $i$ et démontrons-la pour~$i+1$. Soit~$D$ la sous-$(i+1)$\nbd-catégorie de $\tb{i+1}(C)$ dont les $j$\nbd-cellules, $0\leq j\leq i$, sont les $j$\nbd-cellules de~$C$, et dont les $(i+1)$\nbd-flèches sont les composés d'éléments de~$E_{i+1}$ et d'identités itérées de $j$\nbd-cellules de $C$, $0\leq j\leq i$. Par hypothèse de récurrence l'ensemble $E_{\leq i+1}$ engendre $D$ par compositions. Il suffit donc de montrer que $\tb{i+1}(C)=D$, ou encore que l'inclusion $G:D\to\tb{i+1}(C)$ admet une section. Par définition, on a une inclusion $F:\tb{i}(C)\to D$ et une inclusion $f:E_{i+1}\to D_{i+1}$ satisfaisant aux hypothèses de la condition (b) de~\ref{englibr}. On en déduit l'existence d'un $\infty$\nbd-foncteur $F':\tb{i+1}(C)\to D$ tel que $F'|\tb{i}(C)=F$ et $F'|E_{i+1}=f$. La restriction de $GF'$ à $\tb{i}(C)$ est donc l'inclusion $\tb{i}(C)\to\tb{i+1}(C)$ et la restriction à $E_{i+1}$ l'inclusion $E_{i+1}\to C_{i+1}$, ce qui, en vertu de la partie unicité de la condition (b) de~\ref{englibr}, prouve que $F'$ est une section de $G$, et achève la démonstration.
\end{proof}

\begin{prop}\label{englibrclef}
Soient $A$ une petite catégorie, $P:A\to\ooCat$ un foncteur, $C$ une $\infty$\nbd-catégorie, et $\alpha:P\to C$ un morphisme de foncteurs de $P$ vers le foncteur constant de valeur $C$. On suppose fixés un ensemble de cellules $E$ qui engendre librement $C$ au sens des polygraphes, et pour tout objet $a$ de $A$, un ensemble de cellules~$E_a$ qui engendre librement $P(a)$ au sens des polygraphes, satisfaisant aux conditions suivantes : 
\begin{itemize}
\item[\emph{a)}] pour tout objet $a$ de $A$, $\alpha_a(E_a)\subset E$;
\item[\emph{b)}] pour toute flèche $u:a\to b$ de $A$, $P(u)(E_a)\subset E_b$;
\item[\emph{c)}] l'application $\limind_AE_a\to E$, induite par $\alpha$, est bijective.
\end{itemize}
Alors le morphisme $\limind_A\alpha:\limind_AP\to C$ est un isomorphisme de $\ooCat$.
\end{prop}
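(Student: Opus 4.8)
The plan is to build the inverse $\infty$-functor $\psi:C\to\limind_A P$ level by level, using the universal property of a free generation in the sense of polygraphs (\ref{englibr}). Write $L=\limind_A P$ and let $\pi_a:P(a)\to L$ denote the canonical morphisms; by hypothesis (c), the induced map $\limind_A E_a\to E$ is a bijection, and since the $\ti{b}$-truncation commutes with neither limit in general, I will instead exploit that the $\infty$-functors $\pi_a$ together with the free generators $E_a$ furnish, by (b), a well-defined set of cells $E_L=\bigcup_a\pi_a(E_a)$ of $L$. The first step is to check that $E_L$ generates $L$ freely in the sense of polygraphs: condition (a) of \ref{englibr} for $E_L$ follows from $E_{a,0}=P(a)_0$ and the fact that $\limind$ in $\ooCat$ is computed degreewise in degree $0$ (colimits of sets), while condition (b) for $E_L$ is obtained by assembling, over the diagram $A$, the universal properties of the $E_a$: given a test $\infty$-category $D$, an $\infty$-functor $\tb{i}(L)\to D$ and a map $E_{L,i+1}\to D_{i+1}$ compatible with sources and targets, one pulls back along each $\pi_a$ to get compatible data on each $P(a)$, invokes freeness of $E_a$ to extend to $\tb{i+1}(P(a))$, and checks that these extensions are compatible with the transition functors $P(u)$ (here one uses (b) again) so that they glue to the required $\tb{i+1}(L)\to D$; uniqueness is inherited from the uniqueness clauses.

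The second step is to produce $\psi:C\to L$. By hypothesis $E$ generates $C$ freely, so it suffices to specify $\psi$ on $\tb{0}(C)=C_0$ and then, inductively, on each $E_{i+1}$, compatibly with sources and targets. On objects, use the bijection $\limind_A E_{a,0}=\limind_A P(a)_0 \xrightarrow{\ \sim\ } E_0=C_0$ (degree-$0$ colimits again), giving $\psi_0:C_0\to L_0$. For the inductive step, the bijection of (c) identifies each generator $x\in E_{i+1}$ with the class of some $\tilde x\in E_{a,i+1}$ for a suitable $a$; set $\psi(x)=\pi_a(\tilde x)\in E_{L,i+1}\subset L_{i+1}$. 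One must verify this is independent of the chosen representative — this is exactly the injectivity half of (c) combined with the fact that the $\pi_a$ identify the images of $E_a$ according to the colimit relations — and that $s(\psi(x))=\psi(s(x))$, $t(\psi(x))=\psi(t(x))$, which reduces to the corresponding identities inside $P(a)$ together with the already-constructed lower-dimensional part of $\psi$. Freeness of $E$ in $C$ then yields a unique $\infty$-functor $\psi:C\to L$ with $\psi|_{E}$ as prescribed.

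The third step is to check that $\psi$ and $\varphi:=\limind_A\alpha:L\to C$ are mutually inverse. Both composites $\varphi\psi$ and $\psi\varphi$ are $\infty$-endofunctors; to see $\varphi\psi=\mathrm{id}_C$ it suffices, by the uniqueness part of the universal property attached to the free generating set $E$ of $C$, to check that $\varphi\psi$ fixes $C_0$ and fixes each generator in $E_{i+1}$ — on a generator $x$ corresponding to $\tilde x\in E_{a,i+1}$ one has $\varphi\psi(x)=\varphi(\pi_a(\tilde x))=\alpha_a(\tilde x)$, which by (a) and the definition of the bijection in (c) is again $x$. Symmetrically, $\psi\varphi=\mathrm{id}_L$ follows from the uniqueness part of the universal property of $E_L$ established in the first step, checking the identity on $E_{L,0}=L_0$ and on each $\pi_a(\tilde x)$. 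Hence $\varphi$ is an isomorphism of $\ooCat$, as claimed.

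The main obstacle I anticipate is the first step: proving that $E_L$ generates $L$ freely in the sense of polygraphs, i.e.\ that the extension problems over the diagram $A$ glue coherently. This is essentially the statement that "being a polygraph" is stable under the colimits occurring here, and it requires a careful bookkeeping of the compatibilities imposed by the arrows $u:a\to b$ of $A$ via condition (b); everything else is a routine transport along the bijections of (a)–(c) and an appeal to the uniqueness clauses in \ref{englibr}. An alternative, possibly shorter route would be to bypass the freeness of $E_L$ and construct $\psi$ directly as the colimit, over $A$, of the $\infty$-functors $P(a)\to C$… but since these do not land in a fixed object one is forced back to using the universal property of $E$ in $C$, so the argument above seems the most economical.
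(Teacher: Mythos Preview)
Your route differs from the paper's and carries an unnecessary detour. The paper does \emph{not} construct an inverse: it shows directly, by induction on the truncation level $i$, that $\tb{i}C$ satisfies the universal property of $\limind_A\tb{i}P(a)$. Concretely, given a cocone $\beta:\tb{i+1}P\to D$, the inductive hypothesis gives a unique $F:\tb{i}C\to D$ with $\beta_a|_{\tb{i}P(a)}=F\circ\tb{i}\alpha_a$; the bijection~(c) at level $i{+}1$ produces a map $f:E_{i+1}\to D_{i+1}$; freeness of $E$ in $C$ extends $(F,f)$ to $G:\tb{i+1}C\to D$; and freeness of each $E_a$ in $P(a)$ (its \emph{uniqueness} clause) forces $\beta_a=G\circ\tb{i+1}\alpha_a$. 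No structure of $L=\limind_AP$ is ever analysed.

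Two remarks on your outline. First, your parenthetical that ``the $\tb{}$-truncation commutes with neither limit'' is false: $\tb{n}$ has both a left adjoint ($\ti{n}$) and a right adjoint (see~\ref{tronq}), hence commutes with colimits and limits. This is precisely why the paper can write $\tb{i}\limind_AP\simeq\limind_A\tb{i}P$ and run the induction cleanly; it is also what would make your gluing in Step~1 go through. Second, your Step~1 (that $E_L$ freely generates $L$) is essentially equivalent to the proposition itself, and it is not needed: for $\psi\varphi=\mathrm{id}_L$ you may instead use the universal property of the colimit $L$ to reduce to $\psi\circ\alpha_a=\pi_a$, and then the uniqueness clause for $E_a$ in $P(a)$. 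If you drop Step~1, you must however maintain the invariant $\psi\circ\alpha_a=\pi_a$ \emph{inside} the induction that builds $\psi$ (otherwise the source/target check in Step~2 is circular); the paper's argument folds all of this into a single verification of the colimit universal property and is shorter for it.
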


\begin{proof}
Pour tout $i\geq0$, on pose $E_i=E\cap C_i$ et pour tout objet $a$ de~$A$, $E_{a,i}=E_a\cap P(a)_i$. Par hypothèse, pour tout $i\geq0$, l'application $\limind_AE_{a,i}\to E_i$, induite par $\alpha$, est bijective. On va montrer par récurrence sur $i$ que pour tout $i\geq0$, le tronqué bête (cf.~\ref{tronq})
$$
\tb{i}\limind_A\alpha:\tb{i}\limind_AP\simeq\limind_A\tb{i}P\toto\tb{i}C
$$
de $\limind_A\alpha$ est un isomorphisme, ce qui prouvera la proposition. Pour $i=0$, cela résulte de la bijectivité de l'application $\limind_AE_{a,0}\to E_0$ et du fait qu'en vertu de la condition~(a) de~\ref{englibr}, on a $C_0=E_0$, et pour tout $a$ dans $A$, $P(a)_0=E_{a,0}$. Supposons l'assertion établie pour $i$ et prouvons-la pour $i+1$. Soit $D$ une $\infty$\nbd-catégorie, et \hbox{$\beta:\tb{i+1}P\to D$} un morphisme de foncteurs vers le foncteur constant de valeur~$D$. Il s'agit de montrer qu'il existe un unique $\infty$\nbd-foncteur $G:\tb{i+1}C\to D$ tel que pour tout objet $a$ de $A$ on ait $\beta_a=G\circ\tb{i+1}\alpha_a$. En vertu de l'hypothèse de récurrence, il existe un unique $\infty$\nbd-foncteur $F:\tb{i}C\to D$ tel que pour tout objet $a$ de $A$ on ait $\beta_a|\tb{i}P(a)=F\circ\tb{i}\alpha_a$. D'autre part, la bijectivité de l'application $\limind_AE_{a,i+1}\to E_{i+1}$ implique l'existence d'une unique application $f:E_{i+1}\to D_{i+1}$ telle que pour tout objet $a$ de $A$ et tout $x_a\in E_{a,i+1}$, on ait $\beta_a(x_a)=f(\alpha_a(x_a))$. Or, pour tout $x\in E_{i+1}$, il existe un objet $a$ de $A$ et $x_a\in E_{a,i+1}$ tel que $x=\alpha_a(x_a)$. On a donc
$$\begin{aligned}
s(f(x))&=s(f(\alpha_a(x_a)))=s(\beta_a(x_a))=\beta_a(s(x_a))\cr
&=F\alpha_a(s(x_a))=F(s(\alpha_a(x_a)))=F(s(x))\ ,
\end{aligned}$$
et de même $t(f(x))=F(t(x))$. Comme $E$ engendre librement $C$ au sens des polygraphes, en vertu de la condition~(b) de~\ref{englibr}, il existe donc un unique $\infty$\nbd-foncteur $G:\tb{i+1}C\to D$ tel que 
$$G|\tb{i}C=F\qquad\hbox{et}\qquad G|E_{i+1}=f\ .$$
Il reste à prouver que pour tout objet $a$ de $A$, on a $\beta_a=G\circ\tb{i+1}\alpha_a$. Or, on a les égalités
$$\beta_a|\tb{i}P(a)=F\circ\tb{i}\alpha_a=G|\tb{i}C\circ\tb{i}\alpha_a=G\circ\tb{i+1}\alpha_a|\tb{i}P(a)\ ,$$
et pour tout $x_a\in E_{a,i+1}$, on a $\beta_a(x_a)=f(\alpha_a(x_a))=G(\alpha_a(x_a))$, d'où 
$$\beta_a|E_{a,i+1}=G\circ\tb{i+1}\alpha_a|E_{a,i+1}\ .$$
Comme $E_{a}$ engendre librement $P(a)$ au sens des polygraphes, en vertu de la partie unicité de la condition~(b) de~\ref{englibr}, on a donc $\beta_a=G\circ\tb{i+1}\alpha_a$, ce qui achève la démonstration.
\end{proof}

\begin{rem}
On vérifie facilement que la proposition reste vraie si l'on remplace l'hypothèse que pour tout objet $a$ de $A$, l'ensemble $E_a$ engendre $P(a)$ librement au sens des polygraphes par l'hypothèse plus faible qu'il l'engendre par compositions. En revanche, on doit toujours supposer que l'ensemble $E$ engendre librement $C$ au sens des polygraphes. On n'aura pas besoin de ce résultat dans la suite.
\end{rem}

\begin{paragr}\label{dualooCat}
Soit $C$ une $\infty$\nbd-catégorie. On note $\op{C}$ la $\infty$\nbd-catégorie \emph{duale} de $C$, obtenue en inversant le sens des $i$\nbd-flèches pour \emph{tout} $i>0$. On définit ainsi un endofoncteur involutif $\Dual:\ooCat\to\ooCat$. Plus généralement, pour toute partie $\K$ de~\hbox{$\mathbb{N}^*=\mathbb{N}\sauf\{0\}$}, on définit un endofoncteur involutif $\KDual{\K}:\ooCat\to\ooCat$ en associant à une $\infty$\nbd-catégorie $C$ la $\infty$\nbd-catégorie obtenue à partir de $C$ en inversant le sens des $i$\nbd-flèches pour $i\in \K$. On dira que $\KDual{\K}(C)$ est le \emph{$\K$\nbd-dual} de $C$. Par définition, on a~$\Dual=\KDual{\mathbb{N}^*}$. Deux autres cas particuliers sont importants: le \emph{dual pair} et le \emph{dual impair} de $C$. Le dual pair (resp. impair) de $C$, noté $\opp{C}$ (resp.~$\opi{C}$), est le $\K$\nbd-dual de~$C$ pour $\K=2\mathbb{N}^*$ (resp.~pour $\K=2\mathbb{N}^*-1$). On remarque que $\op{C}=\opi{(\opp{C})}=\opp{(\opi{C})}$.
\smallbreak

Pour tout entier $n\geq0$, une $n$\nbd-catégorie $C$ admet un objet quasi-final si et seulement si la $n$\nbd-catégorie $\op{C}$ admet un objet quasi-initial, et un objet de $C$ est quasi-final si et seulement si il est un objet quasi-initial de $\op{C}$.
\end{paragr}

\section{La catégorie des complexes dirigés augmentés}

\begin{paragr}\label{ADCdef}
Dans~\cite{Steiner}, Steiner a introduit la notion de complexe dirigé augmenté. On rappelle qu'un \emph{complexe dirigé augmenté} est un triplet $(K,K^*,\augm)$, où
$$K=\kern 10pt
\xymatrix{
\cdots\ar[r]^-{d_{n+1}}
&K_n\ar[r]^-{d_{n}}
&K_{n-1}\ar[r]^-{d_{n-1}}
&\cdots\ar[r]^-{d_{2}}
&K_1\ar[r]^-{d_{1}}
&K_0
}$$
est un complexe de chaînes de groupes abéliens en degrés positifs, $\augm:K_0\to\mathbb{Z}$ une augmentation (de sorte que $d_nd_{n+1}=0$ pour $n\geq1$, et $\augm d_1=0$), et $K^*=(K_n^*)^{}_{n\in\mathbb{N}}$, est un sous-monoïde gradué du monoïde gradué sous-jacent au complexe $K$, autrement dit, pour tout $n\geq0$, $K_n^*$ est un sous-monoïde du groupe abélien $K_n$. On ne demande aucune compatibilité des différentiels $d_n$ ou de l'augmentation avec les $K_n^*$. Le complexe $K$ sera considéré souvent comme un complexe indexé par $\mathbb{Z}$ en posant $K_i=0$ pour $i<0$ et $d_i=0$ pour $i\leq0$:
$$\xymatrix{
\cdots\ar[r]^-{d_{n+1}}
&K_n\ar[r]^-{d_{n}}
&K_{n-1}\ar[r]^-{d_{n-1}}
&\cdots\ar[r]^-{d_{2}}
&K_1\ar[r]^-{d_{1}}
&K_0\ar[r]
&0\ar[r]
&0\ar[r]
&\cdots
}\ .$$
Le complexe dirigé augmenté $(K,K^*,\augm)$ sera noté souvent, par abus, simplement $K$.
\smallbreak

Un \emph{morphisme de complexes dirigés augmentés} $f:(K,K^*,\augm)\to(K',K'^*,\augm')$ est un morphisme de complexes $f:K\to K'$ compatible à l'augmentation (au sens où $\augm'f_0=\augm$) et tel que pour tout $n\geq0$, on ait $f_n(K^*_n)\subset K'^*_n$. On note $\ADC$ la catégorie des complexes dirigés augmentés. 
\end{paragr}

\begin{paragr}\label{monoADC}\label{induitADC}
Un morphisme de complexes dirigés augmentés $i:(K',K'^*,\augm')\to(K,K^*,\augm)$ est un monomorphisme si et seulement si pour tout $n\geq0$, l'application $i_n:K'_n\to K_n$ est injective. On dit alors que $(K',K'^*,\augm')$ est un \emph{sous-complexe dirigé augmenté} de~$(K,K^*,\augm)$. On dit que le monomorphisme $i$ est \emph{strict} (et que le sous-complexe dirigé augmenté $(K',K'^*,\augm')$ de $(K,K^*,\augm)$ est \emph{strict}) si pour tout $n\geq0$, on a l'égalité $K'^*_n=i^{-1}_n(K^*_n)$. On vérifie facilement que cette terminologie est compatible avec la notion générale de monomorphisme strict de~\cite{SGA4-1}.
\smallbreak

Soient $(K,K^*,\augm)$ un complexe dirigé augmenté et $i:(K',\augm')\to(K,\augm)$ un monomorphisme de complexes augmentés. Alors il existe une unique structure de complexe dirigé augmenté sur $(K',\augm')$ faisant de $i$ un monomorphisme strict de complexes dirigés augmentés. On dira qu'elle est la structure \emph{induite} par celle de $(K,K^*,\augm)$. Elle est définie en posant $K'^*_n=i^{-1}_n(K^*_n)$, pour $n\geq0$.
\end{paragr}

\begin{paragr}\label{quotientADC}
Soit $i:(K',K'^*,\augm')\to(K,K^*,\augm)$ un monomorphisme de complexes dirigés augmentés. On suppose que $\augm'=0$ (ce qui est par exemple le cas si $K'_0=0$). On définit alors un \emph{complexe dirigé augmenté quotient} $(K'',K''^*,\augm'')$ comme suit. Le complexe~$K''$ est le complexe quotient de $K$ par le sous-complexe $K'$. Pour $n\geq0$, le sous-monoïde $K''^*_n$ de $K''_n$ est l'image de $K^*_n$ par la surjection canonique $K_n\to K_n/K'_n$. L'augmentation $\augm''$ est déduite de l'augmentation $\augm$ par passage au quotient, ce qui est licite puisque par hypothèse $\augm\, i_0=\augm'=0$. La surjection canonique $K\to K''$ définit alors un morphisme de complexes dirigés augmentés satisfaisant à la propriété universelle des quotients.
\end{paragr}

\begin{paragr}\label{limindADC}
La catégorie $\ADC$ est cocomplète. Soit $K:I\to\ADC$, $i\mapsto(K(i),K(i)^*,\augm(i))$, un foncteur de source une petite catégorie $I$. La limite inductive $(L,L^*,\augm)$ de $K$ est obtenue comme suit. Le complexe $L$ est la limite inductive des $K(i)$ dans la catégorie des complexes. L'augmentation $\augm$ est définie par la propriété universelle des limites inductives. Pour $n\geq0$, $L^*_n$ est le sous-monoïde de $L_n$ engendré par la réunion des images des sous-monoïdes $K(i)^*_n$ par les morphismes canoniques $K(i)_n\to L_n$, pour~$i$ dans $I$. La propriété universelle se vérifie alors sans difficulté. On remarque que les foncteurs \og complexe sous-jacent\fg{} et \og complexe augmenté sous-jacent\fg{} commutent aux limites inductives.
\end{paragr}

\begin{paragr}\label{ADCfin}
La catégorie $\ADC$ admet un objet final $\ADCfin$ qui est défini par
$$
\ADCfin_n =\left\{\begin{matrix}
0\,,
&\hbox{ si }n>0\,,
\cr
\mathbb{Z}\,,
&\hbox{ si }n=0\,,
\end{matrix}\right.\qquad\ADCfin^*_0=\mathbb{Z}\qquad\hbox{et}\qquad \augm=1_{\mathbb{Z}}\ .
$$
En effet, pour tout complexe dirigé augmenté $(K,K^*,\augm)$, il existe un et un seul morphisme de complexes augmentés $p$ vers $\ADCfin$, qui est défini par $p_n=0$, pour $n>0$ et~$p_0=\augm$.  
\end{paragr}

\begin{paragr}\label{limprojADC}
Plus généralement la catégorie $\ADC$ admet des petites limites projectives. Soit $K:I\to\ADC$, $i\mapsto(K(i),K(i)^*,\augm(i))$, un foncteur de source une petite catégorie $I$. La limite projective $(L,L^*,\augm)$ de $K$ est obtenue comme suit. Le complexe augmenté $(L,e)$ est la limite projective des $(K(i),\augm(i))$ dans la catégorie des complexes augmentés. Explicitement, pour $n>0$, $L_n$ est la limite projective des $K(i)_n$ dans la catégorie des groupes abéliens. Pour $n=0$, $L_0$ est le sous-groupe de la limite projective des $K(i)_0$ dans la catégorie des groupes abéliens défini par 
$$L_0=\{(x_i)_{i\in\ob\,I}\in\limproj K(i)_0\,|\,\forall j,k\in\ob I\,,\ \augm(j)(x_j)=\augm(k)(x_k)\}\ .$$
Les sous-monoïdes $L^*_n$ sont définis de façon analogue. Pour $n>0$, $L^*_n$ est la limite projective des $K(i)^*_n$ dans la catégorie des monoïdes commutatifs. Pour $n=0$, $L^*_0$ est le sous-monoïde de la limite projective des $K(i)^*_0$ dans la catégorie des monoïdes commutatifs défini par 
$$L^*_0=\{(x_i)_{i\in\ob\,I}\in\limproj K(i)^*_0\,|\,\forall j,k\in\ob I\,,\ \augm(j)(x_j)=\augm(k)(x_k)\}\ .$$
La propriété universelle se vérifie facilement.
\end{paragr}

\begin{paragr}\label{ADClocpres}
On peut montrer sans difficulté que la catégorie $\ADC$ est localement présentable.
\end{paragr}

\begin{paragr}\label{ADCconc}
On dit qu'un morphisme $f:(K,K^*,\augm)\to(K',K'^*,\augm')$ de $\ADC$ est \emph{concentré en degré~$0$} si $f_n=0$ pour $n>0$. L'application $f\mapsto f_0$ établit une bijection entre les morphismes concentrés en degré $0$ et les morphismes de groupes abéliens \hbox{$f_0:K_0\to K'_0$} tels que $f_0d_1=0$, $f_0(K^*_0)\subset K'^*_0$ et $\augm'f_0=\augm$. On dira alors que $f$ est le \emph{morphisme concentré en degré $0$ défini par $f_0$}.  
\end{paragr}

\begin{paragr}\label{ADCdesc}
On dit qu'un complexe dirigé augmenté $(K,K^*,\augm)$ est \emph{décent} si
pour tout~$x$ dans $K^*_0$, on a $\augm(x)\in\mathbb{N}$. L'objet final $\ADCfin$ de la catégorie $\ADC$ n'est pas décent. La sous-catégorie pleine de $\ADC$ formée des objets décents admet un objet final $\ADCdescfin$ qui est défini par 
$$\ADCdescfin_n =\left\{\begin{matrix}
0\,,
&\hbox{ si }n>0\,,
\cr
\mathbb{Z}\,,
&\hbox{ si }n=0\,,
\end{matrix}\right.\qquad\ADCdescfin^{*}_0=\mathbb{N}\qquad\hbox{et}\qquad \augm=1_{\mathbb{Z}}\ .
$$
\end{paragr}

\begin{paragr}\label{ADCcons}
On dit qu'un morphisme $f:(K,K^*,\augm)\to(K',K'^*,\augm')$ entre complexes dirigés augmentés décents est \emph{constant} s'il se factorise par $\ADCdescfin$. Alors $f$ est un morphisme concentré en degré $0$, $f_0$ se factorise par $\mathbb{Z}$,
$$K_0\toto\mathbb{Z}\toto K'_0\ ,$$
et si $c_0$ est l'image dans $K'_0$ de $1\in\mathbb{Z}$ par la flèche de droite, alors $c_0\in K'^*_0$, $\augm'(c_0)=1$ et pour tout $x\in K_0$, on a $f_0(x)=\augm(x) c_0$. Réciproquement, pour tout $c_0\in K'^*_0$ tel que $\augm'(c_0)=1$, le morphisme de groupes abéliens 
$$f_0:K_0\toto K'_0\ ,\qquad x\longmapsto \augm(x) c_0\ ,$$
satisfait aux conditions $f_0d_1=0$, $f_0(K^*_0)\subset K'^*_0$ et $\augm'f_0=\augm$ du paragraphe~\ref{ADCconc}, et le morphisme de complexes dirigés augmentés $f$ concentré en degré $0$ défini par $f_0$ est un morphisme constant. On dira que $f$ est le \emph{morphisme constant défini par $c_0$} et que $c_0$ est la \emph{valeur} du morphisme constant $f$.
\end{paragr}

\begin{paragr}\label{ADChmt}
Soient $f,g:(K,K^*,\augm)\todouble(K',K'^*,\augm')$ deux morphismes de complexes dirigés augmentés de même source et même but. Une \emph{homotopie de $f$ vers $g$} est une famille $h=(h_n)_{n\in\mathbb{N}}$, où $h_n:K_n\to K'_{n+1}$ est un morphisme de groupes abéliens, telle que:
\begin{itemize}
\item[a)] pour tout $n\geq0$, on a $h_n(K^*_n)\subset K'^*_{n+1}$;
\item[b)] pour tout $n\geq0$ et tout $x\in K_n$, on a $(d'_{n+1}h_n+h_{n-1}d_n)(x)=g_n(x)-f_n(x)$ 
(en posant $h_{-1}=0$).
\end{itemize}
\smallbreak

Conformément à l'usage, quand aucune ambiguïté n'en résulte, on omettra les indices de $d$, $h$, $f$, $g$, etc.
\end{paragr}

\begin{paragr}\label{dualADC}
Soient $(K,K^*,\augm)$ un complexe dirigé augmenté, et $\K$ une partie de \hbox{$\mathbb{N}^*=\mathbb{N}\sauf\{0\}$}. Le \emph{$\K$\nbd-dual} de $(K,K^*,\augm)$ est le complexe dirigé augmenté $(K',K^*,\augm)$, où $K'$ est le complexe ayant même groupe abélien gradué sous-jacent que $K$ et dont la différentielle $d'$ est définie par 
$$d'_j=\left\{
\begin{aligned}
&-d_j\ ,\qquad j\in\K\ ,\cr
&\phantom{{}-{}}d_j\ ,\qquad j\in\mathbb{N}^*\sauf\K\ ,
\end{aligned}
\right.$$
autrement dit, $d'_j=(-1)^{\chi(j)}d_j$, où $\chi:\mathbb{N}^*\to\{0,1\}$ est la fonction caractéristique de~$\K$. On définit ainsi un endofoncteur involutif $\KDualADC{\K}:\ADC\to\ADC$ associant à $(K,K^*,\augm)$ son $\K$\nbd-dual. Le \emph{dual} du complexe dirigé augmenté $(K,K^*,\augm)$ est son $\mathbb{N}^*$\nbd-dual, et il est noté $\op{(K,K^*,\augm)}$. 
\end{paragr}

\begin{paragr}\label{dualhmtpADC}
Soient $f,g:(K,K^*,\augm)\todouble(K',K'^*,\augm')$ deux morphismes de complexes dirigés augmentés. On vérifie aussitôt qu'une homotopie $h$ de morphismes de complexes dirigés augmentés de $f$ vers $g$ induit une homotopie de complexes dirigés augmentés de $\op{g}$ vers $\op{f}$. Pour une partie arbitraire $\K$ de $\mathbb{N}^*$, l'homotopie $h$ ne définit pas en général une homotopie entre $\KDualADC{\K}(f)$ et $\KDualADC{\K}(g)$, mais seulement une variante \emph{$\K$\nbd-tordu} de la notion d'homotopie qu'on n'aura pas besoin d'expliciter ici (la variante $\mathbb{N}^*$\nbd-tordu d'une homotopie de $f$ vers $g$ étant exactement une homotopie de $g$ vers $f$).
\end{paragr}

\section{Rappels de la théorie de Steiner}

Tous les résultats de cette section sont dus à Steiner~\cite{Steiner}. Pour commencer, Steiner définit un couple de foncteurs adjoints
$$\Stlambda:\ooCat\toto\ADC\ ,\qquad\Stnu:\ADC\toto\ooCat$$
comme suit. 

\begin{paragr}\label{defStlambda}
Soit $C$ une $\infty$\nbd-catégorie. Pour $i\geq0$, le groupe de chaînes $\Stlambda(C)_i$ est le groupe abélien engendré par les éléments $\ch{x}$, pour $x$ une $i$\nbd-cellule de $C$, soumis aux relations $\ch{x\comp{j} y}=\ch{x}+\ch{y}$, pour $0\leq j<i$ et $x,y$ des $i$\nbd-cellules $j$\nbd-composables de $C$. Le sous-monoïde $\Stlambda(C)_i^*$ de $\Stlambda(C)_i$ est le sous-monoïde engendré par les $\ch{x}$, $x\in C_i$. Pour $i>0$, la différentielle $d_i:\Stlambda(C)_i\to\Stlambda(C)_{i-1}$ est définie sur les générateurs par $d_i(\ch{x})=\ch{t(x)}-\ch{s(x)}$. L'augmentation $\augm:\Stlambda(C)_0\to\mathbb{Z}$ est définie par $\augm(\ch{x})=1$, pour $x\in C_0$. On vérifie facilement que si $x$ est une $i$\nbd-cellule identité de $C$, on a $\ch{x}=0$, et par suite, si $C$ est une $n$\nbd-catégorie, on a $\Stlambda(C)_i=0$ pour $i>n$.
\smallbreak

Si $F:C\to D$ est un $\infty$\nbd-foncteur, le morphisme de complexes dirigés augmentés $\Stlambda(F):\Stlambda(C)\to\Stlambda(D)$ est défini en posant $\Stlambda(F)_i(\ch{x})=\ch{F(x)}$ pour $i\geq0$ et $x\in C_i$.
\end{paragr}

\begin{paragr}\label{defStnu}
Soit $(K,K^*,\augm)$ un complexe dirigé augmenté noté plus simplement $K$. Les $i$\nbd-cellules, $i\geq0$, de la $\infty$\nbd-catégorie $\Stnu(K)$ sont les tableaux
$$\cellule{x}=\tabld{x}{i}$$
tels que:
\begin{itemize}
\item[a)] $x^\e_k\in K^*_k$, pour $\e\in\{0,1\}$ et $0\leq k\leq i$;
\item[b)] $d^{}_k(x^\e_k)=x^1_{k-1}-x^0_{k-1}$, pour $\e\in\{0,1\}$ et $1\leq k\leq i$;
\item[c)] $\augm(x^\e_0)=1$, pour $\e\in\{0,1\}$;
\item[d)] $x^0_i=x^1_i$.
\end{itemize}
On posera souvent $x^{}_i=x^0_i=x^1_i$.
En particulier, les objets de $\Stnu(K)$ sont les tableaux~$\cellule{x}$ de la forme
$$\cellule{x}=\begin{pmatrix} x^{}_0\cr\noalign{\vskip 3pt} x^{}_0\end{pmatrix}\qquad\hbox{avec}\ \ x^{}_0\in K^*_0\,,\ \augm(x^{}_0)=1\ .$$
Ainsi, on identifiera l'ensemble des objets de $\Stnu(K)$ avec l'ensemble des éléments $x^{}_0$ de~$K^*_0$ tels que $\augm(x^{}_0)=1$.
En revenant au cas général d'une $i$\nbd-cellule $\cellule{x}$ comme ci-dessus, $i\geq0$, la $(i+1)$\nbd-flèche unité de $\cellule{x}$ est donnée par le tableau
$$1_{\cellule{x}}=\tablnu{x}{i}{0}\ .$$
Si $i>0$, la source et le but de la $i$\nbd-flèche $\cellule{x}$ sont respectivement les $(i-1)$\nbd-cellules
$$s(\cellule{x})=\tablnu{x}{i-2}{x^0_{i-1}}\quad\ \hbox{et}\quad\ t(\cellule{x})=\tablnu{x}{i-2}{x^1_{i-1}}\ .$$
Si $j$ est un entier tel que $0\leq j<i$, et si
$$\cellule{y}=\tabld{y}{i}$$
est une $i$\nbd-flèche de $\Stnu(K)$ dont la $j$\nbd-cellule but itérée est égale à la $j$\nbd-cellule source itérée de $x$, autrement dit, si
$$x^\e_k=y^\e_k\,,\ \e\in\{0,1\}\,,\ 0\leq k<j\qquad\hbox{et}\qquad x^0_j=y^1_j\ ,$$
alors le composé $\cellule{x}\comp{j}\cellule{y}$ est défini par
$$\cellule{x}\comp{j}\cellule{y}=
\begin{pmatrix}
y^0_0
&\dots
&y^0_j
&x^0_{j+1}+y^0_{j+1}
&\dots
&x^0_{i}+y^0_{i}
\cr\noalign{\vskip 3pt}
x^1_0
&\dots
&x^1_j
&x^1_{j+1}+y^1_{j+1}
&\dots
&x^1_{i}+y^1_{i}
\end{pmatrix}\ .
$$
On remarque que si $n\geq0$ est un entier tel que pour tout $i>n$, on ait $K_i=0$, alors $\Stnu(K)$ est une $n$\nbd-catégorie.
\smallbreak

Si $f:K\to K'$ est un morphisme de complexes dirigés augmentés, l'image de la $i$\nbd-cellule $\cellule{x}$ de $\Stnu(K)$ par le $\infty$\nbd-foncteur $\Stnu(f)$ est définie par le tableau
$$\Stnu(f)(\cellule{x})=
\begin{pmatrix}
f(x^0_1)
&\dots
&f(x^0_{i-1})
&f(x^0_{i})
\cr\noalign{\vskip 3pt}
f(x^1_1)
&\dots
&f(x^1_{i-1})
&f(x^1_{i})
\end{pmatrix}\ .$$
\end{paragr}

\begin{prop}\label{adjSteiner}
Le couple $(\Stlambda,\Stnu)$ est un couple de foncteurs adjoints.
\end{prop}

\begin{proof}
Voir~\cite[théorème~2.11]{Steiner}.
\end{proof}

\begin{paragr}\label{baseADC}
Une \emph{base} d'un complexe dirigé augmenté $(K,K^*,\augm)$ est un sous-ensemble gradué $B=(B_i)_{i\geq0}$ de l'ensemble gradué sous-jacent au complexe $K$ tel que 
\begin{itemize}
\item[a)] pour tout $i\geq0$, $B_i$ est une base du $\mathbb{Z}$\nbd-module $K_i$;
\item[b)] pour tout $i\geq0$, $B_i$ engendre le sous-monoïde $K^*_i$ de $K_i$.
\end{itemize}
\end{paragr}

\begin{paragr}\label{ordpartADC}
Soit $(K,K^*,\augm)$ un complexe dirigé augmenté. Pour $i\geq0$, le sous-monoïde~$K^*_i$ induit une relation de préordre $\leq$ sur $K_i$, compatible avec sa structure de groupe, définie par
$$x\leq y\ \Longleftrightarrow\ y-x\in K^*_i\ ,$$
et alors on a
$$K^*_i=\{x\in K_i\,|\,x\geq0\}\ .\ \footnotemark$$
\footnotetext{
On remarquera, d'ailleurs, que se donner une structure de complexe dirigé augmenté sur un complexe augmenté $(K,\augm)$ revient à se donner, pour chaque $i\geq0$, une relation de préordre sur $K_i$, compatible avec la structure de groupe. Ce complexe dirigé augmenté sera décent si et seulement si l'augmentation induit un morphisme d'ensembles préordonnés de $K_0$ vers $\mathbb{Z}$, pour la relation d'ordre naturelle sur~$\mathbb{Z}$.
}%
Si ce complexe dirigé augmenté admet une base $B$, alors cette relation de préordre est une relation d'ordre faisant de $K_i$ un groupe réticulé (groupe ordonné tel que toute paire d'éléments $x,y$ admet une borne inférieure et une borne supérieure, qu'on notera respectivement $x\wedge y$ et $x\vee y$). L'ensemble $B_i$ est alors exactement l'ensemble des éléments minimaux de $K^*_i$. Ainsi, pour un complexe dirigé augmenté admettant une base $B$, cette dernière est unique et \emph{ne constitue pas} une donnée supplémentaire. On se gardera néanmoins de croire qu'un morphisme de complexes dirigés augmentés admettant des bases envoie nécessairement les élément de la base de sa source sur des éléments de la base de son but.
\end{paragr}

\begin{paragr}\label{baseunitADC}
Soit $(K,K^*,\augm)$ un complexe dirigé augmenté admettant une base $B$, et soit~$i\geq0$. Tout élément $x\in K_i$ se décompose de façon unique en
$$x=\Pos{x}-\Neg{x}\ ,\qquad\Pos{x}\,,\,\Neg{x}\geq0\ ,\quad \Pos{x}\wedge\Neg{x}=0\ $$
(si $x$ s'écrit comme combinaison linéaire d'éléments distincts de $B_i$, $\Pos{x}$ est la somme des termes à coefficients positifs et $-\Neg{x}$ la somme des termes à coefficients négatifs).
Soit toujours $x\in K_i$. On définit un tableau
$$\atom{x}=\tabll{\atom{x}}{i}\ ,$$
où les $\atom{x}^\e_k$, $\e\in\{0,1\}$, $0\leq k\leq i$, sont définis par récurrence descendante en posant
$$\begin{aligned}
&\atom{x}^0_i=\atom{x}^1_i=x\cr
&\atom{x}^0_{k-1}=\Neg{(d_{k}(\atom{x}^0_{k}))}\ ,\quad0<k\leq i\ ,\cr
&\atom{x}^1_{k-1}=\Pos{(d_{k}(\atom{x}^1_{k}))}\ ,\quad0<k\leq i\ .
\end{aligned}$$
On vérifie facilement que pour $\e\in\{0,1\}$ et $0<k\leq i$, on a $d_k(\atom{x}^\e_{k})=\atom{x}^1_{k-1}-\atom{x}^0_{k-1}$ et $\atom{x}^\e_{k-1}\in K^*_{k-1}$, ce qui implique que le tableau $\atom{x}$ est une $i$\nbd-cellule de $\Stnu(K)$ si et seulement si on a $x\in K^*_i$ et $\augm(\atom{x}^0_0)=\augm(\atom{x}^1_0)=1$. On dit que la base $B$ est \emph{unitaire} si pour tout $i\geq0$ et tout $b\in B_i$, $\atom{b}$ est une $i$\nbd-cellule de $\Stnu(K)$, autrement dit si $\augm(\atom{b}^0_0)=\augm(\atom{b}^1_0)=1$. En particulier, pour tout $b\in B_0$, on a alors $\augm(b)=1$, ce qui implique aussitôt que $B_0=\{b\in K^*_0\,|\,\augm(b)=1\}=\ob(\Stnu(K))$. Si $B$ est une base unitaire, $i\geq0$ un entier, et $b\in B_i$, on dit que la $i$\nbd-cellule $\atom{b}$ de $\Stnu(K)$ est \emph{l'atome} associé à $b$.
\end{paragr}

\begin{rem}
Un complexe dirigé augmenté $(K,K^*,\augm)$ admettant une base unitaire $B$ est décent. En effet, en vertu de ce qui précède, pour tout $b\in B_0$, on a  $\augm(b)=1$, et comme le monoïde $K_0^*$ est engendré par $B_0$, on en déduit que $\augm(K^*_0)\subset\mathbb{N}$.
\end{rem}

\begin{paragr}\label{defsansboucles}
Soit $(K,K^*,\augm)$ un complexe dirigé augmenté admettant une base $B$. Pour $i\geq0$, on note $\leqb{i}$ la relation de préordre sur $B$ engendrée par la relation $R_i$ définie par
$$a\,R_i\,b \ \Longleftrightarrow\ \hbox{il existe $j,k>i$ tels que $a\in B_j$, $b\in B_k$ et $\atom{a}^1_i\wedge\atom{b}^0_i>0$\ .}$$
On dit que la base $B$ est \emph{sans boucles} si pour tout $i\geq0$, la relation de préordre $\leqb{i}$ est une relation d'ordre. 
\end{paragr}

\begin{thm}\label{isoadjunitsansboucles}
Pour tout complexe dirigé augmenté $(K,K^*,\augm)$ admettant une base unitaire sans boucles, le morphisme d'adjonction $\Stlambda\Stnu(K)\to K$ est un isomorphisme. En particulier, la restriction du foncteur $\Stnu:\ADC\to\ooCat$ à la sous-catégorie pleine de $\ADC$ formée des complexes dirigés augmentés admettant une base unitaire sans boucles est pleinement fidèle.
\end{thm}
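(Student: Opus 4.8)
L'idée est de construire explicitement, à partir d'un complexe dirigé augmenté $(K,K^*,\augm)$ admettant une base unitaire sans boucles $B$, un morphisme $\phi : \Stlambda\Stnu(K) \to K$ et de prouver que c'est un isomorphisme, puis d'identifier $\phi$ au morphisme d'adjonction (ou bien de vérifier directement que le morphisme d'adjonction $\varepsilon_K : \Stlambda\Stnu(K)\to K$ a cette forme et est inversible). Le morphisme naturel à considérer envoie, pour toute $i$-cellule $\cellule{x}$ de $\Stnu(K)$, le générateur $\ch{\cellule{x}}$ de $\Stlambda\Stnu(K)_i$ sur $x^{}_i = x^0_i = x^1_i \in K_i$ : c'est bien défini car les relations $\ch{\cellule{x}\comp{j}\cellule{y}} = \ch{\cellule{x}}+\ch{\cellule{y}}$ se traduisent, via la formule de composition de~\ref{defStnu}, par l'égalité $x^{}_i+y^{}_i$ en degré $i$ (pour $j<i$), et les cellules identités ont composante nulle en degré top ; la compatibilité aux différentielles vient de $d_i(x^{}_i) = x^1_{i-1}-x^0_{i-1}$, qui est précisément la différentielle de $\ch{\cellule{x}}$ dans $\Stlambda\Stnu(K)$ (différence but moins source), et la compatibilité à l'augmentation est immédiate. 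Que $\phi$ préserve les sous-monoïdes de positivité résulte de ce que $\Stlambda\Stnu(K)^*_i$ est engendré par les $\ch{\cellule{x}}$ avec $x^{}_i\in K^*_i$.

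**Surjectivité et construction d'un inverse.** L'ingrédient-clé est la base unitaire : pour chaque $b\in B_i$ on dispose de l'atome $\atom{b}$, qui est une $i$-cellule de $\Stnu(K)$ avec $\atom{b}^{}_i = b$, donc $\phi(\ch{\atom{b}}) = b$. Comme $B_i$ engendre $K_i$ comme groupe abélien, $\phi$ est surjectif en tout degré ; et comme $B_i$ engendre aussi $K^*_i$ comme monoïde, l'application $b\mapsto \ch{\atom{b}}$ induit un morphisme de complexes dirigés augmentés $\psi : K \to \Stlambda\Stnu(K)$ en posant $\psi_i(b) = \ch{\atom{b}}$ et en étendant par linéarité — il faut vérifier que $\psi$ commute aux différentielles, ce qui revient à l'identité $d_i(\ch{\atom{b}}) = \ch{t(\atom{b})} - \ch{s(\atom{b})}$ dans $\Stlambda\Stnu(K)_{i-1}$, à relier aux décompositions $\atom{b}^0_{i-1} = \Neg{(d_i(b))}$, $\atom{b}^1_{i-1} = \Pos{(d_i(b))}$ et à la façon dont $\Stlambda$ découpe $d_i(b) = \Pos{(d_i b)} - \Neg{(d_i b)}$ en somme de $\ch{\atom{b'}}$. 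On a clairement $\phi\circ\psi = \id_K$. Il reste le point délicat : $\psi\circ\phi = \id$, autrement dit tout générateur $\ch{\cellule{x}}$ de $\Stlambda\Stnu(K)_i$ s'écrit comme la combinaison linéaire $\sum_b \lambda_b \ch{\atom{b}}$ où $x^{}_i = \sum_b \lambda_b b$ est la décomposition de $x^{}_i$ dans la base $B_i$. C'est ici que la condition \emph{sans boucles} intervient de façon essentielle : elle permet, par une récurrence sur la dimension $i$ et un argument d'induction le long de l'ordre $\leqb{i-1}$ (ou plutôt sur le support de $\cellule{x}$ dans les $B_j$, $j\le i$), de montrer que la décomposition d'une cellule en atomes est « rigide », c'est-à-dire que les données $(x^\e_k)$ d'une cellule $\cellule{x}$ sont entièrement déterminées, de manière compatible aux compositions, par la donnée de $x^{}_i$ ; sans l'hypothèse sans boucles, des « boucles » dans la relation $R_{i-1}$ créeraient des ambiguïtés dans la reconstruction des composantes $\atom{\cdot}^\e_{k}$ intermédiaires et l'égalité $\ch{\cellule{x}} = \psi\phi(\ch{\cellule{x}})$ pourrait échouer.

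**Principale difficulté et conclusion.** Le cœur technique est donc l'assertion de reconstruction : toute $i$-cellule $\cellule{x}$ de $\Stnu(K)$ est un $\comp{j}$-composé (pour les divers $j<i$) d'atomes $\atom{b}$, $b\in B_i$, collés le long de cellules elles-mêmes reconstruites en dimension inférieure ; l'unicité de cette écriture modulo les relations définissant $\Stlambda$ force $\ch{\cellule{x}} = \sum \lambda_b\ch{\atom{b}}$. L'argument repose sur le fait, démontré par Steiner, qu'une base unitaire sans boucles fait de $\Stnu(K)$ une $\infty$-catégorie librement engendrée au sens des polygraphes par les atomes (avec $E_i = \{\atom{b} : b\in B_i\}$) ; connaissant cela, la propriété universelle de l'engendrement libre (cf.~\ref{englibr}), combinée au fait que $\Stlambda$ envoie un tel système de générateurs polygraphiques sur la base $B$, entraîne directement que $\phi$ et $\psi$ sont inverses l'une de l'autre. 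Enfin, pour la dernière phrase de l'énoncé : le morphisme d'adjonction $\Stlambda\Stnu(K)\to K$ coïncide (par la propriété universelle de l'adjonction $(\Stlambda,\Stnu)$ de la proposition~\ref{adjSteiner}) avec le $\phi$ construit ci-dessus ; étant un isomorphisme naturel, la counité est inversible sur la sous-catégorie pleine des complexes admettant une base unitaire sans boucles, ce qui équivaut exactement à dire que la restriction de $\Stnu$ à cette sous-catégorie est pleinement fidèle. Je m'attends à ce que toute la subtilité réside dans le maniement soigneux de la condition sans boucles pour la reconstruction des cellules ; le reste n'est que vérifications de compatibilité de routine.
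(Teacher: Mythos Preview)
The paper does not actually prove this statement: its entire proof is the one-line reference ``Voir~\cite[th\'eor\`eme 5.6]{Steiner}.'' There is thus no in-paper argument to compare against; your proposal is an independent attempt.

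Your outline is broadly correct, but there is a structural issue worth flagging. The decisive step in your argument is the assertion that the atoms $\atom{b}$ generate $\Stnu(K)$ by compositions (so that the classes $\ch{\atom{b}}$ generate $\Stlambda\Stnu(K)_i$ as a group); you justify this by invoking the polygraph property, which in this paper is the \emph{next} theorem (th\'eor\`eme~\ref{unitsansbouclespolygr}), also cited from Steiner. If you are going to rely on Steiner anyway, you may as well cite th\'eor\`eme~5.6 directly; if instead you want a self-contained argument, you must first check that in Steiner's paper the polygraph result does not itself depend on the isomorphism $\Stlambda\Stnu(K)\simeq K$, otherwise your reduction is circular.

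A minor technical wobble: your verification that $\psi$ commutes with the differentials is not the routine check you suggest --- showing $\ch{t(\atom{b})}=\sum_c\mu_c\,\ch{\atom{c}}$ in $\Stlambda\Stnu(K)_{i-1}$ (where $(d_ib)_+=\sum_c\mu_c\,c$) is exactly the same reconstruction problem one degree down. Fortunately you do not need $\psi$ to be a morphism of complexes dirig\'es augment\'es at all: once the $\ch{\atom{b}}$ generate $\Stlambda\Stnu(K)_i$ and $\phi$ sends them to the basis $B_i$ of $K_i$, the group homomorphism $\phi_i$ is automatically bijective; since $\phi$ (the counit) is already a morphism of $\ADC$, it is then an isomorphism in $\ADC$. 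So your argument can be streamlined by dropping $\psi$ as a morphism and keeping only the generation-by-atoms step.
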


\begin{proof}
Voir~\cite[théorème 5.6]{Steiner}.
\end{proof}

\begin{thm}\label{unitsansbouclespolygr}
Soit $(K,K^*,\augm)$ un complexe dirigé augmenté admettant une base unitaire sans boucles $B$. Alors la $\infty$\nbd-catégorie $\Stnu(K)$ est librement engendrée au sens des polygraphes par l'ensemble des atomes $\atom{b}$, pour $b\in B$.
\end{thm}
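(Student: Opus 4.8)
On v\'erifie les deux conditions de~\ref{englibr} pour $C=\Stnu(K)$ et $E=\{\,\atom{b}\,|\,b\in B\,\}$. La condition~(a) est imm\'ediate : d'apr\`es~\ref{baseunitADC}, pour une base unitaire on a $B_0=\{\,b\in K^*_0\,|\,\augm(b)=1\,\}=\ob(\Stnu(K))$, l'atome $\atom{b}$ associ\'e \`a $b\in B_0$ \'etant sous cette identification l'objet $b$ de $\Stnu(K)$, d'o\`u $E_0=C_0$. Pour la condition~(b), on fixe $i\geq0$ et on note, pour $j\geq0$, $K^{(j)}$ le complexe dirig\'e augment\'e d\'eduit de $K$ en annulant les composantes de degr\'e $>j$ ; sa base $B^{(j)}=(B_0,\dots,B_j,\varnothing,\dots)$ est encore unitaire sans boucles, car les atomes relatifs \`a $K^{(j)}$ sont les atomes de $K$ de dimension~$\leq j$, de sorte que chaque pr\'eordre $\leqb{k}$ de~\ref{defsansboucles} relatif \`a $K^{(j)}$ est une sous-relation de celui relatif \`a $K$, donc encore un ordre. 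Comme une cellule de dimension $l$ de $\Stnu(K)$ ne d\'epend que de $K_0,\dots,K_l$, on dispose des identifications $\tb{i}\Stnu(K)=\Stnu(K^{(i)})$ et $\tb{i+1}\Stnu(K)=\Stnu(K^{(i+1)})$.

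L'ingr\'edient central, \`a \'etablir pour tout $i\geq0$, est l'\'enonc\'e structurel suivant : \emph{toute $(i+1)$\nbd-fl\`eche $x$ de $\Stnu(K)$ s'\'ecrit comme un compos\'e d'un nombre fini d'atomes $\atom{b}$, $b\in B_{i+1}$, et d'unit\'es it\'er\'ees d'atomes de dimension~$\leq i$ (de la forme d\'ecrite en~\ref{engparcomp}), et le nombre d'occurrences de chaque $\atom{b}$, $b\in B_{i+1}$, dans une telle \'ecriture ne d\'epend que de $x$.} L'hypoth\`ese \og sans boucles\fg{} est ici essentielle pour l'existence de l'\'ecriture : les $\leqb{k}$, $k\leq i$, \'etant des ordres, on peut, par r\'ecurrence sur le nombre total d'occurrences d'atomes de dimension $i+1$ figurant dans $x$, d\'etacher \`a une extr\'emit\'e un atome extr\'emal ; c'est un argument de r\'ecurrence exploitant l'acyclicit\'e de ces ordres. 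Le nombre d'occurrences est quant \`a lui d\'etermin\'e en appliquant le foncteur $\Stlambda$ (qui transforme tout compos\'e en somme et annule les identit\'es, cf.~\ref{defStlambda}) et l'isomorphisme $\Stlambda\Stnu(K)\simeq K$ du th\'eor\`eme~\ref{isoadjunitsansboucles}, lequel envoie $\ch{\atom{b}}$ sur~$b$ (la composante de plus haut degr\'e du tableau de l'atome) : une \'ecriture de $x$ fournit $\ch{x}=\sum_{b\in B_{i+1}}n_b\,b$ dans $K_{i+1}$, avec $n_b\in\mathbb{N}$, et $\atom{b}$ y figure exactement $n_b$ fois ; comme $B_{i+1}$ est une base de $K_{i+1}$, les $n_b$ ne d\'ependent que de $x$.

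La condition~(b) s'en d\'eduit. Soient $D$ une $\infty$\nbd-cat\'egorie, $F:\tb{i}\Stnu(K)\to D$ un $\infty$\nbd-foncteur et $f:E_{i+1}\to D_{i+1}$ une application telle que $s(f(\atom{b}))=F(s(\atom{b}))$ et $t(f(\atom{b}))=F(t(\atom{b}))$ pour tout $b\in B_{i+1}$. L'\emph{unicit\'e} d'un prolongement $F':\tb{i+1}\Stnu(K)\to D$ de $F$ et $f$ est alors imm\'ediate : par fonctorialit\'e, la valeur de $F'$ sur une $(i+1)$\nbd-fl\`eche $x$ est impos\'ee par celle de $F$ sur les cellules de dimension $\leq i$ (en particulier $F'(1_z)=1_{F(z)}$) et par celle de $f$ sur les atomes, au moyen d'une \'ecriture quelconque de $x$ comme ci-dessus. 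Pour l'\emph{existence}, on pose $F'|\tb{i}\Stnu(K)=F$ et $F'|E_{i+1}=f$, puis on \emph{d\'efinit} $F'(x)$, pour $x$ une $(i+1)$\nbd-fl\`eche, en choisissant une \'ecriture de $x$ comme ci-dessus et en y rempla\c cant les atomes $\atom{b}$ par les $f(\atom{b})$ et les unit\'es it\'er\'ees par celles des images correspondantes par $F$.

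Le point d\'elicat, o\`u se concentre l'essentiel du travail, sera de v\'erifier que $F'(x)$ ne d\'epend pas de l'\'ecriture choisie, puis que le $F'$ ainsi obtenu est bien un $\infty$\nbd-foncteur (compatibilit\'e aux sources, buts, unit\'es et \`a \emph{toutes} les op\'erations $\comp{j}$). Cela revient \`a montrer que deux \'ecritures d'une m\^eme $(i+1)$\nbd-fl\`eche se d\'eduisent l'une de l'autre par une suite de \og mouvements \'el\'ementaires\fg{} (associativit\'e, lois d'unit\'e, loi d'\'echange) respect\'es par $F'$ parce qu'ils sont valables dans $D$ : c'est un \'enonc\'e d'unicit\'e de forme normale modulo les axiomes des $\infty$\nbd-cat\'egories, qui exploite \`a fond la structure de groupe r\'eticul\'e des $K_k$ (cf.~\ref{ordpartADC}) et l'absence de boucles. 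De fa\c con \'equivalente, pour $i$ fix\'e, la condition~(b) affirme que le carr\'e de $\ooCat$ form\'e de $\tb{i}\Stnu(K)$, de $\tb{i+1}\Stnu(K)$, de $\coprod_{b\in B_{i+1}}\mathbb{G}$ et de $\coprod_{b\in B_{i+1}}\partial\mathbb{G}$ (o\`u $\mathbb{G}$ est la $(i+1)$\nbd-cat\'egorie librement engendr\'ee par une seule $(i+1)$\nbd-cellule et $\partial\mathbb{G}$ son bord) est cocart\'esien ; toutes ces $\infty$\nbd-cat\'egories \'etant de la forme $\Stnu$ d'un complexe dirig\'e augment\'e \`a base unitaire sans boucles, et le carr\'e analogue de $\ADC$ \'etant trivialement cocart\'esien ($K^{(i+1)}$ s'obtenant \`a partir de $K^{(i)}$ en adjoignant librement des g\'en\'erateurs de degr\'e $i+1$ de bords prescrits), tout revient \`a v\'erifier que $\Stnu$ pr\'eserve ce carr\'e, ce qui est une reformulation, et non une simplification, de l'obstacle principal.
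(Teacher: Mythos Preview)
La preuve du papier se r\'eduit \`a un renvoi : \og Il s'agit d'une reformulation de~\cite[th\'eor\`eme~6.1]{Steiner}\fg. Il n'y a donc aucune d\'emonstration d\'etaill\'ee \`a laquelle comparer la v\^otre ; ce que vous proposez est une tentative de red\'emontrer directement le r\'esultat de Steiner.

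Votre plan est correct dans ses grandes lignes : la condition~(a) est bien imm\'ediate, l'identification $\tb{j}\Stnu(K)=\Stnu(K^{(j)})$ est pertinente, et l'usage de l'isomorphisme $\Stlambda\Stnu(K)\simeq K$ du th\'eor\`eme~\ref{isoadjunitsansboucles} pour compter les occurrences d'atomes est la bonne id\'ee. Cependant, vous reconnaissez vous-m\^eme explicitement que le c\oe ur de la preuve --- l'ind\'ependance de $F'(x)$ vis-\`a-vis de l'\'ecriture choisie, et la compatibilit\'e de $F'$ \`a toutes les op\'erations $\comp{j}$ --- n'est pas trait\'e : vous \'ecrivez que c'est \og le point d\'elicat, o\`u se concentre l'essentiel du travail\fg{} et que votre reformulation en termes de carr\'e cocart\'esien \og n'est pas une simplification de l'obstacle principal\fg. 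C'est une lacune r\'eelle : l'\'enonc\'e structurel que vous invoquez (d\'ecomposition en atomes par d\'etachement d'un atome extr\'emal pour l'ordre $\leqb{k}$) est pr\'ecis\'ement ce que Steiner \'etablit par une technique de \emph{scission} (\og splitting\fg) des cellules, et c'est cette technique qui fournit \`a la fois l'existence de l'\'ecriture et la coh\'erence n\'ecessaire \`a la bonne d\'efinition de $F'$. Sans cette machinerie ou un substitut, votre texte demeure une esquisse honn\^ete du squelette de la preuve de Steiner, mais pas une d\'emonstration compl\`ete.
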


\begin{proof}
Il s'agit d'une reformulation de~\cite[théorème 6.1]{Steiner}.
\end{proof}

\begin{paragr}\label{deffortsansboucles}
Soit $(K,K^*,\augm)$ un complexe dirigé augmenté admettant une base $B$. On note $\leqbb$ la relation de préordre sur $B$ engendrée par la relation $R_{\mathbb{N}}$ définie par
$$a\,R_{\mathbb{N}}\, b\ \Longleftrightarrow\ 
\left\{\begin{aligned}
&\hbox{ il existe $i>0$ tel que $a\in B_{i-1}\,,\ b\in B_i$ et $a\leq\Neg{(d_i(b))}$}\cr
&\hbox{ou}\cr
&\hbox{ il existe $i>0$ tel que $a\in B_i\,,\ b\in B_{i-1}$ et $\Pos{(d_i(a))}\geq b$}\ .
\end{aligned}\right.$$
On dit que la base $B$ est \emph{fortement sans boucles} si pour tout $i\geq0$, la relation de préordre $\leqbb$ est une relation d'ordre. 
\end{paragr}

\begin{prop}\label{propfortsansboucles}
Une base fortement sans boucles d'un complexe dirigé augmenté est sans boucles.
\end{prop}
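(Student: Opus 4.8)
The plan is to reduce the whole statement to the single relation $\leqbb$, by proving that for every $i\geq 0$ one has $a\leqb{i}b\Rightarrow a\leqbb b$. Since $\leqb{i}$ is by definition the smallest preorder containing $R_i$, and $\leqbb$ is itself a preorder, it is enough to check that $a\,R_i\,b$ implies $a\leqbb b$. Once this is done the conclusion is immediate: a preorder contained in a partial order is itself antisymmetric, hence a partial order, so the hypothesis that $\leqbb$ is an order forces every $\leqb{i}$ to be one as well, which is exactly the assertion that $B$ is sans boucles.

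To prove $a\,R_i\,b\Rightarrow a\leqbb b$, I would fix $a\in B_j$ and $b\in B_k$ with $j,k>i$ and $\atom{a}^1_i\wedge\atom{b}^0_i>0$, and choose a basis element $e\in B_i$ occurring with positive coefficient in both $\atom{a}^1_i$ and $\atom{b}^0_i$; such an $e$ exists since $\atom{a}^1_i,\atom{b}^0_i\in K^*_i$ have positive meet. The heart of the argument is the following pair of facts, each proved by \emph{descending} induction on $m$: for $i\leq m\leq j$, every $c\in B_m$ with positive coefficient in $\atom{a}^1_m$ satisfies $a\leqbb c$; and, symmetrically, for $i\leq m\leq k$, every $c\in B_m$ with positive coefficient in $\atom{b}^0_m$ satisfies $c\leqbb b$. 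Taking $m=i$ and $c=e$ in both facts and using transitivity of $\leqbb$ then yields $a\leqbb e\leqbb b$, as desired.

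For the inductive step of the first fact, the base case $m=j$ being trivial since $\atom{a}^1_j=a$, I would write $\atom{a}^1_m=\sum_{c\in B_m}\mu_c\,c$ with all $\mu_c\geq 0$ (recall $\atom{a}^1_m\in K^*_m$). From $\atom{a}^1_{m-1}=\Pos{(d_m(\atom{a}^1_m))}$, any $c'\in B_{m-1}$ with positive coefficient in $\atom{a}^1_{m-1}$ must have positive coefficient in $d_m(\atom{a}^1_m)=\sum_c\mu_c\,d_m(c)$; by $\mathbb{Z}$\nbd-linearity of the coefficient functionals there is then some $c\in B_m$ with $\mu_c>0$ and with $c'$ occurring with positive coefficient in $d_m(c)$, hence in $\Pos{(d_m(c))}$, whence $\Pos{(d_m(c))}\geq c'$. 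This says precisely that $c\,R_{\mathbb{N}}\,c'$ via the second alternative in the definition of $R_{\mathbb{N}}$ (legitimate because $m\geq 1$), while $\mu_c>0$ together with the induction hypothesis gives $a\leqbb c$; hence $a\leqbb c'$. The second fact is proved in exactly the same way, replacing $\Pos{(\cdot)}$ by $\Neg{(\cdot)}$, using $\atom{b}^0_{m-1}=\Neg{(d_m(\atom{b}^0_m))}$, and invoking the first alternative of $R_{\mathbb{N}}$.

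I do not anticipate any serious obstacle; the argument is essentially bookkeeping on positive and negative parts inside the free abelian groups $K_m$. The two points that deserve a moment's care are that a basis element with positive coefficient in $\Pos{z}$ has positive coefficient in $z$ itself, and that a basis element occurring in a nonnegative chain $z\in K^*_m$ is $\leq z$ — both immediate from the explicit description of $\Pos{(\cdot)}$, $\Neg{(\cdot)}$ and of the order $\leq$ on $K_m$ recalled in \ref{ordpartADC} and \ref{baseunitADC}.
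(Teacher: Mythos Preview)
Your argument is correct. The key reduction --- showing that each elementary relation $a\,R_i\,b$ forces $a\leqbb b$ by descending along the recursive definitions $\atom{a}^1_{m-1}=\Pos{(d_m\atom{a}^1_m)}$ and $\atom{b}^0_{m-1}=\Neg{(d_m\atom{b}^0_m)}$ --- is exactly the right idea, and the bookkeeping with positive and negative parts goes through as you describe. The two ``points that deserve a moment's care'' that you flag are indeed the only places one could slip, and your justifications for them are sound.

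As for comparison with the paper: there is nothing to compare, since the paper does not prove this statement but simply refers to~\cite[proposition~3.7]{Steiner}. Your write-up is in fact a self-contained reconstruction of Steiner's argument, which proceeds along the same lines (show $\leqb{i}\subset\leqbb$ by tracking basis elements through the iterated positive/negative parts). So you have supplied what the paper omits.
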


\begin{proof}
Voir~\cite[proposition 3.7]{Steiner}.
\end{proof}

\begin{prop}\label{dualStnu}
Pour toute partie $\K$ de $\mathbb{N}^*=\mathbb{N}\sauf\{0\}$, le carré de foncteurs 
$$
\xymatrix{
\ADC\ar[r]^{\KDualADC{\K}}\ar[d]_{\Stnu}
&\ADC\ar[d]^{\Stnu}
\\
\ooCat\ar[r]_{\KDual{\K}}
&\ooCat
}
$$
\emph{(cf.~\ref{dualooCat} et~\ref{dualADC})}
est commutatif à isomorphisme canonique près. En particulier, pour tout complexe dirigé augmenté $K$, on a un isomorphisme canonique $\Stnu(\op{K})\simeq\op{\Stnu(K)}$.
\end{prop}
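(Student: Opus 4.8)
La stratégie que je suivrais consiste à ramener l'énoncé à l'énoncé analogue pour l'adjoint à gauche $\Stlambda$, bien plus transparent, puis à invoquer l'unicité de l'adjoint à droite. Le premier pas est d'établir l'\emph{égalité} de foncteurs $\Stlambda\circ\KDual{\K}=\KDualADC{\K}\circ\Stlambda$ de $\ooCat$ dans $\ADC$. Soit $C$ une $\infty$\nbd-catégorie. Par définition (cf.~\ref{dualooCat}), les cellules de $\KDual{\K}(C)$ sont exactement celles de $C$, de sorte que $\Stlambda(\KDual{\K}(C))_i$ et $\Stlambda(C)_i$ sont engendrés par les mêmes symboles $\ch{x}$, $x\in C_i$ (cf.~\ref{defStlambda}). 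Les relations définissantes $\ch{x\comp{j}y}=\ch{x}+\ch{y}$ coïncident elles aussi : tout composé de deux cellules dans $\KDual{\K}(C)$ étant à nouveau un composé de ces deux mêmes cellules dans $C$, la relation correspondante figure déjà, à l'ordre près (sans importance puisque $\ch{x}+\ch{y}=\ch{y}+\ch{x}$), parmi celles définissant $\Stlambda(C)$. De même, le sous-monoïde $\Stlambda(C)_i^*$ (engendré par les $\ch{x}$, $x\in C_i$) et l'augmentation (qui envoie $\ch{x}$ sur $1$ pour $x\in C_0$, et n'est donc pas affectée puisque $0\notin\K$) sont les mêmes des deux côtés. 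Le seul changement concerne la différentielle : en degré $i$, c'est la seule différentielle de $\Stlambda$ à faire intervenir les applications source et but $C_i\to C_{i-1}$, que $\KDual{\K}$ échange précisément lorsque $i\in\K$ ; donc, si $i\in\K$, la différentielle de $\Stlambda(\KDual{\K}(C))$ en degré $i$ envoie $\ch{x}$ sur $\ch{s(x)}-\ch{t(x)}=-d_i^{\Stlambda(C)}(\ch{x})$, et sinon elle coïncide avec $d_i^{\Stlambda(C)}$, c'est-à-dire que dans tous les cas on retrouve la différentielle de $\KDualADC{\K}(\Stlambda(C))$ (cf.~\ref{dualADC}). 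Les deux complexes dirigés augmentés coïncident donc, fonctoriellement en $C$ (les deux foncteurs envoient $\ch{x}$ sur $\ch{F(x)}$ pour tout $\infty$\nbd-foncteur $F$).

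Le second pas est formel. Les endofoncteurs $\KDual{\K}\colon\ooCat\to\ooCat$ et $\KDualADC{\K}\colon\ADC\to\ADC$ étant involutifs (\ref{dualooCat}, \ref{dualADC}), ce sont des isomorphismes de catégories, chacun étant son propre quasi-inverse ; en particulier chacun est adjoint à droite comme à gauche de lui-même. En prenant les adjoints à droite dans l'égalité du premier pas et en utilisant l'adjonction $(\Stlambda,\Stnu)$ (\ref{adjSteiner}), on obtient que l'adjoint à droite de $\Stlambda\circ\KDual{\K}$ est $\KDual{\K}\circ\Stnu$, tandis que celui de $\KDualADC{\K}\circ\Stlambda$ est $\Stnu\circ\KDualADC{\K}$ ; l'unicité de l'adjoint à droite à isomorphisme canonique près fournit alors un isomorphisme canonique $\KDual{\K}\circ\Stnu\simeq\Stnu\circ\KDualADC{\K}$, autrement dit la commutativité à isomorphisme canonique près du carré de l'énoncé. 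La dernière assertion en est le cas particulier $\K=\mathbb{N}^*$, le dual d'une $\infty$\nbd-catégorie étant son $\mathbb{N}^*$\nbd-dual et le dual d'un complexe dirigé augmenté étant son $\mathbb{N}^*$\nbd-dual (\ref{dualooCat}, \ref{dualADC}).

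Le point qui demande le plus de soin est la vérification \og sur le nez\fg{} de l'égalité $\Stlambda\circ\KDual{\K}=\KDualADC{\K}\circ\Stlambda$ : il faut contrôler comment le renversement du sens des $j$\nbd-flèches de $C$ se répercute, via les applications source et but itérées, sur la composabilité des cellules de dimension supérieure, et s'assurer que rien de tout cela ne modifie les groupes abéliens $\Stlambda(C)_i$, leurs sous-monoïdes, l'augmentation, ni les différentielles autres que $d_j$ — ce qui résulte, comme ci-dessus, de ce que les groupes de chaînes ne \og voient\fg{} la composition qu'à commutativité près. Tout le reste est formel. On pourrait aussi, au prix d'une combinatoire plus lourde, donner une preuve directe en déroulant les formules de~\ref{defStnu} : l'isomorphisme de $\Stnu(\KDualADC{\K}(K))$ vers $\KDual{\K}(\Stnu(K))$ associe à un tableau le tableau obtenu en échangeant ses deux lignes dans chaque colonne $k$ telle que $k+1\in\K$, les conditions (a)--(d) de~\ref{defStnu} se transformant correctement grâce aux signes $(-1)^{\chi(k)}$, après quoi l'on vérifie la compatibilité aux sources, aux buts, aux unités et aux composés.
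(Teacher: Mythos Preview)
Your proof is correct and takes a genuinely different route from the paper's. The paper proceeds by direct inspection: it writes down an explicit bijection on cells, sending a tableau $x$ (an $i$\nbd-cell of $\KDual{\K}\Stnu(K)=\Stnu(K)$) to the tableau $y$ obtained by swapping the two rows in each column whose index lies in $\K$, and leaves to the reader the verification that this is an $\infty$\nbd-functor onto $\Stnu(\KDualADC{\K}(K))$. You instead pass to the left adjoint, where the analogous statement is an \emph{equality} $\Stlambda\KDual{\K}=\KDualADC{\K}\Stlambda$ that is immediate: the chain groups $\Stlambda(C)_i$ are blind to the order of composition and to which of $s,t$ is which, so only the differential $d_i(\ch{x})=\ch{t(x)}-\ch{s(x)}$ can change, and it changes sign exactly for $i\in\K$. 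Uniqueness of right adjoints, together with the fact that each duality is an involution (hence self-adjoint), then yields the desired isomorphism on the $\Stnu$ side. The paper's approach has the advantage of producing the explicit formula for the isomorphism, which is convenient when one later wants to transport a concrete construction (such as an homotopy, cf.~\ref{dualhmtpADC}) through the duality; your approach is more conceptual, makes the canonicity of the isomorphism automatic, and avoids the bookkeeping of checking by hand that the column-swap is compatible with sources, targets, units and all compositions $\comp{j}$.
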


\begin{proof}
On vérifie immédiatement que pour $\K\subset\mathbb{N}^*$, et $K$ un complexe dirigé augmenté, les applications
$$
\cellule{x}=\tablg{x}{i}\longmapsto\,\,\cellule{y}=\tablg{y}{i},\ \ 
\begin{matrix}
\cellule{x}\in\bigl(\KDual{\K}\Stnu(K)\bigr)_i=\Stnu(K)_i\,,\cr
\noalign{\vskip 3pt}
i\geq0\,,
\end{matrix}
$$
où
$$
y^\e_i=\left\{\begin{aligned}
&x^{1-\e}_i\ ,\ \phantom{x^\e_i}\hbox{si $i\in\K$}\,,\cr
&x^\e_i\ ,\quad\phantom{x^{1-\e}_i}\hbox{sinon}\,,
\end{aligned}\right.\quad\ \e\in\{0,1\}\,,\ \ i\geq0\,, 
$$
définissent l'isomorphisme voulu.
\end{proof}

\section{Contraction d'un complexe dirigé augmenté}

Dans cette section, on se fixe un complexe dirigé augmenté $(K,K^*,\augm)$, noté simplement~$K$. On va dégager des conditions suffisantes sur $K$ pour que $\Stnu(K)$ admette un objet quasi-initial ou quasi-final.

\begin{paragr}\label{defnegl}
Soient $i\geq0$, et $h$ un morphisme de groupes abéliens de source $K_i$. On dit qu'une $i$\nbd-cellule
$$\cellule{x}=\tablnu{x}{i-1}{x_i}$$
de $\Stnu(K)$ est \emph{$h$\nbd-négligeable} si $h(x_i)=0$.
\end{paragr}

\begin{lemme}\label{lemmeclef1}
Soient $i>0$, et trois morphismes de groupes abéliens
$$h_{i-1}:K_{i-1}\toto K_i\ ,\quad h_i:K_i\toto K_{i+1}\ ,\quad h_{i+1}:K_{i+1}\toto K_{i+2}$$
satisfaisant aux conditions:
$$d_{i+1}h_i+h_{i-1}d_i=1_{K_i}\qquad\hbox{\emph{(resp.}\ \ }d_{i+1}h_i+h_{i-1}d_i=-1_{K_i}\ )\ ,\leqno(*)$$
$$h_i(K_i^*)\subset K^*_{i+1}\ ,\leqno(**)$$
$$h_{i+1}h_i=0\ .\leqno(*\kern -2pt*\kern -2pt*)$$
Si
$$\cellule{x}=\tablnu{a}{i-1}{x_i}\qquad\hbox{et}\qquad\cellule{y}=\tablnu{a}{i-1}{y_i}$$
sont deux $i$\nbd-cellules parallèles de $\Stnu(K)$, et si $\cellule{x}$ est $h_i$\nbd-négligeable, alors il existe une $(i+1)$\nbd-cellule $h_{i+1}$\nbd-négligeable $\cellule{z}$ de $C$ de source $\cellule{x}$ et de but $\cellule{y}$ \emph{(resp.} de source $\cellule{y}$ et de but~$\cellule{x}$\emph{)}. Plus précisément, si l'on pose $z_{i+1}=h_i(y_i)$ et
$$\cellule{z}=
\begin{pmatrix}
a^0_0
&\dots
&a^0_{i-1}
&x_i
&z_{i+1}
\cr\noalign{\vskip 3pt}
a^1_0
&\dots
&a^1_{i-1}
&y_i
&z_{i+1}
\end{pmatrix}
\qquad\hbox{\emph{(resp.}}\ \ 
\cellule{z}=
\begin{pmatrix}
a^0_0
&\dots
&a^0_{i-1}
&y_i
&z_{i+1}
\cr\noalign{\vskip 3pt}
a^1_0
&\dots
&a^1_{i-1}
&x_i
&z_{i+1}
\end{pmatrix}
\ ),
$$
alors $\cellule{z}$ est une $(i+1)$\nbd-cellule $h_{i+1}$\nbd-négligeable de $\Stnu(K)$ de $\cellule{x}$ vers $\cellule{y}$ \emph{(resp.} de $\cellule{y}$ vers~$\cellule{x}$\emph{)}.
\end{lemme}

\begin{proof}
Comme $y_i\in K^*_i$, on a, en vertu de la condition ($**$), $z_{i+1}\in K^*_{i+1}$. D'autre part, $d_ix_i=a^1_{i-1}-a^0_{i-1}=d_iy_i$, d'où $d_i(y_i-x_i)=0$ et par suite, comme $\cellule{x}$ est $h_i$\nbd-négligeable, on a
$$d_{i+1}z_{i+1}=d_{i+1}h_i(y_i-x_i)=(d_{i+1}h_{i}+h_{i-1}d_i)(y_i-x_i)\ .$$
On a donc, en vertu de la condition ($*$),
$$d_{i+1}z_{i+1}=y_i-x_i\qquad\hbox{(resp.}\ \ d_{i+1}z_{i+1}=x_i-y_i\ ).$$
On en déduit que $\cellule{z}$ est une $(i+1)$\nbd-cellule de $\Stnu(K)$ de $\cellule{x}$ vers $\cellule{y}$ (resp. de $\cellule{y}$ vers $\cellule{x}$). 
Enfin, en vertu de la condition $(*\kern -2pt*\kern -2pt*)$, la $(i+1)$\nbd-cellule $\cellule{z}$ est $h_{i+1}$\nbd-négligeable.
\end{proof}

\begin{prop}\label{existpsin1}
Soient $i>0$, et une famille de morphismes de groupes abéliens
$$h_j:K_j\toto K_{j+1}\ ,\quad j\geq i-1\ ,$$
satisfaisant aux conditions:
$$d_{j+1}h_j+h_{j-1}d_j=1_{K_j}\quad\hbox{\emph{(resp.}\ \ }d_{j+1}h_j+h_{j-1}d_j=-1_{K_j}\ )\ ,\qquad j\geq i\ ,\leqno(*_j)$$
$$h_j(K^*_j)\subset K^*_{j+1}\ ,\qquad j\geq i\ ,\leqno(**_j)$$
$$h_{j+1}h_j=0\ ,\qquad j\geq i\ .\leqno(*\kern -2pt*\kern -2pt*_j)$$
Si $n\geq i$ est un entier tel que pour tout $j>n$, $K_j=0$, alors toute $i$\nbd-cellule $\cellule{x}$ $h_i$\nbd-négligeable de la $n$\nbd-catégorie $\Stnu(K)$ est un objet quasi-initial \emph{(resp.} quasi-final\emph{)} de la \hbox{$(n-i)$}\nbd-catégorie $\Par_{\Stnu(K)}(\cellule{x})$ des $i$\nbd-cellules parallèles à $\cellule{x}$ \emph{(\emph{cf}.~\ref{defcelparal} et~\ref{defnCat})}.
\end{prop}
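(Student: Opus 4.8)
The plan is to argue by induction on the integer $n-i\geq0$, keeping $K$ and the family $(h_j)_{j\geq i-1}$ fixed and merely shifting the degree at which one starts. If $n=i$, then $K_{i+1}=0$, so by the remark following~\ref{defStnu} the $\infty$\nbd-category $\Stnu(K)$ is an $i$\nbd-category and, by~\ref{defnCat}, $\Par_{\Stnu(K)}(\cellule{x})$ is a $0$\nbd-category, i.e.\ a set; it thus suffices to show that $\cellule{x}$ is its only object. If $\cellule{y}$ is an $i$\nbd-cell parallel to $\cellule{x}$, with top entries $x_i$ and $y_i$, then $d_i(y_i-x_i)=0$; applying $(*_i)$ to $y_i-x_i$ and using that $h_{i-1}d_i(y_i-x_i)=0$ and that $d_{i+1}h_i(y_i-x_i)=0$ since $K_{i+1}=0$, one gets $\pm(y_i-x_i)=0$, hence $\cellule{y}=\cellule{x}$; the unique object $\cellule{x}$ is then both quasi-initial and quasi-final.

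For the inductive step $n-i>0$, the $\infty$\nbd-category $\Par_{\Stnu(K)}(\cellule{x})$ is an $(n-i)$\nbd-category with $n-i\geq1$, so by~\ref{pseudofinal} and~\ref{defcelparal} it suffices, in the quasi-initial case, to exhibit for every $i$\nbd-cell $\cellule{y}$ parallel to $\cellule{x}$ a quasi-initial object of $\sHom_{\Stnu(K)}(\cellule{x},\cellule{y})$. The conditions $(*_i)$, $(**_i)$ and $(***_i)$ are precisely the hypotheses $(*)$, $(**)$, $(***)$ of Lemme~\ref{lemmeclef1} for the triple $h_{i-1},h_i,h_{i+1}$, and $\cellule{x}$ is $h_i$\nbd-negligeable, so that lemma produces an $h_{i+1}$\nbd-negligeable $(i+1)$\nbd-cell $\cellule{z}$ of $\Stnu(K)$ of source $\cellule{x}$ and target $\cellule{y}$. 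By~\ref{defcelparal} (as $i+1>0$) one has
$$\sHom_{\Stnu(K)}(\cellule{x},\cellule{y})=\sHom_{\Stnu(K)}\bigl(s(\cellule{z}),t(\cellule{z})\bigr)=\Par_{\Stnu(K)}(\cellule{z})\ ,$$
and the hypotheses of the proposition hold with $i+1$ in place of $i$: the family $(h_j)_{j\geq i}$ satisfies $(*_j)$, $(**_j)$, $(***_j)$ for $j\geq i+1$, one still has $K_j=0$ for $j>n$, and $\cellule{z}$ is an $h_{i+1}$\nbd-negligeable $(i+1)$\nbd-cell. Since $n-(i+1)<n-i$, the induction hypothesis gives that $\cellule{z}$ is a quasi-initial object of $\Par_{\Stnu(K)}(\cellule{z})=\sHom_{\Stnu(K)}(\cellule{x},\cellule{y})$, as desired. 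The quasi-final case is identical, using the ``resp.''\ part of Lemme~\ref{lemmeclef1}, which gives $\cellule{z}$ of source $\cellule{y}$ and target $\cellule{x}$, hence an object of $\sHom_{\Stnu(K)}(\cellule{y},\cellule{x})=\Par_{\Stnu(K)}(\cellule{z})$, quasi-final there by induction.

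The essential ingredient is the identification $\sHom_{\Stnu(K)}(\cellule{x},\cellule{y})=\Par_{\Stnu(K)}(\cellule{z})$ for the cell $\cellule{z}$ built by Lemme~\ref{lemmeclef1}: this is what lets the proposition feed back into itself with the starting degree raised by one, so that the induction on $n-i$ closes. The remaining points — the base-case identity $\cellule{y}=\cellule{x}$ and checking that the shifted data still satisfies $(*_j)$, $(**_j)$, $(***_j)$ for $j\geq i+1$ — are routine, and I expect no genuine obstacle beyond getting this bookkeeping right.
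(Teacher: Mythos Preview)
Your proof is correct and follows essentially the same approach as the paper: both argue by induction on $n-i$ (the paper phrases it as descending induction on $i$), apply Lemme~\ref{lemmeclef1} to produce the $h_{i+1}$-negligeable cell $\cellule{z}$, and invoke the induction hypothesis via the identification $\Par_{\Stnu(K)}(\cellule{z})=\sHom_{\Stnu(K)}(\cellule{x},\cellule{y})$. The only cosmetic difference is in the base case $n=i$: the paper still invokes Lemme~\ref{lemmeclef1} to get an $(n+1)$-cell $\cellule{z}$ and observes it must be an identity since $\Stnu(K)$ is an $n$-category, whereas you compute directly that $y_i=x_i$ from $(*_i)$ and $K_{i+1}=0$; these are the same calculation, yours just unpacks the lemma inline.
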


\begin{proof}
On raisonne par récurrence descendante sur $i$ de $n$ à $1$.
Soient $\cellule{x}$ une $i$\nbd-cellule $h_i$\nbd-négligeable de la $n$\nbd-catégorie $\Stnu(K)$ et $\cellule{y}$ une $i$\nbd-cellule parallèle à $\cellule{x}$. En vertu du lemme~\ref{lemmeclef1}, il existe une $(i+1)$\nbd-cellule $h_{i+1}$\nbd-négligeable $\cellule{z}$ de $\Stnu(K)$ de $\cellule{x}$ vers $\cellule{y}$ (resp.~de $\cellule{y}$ vers $\cellule{x}$). Si $i=n$, comme $\Stnu(K)$ est une $n$\nbd-catégorie, $\cellule{z}$ est forcément une identité, ce qui implique que $\cellule{y}=\cellule{x}$, et prouve l'assertion dans ce cas. Si $i<n$, l'hypothèse de récurrence, appliquée à la famille de morphismes de groupes abéliens $(h_j)_{j\geq i}$ et la $(i+1)$\nbd-cellule $z$, implique que $\cellule{z}$ est un objet quasi-initial (resp.~quasi-final) de la \hbox{$(n-i-1)$}\nbd-catégorie $\Par_{\Stnu(K)}(\cellule{z})$ qui n'est autre que $\sHom_{\Stnu(K)}(\cellule{x},\cellule{y})$ (resp.~$\sHom_{\Stnu(K)}(\cellule{y},\cellule{x})$\,), autrement dit $\sHom_{\Par_{\Stnu(K)}(\cellule{x})}(\cellule{x},\cellule{y})$ (resp.~$\sHom_{\Par_{\Stnu(K)}(\cellule{x})}(\cellule{y},\cellule{x})$\,). On en déduit que $\cellule{x}$ est un objet quasi-initial (resp.~quasi-final) de la $(n-i)$\nbd-catégorie $\Par_{\Stnu(K)}(\cellule{x})$.
\end{proof}

\begin{lemme}\label{lemmeclef2}
Soient $c_0$ une $0$\nbd-cellule de $\Stnu(K)$, et deux morphismes de groupes abéliens
$$h_0:K_0\toto K_1\ ,\quad h_1:K_1\toto K_2$$
satisfaisant aux conditions suivantes:
$$d_1h_0(x)=x-\augm(x)c_0\qquad\hbox{\emph{(resp.}\ \ }d_1h_0(x)=\augm(x)c_0-x\ )\ ,\qquad x\in K_0\ ,\leqno(*)$$
$$h_0(K_0^*)\subset K^*_{1}\ ,\leqno(**)$$
$$h_{1}h_0=0\ .\leqno(*\kern -2pt*\kern -2pt*)$$
Pour toute $0$\nbd-cellule $\cellule{y_0}$ de $\Stnu(K)$, il existe une $1$\nbd-cellule $h_{1}$\nbd-négligeable $\cellule{z}$ de $\Stnu(K)$ de source $\cellule{c_0}$ et de but $\cellule{y_0}$ \emph{(resp.} de source $\cellule{y_0}$ et de but~$\cellule{c_0}$\emph{)}. Plus précisément, si l'on pose $z_{1}=h_0(y_0)$ et
$$
\cellule{z}=
\begin{pmatrix}
c_0&z_1\cr
\noalign{\vskip 3pt}
y_0&z_1
\end{pmatrix}
\qquad\hbox{\emph{(resp.}}\ \ 
\cellule{z}=
\begin{pmatrix}
y_0&z_1\cr
\noalign{\vskip 3pt}
c_0&z_1
\end{pmatrix}
\ ),
$$
alors $\cellule{z}$ est une $1$\nbd-cellule $h_{1}$\nbd-négligeable de $\Stnu(K)$ de $\cellule{c_0}$ vers $\cellule{y_0}$ \emph{(resp.} de $\cellule{y_0}$ vers~$\cellule{c_0}$\emph{)}.
\end{lemme}

\begin{proof}
Comme $y_0\in K^*_0$, on a, en vertu de la condition ($**$), $z_{1}\in K^*_{1}$. D'autre part, en vertu de la condition $(*)$,
$$d_1(z_1)=y_0-\augm(y_0)c_0\qquad\hbox{(resp.}\ \ d_1(z_1)=\augm(y_0)c_0-y_0\ ).$$
Comme $\augm(y_0)=1$, on en déduit que $\cellule{z}$ est une $1$\nbd-cellule de $\Stnu(K)$ de $\cellule{c_0}$ vers $\cellule{y_0}$ (resp. de~$\cellule{y_0}$ vers $\cellule{c_0}$). 
Enfin, en vertu de la condition $(*\kern -2pt*\kern -2pt*)$, la $1$\nbd-cellule $\cellule{z}$ est $h_{1}$\nbd-négligeable.
\end{proof}

\begin{prop}\label{existpsin2}
Soient $c_0$ une $0$\nbd-cellule de $\Stnu(K)$, 
et une famille de morphismes de groupes abéliens
$$h_j:K_j\toto K_{j+1}\ ,\quad j\geq0\ ,$$
satisfaisant aux conditions:
$$d_1h_0(x)=x-\augm(x)c_0\qquad\hbox{\emph{(resp.}\ \ }d_1h_0(x)=\augm(x)c_0-x\ )\ ,\qquad x\in K_0\ ,\leqno(*_0)$$
$$d_{j+1}h_j+h_{j-1}d_j=1_{K_j}\quad\hbox{\emph{(resp.}\ \ }d_{j+1}h_j+h_{j-1}d_j=-1_{K_j}\ )\ ,\qquad j\geq 1\ ,\leqno(*_j)$$
$$h_j(K^*_j)\subset K^*_{j+1}\ ,\qquad j\geq 0\ ,\leqno(**_j)$$
$$h_{j+1}h_j=0\ ,\qquad j\geq 0\ .\leqno(*\kern -2pt*\kern -2pt*_j)$$
Si $n\geq0$ est un entier tel que pour tout $j>n$, $K_j=0$, alors $c_0$ est un objet quasi-initial \emph{(resp.} quasi-final\emph{)} de la $n$\nbd-catégorie $\Stnu(K)$. De plus, $c_0$ est une $0$\nbd-cellule négligeable de $\Stnu(K)$, et toute $0$\nbd-cellule négligeable de $\Stnu(K)$ est égale à $c_0$.
\end{prop}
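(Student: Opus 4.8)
The plan is to prove the two dual statements in parallel, this proposition being the ``degree $0$'' analogue of Proposition~\ref{existpsin1}, from which essentially all the work will be borrowed. I would first dispose of the case $n=0$: then $K_j=0$ for $j>0$, hence $d_1=0$, and condition $(*_0)$ reduces to $x=\augm(x)c_0$ (resp. $\augm(x)c_0=x$) for every $x\in K_0$; applied to an arbitrary object $y_0$ of $\Stnu(K)$, which by definition satisfies $\augm(y_0)=1$, this yields $y_0=c_0$, so $\Stnu(K)$ is the $0$\nbd-category with unique object $c_0$, and \ref{pseudofinal} gives that $c_0$ is a quasi-initial (resp. quasi-final) object of it.

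Assume now $n\geq1$. Given an object $y_0$ of $\Stnu(K)$, I would apply Lemma~\ref{lemmeclef2} to the pair $h_0,h_1$, whose hypotheses $(*)$, $(**)$, $(*\!*\!*)$ are exactly our $(*_0)$, $(**_0)$, $(*\!*\!*_0)$: it produces an $h_1$\nbd-negligible $1$\nbd-cell $\cellule{z}$ of $\Stnu(K)$ of source $c_0$ and target $y_0$ (resp. of source $y_0$ and target $c_0$). By \ref{defcelparal}, the $(n-1)$\nbd-category $\Par_{\Stnu(K)}(\cellule{z})$ is $\sHom_{\Stnu(K)}(c_0,y_0)$ (resp. $\sHom_{\Stnu(K)}(y_0,c_0)$). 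Then I would invoke Proposition~\ref{existpsin1} with $i=1$ and the family $(h_j)_{j\geq0}$: its hypotheses $(*_j)$, $(**_j)$, $(*\!*\!*_j)$ for $j\geq1$ hold, $n\geq1$, and $\cellule{z}$ is $h_1$\nbd-negligible, so $\cellule{z}$ is a quasi-initial (resp. quasi-final) object of $\Par_{\Stnu(K)}(\cellule{z})$. Since $y_0$ was arbitrary, the recursive definition in \ref{pseudofinal} shows that $c_0$ is a quasi-initial (resp. quasi-final) object of the $n$\nbd-category $\Stnu(K)$.

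For the last two assertions, if $y_0$ is a negligible $0$\nbd-cell, i.e. $h_0(y_0)=0$, then applying $d_1$ and using $(*_0)$ together with $\augm(y_0)=1$ gives $0=y_0-c_0$ (resp. $0=c_0-y_0$), so $y_0=c_0$; hence $c_0$ is the only possible negligible $0$\nbd-cell. That $c_0$ is itself negligible is clear when $n=0$ (then $K_1=0$); for $n\geq1$ I would note $d_1h_0(c_0)=c_0-\augm(c_0)c_0=0$ (resp. $=\augm(c_0)c_0-c_0=0$), using $\augm(c_0)=1$, and then apply $(*_1)$ to $h_0(c_0)\in K_1$: the summand $h_0d_1h_0(c_0)$ vanishes by the previous line and $d_2h_1h_0(c_0)$ vanishes by $(*\!*\!*_0)$, so $0=\pm h_0(c_0)$.

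No serious difficulty is expected: all the substance lies in Lemma~\ref{lemmeclef2} and Proposition~\ref{existpsin1}. The only point needing care is the bookkeeping --- keeping the source/target swap of Lemma~\ref{lemmeclef2} aligned with the quasi-initial/quasi-final swap of Proposition~\ref{existpsin1}, and using the shifted index range when feeding the family $(h_j)_{j\geq0}$ into Proposition~\ref{existpsin1} at $i=1$.
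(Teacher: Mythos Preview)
Your proposal is correct and follows the same approach as the paper: apply Lemma~\ref{lemmeclef2} to produce an $h_1$-negligible $1$-cell from $c_0$ to $y_0$ (resp.\ from $y_0$ to $c_0$), then invoke Proposition~\ref{existpsin1} at $i=1$ to conclude that this cell is quasi-initial (resp.\ quasi-final) in the relevant $\sHom$, and finish with the same computation using $(*_0)$, $(*_1)$ and $(*\!*\!*_0)$ for the negligibility claims. The only difference is that you treat the case $n=0$ explicitly, whereas the paper absorbs it into the general argument (the $1$-cell produced by Lemma~\ref{lemmeclef2} is then necessarily an identity); both are fine.
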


\begin{proof}
En vertu du lemme précédent, les conditions $(*_0),\,(**_0),\,(*\kern -2pt*\kern -2pt*_0)$ impliquent que pour toute $0$\nbd-cellule $y_0$ de $\Stnu(K)$, il existe une $1$\nbd-cellule $h_{1}$\nbd-négligeable~$\cellule{z}$ de $\Stnu(K)$ de source $\cellule{c_0}$ et de but $\cellule{y_0}$ (resp. de source $\cellule{y_0}$ et de but~$\cellule{c_0}$). En vertu de la proposition~\ref{existpsin1}, les conditions $(*_j)$, $(**_j)$, $(*\kern -2pt*\kern -2pt*_j)$, $j>0$, impliquent que $z$ est un objet quasi-initial (resp.~quasi-final) de la \hbox{$(n-1)$}\nbd-catégorie $\Par_{\Stnu(K)}(\cellule{z})$ qui n'est autre que $\sHom_{\Stnu(K)}(\cellule{c_0},\cellule{y_0})$ (resp.~$\sHom_{\Stnu(K)}(\cellule{y_0},\cellule{c_0})$\,). On en déduit que $\cellule{c_0}$ est un objet quasi-initial (resp. quasi-final) de $\Stnu(K)$.
\smallbreak

Montrons que $c_0$ est une $0$\nbd-cellule $h_0$\nbd-négligeable de $\Stnu(K)$. En effet, en vertu de la condition $(*_0)$, on a 
$$d_1h_0(c_0)=c_0-\augm(c_0)c_0=0\ ,\qquad\hbox{(resp. \ }d_1h_0(c_0)=\augm(c_0)c_0-c_0=0\ ).$$
D'autre part, la condition $(*_1)$ implique que $d_2h_1h_0+h_0d_1h_0=h_0$, et par suite, en vertu de la condition $(*\kern -2pt*\kern -2pt*_0)$, $h_0d_1h_0=h_0$. On a donc $h_0(c_0)=h_0d_1h_0(c_0)=0$. Enfin, si $x_0$ est une $0$\nbd-cellule $h_0$\nbd-négligeable, la condition $(*_0)$ implique que 
$$x_0-c_0=x_0-\augm(x_0)c_0=d_1h_0(x_0)=0\,,\ \hbox{(resp. \ }c_0-x_0=\augm(x_0)c_0-x_0=d_1h_0(x_0)=0\ ),$$
d'où $x_0=c_0$.
\end{proof}

\begin{paragr}\label{defcontraction}
Les données et les hypothèses de la proposition précédente peuvent s'exprimer de façon beaucoup plus compacte.
Au complexe dirigé augmenté $K$, on associe un complexe non borné 
$$\cnb{K} =
\xymatrixcolsep{1.8pc}
\xymatrix{
\cdots\ar[r]^-{d_{n+1}}
&K_n\ar[r]^-{d_{n}}
&K_{n-1}\ar[r]^-{d_{n-1}}
&\cdots\ar[r]^-{d_{2}}
&K_1\ar[r]^-{d_{1}}
&K_0\ar[r]^-{\augm}
&\mathbb{Z}\ar[r]
&0\ar[r]
&\cdots
}\,,$$
défini par
$$\cnb{K}_j=\left\{
\begin{aligned}
&K_j\,,\phantom{0\mathbb{Z}} j\geq0\,,\cr
&\mathbb{Z}\,,\phantom{0K_j} j=-1\,,\cr
&0\,,\phantom{\mathbb{Z}K_j} j<-1\,,
\end{aligned}
\right.
\kern 20pt
\cnb{d}_j=\left\{
\begin{aligned}
&d_j\,,\phantom{0\augm} j\geq1\,,\cr
&\augm\,,\phantom{0d_j} j=0\,,\cr
&0\,,\phantom{\augm d_j} j<0\,,
\end{aligned}
\right.
$$
\emph{muni}, pour tout $j\in\mathbb{Z}$, du sous-monoïde $\cnb{K}^*_j$ du groupe abélien $\cnb{K}_j$ défini par
$$\cnb{K}^*_j=\left\{
\begin{aligned}
&K^*_j\,,\phantom{0\mathbb{N}} j\geq0\,,\cr
&\mathbb{N}\,,\phantom{0K^*_j} j=-1\,,\cr
&0\,,\phantom{\mathbb{N}K_j} j<-1\,.
\end{aligned}
\right.$$
Une \emph{contraction} (resp. une \emph{contraction duale}) du complexe dirigé augmenté $K$ est une homotopie de carré nul du complexe $\cnb{K}$ (resp. du complexe $\cnb{\op{K}}$, où $\op{K}$ désigne le complexe dirigé augmenté dual de $K$ (cf.~\ref{dualADC})), de l'endomorphisme nul vers l'endomorphisme identité, compatible aux sous-monoïdes $\cnb{K}^*_j$, $j\in\mathbb{Z}$, ce qui revient à la donnée d'une famille de morphismes de groupes abéliens $h_j:\cnb{K}_j\to\cnb{K}_{j+1}$, $j\geq-1$, 
$$
\xymatrixcolsep{1.6pc}
\xymatrix{
\cdots\ar[r]^-{\pm d_{j+2}}
&K_{j+1}\ar[r]^-{\pm d_{j+1}}
&K_j\ar[r]^-{\pm d_{j}}\ar[dl]_(.54){h_{j}\kern -3pt}
&K_{j-1}\ar[r]^-{\pm d_{j-1}}\ar[dl]^(.37){h_{j-1}}
&\cdots\ar[r]^-{\pm d_{2}}
&K_1\ar[r]^-{\pm d_{1}}
&K_0\ar[r]^-{\augm}\ar[ld]_(.54){h_0\kern -3pt}
&\mathbb{Z}\ar[r]\ar[ld]_(.54){h_{-1}\kern -3pt}
&0\ar[r]\ar[ld]^(.37){0}
&\cdots
\\
\cdots\ar[r]^-{\pm d_{j+2}}
&K_{j+1}\ar[r]^-{\pm d_{j+1}}
&K_j\ar[r]^-{\pm d_{j}}
&K_{j-1}\ar[r]^-{\pm d_{j-1}}
&\cdots\ar[r]^-{\pm d_{2}}
&K_1\ar[r]^-{\pm d_{1}}
&K_0\ar[r]^-{\augm}
&\mathbb{Z}\ar[r]
&0\ar[r]
&\cdots
}$$
satisfaisant aux conditions
$$\augm h_{-1}=1_{\mathbb Z}\ ,\leqno(\,\cnb{*}_{-1})$$
$$d_1h_0+h_{-1}\augm=1_{K_0}\quad\hbox{(resp.\ \ }-d_1h_0+h_{-1}\augm=1_{K_0}\ )\ ,\leqno(\,\cnb{*}_0)$$
$$d_{j+1}h_j+h_{j-1}d_j=1_{K_j}\quad\hbox{(resp.\ \ }-d_{j+1}h_j-h_{j-1}d_j=1_{K_j}\ )\ ,\qquad j\geq1\ ,\leqno(\,\cnb{*}_j)$$
$$h_{-1}(\mathbb{N})\subset K^*_0\ ,\leqno(\cnb{**}_{-1})$$
$$h_j(K^*_j)\subset K^*_{j+1}\ ,\quad j\geq0\ ,\leqno(\cnb{**}_j)$$
$$h_{j+1}h_j=0\ ,\qquad j\geq-1\ .\leqno(\cnb{*\kern -2pt*\kern -2pt*}_j)$$
La donnée du morphisme $h_{-1}:\mathbb{Z}\to K_0$ revient à la donnée de l'élément $c_0=h_{-1}(1)$. Pour $j\geq1$, les conditions $(\cnb{*}_j)$ coïncident avec les conditions $(*_j)$ de la proposition~\ref{existpsin2}. Pour $j=0$, la relation $(\cnb{*}_0)$ exprime que pour tout $x\in K_0$,
$$d_1h_0(x)+e(x).c_0=x\qquad\hbox{(resp.\ \ }-d_1h_0(x)+e(x).c_0=x\ ),$$
autrement dit, elle est équivalente à la condition $(*_0)$ de la proposition~\ref{existpsin2}. Pour $j\geq0$, les conditions~$(\cnb{**}_j)$ et~$(\cnb{*\kern -2pt*\kern -2pt*}_j)$ coïncident respectivement avec les conditions~$(**_j)$ et~$(*\kern -2pt*\kern -2pt*_j)$ de la proposition~\ref{existpsin2}. Pour $j=-1$, la condition $(\cnb{*}_{-1})$ équivaut à
$$\augm(c_0)=1\ ,\leqno(1)$$
la condition $(\cnb{**}_{-1})$ à
$$c_0\in K^*_0\ ,\leqno(2)$$
et la condition $(\cnb{*\kern -2pt*\kern -2pt*}_{-1})$ à
$$h_0(c_0)=0\ .\leqno(3)$$
On remarque que les conditions $(1)$, $(2)$ et $(3)$ signifient exactement que $c_0$ est une $0$\nbd-cellule $h_0$\nbd-négligeable de $\Stnu(K)$ (la condition (3) étant en vertu de la proposition~\ref{existpsin2}, conséquence des autres conditions). Ainsi, les propositions~\ref{existpsin2} et~\ref{existpsin1} impliquent le théorème suivant:
\end{paragr}

\begin{thm}\label{thmcontraction}
Soient $n\geq0$, et $K$ un complexe dirigé augmenté tel que \hbox{$K_j=0$}, pour $j>n$. Si $K$ admet une contraction \emph{(resp.} une contraction duale\emph{),} alors la $n$\nbd-catégorie $\Stnu(K)$ admet un objet quasi-initial \emph{(resp.} quasi-final\emph{).} Plus précisément, si $h$ est une contraction \emph{(resp.} une contraction duale\emph{)} de $K$, et si l'on pose \hbox{$c_0=h_{-1}(1)$}, alors $c_0$ est un objet quasi-initial \emph{(resp.} quasi-final\emph{)} de $\Stnu(K)$. De plus, pour tout $i$, $0\leq i\leq n$, si $x$ est une $i$\nbd-cellule $h_i$\nbd-négligeable de $\Stnu(K)$, alors $x$ est un objet quasi-initial \emph{(resp.} quasi-final\emph{)} de la \hbox{$(n-i)$}\nbd-catégorie $\Par_{\Stnu(K)}(\cellule{x})$ des $i$\nbd-cellules parallèles à~$\cellule{x}$, l'unique $0$\nbd-cellule $h_0$\nbd-négligeable étant $c_0$.
\end{thm}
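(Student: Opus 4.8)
The plan is to observe that the statement is essentially a repackaging of Propositions~\ref{existpsin2} and~\ref{existpsin1}, all the groundwork having been laid in paragraph~\ref{defcontraction}: what remains is just to line up the conditions defining a contraction of $K$ with the hypotheses of those two propositions. So the first step is to fix a contraction $h=(h_j)_{j\geq-1}$ of $K$, set $c_0=h_{-1}(1)$, and record the dictionary. The conditions $(\cnb{*}_{-1})$, $(\cnb{**}_{-1})$ and $(\cnb{*\kern -2pt*\kern -2pt*}_{-1})$ say respectively that $\augm(c_0)=1$, that $c_0\in K^*_0$ and that $h_0(c_0)=0$; the first two express that $c_0$ is a $0$\nbd-cell of $\Stnu(K)$, the third that this cell is $h_0$\nbd-negligible (\emph{cf.}~\ref{defnegl}). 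Moreover, for $j\geq0$, the conditions $(\cnb{*}_j)$, $(\cnb{**}_j)$ and $(\cnb{*\kern -2pt*\kern -2pt*}_j)$ are precisely the conditions $(*_j)$, $(**_j)$ and $(*\kern -2pt*\kern -2pt*_j)$ of Proposition~\ref{existpsin2}. For a dual contraction one gets the variant conditions written in parentheses in~\ref{defcontraction}, which are exactly the ``resp.'' hypotheses of the two propositions; alternatively, the dual-contraction case can be reduced to the direct one via the isomorphism $\Stnu(\op{K})\simeq\op{\Stnu(K)}$ of Proposition~\ref{dualStnu} together with~\ref{dualooCat}.

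Next I would apply Proposition~\ref{existpsin2} to the $0$\nbd-cell $c_0$ and the family $(h_j)_{j\geq0}$, the integer $n$ being admissible since $K_j=0$ for $j>n$. This immediately yields that $c_0$ is a quasi-initial (resp. quasi-final) object of the $n$\nbd-category $\Stnu(K)$, that $c_0$ is an $h_0$\nbd-negligible $0$\nbd-cell, and that it is the only $h_0$\nbd-negligible $0$\nbd-cell. As $c_0$ is an object, one has $\Par_{\Stnu(K)}(\cellule{c_0})=\Stnu(K)$ (\emph{cf.}~\ref{defcelparal}), so this also disposes of the case $i=0$ of the final, more precise assertion.

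Finally, given an integer $i$ with $1\leq i\leq n$ and an $h_i$\nbd-negligible $i$\nbd-cell $\cellule{x}$ of $\Stnu(K)$, I would invoke Proposition~\ref{existpsin1} for this $i$ and the shifted family $(h_j)_{j\geq i-1}$, which, by the dictionary above, satisfies $(*_j)$, $(**_j)$ and $(*\kern -2pt*\kern -2pt*_j)$ for every $j\geq i$; the conclusion is that $\cellule{x}$ is a quasi-initial (resp. quasi-final) object of the $(n-i)$\nbd-category $\Par_{\Stnu(K)}(\cellule{x})$, which finishes the proof. I do not anticipate any genuine obstacle here: the homological content has already been extracted in Lemmas~\ref{lemmeclef1} and~\ref{lemmeclef2} and in Propositions~\ref{existpsin1} and~\ref{existpsin2}. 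The one point requiring a little care is the index bookkeeping — checking that a contraction really does supply the hypotheses of Proposition~\ref{existpsin1} for the family indexed from $j=i-1$ onwards, and that the signs in the ``resp.'' branches of~\ref{defcontraction} match those of the two propositions (equivalently, that a dual contraction is precisely what the quasi-final case requires).
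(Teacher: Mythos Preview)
Your proposal is correct and follows essentially the same approach as the paper: paragraph~\ref{defcontraction} itself performs the dictionary translation between the contraction conditions $(\cnb{*}_j)$, $(\cnb{**}_j)$, $(\cnb{*\kern -2pt*\kern -2pt*}_j)$ and the hypotheses $(*_j)$, $(**_j)$, $(*\kern -2pt*\kern -2pt*_j)$ of Propositions~\ref{existpsin1} and~\ref{existpsin2}, and the theorem is then simply stated as an immediate consequence of those two propositions. Your only addition is the explicit remark that the case $i=0$ of the last assertion is covered by $\Par_{\Stnu(K)}(c_0)=\Stnu(K)$, which the paper leaves implicit.
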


\begin{rem}\label{remcontraction}
Si le complexe dirigé augmenté $K$ est décent (cf.~\ref{ADCdesc}), se donner une contraction (resp. une contraction duale) de $K$ revient à se donner un morphisme constant de complexes dirigés augmentées $f:K\to K$ (cf.~\ref{ADCcons}) et une homotopie de morphismes de complexes dirigés augmentés de $f$ vers $1_K$ (resp. de $1_K$ vers $f$) (cf.~\ref{ADChmt}). En effet, se donner un morphisme constant de complexes dirigés augmentées $f:K\to K$ revient à se donner un élément $c_0$ de $K_0$ satisfaisant aux conditions $(1)$ et $(2)$ du paragraphe~\ref{defcontraction}, et une homotopie de morphismes de complexes dirigés augmentés de $f$ vers $1_K$ (resp. de $1_K$ vers $f$) est la donnée d'une famille $(h_j)_{j\geq0}$ de morphismes de groupes abéliens $h_j:K_j\to K_{j+1}$ satisfaisant exactement, pour $j\geq0$, aux conditions $(\cnb{*}_j)$, $(\cnb{**}_j)$ et $(\cnb{*\kern -2pt*\kern -2pt*}_j)$ de~\ref{defcontraction}.
\end{rem}

\section{Le nerf d'un complexe dirigé augmenté}\label{nerfADC}

\begin{paragr}\label{enssimpl}
On note $\ord$ la catégorie des ensembles ordonnés et applications croissantes, considérée comme sous-catégorie pleine de la catégorie $\cat$ des petites catégories. On rappelle que la \emph{catégorie des simplexes} $\cats$ est la sous-catégorie pleine de $\ord$ formée des ensembles
$$\smp{n}=\{0,1,\dots,n\}\ ,\qquad n\geq0\ ,$$
ordonnés par l'ordre naturel. On note 
$$\face{i}{n}:\smp{n-1}\toto\smp{n}\ ,\qquad n>0\ ,\quad0\leq i\leq n\ ,$$
l'unique injection croissante dont l'image ne contient pas $i$ et
$$\dgn{i}{n}:\smp{n+1}\toto\smp{n}\ ,\qquad n\geq0\ ,\quad0\leq i\leq n\ ,$$
l'unique surjection croissante qui prend deux fois la valeur $i$.
La catégorie $\cats$ est engendrée par composition par ces morphismes soumis aux \emph{relations simpliciales}:
$$\begin{aligned}
&\face{j}{n+1}\face{i}{n}=\face{i}{n+1}\face{j-1}{n},\qquad n>0\ ,\quad0\leq i<j\leq n+1\ ,\cr
\noalign{\vskip 3pt}
&\dgn{j}{n}\dgn{i}{n+1}=\dgn{i}{n}\dgn{j+1}{n+1}\,\ ,\qquad n\geq0\ ,\quad0\leq i\leq j\leq n\ ,\cr
\noalign{\vskip 3pt}
&\dgn{j}{n}\face{i}{n+1}=\left\{\begin{aligned}
&\face{i}{n}\dgn{j-1}{n-1}\ ,\qquad\kern-7pt n>0\ ,\quad 0\leq i<j\leq n\ ,\cr
&1_{\smp{n}}\ ,\qquad\kern 7pt n\geq0\ ,\quad0\leq j\leq n\ ,\quad i=j\,,j+1\ ,\cr
&\face{i-1}{n}\dgn{j}{n-1}\ ,\qquad\kern-16pt n>0\ ,\quad1\leq j+1<i\leq n+1\ .
\end{aligned}\right.
\end{aligned}$$
La catégorie des \emph{ensembles simpliciaux} est la catégorie $\simpl$ des préfaisceaux sur $\cats$. On identifiera, par le plongement de Yoneda, $\cats$ à une sous-catégorie pleine de $\simpl$. 
\end{paragr}

\begin{paragr}
Soit $X$ un ensemble simplicial. Comme de coutume, on pose
$$
\begin{aligned}
&X_n=X(\smp{n})\ ,\qquad n\geq0\ ,\cr
&d^i_n=X(\face{i}{n})\ ,\qquad n>0\ ,\quad0\leq i\leq n\ ,\cr
&s^i_n=X(\dgn{i}{n})\ ,\qquad n\geq0\ ,\quad0\leq i\leq n\ ,
\end{aligned}$$
et les éléments de $X_n$ sont appelés les \emph{$n$\nbd-simplexes} de $X$. On dit qu'un $n$\nbd-simplexe $x$ de $X$ est \emph{dégénéré} s'il existe $m$, $0\leq m<n$, une application croissante $\varphi:\smp{n}\to\smp{m}$, et un $m$\nbd-simplexe $y$ tels que $x=X(\varphi)(y)$; on dit qu'il est \emph{non dégénéré} s'il n'est pas dégénéré. On note $\sdeg{X}{n}$ (resp. $\sndeg{X}{n}$) l'ensemble des $n$\nbd-simplexes dégénérés (resp. non dégénérés) de $X$. Tout $0$\nbd-simplexe de $X$ est non dégénéré, et pour $n>0$, un $n$\nbd-simplexe de $X$ est dégénéré si et seulement si il existe $i$, $0\leq i<n$, et $y\in X_{n-1}$ tel que $x=s^i_{n-1}(y)$.
\end{paragr}

\begin{paragr}\label{complchaines}
\`A tout ensemble simplicial $X$, on associe un complexe dirigé augmenté $\Ch{X}$, appelé \emph{complexe dirigé augmenté des chaînes de $X$}. Le complexe sous-jacent est le complexe des chaînes associé à l'ensemble simplicial $X$, autrement dit, pour $n\geq0$, $(\Ch{X})_n$ est le groupe abélien libre engendré par $X_n$, et pour $n>0$ la différentielle $d_n:(\Ch{X})_n\to(\Ch{X})_{n-1}$ est l'application $\mathbb{Z}$\nbd-linéaire définie pour $x\in X_n$ par 
$$d^{}_n(x)=\textstyle\sum\limits_{0\leq i\leq n}(-1)^id^i_n(x)\ .$$
L'augmentation $\augm:(\Ch{X})_0\to\mathbb{Z}$ est l'application $\mathbb{Z}$\nbd-linéaire définie pour $x\in X_0$ par $\augm(x)=1$. Le sous-monoïde $(\Ch{X})^*_n$ de $(\Ch{X})_n$ est le sous-monoïde engendré par la base~$X_n$ de $(\Ch{X})_n$. On définit ainsi un foncteur $\Ch{}:\simpl\to\ADC$.
\end{paragr}

\begin{paragr}\label{souscompldeg}
Pour $X$ un ensemble simplicial et $n\geq0$, on note $(\Chdeg{X})_n$ le sous-groupe abélien libre de $(\Ch{X})_n$ engendré par l'ensemble $\sdeg{X}{n}$ des $n$\nbd-simplexes dégénérés de~$X$. Les relations simpliciales impliquent facilement qu'on a, pour $n>0$, l'inclusion $d_n((\Chdeg{X})_n)\subset(\Chdeg{X})_{n-1}$, d'où un sous-complexe $\Chdeg{X}$ de $\Ch{X}$. En munissant ce complexe de la structure de complexe dirigé augmenté induite par celle de $\Ch{X}$ (cf.~\ref{induitADC}), on obtient un complexe dirigé augmenté qu'on notera encore $\Chdeg{X}$, et qu'on appellera le \emph{complexe dirigé augmenté des chaînes dégénérées} de $X$. Comme un morphisme d'ensembles simpliciaux envoie les simplexes dégénérés sur des simplexes dégénérés, on en déduit un foncteur $\Chdeg{}:\simpl\to\ADC$.
\end{paragr}

\begin{paragr}\label{complnormchaines}
Comme pour tout ensemble simplicial $X$, l'ensemble des $0$\nbd-simplexes dégénérés est vide, on a $(\Chdeg{X})_0=0$, et en particulier l'augmentation de $\Chdeg{X}$ est nulle. On en déduit un complexe dirigé augmenté quotient de $\Ch{X}$ par $\Chdeg{X}$ (cf.~\ref{quotientADC}), qu'on notera $\Chnorm{X}$, et qu'on appellera le \emph{complexe dirigé augmenté normalisé des chaînes} de~$X$. Pour tout $n\geq0$, le groupe abélien $(\Chnorm{X})_n=(\Ch{X})_n/(\Chdeg{X})_n$ s'identifie au groupe abélien libre engendré par l'ensemble $\sndeg{X}{n}$ des $n$\nbd-simplexes non dégénérés de $X$, et le sous-monoïde $(\Chnorm{X})^*_n$ de $(\Chnorm{X})_n$ au sous-monoïde engendré par cet ensemble. Dans cette identification, pour $n>0$, la différentielle est donnée par la même formule que celle de $\Ch{X}$ (cf.~\ref{complchaines}), appliquée aux seuls simplexes non dégénérés et en omettant au membre de droite les termes correspondant à des $(n-1)$\nbd-simplexes dégénérés. Pour $n=0$, on a $(\Chnorm{X})_0=(\Ch{X})_0$, et l'augmentation de $\Chnorm{X}$ coïncide avec celle de $\Ch{X}$. On définit ainsi un foncteur $\Chnorm{}:\simpl\to\ADC$.
\end{paragr}

\begin{rem}
Pour tout ensemble simplicial $X$, les complexes dirigés augmentés $\Ch{X}$, $\Chdeg{X}$ et $\Chnorm{X}$ admettent une base (cf.~\ref{baseADC}), et sont décents (cf.~\ref{ADCdesc}).
\end{rem}

\begin{exemple}\label{exADCstandard}
Le complexe dirigé augmenté $\Chnorm{\smp{n}}$, $n\geq0$, se décrit comme suit. Pour $p\geq0$, un $p$\nbd-simplexe non dégénéré de l'ensemble simplicial représentable $\smp{n}$ est une application strictement croissante $\smp{p}\to\smp{n}$. Ainsi, pour $p\geq0$, $(\Chnorm{\smp{n}})_p$ (resp.~$(\Chnorm{\smp{n}})^*_p$) s'identifie au groupe (resp. au monoïde) commutatif libre engendré par la famille des \hbox{$(p+1)$}\nbd-uplets 
$$(i_0,i_1,\dots,i_p)\,,\quad 0\leq i_0<i_1<\cdots<i_p\leq n\ .$$
La différentielle est définie par 
$$d(i_0,i_1,\dots, i_p)=\textstyle\sum\limits_{0\leq k\leq p}(-1)^k(i_0,\dots, \widehat{i}_k,\dots, i_p)\,,\quad p>0\ ,$$
où $(i_0,\dots, \widehat{i}_k,\dots, i_p)=(i_0,\dots, i_{k-1},i_{k+1},\dots, i_p)$, et l'augmentation par $\augm(i_0)=1$. On remarque que pour $p>n$, on a $\ADCO{n}_p=0$. Le complexe dirigé augmenté $\Chnorm{\smp{n}}$ est à base unitaire fortement sans boucles~\cite[exemple~3.8]{Steiner}. Le théorème~\ref{csunitsansboucles} qu'on démontrera dans la section~\ref{complsimpl} constitue une généralisation de ce résultat.
\end{exemple}

\begin{prop}\label{complcommlimind}
Les foncteurs $\Ch{},\,\Chdeg{},\,\Chnorm{}:\simpl\to\ADC$ commutent aux petites limites inductives.
\end{prop}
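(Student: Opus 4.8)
The statement to prove is that the three functors $\Ch{}$, $\Chdeg{}$, $\Chnorm{}:\simpl\to\ADC$ commute with small inductive limits. The natural strategy is to reduce the claim for all three functors to a statement about the underlying chain complexes, exploiting the observation made in~\ref{limindADC} that the "underlying complex" and "underlying augmented complex" functors $\ADC\to\mathsf{Ch}$ both commute with inductive limits, and that the extra structure (the positivity submonoids $L^*_n$) in a colimit is precisely the submonoid generated by the images of the $K(i)^*_n$. So the plan is: first treat $\Ch{}$, then deduce the cases of $\Chdeg{}$ and $\Chnorm{}$ as consequences.

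For $\Ch{}$, I would argue as follows. Fix a small category $I$ and a functor $X\colon I\to\simpl$, $i\mapsto X(i)$, with colimit $Y=\limind_I X(i)$. Since colimits of simplicial sets are computed degreewise (presheaf category), for each $n\geq 0$ we have $Y_n=\limind_I X(i)_n$ in $\ens$. The free abelian group functor $\mathbb{Z}[-]\colon\ens\to\mathsf{Ab}$ is a left adjoint, hence commutes with colimits, so $(\Ch{Y})_n=\mathbb{Z}[Y_n]=\limind_I\mathbb{Z}[X(i)_n]=\limind_I(\Ch{X(i)})_n$ in $\mathsf{Ab}$, compatibly with the differentials and the augmentation; thus the underlying augmented complex of $\Ch{Y}$ is the colimit, in the category of augmented complexes, of the underlying augmented complexes of the $\Ch{X(i)}$. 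It remains to check the positivity submonoids match. By~\ref{limindADC}, the submonoid $(\limind_I\Ch{X(i)})^*_n$ is the submonoid of $\mathbb{Z}[Y_n]$ generated by the union of the images of the $(\Ch{X(i)})^*_n=\mathbb{N}[X(i)_n]$. Since each $X(i)_n\to Y_n$ is one of the structural maps of the colimit $Y_n=\limind_I X(i)_n$ and these maps are jointly surjective, the images of the free commutative monoids $\mathbb{N}[X(i)_n]$ generate exactly $\mathbb{N}[Y_n]=(\Ch{Y})^*_n$. Hence the canonical comparison morphism $\limind_I\Ch{X(i)}\to\Ch{Y}$ is an isomorphism of complexes dirig\'es augment\'es, which is the claim for $\Ch{}$.

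For $\Chdeg{}$ and $\Chnorm{}$, I would use that a morphism of simplicial sets sends degenerate simplices to degenerate simplices and (for a surjective map between colimit pieces) reflects degeneracy in the relevant sense, so that degeneracy is compatible with colimits: a simplex of $Y_n$ is degenerate iff it comes from a degenerate simplex of some $X(i)_n$. More precisely, since $Y_n=\limind_I X(i)_n$ and degeneracies in $Y$ are computed from those of the $X(i)$, one gets $\sdeg{Y}{n}=\bigcup_i\,\mathrm{im}\bigl(\sdeg{X(i)}{n}\to Y_n\bigr)$ and likewise a colimit description of $\sndeg{Y}{n}$. Applying $\mathbb{Z}[-]$ degreewise again, $(\Chdeg{Y})_n=\limind_I(\Chdeg{X(i)})_n$ and $(\Chnorm{Y})_n=\limind_I(\Chnorm{X(i)})_n$, with matching differentials, augmentation, and positivity submonoids by the same generation argument as above; alternatively, one can observe that $\Chnorm{}$ is the cokernel (in $\ADC$, using the quotient construction~\ref{quotientADC}) of the monomorphism $\Chdeg{}\hookrightarrow\Ch{}$, and that cokernels are colimits, so $\Chnorm{}$ commutes with inductive limits once $\Ch{}$ and $\Chdeg{}$ do and the monomorphism is compatible with colimits.

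**Main obstacle.** The only genuinely delicate point is the bookkeeping of degenerate simplices: one must be careful that $\sdeg{Y}{n}$ is the union of the images of the $\sdeg{X(i)}{n}$ rather than merely containing it, i.e. that a simplex of the colimit which is degenerate must already be "degenerate at some stage" of the diagram. This is where a small argument using that colimits in $\simpl$ are computed pointwise and that the $Y_n$'s are jointly generated by the $X(i)_n$'s is needed; everything else is a routine transport of the adjunction "$\mathbb{Z}[-]$ preserves colimits" through the degreewise description together with the explicit formula~\ref{limindADC} for colimits in $\ADC$. Using the cokernel presentation of $\Chnorm{}$ sidesteps most of this for the normalized functor, reducing the whole statement essentially to the case of $\Ch{}$ and $\Chdeg{}$.
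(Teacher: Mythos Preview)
Your overall architecture matches the paper's exactly: treat $\Ch{}$ directly via the free abelian group/free commutative monoid functors and the description of colimits in $\ADC$ from~\ref{limindADC}; deduce $\Chnorm{}$ from $\Ch{}$ and $\Chdeg{}$ via the quotient construction~\ref{quotientADC}. For $\Ch{}$ and for the reduction of $\Chnorm{}$, your arguments are essentially identical to the paper's.

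The one place where your proposal falls short is precisely the point you flag as the ``main obstacle'', namely the $\Chdeg{}$ case. Your claim that $\sdeg{Y}{n}=\bigcup_i\operatorname{im}\bigl(\sdeg{X(i)}{n}\to Y_n\bigr)$ is correct and easy (degeneracies are preserved by simplicial maps, and every $y'\in Y_{n-1}$ lifts to some $X(i)_{n-1}$), but this only gives \emph{surjectivity} of the comparison map $\limind_I\sdeg{X(i)}{n}\to\sdeg{Y}{n}$. To conclude $(\Chdeg{Y})_n=\limind_I(\Chdeg{X(i)})_n$ you need this comparison to be a \emph{bijection}, i.e.\ you need the natural map $\limind_I\sdeg{X(i)}{n}\to\limind_I X(i)_n=Y_n$ to be injective. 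Since colimits in $\ens$ do not preserve monomorphisms, the levelwise inclusions $\sdeg{X(i)}{n}\hookrightarrow X(i)_n$ are not enough by themselves, and a zigzag in $\limind_I X(i)_n$ identifying two degenerate simplices may well pass through non-degenerate ones. Your sentence ``a small argument using that colimits in $\simpl$ are computed pointwise\dots\ is needed'' does not supply this.

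The paper closes this gap cleanly via the Eilenberg-Zilber lemma: letting $I$ be the full subcategory of $\smp{n}\backslash\cats$ on the non-identity epimorphisms $\pi:\smp{n}\to\smp{i}$, one has a functorial bijection
\[
\sdeg{X}{n}\ \simeq\ \limind_{(\smp{i},\pi)\in I} X_i\ .
\]
Thus $X\mapsto\sdeg{X}{n}$ is itself a colimit of evaluation functors $X\mapsto X_i$, each of which preserves colimits; commutation of colimits with colimits then gives the result immediately. This is the missing ingredient in your sketch.
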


\begin{proof}
Comme les foncteurs \og groupe abélien libre\fg{} et \og monoïde commutatif libre\fg{} commutent aux limites inductives, l'assertion concernant $\Ch{}$ résulte de la description des limites inductives dans $\ADC$ (cf.~\ref{limindADC}). L'assertion concernant le foncteur $\Chdeg$ se démontre de la même façon en remarquant que pour tout $n\geq0$, le foncteur associant à un ensemble simplicial $X$ l'ensemble $\sdeg{X}{n}$ de ses $n$\nbd-simplexes dégénérés commute aux limites inductives. En effet, il résulte~facilement du lemme d'Eilenberg-Zilber~\cite[8.3]{EZ} ou~\cite[chapitre II, 3.1]{GZ} que si l'on note $I$ la sous-catégorie pleine de la catégorie $\mc{\smp{n}}{\cats}$ des objets de $\cats$ sous $\smp{n}$ formée des couples $(\smp{i},\pi:\smp{n}\to\smp{i})$ tels que $\pi$ soit un épimorphisme de $\cats$ qui n'est pas une identité, alors on a une bijection fonctorielle
$$\sdeg{X}{n}\ \simeq\limind_{(\smp{i},\pi)\in I}\kern -5pt X_i\ ,$$
et l'assertion résulte de la commutativité des limites inductives entre elles. Enfin, la commutativité du foncteur $\Chnorm{}$ aux limites inductives résulte de celle de $\Ch{}$ et $\Chdeg{}$, et de la propriété universelle des quotients (cf.~\ref{quotientADC}).
\end{proof}

\begin{paragr}\label{defnerfADC}
On définit un foncteur \emph{nerf} $\NerfADC:\ADC\to\simpl$ par
$$K\longmapsto(\smp{n}\mapsto\Hom_{\ADC}(\Chnorm{}(\smp{n}),K))\ .$$
Le foncteur $\Chnorm{}$  commutant aux limites inductives en vertu de la proposition précédente, on obtient un couple de foncteurs adjoints
$$\Chnorm{}:\simpl\to\ADC\ ,\qquad \NerfADC:\ADC\to\simpl\ .$$
\end{paragr}

\begin{paragr}\label{eqfADC}
On note $\Wsimpl$ la classe des équivalences faibles simpliciales, flèches de $\simpl$ dont la réalisation topologique est une équivalence d'homotopie. On rappelle que la catégorie localisée $\Wsimpl^{-1}\simpl$ est équivalente à la catégorie homotopique $\Hot$ des CW\nbd-complexes~\cite{Mil}. On dit qu'un morphisme $f$ de $\ADC$ est une \emph{équivalence faible} si $\NerfADC(f)$ est une équivalence faible simpliciale. On note $\WADC=\NerfADC^{-1}(\Wsimpl)$ la classe des équivalences faibles de $\ADC$. Ainsi le foncteur nerf $\NerfADC$ induit un foncteur entre les catégories localisées
$$\loc{\NerfADC}:\WADC^{-1}\ADC\toto\Wsimpl^{-1}\simpl\simeq\Hot\ .$$
\end{paragr}

\begin{conj}\label{thinair}
Le foncteur $\loc{\NerfADC}$ est une équivalence de catégories.
\end{conj}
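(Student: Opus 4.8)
The plan is to factor the conjecture through a single statement about the unit of the adjunction $(\Chnorm{},\NerfADC)$ of~\ref{defnerfADC}. Since $\Wsimpl$ is the class of weak equivalences of a model structure it is saturated, so a morphism of simplicial sets lies in $\Wsimpl$ as soon as its image in $\Wsimpl^{-1}\simpl\simeq\Hot$ is invertible; hence it suffices to prove that for \emph{every} simplicial set $X$ the unit $\eta_X\colon X\to\NerfADC\Chnorm{X}$ lies in $\Wsimpl$. Indeed, granting this, the naturality square of $\eta$ together with the two-out-of-three property shows that $\NerfADC\Chnorm{}$ preserves weak equivalences, hence $\Chnorm{}(\Wsimpl)\subseteq\WADC$, so the adjunction descends to an adjunction between $\loc{\Chnorm{}}\colon\Wsimpl^{-1}\simpl\to\WADC^{-1}\ADC$ and $\loc{\NerfADC}\colon\WADC^{-1}\ADC\to\Wsimpl^{-1}\simpl$. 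The unit of this localized adjunction at $X$ is the image of $\eta_X$, hence invertible, and its counit at a complex $K$ is the image of the counit $\varepsilon_K\colon\Chnorm{}\NerfADC(K)\to K$; but $\varepsilon_K\in\WADC$ because, by the triangle identity, $\NerfADC(\varepsilon_K)$ is a retraction of $\eta_{\NerfADC(K)}\in\Wsimpl$, so $\NerfADC(\varepsilon_K)\in\Wsimpl$ and $\varepsilon_K\in\NerfADC^{-1}(\Wsimpl)=\WADC$. Thus $\loc{\NerfADC}$ would be an equivalence.

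To attack the unit I would use the canonical isomorphism $\NerfADC\simeq\NStreet\Stnu$, which identifies the target of $\eta_X$ with the Street nerve of the strict $\infty$\nbd-category $\Stnu\Chnorm{X}$. For $X=\smp{n}$ one has $\Stnu\Chnorm{\smp{n}}=\Or{n}$, and $\eta_{\smp{n}}\colon\smp{n}\to\NStreet\Or{n}$ is a simplicial weak equivalence: the complex $\Chnorm{\smp{n}}$ carries a contraction (Section~6), so $\Or{n}$ admits a quasi-initial object by Theorem~\ref{thmcontraction} and is weakly contractible by the $\infty$\nbd-categorical Theorem~A of~\cite{DG3}, and both $\smp{n}$ and $\NStreet\Or{n}$ are then contractible. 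Corollary~\ref{corconde} already upgrades this to $\eta_X\in\Wsimpl$ whenever $X$ is the nerve of a poset (in which case moreover $\Stnu\Chnorm{X}\simeq\cStreet(X)$), so the remaining task is to propagate the statement from nerves of posets to arbitrary simplicial sets.

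I see two complementary routes for this propagation, together with a structural shortcut. The first route is by subdivision: for any simplicial set $X$ a suitable iterated barycentric subdivision $\mathrm{Sd}^{k}X$ is the nerve of a poset and the last-vertex map $\mathrm{Sd}^{k}X\to X$ is a natural simplicial weak equivalence, so $\eta_{\mathrm{Sd}^{k}X}\in\Wsimpl$; naturality of $\eta$ then reduces the problem to showing that $\NerfADC\Chnorm{}$ — equivalently, $\Stnu$ evaluated on complexes of the form $\Chnorm{Y}$ — sends this last-vertex map to a weak equivalence, and since after one further subdivision the simplicial sets in sight come from simplicial complexes, the $\infty$\nbd-categories involved are freely generated in the polygraphic sense and one may hope to build the required homotopy explicitly, in the spirit of the contraction machinery of Section~4. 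The second route is a skeletal induction along the cell attachments $\partial\smp{n}\hookrightarrow\smp{n}$ assembling $X$: the functor $\Chnorm{}$ preserves these pushouts by Proposition~\ref{complcommlimind}, so one is reduced to controlling the behaviour of $\NStreet\Stnu$ on the corresponding pushouts of polygraphic $\infty$\nbd-categories, for which the explicit formulas defining $\Stnu$ should again give enough leverage. Finally, one may instead try to deduce the conjecture from the two companion conjectures of the introduction, namely that $\loc{\Stnu}\colon\mathrm{Ho}(\ADC)\to\mathrm{Ho}(\ooCat)$ and $\loc{\NStreet}\colon\mathrm{Ho}(\ooCat)\to\mathrm{Ho}(\simpl)$ are equivalences, since $\loc{\NerfADC}\simeq\loc{\NStreet}\circ\loc{\Stnu}$.

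The hard part is precisely the step shared by the first two routes. Because $\NerfADC$ is a right adjoint it does \emph{not} commute with the colimits used to build a simplicial set out of simplices, nor with subdivision, so there is no formal reason for $\NerfADC\Chnorm{}$ — equivalently, for Steiner's functor $\Stnu$ restricted to the complexes $\Chnorm{Y}$ — to send cell attachments or last-vertex maps to weak equivalences. Establishing this is a genuine homotopy-invariance property of $\Stnu$, at present available only in the poset case treated here and tightly intertwined with the companion conjectures just mentioned; a complete proof would presumably demand a new homotopical input — for instance a tractable model structure on $\ADC$ whose cofibrant objects are the complexes admitting a unitary loop-free basis — going beyond the techniques developed in this article.
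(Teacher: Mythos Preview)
The statement you are trying to prove is labelled \emph{Conjecture} in the paper, and the paper offers no proof of it: it is presented as an open problem, supported only by the essential surjectivity argument given later (the corollary showing that every homotopy type is in the essential image of $\loc{\NerfADC}$). There is therefore no ``paper's own proof'' to compare against.

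Your proposal is not a proof either, and you say so yourself. The reduction to the unit $\eta_X\in\Wsimpl$ is correct and standard, and the verification for $X$ the nerve of a poset is exactly what the paper establishes. But the two propagation routes you sketch both founder at the same point you identify: neither $\NerfADC$ nor $\Stnu$ is known to interact well with the colimits or subdivisions needed, and the paper explicitly warns (in the remark preceding the conjecture on simplicial complexes) that the analogous statement already fails for $\cStr{n}$ with $n$ finite on boundaries $\partial\smp{p}$. Your third route, deducing the result from the companion conjectures on $\loc{\Stnu}$ and $\loc{\NStreet}$, is circular in practice since those are equally open. So what you have written is a reasonable discussion of why the conjecture is plausible and what the obstacles are, but it does not advance beyond what the paper already states; the conjecture remains open.
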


\section{Quelques propriétés des orientaux de Street}

\begin{paragr}\label{defor}
Soit $n\geq0$ un entier. Le \emph{$n$\nbd-ième oriental de Street} $\Or{n}$ est par définition la $\infty$\nbd-catégorie $\Stnu\ADCO{n}$ (cf.~\ref{defStlambda} et~\ref{exADCstandard}). Comme pour tout $p>n$, on a $\ADCO{n}_p=0$, l'oriental $\Or{n}$ est une $n$\nbd-catégorie. On vérifie facilement qu'on a des isomorphismes
$$\Or{0}\simeq\smp{0}\ ,\qquad\Or{1}\simeq\smp{1}\ ,$$
et que $\Or{2}$ est la $2$\nbd-catégorie
$$
\xymatrixrowsep{.31pc}
\xymatrixcolsep{2.3pc}
\xymatrix{
&&1\ar[rddddd]^{<(1,2)>}
\\
\\
&&
\\
\\
&&\ar@{=>}[uu]_{\alpha}
\\
&0\ar[rr]_{<(0,2)>}\ar[ruuuuu]^{<(0,1)>}
&&2
&\kern-20pt,\kern20pt
}$$
où
$$<(0,1)>\,=\begin{pmatrix}
(0)
&(0,1)
\cr
(1)
&(0,1)
\end{pmatrix},\ 
<(0,2)>=\begin{pmatrix}
(0)
&(0,2)
\cr
(2)
&(0,2)
\end{pmatrix},\ 
<(1,2)>=\begin{pmatrix}
(1)
&(1,2)
\cr
(2)
&(1,2)
\end{pmatrix} 
$$
et
$$
\alpha=<(0,1,2)>\,=\begin{pmatrix}
(0)
&(0,2)
&(0,1,2)
\cr
(2)
&(0,1)+(1,2)
&(0,1,2)
\end{pmatrix}\ .
$$
Pour une description explicite des $n$\nbd-catégories $\Or{n}$, pour $n\leq6$, voir~\cite{S1}.
\end{paragr}

\begin{paragr}
On note $u$ l'endomorphisme constant du complexe dirigé augmenté décent $\Chnorm{\smp{n}}$ défini par 
$$u_0:\ADCO{n}_0\toto\ADCO{n}_0\,,\qquad (i_0)\longmapsto(0)\,,\quad0\leq i_0\leq n\ .$$
Pour tout $p\geq0$, on définit un morphisme de groupes abéliens $$h_p:\ADCO{n}_p\to \ADCO{n}_{p+1}$$ par
$$h_p(i_0,i_1,\dots,i_p)=\left\{\begin{matrix}
(0,i_0,i_1,\dots,i_p)\,,
&&i_0>0\,,\cr
0\,,
&&i_0=0\,.
\end{matrix}\right.$$
\end{paragr}

\begin{prop}\label{hmtpsimpl}
La famille $(h_p)_{p\in\mathbb{N}}$ est une homotopie de carré nul de morphismes de complexes dirigés augmentés, de source l'endomorphisme constant $u$ de~$\Chnorm{\smp{n}}$ et de but l'endomorphisme identité, définissant ainsi une contraction du complexe dirigé augmenté $\Chnorm{\smp{n}}$ \emph{(cf.~remarque~\ref{remcontraction})}.
\end{prop}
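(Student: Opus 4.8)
The plan is to read the claim off Remark~\ref{remcontraction}: since $\Chnorm{\smp{n}}$ is décent, producing a contraction amounts to giving a constant endomorphism $f$ of $\Chnorm{\smp{n}}$ together with a square-zero homotopy of morphisms of augmented directed complexes from $f$ to the identity. Here $f$ will be $u$, which by~\ref{ADCcons} is the constant morphism of value $c_0=(0)\in\ADCO{n}^*_0$ (and indeed $\augm((0))=1$); so the only thing to do is to verify that the family $(h_p)_{p\geq0}$ satisfies conditions~(a) and~(b) of~\ref{ADChmt} together with the square-zero condition, that is: $h_p(\ADCO{n}^*_p)\subset\ADCO{n}^*_{p+1}$; $(d_{p+1}h_p+h_{p-1}d_p)(x)=x-u_p(x)$ for every $x\in\ADCO{n}_p$ (with the conventions $h_{-1}=0$, $d_0=0$); and $h_{p+1}h_p=0$.

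First I would dispose of the two easy conditions. Recall that $\ADCO{n}_p$ is free on the strictly increasing $(p+1)$-tuples $(i_0,\dots,i_p)$ in $\smp{n}$; each face $(i_0,\dots,\widehat{i}_k,\dots,i_p)$ of such a tuple is again strictly increasing, hence non-degenerate, so on a basis element the differential of $\Chnorm{\smp{n}}$ omits no term; and when $i_0>0$ the tuple $(0,i_0,\dots,i_p)$ is strictly increasing, hence a basis element of $\ADCO{n}_{p+1}$. Consequently $h_p$ sends each basis element either to $0$ or to a basis element, so it carries $\ADCO{n}^*_p$ into $\ADCO{n}^*_{p+1}$; and since the image of $h_p$ is spanned by tuples beginning with $0$, all of which $h_{p+1}$ annihilates, we get $h_{p+1}h_p=0$ (one checks likewise $h_0(c_0)=h_0((0))=0$, the remaining square-zero condition of~\ref{defcontraction}, which is automatic anyway by Proposition~\ref{existpsin2}).

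The substance of the proof is the homotopy identity, which I would check on a basis element $\sigma=(i_0,\dots,i_p)$, separating $p=0$ from $p\geq1$ and, within each, $i_0=0$ from $i_0>0$. For $p=0$ it is immediate: if $i_0=0$ both sides vanish, and if $i_0>0$ then $d_1h_0((i_0))=d_1((0,i_0))=(i_0)-(0)=(1-u_0)((i_0))$. For $p\geq1$ one has $u_p=0$, so the identity reads $d_{p+1}h_p(\sigma)+h_{p-1}d_p(\sigma)=\sigma$: if $i_0=0$, then $h_p(\sigma)=0$, and in $h_{p-1}d_p(\sigma)$ every face obtained by deleting some $i_k$ with $k\geq1$ still begins with $i_0=0$ and is killed by $h_{p-1}$, while the remaining term, from $k=0$, is $h_{p-1}((i_1,\dots,i_p))=(0,i_1,\dots,i_p)=\sigma$; if $i_0>0$, then expanding $d_{p+1}$ of $h_p(\sigma)=(0,i_0,\dots,i_p)$ yields the term $\sigma$ (deletion of the leading $0$) together with $\sum_{k=0}^{p}(-1)^{k+1}(0,i_0,\dots,\widehat{i}_k,\dots,i_p)$, whereas, every face of $\sigma$ beginning with a positive entry, $h_{p-1}d_p(\sigma)=\sum_{k=0}^{p}(-1)^{k}(0,i_0,\dots,\widehat{i}_k,\dots,i_p)$, and these two alternating sums cancel, leaving $\sigma$. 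With these verifications done, Remark~\ref{remcontraction} gives the conclusion that $(h_p)_{p\in\mathbb N}$ is a contraction of $\Chnorm{\smp{n}}$. I do not expect a genuine obstacle: the argument is a direct computation, and the only points deserving a moment's care are the sign bookkeeping in the telescoping cancellation of the case $i_0>0$ and the observation that one never leaves the set of non-degenerate simplices, so that the explicit formulas for $d$ and for $h_p$ in $\Chnorm{\smp{n}}$ apply term by term with no correction for degeneracies.
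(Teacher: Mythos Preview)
Your proof is correct and follows essentially the same approach as the paper's: you first dispose of the positivity and square-zero conditions by the same immediate observations, then verify the homotopy identity on basis elements by the same case split ($p=0$ versus $p\geq1$, and within each $i_0=0$ versus $i_0>0$), with the same telescoping cancellation in the nontrivial case.
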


\begin{proof}
Il est immédiat que pour tout $p\geq0$, on a $h_p(\ADCO{n}^*_p)\subset\ADCO{n}^*_{p+1}$ et $h_{p+1}h_p=0$. Il suffit donc de montrer:
\begin{itemize}
\item[a)] pour tout $x\in\ADCO{n}_0$, on a $d_1h_0(x)=x-\augm(x).(0)$;
\item[b)] pour tout $p$, $0<p\leq n$ et tout $x\in\ADCO{n}_p$, on a $(d_{p+1}h_p+h_{p-1}d_p)(x)=x$.
\end{itemize}
Pour prouver (\emph{a}) et (\emph{b}), il suffit de vérifier ces égalités pour $x$ un élément de la base de $\Chnorm{\smp{n}}$, ce qu'on va faire en omettant, conformément à l'usage, les indices de $h$ et de~$d$. Un élément $(i_0,\dots,i_p)$ de la base de $\Chnorm{\smp{n}}$ sera noté plus simplement $(i_0\dots i_p)$, ou en ne gardant que les virgules nécessaires pour éviter toute ambiguïté. 
\smallbreak

Pour $i_0>0$, on a les égalités $dh(i_0)=d(0,i_0)=(i_0)-(0)$ et pour $i_0=0$, les égalités $dh(i_0)=0=(0)-(0)$, ce qui prouve (\emph{a}).
\smallbreak

Pour $p>0$ et $0\leq i_0<i_1<\cdots<i_p\leq n$, si $i_0>0$, on a 
$$\begin{aligned}
(dh&+hd)(i_0\dots i_p)=d(0,i_0\dots i_p)+h\bigl(\textstyle\sum\limits_{0\leq k\leq p}(-1)^k(i_0\dots\widehat{i}_k\dots i_p)\bigr)\cr
&=(i_0\dots i_p)+\textstyle\sum\limits_{0\leq k\leq p}(-1)^{k+1}(0,i_0\dots\widehat{i}_k\dots i_p)+\sum\limits_{0\leq k\leq p}(-1)^k(0,i_0\dots\widehat{i}_k\dots i_p)\cr
&=(i_0\dots i_p)\ ,
\end{aligned}$$
et si $i_0=0$, on a 
$$\begin{aligned}
(dh+hd)(i_0\dots i_p)&=h\bigl(\textstyle\sum\limits_{0\leq k\leq p}(-1)^k(i_0\dots\widehat{i}_k\dots i_p)\bigr)\cr
&=(0,i_1\dots i_p)=(i_0\dots i_p)\ ,
\end{aligned}$$
ce qui prouve (\emph{b}).
\end{proof}

\begin{paragr}
On note $v$ l'endomorphisme constant du complexe dirigé augmenté décent $\Chnorm{\smp{n}}$ défini par 
$$v_0:\ADCO{n}_0\toto\ADCO{n}_0\,,\qquad (i_0)\longmapsto(n)\,,\quad0\leq i_0\leq n\ .$$
Pour tout $p\geq0$, on définit un morphisme de groupes abéliens $$h'_p:\ADCO{n}_p\to \ADCO{n}_{p+1}$$ par
$$
\begin{aligned}
&h'_0(i_0)=\textstyle\sum\limits_{i_0< k\leq n}(k-1,k)\ ,\cr
&h'_p(i_0i_1\dots i_p)=\textstyle\sum\limits_{i_0<k<i_1}(k-1,k,i_1\dots i_p)\ ,\quad p>0\ .
\end{aligned}$$
\end{paragr}

\begin{prop}\label{hmtpcompl}
La famille $(h'_p)_{p\in\mathbb{N}}$ définit une homotopie de carré nul de morphismes de complexes dirigés augmentés, de source l'endomorphisme identité de $\Chnorm{\smp{n}}$ et de but l'endomorphisme constant $v$, définissant ainsi une contraction duale du complexe dirigé augmenté $\Chnorm{\smp{n}}$ \emph{(cf.~remarque~\ref{remcontraction})}.
\end{prop}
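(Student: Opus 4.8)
The plan is to follow the proof of Proposition~\ref{hmtpsimpl} and to reduce the statement, via Remark~\ref{remcontraction}, to a handful of identities checked on basis elements. First I would verify that $v$ is a well-defined constant endomorphism of the decent complex $\Chnorm{\smp{n}}$, of value $c_0=(n)\in\ADCO{n}^*_0$: it is concentrated in degree~$0$, it satisfies $v_0 d_1=0$ since $d_1(i_0,i_1)=(i_1)-(i_0)\longmapsto(n)-(n)=0$, it sends positive chains to positive multiples of~$(n)$, and it commutes with the augmentation because $\augm((n))=1=\augm((i_0))$; so $v$ is the constant morphism of value~$(n)$. By Remark~\ref{remcontraction}, it then suffices to show that $(h'_p)_{p\in\mathbb{N}}$ is a homotopy of null square of morphisms of augmented directed complexes from $1_{\Chnorm{\smp{n}}}$ to~$v$, that is, that it satisfies, for a dual contraction, the conditions $(\cnb{**}_p)$, $(\cnb{*\kern -2pt*\kern -2pt*}_p)$ and $(\cnb{*}_p)$ of~\ref{defcontraction}.

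The conditions $(\cnb{**}_p)$ and $(\cnb{*\kern -2pt*\kern -2pt*}_p)$ are read off the formulas. Each $h'_p$ maps a basis element to a sum of basis elements, all with coefficient~$1$: the tuples $(k-1,k,i_1,\dots,i_p)$ with $i_0<k<i_1$, resp.\ $(k-1,k)$ with $i_0<k\leq n$, are strictly increasing with entries in $\{0,\dots,n\}$, since $i_0<k$ forces $k\geq1$ and hence $k-1\geq0$; thus $h'_p(\ADCO{n}^*_p)\subset\ADCO{n}^*_{p+1}$. For the null-square condition, observe that every basis element occurring in the image of $h'_p$ begins with two \emph{consecutive} integers $k-1,k$; applying $h'_{p+1}$ to such a tuple requires summing over the integers strictly between $k-1$ and~$k$, of which there are none, so $h'_{p+1}h'_p=0$.

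There remains the homotopy identity, which for a dual contraction with value $(n)$ amounts to $d_1h'_0(x)=\augm(x)\,(n)-x$ in degree~$0$ (this is $(\cnb{*}_0)$), and to $(d_{p+1}h'_p+h'_{p-1}d_p)(x)=-x$ for $p>0$ (these are the $(\cnb{*}_p)$, which here read $(d_{p+1}h'_p+h'_{p-1}d_p)(x)=v_p(x)-x$ with $v_p=0$). Both are linear, hence may be checked on basis elements. In degree~$0$, $d_1h'_0(i_0)=\sum_{i_0<k\leq n}\bigl((k)-(k-1)\bigr)$ telescopes to $(n)-(i_0)$, as required. For $p\geq1$, I would substitute the explicit boundary of $(k-1,k,i_1,\dots,i_p)$ into $d_{p+1}h'_p(i_0,\dots,i_p)$: its two outer faces telescope over $i_0<k<i_1$ into $(i_1-1,i_1,\dots,i_p)-(i_0,\dots,i_p)$, while its remaining faces (deleting one of $i_1,\dots,i_p$) give further sums. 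Expanding $h'_{p-1}d_p(i_0,\dots,i_p)$ similarly, the difference of its first two terms produces $-(i_1-1,i_1,\dots,i_p)$ together with a sum cancelling the face of $d_{p+1}h'_p$ that deletes $i_1$, whereas the terms in which some $i_m$ with $m\geq2$ is deleted appear with opposite signs in the two expressions; everything cancels except $-(i_0,\dots,i_p)$, as wanted. The case $p=1$ has to be run on its own, since $h'_0$ is not given by the $p\geq1$ formula: there $d_2h'_1(i_0,i_1)=(i_1-1,i_1)-(i_0,i_1)+\sum_{i_0<k<i_1}(k-1,k)$ combines with $h'_0d_1(i_0,i_1)=-\sum_{i_0<k\leq i_1}(k-1,k)$ to give $-(i_0,i_1)$.

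The only genuine difficulty is the bookkeeping in this last telescoping argument; there is no conceptual obstacle, and the whole proof runs parallel to that of Proposition~\ref{hmtpsimpl}. It is worth recording, as announced in the introduction, that $h'$ is \emph{not} obtained from the contraction~$h$ of Proposition~\ref{hmtpsimpl} by duality, nor by reversing the order of the vertices of~$\smp{n}$, although, like~$h$, it is of simplicial origin.
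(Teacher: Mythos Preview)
Your proposal is correct and follows essentially the same approach as the paper's own proof: reduce via Remark~\ref{remcontraction} to the positivity, null-square, and homotopy identities, dispatch the first two as immediate, and verify the homotopy identity on basis elements by a telescoping computation, treating $p=0$, $p=1$, and $p>1$ separately. You add a few words justifying that $v$ is a constant endomorphism and explaining why $h'_{p+1}h'_p=0$ (images begin with consecutive integers), but the core argument is identical.
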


\begin{proof}
Il est immédiat que pour tout $p\geq0$, on a $h'_p(\ADCO{n}^*_p)\subset\ADCO{n}^*_{p+1}$ et $h'_{p+1}h'_p=0$. Il suffit donc de montrer:
\begin{itemize}
\item[a)] pour tout $x\in\ADCO{n}_0$, on a $d_1h'_0(x)=\augm(x).(n)-x$;
\item[b)] pour tout $p$, $0<p\leq n$ et tout $x\in\ADCO{n}_p$, on a $(d_{p+1}h'_p+h'_{p-1}d_p)(x)=-x$.
\end{itemize}
Pour prouver (\emph{a}) et (\emph{b}), il suffit de vérifier ces égalités pour $x$ un élément de la base de $\Chnorm{\smp{n}}$, ce qu'on va faire en omettant, comme dans la preuve précédente, les indices de $h'$ et de~$d$. Un élément $(i_0,\dots,i_p)$ de la base de $\Chnorm{\smp{n}}$ sera également noté $(i_0\dots i_p)$, ou en ne gardant que les virgules nécessaires pour éviter toute ambiguïté.
\smallbreak
Pour $0\leq i_0\leq n$, on a
$$
\begin{aligned}
dh'(i_0)&=d\bigl(\textstyle\sum\limits_{i_0< k\leq n}(k-1,k)\bigr)\cr
&=\textstyle\sum\limits_{i_0< k\leq n}(k)-\textstyle\sum\limits_{i_0< k\leq n}(k-1)=(n)-(i_0)\ ,
\end{aligned}$$
ce qui prouve (\emph{a}).
\smallbreak

Pour $0\leq i_0<i_1\leq n$, on a
$$
\begin{aligned}
(dh'+h'd)(i_0i_1)&=d\bigl(\textstyle\sum\limits_{i_0<k<i_1}(k-1,k,i_1)\bigr)+h'(i_1-i_0)\cr
\noalign{\vskip 5pt}
&=\textstyle\sum\limits_{i_0<k<i_1}(k,i_1)-\textstyle\sum\limits_{i_0<k<i_1}(k-1,i_1)+\textstyle\sum\limits_{i_0<k<i_1}(k-1,k)\cr
&{}\kern 72pt+\textstyle\sum\limits_{i_1< k\leq n}(k-1,k)-\textstyle\sum\limits_{i_0< k\leq n}(k-1,k)\cr
\noalign{\vskip 5pt}
&=(i_1-1,i_1)-(i_0,i_1)-(i_1-1,i_1)=-(i_0,i_1)\ ,
\end{aligned}$$
ce qui prouve (\emph{b}), dans le cas $p=1$.
\smallbreak

Pour $p>1$ et $0\leq i_0<i_1<\cdots<i_p\leq n$, on a
$$
\begin{aligned}
d&h'(i_0i_1\dots i_p)=d\bigl(\textstyle\sum\limits_{i_0<k<i_1}(k-1,k,i_1\dots i_p)\bigr)\cr
&=\kern -8pt\textstyle\sum\limits_{i_0<k<i_1}\kern -6pt(k,i_1\dots i_p)-\kern -9pt\textstyle\sum\limits_{i_0<k<i_1}\kern -6pt(k-1,i_1\dots i_p)+\kern -8pt\textstyle\sum\limits_{i_0<k<i_1}\sum\limits_{1\leq l\leq p}\kern -4pt(-1)^{l+1}(k-1,k,i_1\dots\widehat{i}_l\dots i_p)\cr
&=(i_1-1,i_1\dots i_p)-(i_0,i_1\dots i_p)+\textstyle\sum\limits_{1\leq l\leq p}(-1)^{l+1}\kern -4pt\sum\limits_{i_0<k<i_1}(k-1,k,i_1\dots\widehat{i}_l\dots i_p)
\end{aligned}$$
et
$$
\begin{aligned}
h'd(i_0i_1\dots i_p)&=h'\bigl(\textstyle\sum\limits_{0\leq l\leq p}(-1)^l(i_0i_1\dots\widehat{i}_l\dots i_p)\bigr)\cr
\noalign{\vskip 5pt}
&=\textstyle\sum\limits_{i_1<k<i_2}(k-1,k,i_2\dots i_p)-\textstyle\sum\limits_{i_0<k<i_2}(k-1,k,i_2\dots i_p)\cr
&{}\kern 72pt+\textstyle\sum\limits_{2\leq l\leq p}(-1)^{l}\kern -4pt\sum\limits_{i_0<k<i_1}(k-1,k,i_1\dots\widehat{i}_l\dots i_p)\cr
\noalign{\vskip 5pt}
&=-\kern -4pt\textstyle\sum\limits_{i_0<k\leq i_1}(k-1,k,i_2\dots i_p)+\kern -4pt\textstyle\sum\limits_{2\leq l\leq p}(-1)^{l}\kern -4pt\sum\limits_{i_0<k<i_1}(k-1,k,i_1\dots\widehat{i}_l\dots i_p)\cr
\noalign{\vskip 5pt}
&=-(i_1-1,i_1,i_2\dots i_p)+\textstyle\sum\limits_{1\leq l\leq p}(-1)^{l}\kern -4pt\sum\limits_{i_0<k<i_1}(k-1,k,i_1\dots\widehat{i}_l\dots i_p)
\end{aligned}$$
et par suite
$$(dh'+h'd)(i_0i_1\dots i_p)=-(i_0i_1\dots i_p)\ ,$$
ce qui prouve (\emph{b}), dans le cas $p>1$.
\end{proof}
\smallbreak

\begin{thm}\label{bigchief}
L'oriental de Street $\Or{n}$ satisfait aux propriétés suivantes :
\begin{itemize}
\item[\emph{a)}] L'objet $(0)$ de $\Or{n}$ est un objet quasi-initial et $(n)$ un objet quasi-final.
\item[\emph{b)}] Si $n\geq1$, les $1$\nbd-flèches
$$\begin{pmatrix}
(0)
&(0,n)
\cr
(n)
&(0,n)
\end{pmatrix}
\qquad\hbox{et}\qquad
\begin{pmatrix}
(0)
&(0,1)+(1,2)+\cdots+(n-1,n)
\cr
(n)
&(0,1)+(1,2)+\cdots+(n-1,n)
\end{pmatrix}
$$
de $\Or{n}$ sont respectivement un objet quasi-initial et un objet quasi-final de la $(n-1)$\nbd-catégorie $\sHom_{\Or{n}}((0),(n))$.
\item[\emph{c)}] Si $n\geq2$, les $2$\nbd-flèches
$$\begin{pmatrix}
(0)
&(0,n)
&(0,1,2)+(0,2,3)+\cdots+(0,n-1,n)
\cr
(n)
&(0,1)+(1,2)+\cdots+(n-1,n)
&(0,1,2)+(0,2,3)+\cdots+(0,n-1,n)
\end{pmatrix}
$$
et
$$\begin{pmatrix}
(0)
&(0,n)
&(0,1,n)+(1,2,n)+\cdots+(n-2,n-1,n)
\cr
(n)
&(0,1)+(1,2)+\cdots+(n-1,n)
&(0,1,n)+(1,2,n)+\cdots+(n-2,n-1,n)
\end{pmatrix}
$$
de $\Or{n}$ sont respectivement un objet quasi-initial et un objet quasi-final de la $(n-2)$\nbd-catégorie 
$$\sHom_{\Or{n}}\Biggl(
\begin{pmatrix}
(0)
&(0,n)
\cr
(n)
&(0,n)
\end{pmatrix}
\,,\,
\begin{pmatrix}
(0)
&(0,1)+(1,2)+\cdots+(n-1,n)
\cr
(n)
&(0,1)+(1,2)+\cdots+(n-1,n)
\end{pmatrix}
\Biggr)\ .$$
\end{itemize}
\end{thm}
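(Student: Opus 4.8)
The plan is to derive everything from Theorem~\ref{thmcontraction}, applied to the complexe dirig\'e augment\'e $\ADCO{n}=\Chnorm{\smp{n}}$, for which $\Stnu\ADCO{n}=\Or{n}$ and $\ADCO{n}_p=0$ for $p>n$, using the contraction $h=(h_p)_{p\geq0}$ of Proposition~\ref{hmtpsimpl} and the contraction duale $h'=(h'_p)_{p\geq0}$ of Proposition~\ref{hmtpcompl}. Point~(a) is then immediate: the constant endomorphism associated with $h$ is $u$, of value $(0)$, and the one associated with $h'$ is $v$, of value $(n)$, so that $c_0=h_{-1}(1)$ equals $(0)$ in the first case and $(n)$ in the second; the precise form of Theorem~\ref{thmcontraction} yields that $(0)$ is quasi-initial and $(n)$ is quasi-final in $\Or{n}$.

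For (b) and (c) I would use the remaining assertion of Theorem~\ref{thmcontraction}: for $0\leq i\leq n$, an $h_i$-n\'egligeable (resp.\ $h'_i$-n\'egligeable) $i$-cell $\cellule{x}$ of $\Or{n}$ is a quasi-initial (resp.\ quasi-final) object of $\Par_{\Or{n}}(\cellule{x})$, and for $i>0$ one has $\Par_{\Or{n}}(\cellule{x})=\sHom_{\Or{n}}(s(\cellule{x}),t(\cellule{x}))$ by~\ref{defcelparal}. Hence it suffices, for each of the four tableaux listed in (b) and (c), to recognize it as a cell of $\Or{n}=\Stnu\ADCO{n}$, to compute its iterated source and target via the formulas of~\ref{defStnu}, and to check the relevant negligibility equality $h_i(x_i)=0$ or $h'_i(x_i)=0$, where $x_i$ is the common top entry of the tableau. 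The source/target bookkeeping is routine: the two $1$-cells of (b) have source $(0)$ and target $(n)$, and the two $2$-cells of (c) have, as $2$-cells, source the first $1$-cell of (b) and target the second, so they lie in the $(n-2)$-category appearing in (c). The negligibility checks are one-liners: $h_1$ and $h_2$ annihilate any tuple whose first entry is $0$, hence $h_1((0,n))=0$ and $h_2\bigl(\sum_{2\leq k\leq n}(0,k-1,k)\bigr)=0$; dually $h'_1$ and $h'_2$ annihilate any tuple whose first two entries are consecutive integers, hence $h'_1\bigl(\sum_{1\leq k\leq n}(k-1,k)\bigr)=0$ and $h'_2\bigl(\sum_{1\leq k\leq n-1}(k-1,k,n)\bigr)=0$.

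Even the verification that the tableaux are cells can be avoided: they are exactly the cells $\cellule{z}$ produced by the lemmas behind Theorem~\ref{thmcontraction}. Starting from $c_0=(0)$, Lemma~\ref{lemmeclef2} with $y_0=(n)$ produces an $h_1$-n\'egligeable $1$-cell with top entry $h_0((n))=(0,n)$, i.e.\ the first $1$-cell of (b); applying Lemma~\ref{lemmeclef1} with $i=1$ to this $1$-cell and to the second $1$-cell of (b) taken as the parallel cell $\cellule{y}$ produces an $h_2$-n\'egligeable $2$-cell with top entry $h_1\bigl(\sum_{1\leq k\leq n}(k-1,k)\bigr)=\sum_{2\leq k\leq n}(0,k-1,k)$, i.e.\ the first $2$-cell of (c). Dually, starting from $c_0=(n)$, the other forms of Lemma~\ref{lemmeclef2} (with $y_0=(0)$) and then of Lemma~\ref{lemmeclef1} with $i=1$ (applied to the second, $h'_1$-n\'egligeable, $1$-cell of (b), with the first as $\cellule{y}$) produce the second $1$-cell of (b), of top entry $h'_0((0))=\sum_{1\leq k\leq n}(k-1,k)$, and the second $2$-cell of (c), of top entry $h'_1((0,n))=\sum_{1\leq k\leq n-1}(k-1,k,n)$. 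So Propositions~\ref{existpsin1} and~\ref{existpsin2}, i.e.\ Theorem~\ref{thmcontraction}, apply to these cells and give (b) and (c). The only work is this bookkeeping; I do not expect any genuine obstacle, since all the substance is already contained in Theorem~\ref{thmcontraction} and Propositions~\ref{hmtpsimpl} and~\ref{hmtpcompl}.
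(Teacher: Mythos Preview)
Your proposal is correct and follows essentially the same approach as the paper: apply Theorem~\ref{thmcontraction} to $\ADCO{n}$ with the contraction $h$ of Proposition~\ref{hmtpsimpl} and the contraction duale $h'$ of Proposition~\ref{hmtpcompl}, and check the negligibility identities for the relevant top entries. The paper's proof records exactly the same vanishing computations $h(0)=h(0,n)=h\bigl(\sum(0,k-1,k)\bigr)=0$ and $h'(n)=h'\bigl(\sum(k-1,k)\bigr)=h'\bigl(\sum(k-1,k,n)\bigr)=0$, while your additional observation that the tableaux in (b) and (c) are precisely the cells produced by Lemmas~\ref{lemmeclef2} and~\ref{lemmeclef1} is a pleasant bonus that the paper leaves implicit.
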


\begin{proof}
En vertu de la remarque~\ref{remcontraction}, le théorème est conséquence immédiate du théorème~\ref{thmcontraction} et des propositions~\ref{hmtpsimpl} et \ref{hmtpcompl}, vu que
$$h(0)=0\,,\quad h(0,n)=0\,,\quad h\bigl((0,1,2)+(0,2,3)+\dots+(0,n-1,n)\bigr)=0$$
et
$$\begin{aligned}
&h'(n)=0\,,\quad h'\bigl((0,1)+(1,2)+\cdots+(n-1,n)\bigr)=0\,,\cr
&h'\bigl((0,1,n)+(1,2,n)+\cdots+(n-2,n-1,n)\bigr)=0\,,
\end{aligned}$$
où $h$ et $h'$ désignent les homotopies des propositions~\ref{hmtpsimpl} et \ref{hmtpcompl}.
\end{proof}

\begin{rem}\label{remthin}
Si $0\leq i<j\leq n$ et si l'on pose $m=j-i$, l'application $k\mapsto k+i$ induit une inclusion de $\Or{m}\to\Or{n}$ telle que pour tout couple $x,y$ de $p$\nbd-cellules parallèles de $\Or{m}$, $p\geq0$, l'inclusion $\sHom_{\Or{m}}(x,y)\to\sHom_{\Or{n}}(x,y)$ soit un isomorphisme. En appliquant le théorème précédent à $\Or{m}$, on en déduit que la $(n-1)$\nbd-catégorie $\sHom_{\Or{n}}((i),(j))$ admet à la fois un objet quasi-initial et un objet quasi-final, $(i)$ étant un objet quasi-initial et $(j)$ un objet quasi-final. 
\end{rem}

\begin{conj}
Pour tout couple parallèle de $i$\nbd-cellules $x,y$ de $\Or{n}$, $0\leq i<n$, si la \hbox{$(n-i-1)$}\nbd-catégorie $\sHom_{\Or{n}}(x,y)$ est non vide, alors elle admet un objet quasi-initial et un objet quasi-final.
\end{conj}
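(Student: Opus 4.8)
The plan is to bootstrap from Theorem~\ref{bigchief} together with the contraction machinery of Section~4. Since $\Or{n}=\Stnu(\ADCO{n})$ with $\ADCO{n}_p=0$ for $p>n$, all the $\sHom$-$\infty$-categories in play are finite-dimensional, so by Theorem~\ref{thmcontraction} it is enough, for each of them, to exhibit a contraction (which yields the quasi-initial object) and a contraction duale (which yields the quasi-final one); as the introduction emphasizes, the two are of genuinely distinct simplicial origin, so I would run both in parallel and below only describe the quasi-initial half. The case $i=0$ is settled at once: if $\sHom_{\Or{n}}((a),(b))\neq\varnothing$ there is a positive $1$-chain with boundary $(b)-(a)$, which forces $a\leq b$, and then $\sHom_{\Or{n}}((a),(b))$ has a quasi-initial and a quasi-final object by Remark~\ref{remthin} (it is the terminal $\infty$-category, hence trivially so, if $a=b$). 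From now on $i\geq1$.

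For $i\geq1$ the strategy is to reduce $\sHom_{\Or{n}}(x,y)$, for arbitrary parallel $i$-cells, to a short list of \emph{standard} $\sHom$-$\infty$-categories of orientals that are already under control. Choose $z\in\Hom_{\Or{n}}(x,y)$, so that $\sHom_{\Or{n}}(x,y)=\Par_{\Or{n}}(z)$ (cf.~\ref{defcelparal}); unwinding the description~\ref{defStnu} of $\Stnu$, an $(i+1+j)$-cell of $\Or{n}$ with $i$-cell source $x$ and $i$-cell target $y$ is a table whose first $i+1$ columns are frozen (those of $x$ and $y$, with common $i$-th column $(x_i,y_i)$) and whose remaining columns form a $\Stnu$-type tower over the affine relation $d_{i+1}(w)=y_i-x_i$. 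For $i=1$ the chain $x_1$ is a directed path between its two endpoints (a positive $1$-chain with a vertex-difference boundary is a strictly increasing sequence of vertices), and its gaps cut $\{0,\dots,n\}$ into blocks; the tower then splits over those blocks, and $\sHom_{\Or{n}}(x,y)$ becomes a finite product of intelligent truncations of $\sHom$-$\infty$-categories of the form $\sHom_{\Or{m}}(\langle(0,m)\rangle,S)$, where $S$ is a directed path from $(0)$ to $(m)$ (the portion of $y$ inside a block). Each such factor is isomorphic, after relabelling the vertices of $S$ as $0,1,\dots$, to the $\sHom$-$\infty$-category between the long edge and the full spine in $\Or{\card(S)-1}$, which by Theorem~\ref{bigchief}(c) carries a quasi-initial object; Paragraph~\ref{prodpseudo} together with Proposition~\ref{tronqpseudofinal} then finishes the $i=1$ case, the quasi-final half being symmetric via Theorem~\ref{bigchief}(c).

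For $i\geq2$ I would push the same scheme through by induction on $n$. The goal is to show that the affine tower above is governed by an auxiliary augmented directed complex $L=L_{x,y,z}$ --- in positive degrees $L_j=\ADCO{n}_{i+1+j}$ with the induced differential, and $L_0$ the affine slice $\{w\in\ADCO{n}^*_{i+1}\mid d_{i+1}(w)=y_i-x_i\}$ rigidified by the base point $z_{i+1}$, the augmentation and positivity being transported from $\ADCO{n}$ --- such that $L$ again admits a unitary loop-free base (placing it within Steiner's theory), $\Stnu(L)\simeq\sHom_{\Or{n}}(x,y)$ up to intelligent truncation, and $L$ decomposes, or reduces through sub-orientals, into standard pieces each falling under Theorem~\ref{bigchief} or being a strictly smaller instance of the conjecture. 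The contraction and contraction duale of $L$, and of each piece, should then be obtained by degree-shifting and restricting the simplicial homotopies $h$, $h'$ of Propositions~\ref{hmtpsimpl} and~\ref{hmtpcompl}: the conditions of~\ref{defcontraction} are formal from those for $h$, $h'$, except the homotopy identity, which is the degree-shifted identity already verified there. The main obstacle is exactly this structural step for $i\geq2$: when $i=1$ the datum $x_1$ has transparent one-dimensional geometry and the block decomposition is immediate, but for higher $i$ the positive chains $x_i,y_i,z_{i+1}$ have no such shape, and proving that the slice $L$ really belongs to the subcategory of complexes with a unitary loop-free base --- equivalently, that $\sHom_{\Or{n}}(x,y)$ is again assembled out of orientals --- seems to require a genuine combinatorial analysis of the cells of orientals. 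I expect this is why the conjecture is established only for $i\leq1$ in the text.
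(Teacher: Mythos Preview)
This statement is labelled a \emph{conjecture} in the paper and is not proved in full generality; only the cases $i=0$ (Remark~\ref{remthin}) and $i=1$ (Theorem~\ref{thconjor1}, proved in Appendix~\ref{AppA}) are established. Your proposal reflects this correctly: you argue for $i\le 1$, sketch a speculative programme for $i\ge 2$, and explicitly flag the latter as the open part.

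For $i\le 1$ your route coincides with the paper's. The $i=1$ block decomposition along the gaps of the source $1$-cell and the appeal to Theorem~\ref{bigchief}(c) is exactly Proposition~\ref{oncasse} (whose proof rests on the technical Lemmas~\ref{avantdecasser} and~\ref{oncommenceacasser}) combined with Proposition~\ref{incliso} and~\ref{prodpseudo}. Two minor corrections. First, the paper applies Proposition~\ref{incliso} \emph{before} decomposing: one passes to $\OrCS{S}\simeq\Or{\card(S)-1}$, where $S$ is the vertex set of the target $1$-cell $y$, so that $y$ becomes the full spine, and only then invokes Proposition~\ref{oncasse}; your order (decompose first, then relabel each block) is morally the same but would require reproving the block splitting for a non-spine target. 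Second, no intelligent truncations enter: Proposition~\ref{oncasse} gives an honest isomorphism with the product, so the reference to Proposition~\ref{tronqpseudofinal} is superfluous here.

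For $i\ge 2$ the paper attempts nothing, and your programme is not a proof. The auxiliary object $L$ you describe is not obviously an augmented directed complex in Steiner's sense: the affine slice $\{w\in\ADCO{n}^*_{i+1}\mid d_{i+1}w=y_i-x_i\}$ is not a subgroup, and after translating by $z_{i+1}$ there is no reason for the induced positivity submonoid to be freely generated by a basis, let alone a unitary loop-free one. Nor is it clear that the simplicial homotopies $h,h'$ of Propositions~\ref{hmtpsimpl} and~\ref{hmtpcompl} preserve such a slice after degree-shifting. As you note yourself, the one-dimensional fact that a positive $1$-chain with vertex-difference boundary is a directed path has no evident analogue in higher degree, and this is precisely the point at which the conjecture remains open.
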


\begin{rem}
Pour $i=1$ cette conjecture résulte du théorème~\ref{thconjor1}.
\end{rem}

\section{Le nerf de Street et les équivalences faibles \pdfinfty-catégoriques}\label{nerfStreet}

\begin{paragr}\label{defnerfStreet}
La restriction à $\cats$ du foncteur composé
$$\xymatrix{
\simpl\ar[r]^{\Chnorm{}}
&\ADC\ar[r]^{\Stnu}
&\nCat{\infty}
}
\ ,\quad \smp{n}\longmapsto\Or{n}$$
définit un objet cosimplicial de $\nCat{\infty}$. Par le procédé de Kan, on en déduit un couple de foncteurs adjoints
$$\cStr{\infty}:\simpl\toto\nCat{\infty}\ ,\quad\NStr{\infty}:\nCat{\infty}\toto\simpl\ ,$$
$\cStreet$ étant l'unique foncteur, à isomorphisme unique près, commutant aux limites inductives et dont la restriction à $\cats$ coïncide avec celle de $\Stnu\Chnorm{}$, et $\NStreet$ étant défini par
$$C\,\longmapsto\,\bigl(\smp{n}\mapsto\Hom_{\ooCat}(\Or{n},C)\bigr)\ .$$
\end{paragr}

\begin{paragr}\label{objnerfadj}
Vu la description des orientaux $\Or{n}$ pour $0\leq n\leq2$ (cf.~\ref{defor}), pour toute $\infty$\nbd-catégorie $C$, les $0$\nbd-simplexes de l'ensemble simplicial $\NStr{\infty}(C)$ s'identifient aux objets de $C$, ses $1$\nbd-simplexes aux $1$\nbd-flèches de $C$, et les $2$\nbd-simplexes aux diagrammes dans $C$ de la forme
$$
\xymatrixrowsep{.31pc}
\xymatrixcolsep{2.3pc}
\xymatrix{
&&x^{}_1\ar[rddddd]^{x^{}_{21}}
\\
\\
&&
\\
\\
&&\ar@{=>}[uu]_{x^{}_{210}}
\\
&x^{}_0\ar[rr]_{x^{}_{20}}\ar[ruuuuu]^{x^{}_{10}}
&&x^{}_2
&\kern-20pt.\kern20pt
}$$
D'autre part, vu que les foncteurs $\cStr{\infty}$,
$$\ob:\ooCat\toto\ens\ ,\qquad C\longmapsto\ob\,C$$
et
$$(?)_{\,0}:\simpl\toto\ens\ ,\qquad X\longmapsto X_0$$
commutent aux limites inductives, pour tout ensemble simplicial $X$, on a des bijections fonctorielles
$$\begin{aligned}
\ob\,\cStr{\infty}\,X&\simeq\ob\,\cStr{\infty}\!\limind_{\smp{m}\rightarrow X}\smp{m}\simeq\limind_{\smp{m}\rightarrow X}\ob\,\cStr{\infty}\,\smp{m}\cr
\noalign{\vskip 3pt}
&=\limind_{\smp{m}\rightarrow X}\ob\,\Or{n}\simeq\limind_{\smp{m}\rightarrow X}(\smp{m})_{\,0}\simeq\Bigl(\limind_{\smp{m}\rightarrow X}\smp{m}\Bigr)_0\simeq X_0\ ,
\end{aligned}$$
identifiant les objets de $\cStr{\infty}(X)$ aux $0$\nbd-simplexes de $X$. De plus, la fonctorialité du morphisme d'adjonction $\cStr{\infty}\NStr{\infty}\to1_{\ooCat}$, appliquée au $\infty$\nbd-foncteur $\Or{0}\simeq\smp{0}\to C$, pour $C$ une $\infty$\nbd-catégorie, implique aussitôt que le $\infty$\nbd-foncteur $\cStr{\infty}\NStr{\infty}C\to C$ induit l'identité sur les objets.
\end{paragr}

\begin{paragr}\label{eqfooCat}
On dit qu'un morphisme $F$ de $\ooCat$ est une \emph{équivalence faible} si $\NStreet(F)$ est une équivalence faible simpliciale. On note $\WooCat=\NStreet^{-1}(\Wsimpl)$ la classe des équivalences faibles de $\ooCat$. Ainsi le foncteur nerf $\NStreet$ induit un foncteur entre les catégories localisées
$$\loc{\NStreet}:\WooCat^{-1}\ooCat\toto\Wsimpl^{-1}\simpl\simeq\Hot\ .$$
On dit qu'une $\infty$\nbd-catégorie $C$ est \emph{asphérique} (ou \emph{faiblement contractile}) si le morphisme de $C$ vers l'objet final $e$ de $\ooCat$ ($\infty$\nbd-catégorie ayant un seul objet et les identités itérées de cet objet comme seules cellules) est une équivalence faible, autrement dit si l'ensemble simplicial $\NStreet(C)$ est faiblement contractile.
\end{paragr}

\begin{conj}\label{conjfond}
Le foncteur $\loc{\NStreet}$ est une équivalence de catégories.
\end{conj}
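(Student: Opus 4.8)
The plan is to deduce this conjecture from the abstract Thomason theorem of \cite[scholie~5.14]{DG}, following the strategy of Thomason's proof \cite{Th} for $n=1$ and of \cite{DG} combined with \cite{Chiche} for $n=2$. One must first check the two technical conditions of \cite[scholie~5.14]{DG}; condition (e) is established in the present paper (Corollary~\ref{corconde}). Granting the remaining condition, \cite[scholie~5.14]{DG} provides a model category structure on $\ooCat$ whose class of weak equivalences is exactly $\WooCat$, together with a Quillen adjunction between it and the classical model structure on $\simpl$, with left adjoint $\cStreet$ and right adjoint $\NStreet$. Since $\WooCat=\NStreet^{-1}(\Wsimpl)$ by construction, the functor $\NStreet$ both preserves and reflects weak equivalences, and every simplicial set is cofibrant; the standard recognition criterion for Quillen equivalences therefore reduces the conjecture to showing that for \emph{every} simplicial set $X$ the unit $X\to\NStreet\cStreet(X)$ is a simplicial weak equivalence.

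By Corollary~\ref{corconde} this is known whenever $X$ is the nerve of an ordered set. To pass to an arbitrary $X$, one invokes the classical fact that the iterated barycentric subdivision $\operatorname{Sd}^2 X$ is isomorphic to the nerve of an ordered set, together with the natural weak equivalence $\operatorname{Sd}^2 X\to X$ given by the iterated last-vertex map. Since $\cStreet$ is a left Quillen functor and every simplicial set is cofibrant, both $\cStreet$ and the composite $\NStreet\cStreet$ preserve simplicial weak equivalences; naturality of the unit with respect to $\operatorname{Sd}^2 X\to X$ then yields a commutative square in $\simpl$ three of whose images in $\Wsimpl^{-1}\simpl$ are isomorphisms --- the top edge $\operatorname{Sd}^2 X\to X$, the left edge by the case of a nerve of an ordered set, and the bottom edge because $\NStreet\cStreet$ preserves weak equivalences --- which forces the fourth, namely $X\to\NStreet\cStreet(X)$, to be an isomorphism as well. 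A Quillen equivalence induces an equivalence of localized categories, so $\loc{\NStreet}$ is an equivalence; running the same argument with $\nCat{n}$ in place of $\ooCat$ yields, in exactly the same way, the corresponding generalisation of Thomason's theorem for every $n$, $1\leq n\leq\infty$.

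The main obstacle is the verification of the second technical condition of \cite[scholie~5.14]{DG}: unlike condition (e), it does not reduce to a statement about nerves of ordered sets, and establishing it seems to require the full strength of the companion papers \cite{DG3} and \cite{DG2} --- the $\infty$\nbd-categorical theorem~A and the comparison of the Street nerve with its bisimplicial variant --- as well as a Thomason\nbd-style analysis of the interaction of $\cStreet$ with barycentric subdivision, which may well force one to replace $\cStreet$ by $\cStreet\operatorname{Sd}^2$ in order to obtain a left adjoint with the cofibrancy properties required by the model structure, exactly as in \cite{Th}. An alternative, more algebraic route would be to establish the conjectural triangle of the introduction directly, by proving separately that $\loc{\Stnu}$ and $\loc{\NerfADC}$ are equivalences, the second of which is Conjecture~\ref{thinair}; the present paper's contraction results (Propositions~\ref{hmtpsimpl} and~\ref{hmtpcompl}) would be a first step in that direction.
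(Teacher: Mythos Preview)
The statement you are attempting to prove is \emph{Conjecture}~\ref{conjfond}: it is presented in the paper as an open problem, and the paper provides no proof. There is therefore nothing to compare your proposal against. The paper's contribution toward this conjecture is precisely Corollary~\ref{corconde}, which establishes condition~(e) of \cite[scholie~5.14]{DG}; the scholie immediately following Corollary~\ref{corconde} makes explicit that the remaining obstruction is condition~(d${}'$), which the paper leaves open.

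Your proposal is not a proof but a strategy sketch, and you yourself identify the gap correctly in your third paragraph: condition~(d${}'$) is unproved. That is honest, and the overall shape of your plan---verify the hypotheses of \cite[scholie~5.14]{DG}, obtain a Thomason-type model structure and a Quillen adjunction with $\simpl$, then upgrade to a Quillen equivalence---matches the programme the authors describe in the introduction and in the scholie.

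There is, however, an internal inconsistency in your second paragraph. You argue as if $\cStreet$ itself were the left Quillen functor, so that $\cStreet$ (and hence $\NStreet\cStreet$) preserves all simplicial weak equivalences; you then use this to transport the unit result from $\operatorname{Sd}^2X$ to $X$. But in the Thomason framework of \cite{DG}, as you note yourself in the third paragraph, the left adjoint in the Quillen pair is $\cStreet\operatorname{Sd}^2$ (with right adjoint $\operatorname{Ex}^2\NStreet$), not $\cStreet$. Without knowing that $\cStreet$ sends arbitrary simplicial weak equivalences to equivalences in $\WooCat$, the square argument of paragraph two does not go through as written; one must instead work with the adjunction $(\cStreet\operatorname{Sd}^2,\operatorname{Ex}^2\NStreet)$ and then argue separately that the induced equivalence of homotopy categories is represented by $\loc{\NStreet}$. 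This is not a fatal objection to the strategy, but it means the reduction you sketch is not as clean as you present it.
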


\begin{prop}\label{commnerf}
Les triangles de foncteurs
$$
\xymatrixcolsep{1pc}
\xymatrix{
\ADC\ar[rr]^{\Stnu}\ar[rd]_{\NerfADC}
&&\ooCat\ar[ld]^{\NStreet}
\\
&\simpl
}
\kern 60pt
\xymatrix{
&\simpl\ar[rd]^{\Chnorm{}}\ar[ld]_{\cStreet}
\\
\ooCat\ar[rr]_{\Stlambda}
&&\ADC
}
$$
sont commutatifs (à isomorphisme canonique près).
\end{prop}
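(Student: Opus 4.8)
The plan is to prove the right-hand triangle first, directly, and then deduce the left-hand one by uniqueness of adjoints.

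\emph{Le triangle de droite.} Les deux foncteurs $\Stlambda\cStreet$ et $\Chnorm{}$ de $\simpl$ vers $\ADC$ commutent aux petites limites inductives : le foncteur $\Stlambda$ est un adjoint à gauche (proposition~\ref{adjSteiner}), le foncteur $\cStreet$ commute aux limites inductives par construction (\ref{defnerfStreet}), et le foncteur $\Chnorm{}$ également (proposition~\ref{complcommlimind}). Comme $\simpl=\pref{\cats}$ et que tout ensemble simplicial est, de façon canonique, limite inductive des représentables au-dessus de lui, un foncteur commutant aux limites inductives de source $\simpl$ est déterminé, à isomorphisme canonique près, par sa restriction à $\cats$ (c'est l'extension de Kan à gauche de cette restriction le long du plongement de Yoneda). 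Il suffit donc de construire un isomorphisme naturel entre les restrictions de $\Stlambda\cStreet$ et de $\Chnorm{}$ à $\cats$.

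\emph{Comparaison sur $\cats$.} Par construction, $\cStreet(\smp{n})=\Stnu\Chnorm{\smp{n}}=\Or{n}$, d'où $\Stlambda\cStreet(\smp{n})=\Stlambda\Stnu(\Chnorm{\smp{n}})$, et plus précisément l'égalité des restrictions à $\cats$ des foncteurs $\Stlambda\cStreet$ et $\Stlambda\Stnu\Chnorm{}$. Or, le morphisme d'adjonction $\Stlambda\Stnu(K)\to K$ est un isomorphisme pour $K=\Chnorm{\smp{n}}$ : en vertu de l'exemple~\ref{exADCstandard}, le complexe dirigé augmenté $\Chnorm{\smp{n}}$ admet une base unitaire fortement sans boucles, donc sans boucles par la proposition~\ref{propfortsansboucles}, de sorte que le théorème~\ref{isoadjunitsansboucles} s'applique. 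Le morphisme d'adjonction étant une transformation naturelle, la famille de ces isomorphismes pour $n\geq0$ fournit un isomorphisme naturel entre les restrictions à $\cats$ de $\Stlambda\Stnu\Chnorm{}$ et de $\Chnorm{}$, c'est-à-dire entre celles de $\Stlambda\cStreet$ et de $\Chnorm{}$. D'après le paragraphe précédent, il s'étend en un isomorphisme canonique $\Stlambda\cStreet\simeq\Chnorm{}$, ce qui établit la commutativité du triangle de droite.

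\emph{Le triangle de gauche.} Par définition, $\NerfADC$ est adjoint à droite de $\Chnorm{}$ (\ref{defnerfADC}), $\NStreet$ est adjoint à droite de $\cStreet$ (\ref{defnerfStreet}), et $\Stnu$ est adjoint à droite de $\Stlambda$ (proposition~\ref{adjSteiner}) ; le foncteur $\NStreet\Stnu$ est donc adjoint à droite de $\Stlambda\cStreet$. L'adjoint à droite d'un foncteur étant unique à isomorphisme canonique près et $\Stlambda\cStreet\simeq\Chnorm{}$ d'après l'étape précédente, on obtient $\NStreet\Stnu\simeq\NerfADC$, ce qui est la commutativité du triangle de gauche. (On peut aussi le vérifier directement : pour tout complexe dirigé augmenté $K$ et tout $n\geq0$, on a $(\NStreet\Stnu(K))_n=\Hom_{\ooCat}(\Or{n},\Stnu K)=\Hom_{\ooCat}(\Stnu\Chnorm{\smp{n}},\Stnu K)\simeq\Hom_{\ADC}(\Stlambda\Stnu\Chnorm{\smp{n}},K)\simeq\Hom_{\ADC}(\Chnorm{\smp{n}},K)=\NerfADC(K)_n$, naturellement en $K$ et cosimplicialement en $\smp{n}$, en utilisant l'adjonction $(\Stlambda,\Stnu)$ et l'isomorphisme $\Stlambda\Stnu(\Chnorm{\smp{n}})\simeq\Chnorm{\smp{n}}$ ci-dessus.)

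Les seuls points un peu délicats sont le principe selon lequel un foncteur commutant aux limites inductives de source $\simpl$ est déterminé par sa restriction aux représentables et, surtout, l'ingrédient affirmant que $\Stlambda\Stnu(\Chnorm{\smp{n}})\to\Chnorm{\smp{n}}$ est un isomorphisme ; tout le reste n'est que manipulation formelle d'adjonctions, et je m'attends à ce que la vérification de la naturalité cosimpliciale des isomorphismes obtenus soit la partie la plus fastidieuse, quoique routinière.
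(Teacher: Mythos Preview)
Your proof is correct and rests on exactly the same key ingredient as the paper: the isomorphism $\Stlambda\Stnu(\Chnorm{\smp{n}})\simeq\Chnorm{\smp{n}}$ supplied by théorème~\ref{isoadjunitsansboucles} (via proposition~\ref{propfortsansboucles} and exemple~\ref{exADCstandard}). The paper simply proceeds in the reverse order --- it proves the left triangle directly by the chain of bijections you give in your parenthetical remark, then deduces the right one by adjonction --- so your parenthetical alternative is in fact the paper's argument.
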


\begin{proof}
Par adjonction, il suffit de montrer la commutativité de celui de gauche. Or, pour $n\geq0$ et $K$ un complexe dirigé augmenté, on a des bijections fonctorielles
$$\begin{aligned}
(\NStreet\Stnu K)_n=\Hom_{\ooCat}(\Stnu\Chnorm{\smp{n}},\Stnu K)&\simeq\Hom_{\ADC}(\Stlambda\Stnu\Chnorm{\smp{n}},K)\cr
&\simeq\Hom_{\ADC}(\Chnorm{\smp{n}},K)=(\NerfADC K)_n
\end{aligned}$$
la première par adjonction (cf. proposition~\ref{adjSteiner}) et la seconde en vertu du théorème~\ref{isoadjunitsansboucles}, de la proposition~\ref{propfortsansboucles} et de l'exemple~\ref{exADCstandard}, ce qui prouve l'assertion.
\end{proof}

\begin{cor}
On a l'égalité $\WADC=\Stnu^{-1}(\WooCat)$ et en particulier le foncteur~$\Stnu$ induit un foncteur $\loc{\Stnu}:\WADC^{-1}\ADC\to\WooCat^{-1}\ooCat$, et le triangle de foncteurs
$$
\xymatrixcolsep{1pc}
\xymatrix{
\WADC^{-1}\ADC\ar[rr]^{\loc{\Stnu}}\ar[rd]_{\loc{\NerfADC}}
&&\WooCat^{-1}\ooCat\ar[ld]^{\loc{\NStreet}}
\\
&\Hot
}$$
est commutatif.
\end{cor}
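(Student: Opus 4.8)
Le plan est de d\'eduire cet \'enonc\'e formellement de la proposition~\ref{commnerf}. On partirait de l'isomorphisme canonique de foncteurs $\NerfADC\simeq\NStreet\Stnu:\ADC\to\simpl$ qu'elle fournit. En prenant les images r\'eciproques de la classe $\Wsimpl$ par les deux membres, on obtiendrait la suite d'\'egalit\'es
$$\WADC=\NerfADC^{-1}(\Wsimpl)=(\NStreet\Stnu)^{-1}(\Wsimpl)=\Stnu^{-1}\bigl(\NStreet^{-1}(\Wsimpl)\bigr)=\Stnu^{-1}(\WooCat)\ ,$$
ce qui est la premi\`ere assertion.

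On en d\'eduirait en particulier l'inclusion $\Stnu(\WADC)\subset\WooCat$, autrement dit que le compos\'e de $\Stnu$ avec le foncteur canonique $\ooCat\to\WooCat^{-1}\ooCat$ envoie les morphismes de $\WADC$ sur des isomorphismes. En vertu de la propri\'et\'e universelle de la localisation, ce compos\'e se factoriserait de mani\`ere unique \`a travers le foncteur canonique $\ADC\to\WADC^{-1}\ADC$, d'o\`u le foncteur $\loc{\Stnu}:\WADC^{-1}\ADC\to\WooCat^{-1}\ooCat$.

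Pour la commutativit\'e du triangle, on observerait que $\loc{\NStreet}\loc{\Stnu}$ et $\loc{\NerfADC}$ sont, par construction, les foncteurs $\WADC^{-1}\ADC\to\Hot$ induits (via l'\'equivalence $\Wsimpl^{-1}\simpl\simeq\Hot$ de~\ref{eqfADC}) respectivement par les foncteurs $\NStreet\Stnu$ et $\NerfADC$ de $\ADC$ vers $\simpl$ ; l'isomorphisme canonique de la proposition~\ref{commnerf} descendrait donc aux cat\'egories localis\'ees pour fournir l'isomorphisme $\loc{\NStreet}\loc{\Stnu}\simeq\loc{\NerfADC}$ voulu. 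La d\'emonstration \'etant ici enti\`erement formelle, il n'y a pas de v\'eritable obstacle : l'unique point demandant un l\'eger soin est de v\'erifier d'une part que $\loc{\NStreet}\circ\loc{\Stnu}$ co\"incide avec le foncteur induit par le compos\'e $\NStreet\circ\Stnu$, ce qui rel\`eve de l'unicit\'e dans la propri\'et\'e universelle de la localisation, et d'autre part que l'isomorphisme de foncteurs passe bien au quotient, ce qui tient au fait que tout morphisme de $\WADC^{-1}\ADC$ s'\'ecrit comme un compos\'e de morphismes provenant de $\ADC$ et d'inverses formels de morphismes de $\WADC$.
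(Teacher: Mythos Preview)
Votre d\'emonstration est correcte et suit exactement la m\^eme approche que l'article, qui se contente d'affirmer que le corollaire est cons\'equence imm\'ediate de la proposition~\ref{commnerf}. Vous avez simplement explicit\'e les d\'etails formels (passage aux images r\'eciproques de $\Wsimpl$, propri\'et\'e universelle de la localisation, descente de l'isomorphisme naturel) que l'article laisse au lecteur.
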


\begin{proof}
Le corollaire est conséquence immédiate de la proposition~\ref{commnerf}.
\end{proof}

\begin{rem}
En vertu du corollaire précédent, les conjectures~\ref{thinair} et~\ref{conjfond} impliquent que le foncteur $\loc{\Stnu}$ est une équivalence de catégories.
\end{rem}

\begin{thm}\label{consthA}
Soit $C$ une $\infty$\nbd-catégorie. Si $C$ admet un objet $x$ tel que pour tout objet $y$ de $C$ la $\infty$\nbd-catégorie $\sHom_C(y,x)$ \emph{(resp.}~$\sHom_C(x,y)$\emph{)} soit asphérique, alors $C$ est asphérique.
\end{thm}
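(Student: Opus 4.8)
The plan is to deduce the statement from the $\infty$\nbd-categorical Theorem~A of~\cite{DG3}, following the classical pattern by which a $1$\nbd-category with a terminal (or initial) object is shown to have a contractible nerve.

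First I would reduce to a statement about a single $\infty$\nbd-functor. Let $e$ denote the final object of $\ooCat$ and $\overline{x}\colon e\to C$ the $\infty$\nbd-functor corresponding to the object $x$; composing $\overline{x}$ with the canonical $\infty$\nbd-functor $C\to e$ yields $1_e$. Since $\NStreet$ is a functor and the class $\WooCat=\NStreet^{-1}(\Wsimpl)$ satisfies two-out-of-three (because $\Wsimpl$ does), it is enough to show that $\overline{x}$ is a weak equivalence: the $\infty$\nbd-functor $C\to e$ will then be one as well, i.e.\ $C$ will be aspherical. (This is consistent with $\NStreet(e)\simeq\smp{0}$ being aspherical, the asphericity of $C$ being transferred along $\overline{x}$.)

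Next I would apply the $\infty$\nbd-categorical Theorem~A of~\cite{DG3} to $\overline{x}\colon e\to C$ --- in its form with comma $\infty$\nbd-categories \emph{under} objects for the main statement, and in its dual form with comma $\infty$\nbd-categories \emph{over} objects for the variant: to prove that $\overline{x}$ is a weak equivalence it then suffices to check that, for every object $y$ of $C$, the comma $\infty$\nbd-category of $\overline{x}$ under $y$ (resp.\ over $y$) is aspherical. The crux is to identify this comma $\infty$\nbd-category. Since the source $e$ of $\overline{x}$ has a single object, over which all cells are identities, the cells of the comma reduce to cells of $C$ of the appropriate shape: a $j$\nbd-cell of the comma of $\overline{x}$ under $y$ is a $(j+1)$\nbd-arrow of $C$ with iterated source $y$ and iterated target $x$ --- resp.\ with iterated source $x$ and iterated target $y$ for the comma over $y$ --- that is, exactly a $j$\nbd-cell of $\sHom_C(y,x)$ (resp.\ of $\sHom_C(x,y)$). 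Hence the comma $\infty$\nbd-category is canonically isomorphic to $\sHom_C(y,x)$ (resp.\ to $\sHom_C(x,y)$); the hypothesis supplies the required asphericity, and Theorem~A gives the conclusion.

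The step I expect to be the main obstacle is precisely this identification of the comma $\infty$\nbd-category with $\sHom_C$: it depends on the exact comma construction adopted in~\cite{DG3}, one must pin down the variance --- which is what the \emph{resp.}\ in the statement records, and one could alternatively obtain the variant by applying the main statement to $\op{C}$ in place of $C$ --- and one must check that the higher cells of the comma faithfully reproduce those of $\sHom_C$ and do not, say, contribute an extra cone point. If the comma construction matched $\sHom_C$ only up to homotopy, it would be enough to produce a deformation retraction of it onto $\sHom_C(y,x)$ (resp.\ $\sHom_C(x,y)$), which would serve just as well.
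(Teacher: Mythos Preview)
Your proposal is correct and follows the same route the paper indicates: the paper's own proof of this theorem consists of a single sentence deferring the result to~\cite{DG3} as a corollary of an $\infty$\nbd-categorical Theorem~A, and your sketch is a reasonable expansion of how that deduction would run. The reduction via $2$\nbd-out\nbd-of\nbd-$3$ to $\overline{x}\colon e\to C$ being a weak equivalence, followed by Theorem~A with the comma identified as $\sHom_C(y,x)$ (resp.~$\sHom_C(x,y)$), is exactly the intended mechanism; your caveat about checking the precise comma construction in~\cite{DG3} is the only place where care is needed, and the paper does not elaborate on it either.
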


\begin{proof}
Ce résultat sera obtenu dans~\cite{DG3} comme corollaire d'un théorème~A $\infty$\nbd-catégorique.
\end{proof}

\begin{cor}\label{pseudoasph}
Soient $n\geq0$ un entier, et $C$ une $n$\nbd-catégorie. Si $C$ admet un objet quasi-final \emph{(resp.}~quasi-initial\emph{)}, alors $C$ est asphérique.
\end{cor}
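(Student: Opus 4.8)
The plan is to argue by induction on $n$, the only substantial input being Theorem~\ref{consthA}. For $n=0$, an $n$\nbd-catégorie admitting a quasi-final (resp.\ quasi-initial) object is, by definition, a singleton, hence an $\infty$\nbd-catégorie isomorphic to the final object $e$ of $\ooCat$; the identity $e\to e$ is a weak equivalence, so $C$ is aspherical. This settles the base case.

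For the inductive step, assume the statement holds for $(n-1)$\nbd-catégories and let $C$ be an $n$\nbd-catégorie admitting a quasi-final object $x$ (the quasi-initial case is entirely parallel, replacing $\sHom_C(y,x)$ by $\sHom_C(x,y)$ throughout, and invoking the \og resp.\fg{} versions of the cited results). By the definition of a quasi-final object (cf.~\ref{pseudofinal}), for every object $y$ of $C$ the $\infty$\nbd-catégorie $\sHom_C(y,x)$ admits a quasi-final object. Since $C$ is an $n$\nbd-catégorie, $\sHom_C(y,x)$ is an $(n-1)$\nbd-catégorie (cf.~\ref{defnCat}), so the induction hypothesis applies and $\sHom_C(y,x)$ is aspherical. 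Applying Theorem~\ref{consthA} to the object $x$ of $C$ then gives that $C$ is aspherical, which completes the induction.

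There is no real combinatorial obstacle: the recursive clause in the definition of a quasi-final object is tailored exactly to match the hypothesis of Theorem~\ref{consthA}, so the induction goes through formally. The genuine weight of the corollary lies entirely in Theorem~\ref{consthA}, i.e.\ in the $\infty$\nbd-categorical Theorem~A of~\cite{DG3}, which is quoted as a black box here. Note that Proposition~\ref{tronqpseudofinal} plays no role in this particular argument; it is the companion fact that asphericity is inherited by the relevant $\sHom$\nbd-categories, already guaranteed by the induction hypothesis.
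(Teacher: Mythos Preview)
Your proof is correct and follows essentially the same approach as the paper's: both argue by induction on $n$, handle $n=0$ by noting that such a $0$\nbd-cat\'egorie is the final object of $\ooCat$, and for $n>0$ combine the recursive definition of a quasi-final (resp.\ quasi-initial) object with the induction hypothesis on the $(n-1)$\nbd-cat\'egories $\sHom_C(y,x)$ (resp.\ $\sHom_C(x,y)$) before invoking Theorem~\ref{consthA}.
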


\begin{proof}
Si $n=0$, $C$ considérée comme $\infty$-catégorie est un objet final de $\ooCat$ et par suite, l'ensemble simplicial $\NStreet(C)$ est isomorphe à $\smp{0}$, ce qui prouve l'assertion dans ce cas. Si $n>0$, par hypothèse, il existe un objet $x$ de $C$ tel que pour tout objet $y$ de $C$ la $(n-1)$\nbd-catégorie $\sHom_C(y,x)$ (resp. $\sHom_C(x,y)$) admette un objet quasi-final (resp. quasi-initial). Par l'hypothèse de récurrence la \hbox{$(n-1)$}\nbd-catégorie $\sHom_C(y,x)$ (resp. $\sHom_C(x,y)$) est alors asphérique. L'assertion résulte donc du théorème précédent.
\end{proof}

\begin{thm}\label{eqfDwyerKaneqf}
Soit $F:C\to D$ un morphisme de $\ooCat$. Si $F$ induit une bijection des ensembles des objets et si pour tous objets $x,y$ de $C$ le morphisme $\sHom_C(x,y)\to\sHom_D(F(x),F(y))$ induit par $F$~est une équivalence faible, alors $F$~est une équivalence faible.
\end{thm}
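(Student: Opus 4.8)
The plan is to deduce the statement from the comparison, established in \cite{DG2}, between the Street nerve $\NStreet$ and a bisimplicial refinement of it. Let me first recall the shape of that refinement. One checks at once that $\NStreet$ preserves finite products, since $\NStreet(A\times B)_n=\Hom_{\ooCat}(\Or{n},A\times B)=\NStreet(A)_n\times\NStreet(B)_n$; hence applying $\NStreet$ to the $\infty$\nbd-categories $\sHom_C(x,y)$ of an $\infty$\nbd-category $C$ turns $C$ into a category enriched in simplicial sets, whose nerve, formed degreewise in the simplicial direction, is a bisimplicial set $\NStreet^{(2)}(C)$ whose component in bidegree $(p,q)$ is
$$\NStreet^{(2)}(C)_{p,q}=\coprod_{(x_0,\dots,x_p)\in(\ob C)^{p+1}}\NStreet(\sHom_C(x_0,x_1))_q\times\cdots\times\NStreet(\sHom_C(x_{p-1},x_p))_q\ ,$$
the bidegree $p=0$ reducing to the discrete set $\ob C$. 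The fact imported from \cite{DG2} is that the canonical morphism comparing $\NStreet(C)$ with the diagonal $\delta^{*}\NStreet^{(2)}(C)$ is a simplicial weak equivalence, naturally in $C$.

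Granting this, here is how I would finish. The hypotheses say that $F$ is bijective on objects and that, for each pair $x,y$ of objects of $C$, the morphism $\NStreet(\sHom_C(x,y))\to\NStreet(\sHom_D(Fx,Fy))$ is a simplicial weak equivalence. Fix $p\geq0$. Reindexing the coproduct defining $\NStreet^{(2)}(D)_{p,\bullet}$ along the bijection $(\ob C)^{p+1}\simeq(\ob D)^{p+1}$ induced by $F$, the simplicial map $\NStreet^{(2)}(F)_{p,\bullet}$ is identified with a coproduct of finite products of the maps $\NStreet(\sHom_C(x_{i-1},x_i))\to\NStreet(\sHom_D(Fx_{i-1},Fx_i))$. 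Since finite products and arbitrary coproducts of simplicial weak equivalences are again simplicial weak equivalences, $\NStreet^{(2)}(F)_{p,\bullet}$ is a simplicial weak equivalence for every $p$; and a morphism of bisimplicial sets which is a weak equivalence in each simplicial degree induces a weak equivalence on diagonals, so $\delta^{*}\NStreet^{(2)}(F)$ is a simplicial weak equivalence. Combining this with the naturality of the comparison recalled above and the two\nbd-out\nbd-of\nbd-three property for $\Wsimpl$, the map $\NStreet(F)$ is a simplicial weak equivalence, that is, $F$ belongs to $\WooCat$.

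Everything in the second paragraph is formal; the one substantial ingredient, and the point where I expect the real difficulty to lie, is the bisimplicial comparison theorem of \cite{DG2}. It is a Dwyer--Kan type comparison, whose proof --- carried out in a separate article --- rests on an argument in the spirit of the $\infty$\nbd-categorical Theorem~A already invoked in Theorem~\ref{consthA}. Accordingly, in the present text the statement is recorded together with the reduction above, the comparison theorem itself being cited from \cite{DG2}.
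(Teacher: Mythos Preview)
Your proposal is correct and follows precisely the approach indicated in the paper: the paper does not prove this theorem here but simply records that it will be obtained in~\cite{DG2} as a consequence of the comparison between the Street nerve and its bisimplicial variant. You have supplied a fuller sketch of that reduction --- the description of~$\NStreet^{(2)}$, the levelwise argument, and the diagonal lemma --- but the substantive ingredient and the overall strategy are identical.
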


\begin{proof}
Ce résultat sera obtenu dans~\cite{DG2} comme conséquence de la comparaison du nerf de Street avec une variante bisimpliciale de ce nerf.
\end{proof}

\begin{paragr}\label{nnerf}
Soit $n\geq0$. On note $\NStr{n}$ la restriction du nerf de Street $\NStreet$ à $\nCat{n}$, composé de l'inclusion $\incl{n}:\nCat{n}\to\ooCat$ et de $\NStreet$. Le foncteur $\NStr{n}$ admet comme adjoint à gauche le composé $\ti{n}\cStreet$ de l'adjoint à gauche de $\NStreet$ et du foncteur de troncation intelligente (cf.~\ref{tronq}). Explicitement, si $C$ est une $n$\nbd-catégorie et $p\geq0$, on a
$$(\NStr{n}C)_p=\Hom_{\nCat{n}}(\ti{n}\Or{p},C)\ .$$
On vérifie facilement que le $1$\nbd-tronqué $\ti{1}\Or{p}$ n'est autre que l'ensemble ordonné $\smp{p}$, et par suite le foncteur $\NStr{1}$ coïncide avec le foncteur nerf usuel $\cat\to\simpl$. 
\end{paragr}

\begin{paragr}\label{eqfnCat}
Une \emph{équivalence faible} de $\nCat{n}$ est une flèche $F$ de $\nCat{n}$ telle que $\NStr{n}(F)$ soit une équivalence faible simpliciale. On note
$$\WnCat{n}=\NStr{n}^{-1}(\Wsimpl)=i_n^{-1}(\WooCat)=\fl{}\nCat{n}\cap\WooCat$$
la classe des équivalences faibles de $\nCat{n}$.
\end{paragr}

\section{Le complexe dirigé augmenté associé à un complexe simplicial}\label{complsimpl}

\begin{paragr}\label{defcs}
Pour un ensemble ordonné $E$, on note $\xi E$ l'ensemble de ses parties finies non vides totalement ordonnées. On rappelle qu'un \emph{complexe simplicial}
est un couple~$(E,\varPhi)$, où $E$ est un ensemble ordonné, et où $\varPhi$ est un sous-ensemble de $\xi E$ satisfaisant aux deux conditions suivantes:
\begin{itemize}
\item[a)] pour tout élément $x$ de $E$, le singleton $\{x\}$ appartient à $\varPhi$;
\item[b)] pour tous $S \in \varPhi$, $S' \subset S$, si $S' \neq \varnothing$, alors $S' \in \varPhi$.
\end{itemize}
Un morphisme de complexes simpliciaux
$f:(E_0,\varPhi_0)\to(E_1,\varPhi_1)$ est une application croissante $f:E_0\to E_1$ telle que pour tout $S\in\varPhi_0$, on a $f(S)\in\varPhi_1$.  On note $\cs$ la catégorie des complexes simpliciaux.
\end{paragr}

\begin{paragr}\label{limindprojcs}
La catégorie $\cs$ est complète et cocomplète. Si $(E_i,\varPhi_i)$ est un système projectif de complexes simpliciaux indexé par une petite catégorie $I$, sa limite projective est le complexe simplicial $(E,\varPhi)$, où $E=\limproj_IE_i$ est la limite projective dans la catégorie des ensembles ordonnés, et
$$\varPhi=\{S\in\xi E\,|\,\hbox{pour tout $i\in \ob I$\,, $\pi_i(S)\in\varPhi_i$}\}\ ,$$
où $\pi_i:E\to E_i$ désigne le morphisme canonique. Si $(E_i,\varPhi_i)$ est un système inductif de complexes simpliciaux indexé par une petite catégorie $I$, sa limite inductive est le complexe simplicial $(E,\varPhi)$, où $E=\limind_IE_i$ est la limite inductive dans la catégorie des ensembles ordonnés, et
$$\varPhi=\{S\in\xi E\,|\,\hbox{il existe $i\in \ob I$ et $S_i\in\varPhi_i$ tel que}\  S=\e_i(S_i)\}\ ,$$
où $\e_i:E_i\to E$ désigne le morphisme canonique.
\end{paragr}

\begin{paragr}\label{kappacs}
Il résulte du paragraphe précédent que le foncteur \og ensemble ordonné sous-jacent à un complexe simplicial\fg{} commute à la fois aux limites inductives et projectives. En fait, il admet un adjoint à gauche et un adjoint à droite. L'adjoint à gauche associe à un ensemble ordonné $E$ le complexe simplicial $(E,\varPhi)$, où $\varPhi=\{\{x\}\,|\,x\in E\}$. L'adjoint à droite, noté $\kappa:\ord\to\cs$, associe à un ensemble ordonné $E$ le complexe simplicial $(E,\xi E)$. Aussi bien l'adjoint à gauche que l'adjoint à droite sont des foncteurs pleinement fidèles. On identifiera parfois $\ord$ à une sous-catégorie pleine de $\cs$ par le foncteur $\kappa$.
\end{paragr}

\begin{paragr}\label{kappastarcs}
On note également $\kappa:\cats\to\cs$ la restriction du foncteur $\kappa$ du paragraphe précédent à la catégorie des simplexes. On en déduit, par le procédé de Kan, un couple de
foncteurs adjoints
$$\kappa^{}_!:\simpl\to\cs\ ,\qquad \kappa^*:\cs\to\simpl\ ,$$
où $\kappa^{}_!$ est l'unique foncteur, à isomorphisme unique près, commutant aux
petites limites inductives et prolongeant le foncteur $\kappa$, et $\kappa^*$ est
défini par
$$
(E,\varPhi)\longmapsto\bigl(\smp{m}\mapsto\Hom_{\cs}(\kappa(\smp{m}),(E,\varPhi))=\Hom_{\cs}((\smp{m},\xi\smp{m}),(E,\varPhi))\bigr)\ .
$$
Ainsi, pour un complexe simplicial $(E,\varPhi)$, l'ensemble des $m$\nbd-simplexes de $\kappa^*(E,\varPhi)$ est formé des applications croissantes $f:\smp{m}\to E$ telles que $f(\smp{m})\in\varPhi$. On en déduit que le composé
$$\xymatrix{
\ord\ar[r]^{\kappa}
&\cs\ar[r]^{\kappa^*}
&\simpl
}$$
associe à un ensemble ordonné son nerf. En particulier, pour tout $n\geq0$, on a~$\kappa^*(\smp{n},\xi\smp{n})=\smp{n}$.
\end{paragr}

\begin{paragr}
Dans la suite, on s'intéresse au foncteur composé $\Chcs=\Chnorm{}\kappa^*$
$$\xymatrix{
\cs\ar[r]^{\kappa^*}
&\simpl\ar[r]^{\Chnorm{}}
&\ADC
}\ .$$
Soit $(E,\varPhi)$ un complexe simplicial. Le complexe dirigé augmenté $\Chcs(E,\varPhi)$ se décrit comme suit. Pour $p\geq0$, un $p$\nbd-simplexe non dégénéré de l'ensemble simplicial $\kappa^*(E,\varPhi)$ est une application strictement croissante $\smp{p}\to E$ dont l'image est dans $\varPhi$. Ainsi, pour $p\geq0$, $(\Chcs(E,\varPhi))_p$ (resp.~$(\Chcs(E,\varPhi))^*_p$) s'identifie au groupe (resp. au monoïde) commutatif libre engendré par la famille des \hbox{$(p+1)$}\nbd-uplets 
$$(i_0,i_1,\dots,i_p)\,,\quad \{i_0,i_1,\dots,i_p\}\in\varPhi\ ,\quad i_0<i_1<\cdots<i_p\ .$$
La différentielle est définie par 
$$d(i_0,i_1,\dots, i_p)=\textstyle\sum\limits_{0\leq k\leq p}(-1)^k(i_0,\dots, \widehat{i}_k,\dots, i_p)\,,\quad p>0\ ,$$
où $(i_0,\dots, \widehat{i}_k,\dots, i_p)=(i_0,\dots, i_{k-1},i_{k+1},\dots, i_p)$, et l'augmentation par $\augm(i_0)=1$. 
\end{paragr}

\begin{thm}\label{csunitsansboucles}
Pour tout complexe simplicial $(E,\varPhi)$, l'ensemble
$$B=\{(i_0,i_1,\dots,i_p)\,|\,p\geq0\,,\ \{i_0,i_1,\dots,i_p\}\in\varPhi\,,\ i_0<i_1<\cdots<i_p\}$$
est une base unitaire fortement sans boucles du complexe dirigé augmenté $\Chcs(E,\varPhi)$.
\end{thm}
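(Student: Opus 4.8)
The plan is to check the three clauses defining a unitary strongly loop\nbd-free base one by one. That $B$ is a base of $\Chcs(E,\varPhi)$ --- hence, by~\ref{ordpartADC}, the unique one --- is immediate from the explicit description of $\Chcs(E,\varPhi)$ recalled just above the statement: in each degree $p$ the family $B_p$ is by construction a $\mathbb{Z}$\nbd-basis of $(\Chcs(E,\varPhi))_p$ that generates the submonoid $(\Chcs(E,\varPhi))^*_p$.

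For the unitary condition, I would fix $b=(i_0,i_1,\dots,i_p)\in B_p$ and use the subcomplex it spans. Since $\{i_0,\dots,i_p\}\in\varPhi$, condition~(b) in the definition of a simplicial complex shows that $k\mapsto i_k$ is a morphism of simplicial complexes $\kappa(\smp{p})\to(E,\varPhi)$; as $\Chcs=\Chnorm{}\kappa^*$ and $\kappa^*\kappa(\smp{p})=\smp{p}$, applying $\Chcs$ produces a morphism $\iota\colon\Chnorm{\smp{p}}\to\Chcs(E,\varPhi)$ of augmented directed complexes that is injective in each degree and sends basis elements to basis elements. Such a morphism commutes with the decomposition $x=\Pos{x}-\Neg{x}$ (read off the basis expansion), hence with the construction $x\mapsto\atom{x}$ of~\ref{baseunitADC}, and it preserves the augmentation; since $b$ is the image of the top basis element $(0,1,\dots,p)$ of $\Chnorm{\smp{p}}$, which has a unitary base by~\ref{exADCstandard}, we get $\augm(\atom{b}^0_0)=\augm(\atom{b}^1_0)=1$. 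As $b$ was arbitrary, $B$ is unitary.

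The real work is to prove that the preorder $\leqbb$ on $B$ is antisymmetric, i.e.\ that the digraph on $B$ with an edge $a\to b$ whenever $a\mathrel{R_{\mathbb{N}}}b$ has no closed walk through two distinct vertices. I would first make $R_{\mathbb{N}}$ explicit: the codimension\nbd-one faces of a basis element are pairwise distinct basis elements, and $\Pos{}$, $\Neg{}$ of its boundary collect the even\nbd- and the odd\nbd-indexed ones, so $a\mathrel{R_{\mathbb{N}}}b$ holds exactly when either $a$ is an odd\nbd-indexed codimension\nbd-one face of $b$ (then $\dim b=\dim a+1$) or $b$ is an even\nbd-indexed codimension\nbd-one face of $a$ (then $\dim b=\dim a-1$). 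The key lemma is that along any closed walk the minimum in $E$ of the support of the current vertex is constant: it is untouched along an ``ascending'' edge (position $0$ does not change) and can only grow along a ``descending'' edge (a face is taken at an even position, possibly at position~$0$). Two consequences follow: every descending edge of a closed walk deletes a position $\geq 2$, and the walk meets no $0$\nbd-dimensional basis element --- the only singleton compatible with the constant minimum $\mu$ is $(\mu)$, into which no edge of the walk can lead.

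Therefore every vertex of a closed walk has dimension $\geq 1$ and carries $\mu$ in position $0$, so I would \emph{strip $\mu$}: deleting $\mu$ from each vertex, the uniform shift of positions by one turns the walk into a closed walk of the dual complex $\op{\Chcs(E,\varPhi)}$ (same base $B$, its $R_{\mathbb{N}}$ obtained by exchanging the two parities, cf.~\ref{dualADC}), of maximal dimension one smaller, still through pairwise distinct basis elements. The same lemma applied to $\op{\Chcs(E,\varPhi)}$ strips \emph{its} minimum and lands back in $\Chcs(E,\varPhi)$, lowering the maximal dimension once more; an induction on the maximal dimension of the walk --- the base case being maximal dimension $0$, where there is no edge at all --- then rules out any nontrivial closed walk and finishes the proof. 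I expect the main obstacle to be precisely this last step: identifying the invariant preserved by closed walks (the minimum of the support) and recognising that ``stripping the minimum'' exchanges the complex with its dual while decreasing the dimension, which is what makes the induction go through rather than forcing a direct analysis of arbitrary loops.
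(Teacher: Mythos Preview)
Your argument is correct and takes a genuinely different route from the paper on both counts.

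For unitarity, the paper does not reduce to the known case $\Chnorm{\smp{p}}$; it proves instead an explicit formula (the lemma immediately following the theorem, Lemma~\ref{lemmeaprouver}) expressing every $\atom{i_0\cdots i_p}^\e_q$ as a sum of iterated face operators, whence in particular $\atom{i_0\cdots i_p}^0_0=(i_0)$, $\atom{i_0\cdots i_p}^1_0=(i_p)$ and unitarity. That lemma is not a detour: the explicit values of $\atom{i_0\cdots i_p}^\e_1$ are invoked again later, notably in the proof of Lemma~\ref{surjcel}. Your reduction via the inclusion $\iota\colon\Chnorm{\smp{p}}\to\Chcs(E,\varPhi)$ is valid and shorter, but it bypasses a computation the paper needs elsewhere.

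For the strongly loop\nbd-free condition, the paper writes down a concrete partial order $\preccurlyeq$ on $B$ --- a lexicographic\nbd-type recursion: compare first entries, and on equality compare the tails for the \emph{reversed} relation --- and checks directly that it is a partial order refining $\leqbb$. This buys more than bare acyclicity: a subsequent remark exploits $\preccurlyeq$ to show that when $E$ is totally ordered and $\varPhi=\xi E$, the preorder $\leqbb$ actually coincides with $\preccurlyeq$ and is a total order. Your ``strip the minimum'' induction, alternating between $\Chcs(E,\varPhi)$ and its dual, is a clean structural alternative that reaches the conclusion with less bookkeeping but does not produce this explicit order. One small imprecision: you write that the stripped walk is ``still through pairwise distinct basis elements'', but a closed walk may revisit vertices; what you actually need --- and what your argument gives, since stripping is injective on tuples with first entry $\mu$ --- is only that the stripped walk still passes through at least two distinct vertices, so that the inductive hypothesis applies.
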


\begin{proof}
En vertu de la description de $\Chcs(E,\varPhi)$ donnée au paragraphe précédent, l'ensemble $B$ est bien une base du complexe dirigé augmenté $\Chcs(E,\varPhi)$. On abrégera la notation des éléments de la base $B$ en posant
$$i_0i_1\cdots i_p = (i_0,i_1,\dots,i_p)\ .$$
Pour montrer que cette base est fortement sans boucles, il suffit de définir une relation d'ordre $\preccurlyeq$ sur $B$ moins fine que la relation de préordre $\leqbb$, autrement dit, vue la définition de la relation $\leqbb$ et celle de la différentielle de $\Chcs(E,\varPhi)$, telle que 
$$\begin{aligned}
&i_0i_1\cdots\widehat{i}_k\cdots i_p\preccurlyeq i_0i_1\cdots i_p\ ,\qquad\hbox{pour $k$ impair,}\cr
\noalign{\vskip 3pt}
&i_0i_1\cdots i_p\preccurlyeq i_0i_1\cdots\widehat{i}_k\cdots i_p\ ,\qquad\hbox{pour $k$ pair,}
\end{aligned}\leqno(*)$$
pour tous $p\geq1$ et $i_0i_1\cdots i_p\in B$. On définit une telle relation 
$$i_0i_1\cdots i_p\preccurlyeq j_0j_1\cdots j_q$$
sur $B$ par récurrence sur $p$ comme suit. Si $p=0$,
$$i_0\preccurlyeq j_0j_1\cdots j_q\ \Longleftrightarrow\ i_0\leq j_0\ .$$
Pour $p>0$,
$$i_0i_1\cdots i_p\preccurlyeq j_0j_1\cdots j_q\ \Longleftrightarrow\ \left\{\begin{aligned}
&\kern 20pt i_0<j_0\cr
\noalign{\vskip -7pt}
&\hbox{ou}\cr
\noalign{\vskip -7pt}
&\kern 20pt i_0=j_0\ ,\quad q>0\ ,\quad j_1\cdots j_q\preccurlyeq i_1\cdots i_p\ .
\end{aligned}\right.$$
Vérifions que la relation ainsi définie est une relation d'ordre. La réflexivité de $\preccurlyeq$ est conséquence immédiate de la réflexivité de la relation d'ordre $\leq$ sur $E$. Pour montrer l'antisymétrie supposons que l'on ait
$$i_0i_1\cdots i_p\preccurlyeq j_0j_1\cdots j_q\qquad\hbox{et}\qquad j_0j_1\cdots j_q\preccurlyeq i_0i_1\cdots i_p\ .$$
La situation étant symétrique, on peut supposer que $q\leq p$. On raisonne par récurrence sur $p$. L'antisymétrie de la relation $\leq$ implique aussitôt que $i_0=j_0$, ce qui prouve en particulier le cas $p=0$. Si $p>0$, comme $i_0=j_0$, la première relation implique que $q>0$ et $j_1\cdots j_q\preccurlyeq i_1\cdots i_p$ et la deuxième que $i_1\cdots i_p\preccurlyeq j_1\cdots j_q$. Il résulte donc de l'hypothèse de récurrence que $j_1\cdots j_q=i_1\cdots i_p$, ce qui prouve l'antisymétrie. Pour montrer la transitivité, supposons que l'on ait
$$i_0i_1\cdots i_p\preccurlyeq j_0j_1\cdots j_q\qquad\hbox{et}\qquad j_0j_1\cdots j_q\preccurlyeq k_0k_1\cdots k_r\ $$
et prouvons que 
$$i_0i_1\cdots i_p\preccurlyeq k_0k_1\cdots k_r\ .$$
On raisonne par récurrence sur $l=\max\{p,r\}$. 
La transitivité de la relation $\leq$ implique aussitôt que $i_0\leq k_0$, ce qui prouve en particulier le cas $p=0$ et à plus forte raison le cas $l=0$. Dans le cas général, si $i_0<j_0$ ou $j_0<k_0$, on a $i_0<k_0$ et l'assertion est évidente. Reste le cas où $i_0=j_0=k_0$ et $p>0$. Alors la première relation implique que $q>0$ et $j_1\cdots j_q\preccurlyeq i_1\cdots i_p$ et la deuxième que $r>0$ et $k_1\cdots k_r\preccurlyeq j_1\cdots j_q$. L'hypothèse de récurrence implique alors que $k_1\cdots k_r\preccurlyeq i_1\cdots i_p$, ce qui prouve l'assertion. Il reste à démontrer les relations $(*)$. On raisonne par récurrence sur $p\geq1$. Pour $p=1$, si $i_0i_1\in B$, on a 
$$i_0\preccurlyeq i_0i_1\preccurlyeq i_1\ ,$$
puisque $i_0\leq i_0<i_1$. Supposons donc que $p>1$, et soit $i_0i_1\cdots i_p\in B$. Si $k=0$, on a 
$$i_0i_1\cdots i_p\preccurlyeq i_1\cdots i_p\ ,$$
puisque $i_0<i_1$. Si $k>0$ et $k$ est pair (resp. impair), comme $k-1$ est impair (resp. pair), on a par hypothèse de récurrence
$$i_1\cdots\widehat{i}_k\cdots i_p\preccurlyeq i_1\cdots i_p\qquad\hbox{(resp. \ }i_1\cdots i_p\preccurlyeq i_1\cdots\widehat{i}_k\cdots i_p\ )\ ,$$
d'où
$$i_0i_1\cdots i_p\preccurlyeq i_0i_1\cdots\widehat{i}_k\cdots i_p\qquad\hbox{(resp. \ }i_0i_1\cdots\widehat{i}_k\cdots i_p\preccurlyeq i_0i_1\cdots i_p\ )\ .$$
Le fait que la base $B$ est unitaire résulte 
du lemme suivant.
\end{proof}

\begin{lemme}\label{lemmeaprouver}
En gardant les notations ci-dessus, soient $p\geq0$ et $i_0i_1\cdots i_p\in B$. Alors pour tout $q$, $0\leq q\leq p$ et $\e\in\{0,1\}$, on a
$$\atom{i_0i_1\cdots i_p}^\e_q=\sum d^{k_{1}}_{q+1} d^{k_{2}}_{q+2}\cdots d^{k_{p-q}}_p(i_0i_1\cdots i_p)\ ,$$
la somme portant sur les suites d'entiers $(k_1,k_2\dots,k_{p-q})$ de parités alternées tels que $0\leq k_1<k_2<\cdots <k_{p-q}\leq p$, l'entier $k_{1}$ étant pair si $\e=1$ et impair si $\e=0$. En particulier,
$$\atom{i_0i_1\cdots i_p}^0_1=i_0i_p\ ,\quad\atom{i_0i_1\cdots i_p}^1_1=i_0i_1+i_1i_2+\cdots i_{p-1}i_p\ $$
et
$$\atom{i_0i_1\cdots i_p}^0_0=i_0\ ,\quad\atom{i_0i_1\cdots i_p}^1_1=i_p\ .$$
\end{lemme}

\begin{proof}
Fixons un entier $p\geq0$. Pour tout $q$, $0\leq q\leq p$, on pose
$$I_q=\{(k_1,k_2,\dots,k_{p-q})\,|\,0\leq k_1<k_2<\cdots<k_{p-q}\leq p\}\ ,$$
et pour tout $K=(k_1,k_2,\dots,k_{p-q})\in I_q$, on note $d^K_p$ l'opérateur simplicial
$$d^K_p=d^{k_{1}}_{q+1} d^{k_{2}}_{q+2}\cdots d^{k_{p-q}}_p\ ,$$
étant entendu que si $q=p$, de sorte que la suite $K$ soit vide, l'opérateur $d^K_p$ est l'identité.
On pose, pour $\e\in\{0,1\}$,
$$J^\e_q=\{(k_1,k_2,\dots,k_{p-q})\in I_q\,|\,(-1)^{k_i}=(-1)^{i+\e},\ 1\leq i\leq p-q\}\ .$$
Il s'agit donc de montrer que pour tout $i_0i_1\cdots i_p\in B$, on a 
$$\atom{i_0i_1\cdots i_p}^\e_q=\sum_{K\in J^\e_q}d^K_p(i_0i_1\cdots i_p)\ .$$
On procède par récurrence descendante sur $q$ (de $q=p$ à $q=0$). La formule est évidente pour $q=p$ et $q=p-1$. Supposons-la au rang $q$ (avec $0<q<p$) et montrons-la au rang $q-1$. On considère donc
$$d_q\atom{i_0i_1\cdots i_p}^1_q=\sum_{K\in J^1_q}d_qd^K_p(i_0i_1\cdots i_p)=\sum_{K\in J^1_q\,}\sum_{j=0}^q(-1)^jd^j_qd^K_p(i_0i_1\cdots i_p)\ .$$
En tenant compte de la relation simpliciale
$$d^n_rd^m_{r+1}=d^m_{r}d^{n+1}_{r+1}\ ,\qquad r>0\ ,\quad0\leq m\leq n\leq r\ ,$$
on en déduit que
$$d_q\atom{i_0i_1\cdots i_p}^1_q=\sum_{K\in J^1_q\,}\sum_{j=0}^q(-1)^jd^{\varphi(j,K)}_{p}(i_0i_1\cdots i_p)\ ,\leqno(*)$$
où la fonction
$$\varphi:\{0,1,\dots,q\}\times J^1_q\toto I_{q-1}$$
est définie comme suit. Pour $0\leq j\leq q$ et $K=(k_1,\dots,k_{p-q})\in J^1_q$, il existe (en posant $k_0=-1$ et $k_{p-q+1}=p+1$) un unique entier $m(j,K)$ tel que 
$$0\leq m(j,K)\leq p-q\qquad\hbox{et}\qquad k_{m(j,K)}<j+m(j,K)<k_{m(j,K)+1}\ ,$$
et par définition, 
$$\varphi(j,K)=(k_1,\dots,k_{m(j,K)},j+m(j,K),k_{m(j,K)+1},\dots,k_q)\ .$$
L'image de l'application $\varphi$ est la réunion (disjointe) des ensembles $J^1_{q+1}$, $J^0_{q+1}$, et de l'ensemble $J'^1_{q+1}$ des suites appartenant à $I_{q+1}$ ayant exactement un couple formé de deux termes consécutifs de même parité et dont le premier terme est pair. Il est évident que toute suite appartenant à l'image de $\varphi$ appartient à cette réunion. Réciproquement, il y a trois cas à examiner. 
\smallbreak

a) La suite $(l_1,\dots,l_{p-q+1})$ appartient à $J'^1_{q+1}$, autrement dit, comporte exactement un couple de termes consécutifs de même parité $(l_r,l_{r+1})$, $0\leq r\leq p-q$, et $l_1$ est pair. Alors la suite $(l_1,\dots,l_{p-q+1})$ admet exactement deux antécédents dans \hbox{$\{0,1,\dots,q\}\times J^1_q$}, à savoir $(j_1,K_1)$ et $(j_2,K_2)$, où
$$\begin{aligned}
&j_1=l_r-r+1\ ,\quad K_1=(l_1,\dots,\widehat{l}_r,\dots,l_{p-q+1})\ ,\cr
&j_2=l_{r+1}-r\ ,\quad K_2=(l_1,\dots,\widehat{l}_{r+1},\dots,l_{p-q+1})\ .
\end{aligned}$$
En observant que $j_1$ et $j_2$ sont de parité opposée, on en déduit que les deux termes correspondants dans la somme de la formule $(*)$ s'éliminent mutuellement.
\smallbreak

b) La suite $(l_1,\dots,l_{p-q+1})$ appartient à $J^0_{q+1}$. Alors cette suite admet exactement un antécédent, à savoir $(j,K)$, où $j=l_1$ et $K=(l_2,\dots,l_{p-q+1})$. Comme $j$ est alors impair, le signe du terme correspondant dans la somme de la formule $(*)$ est $-1$.
\smallbreak

c) La suite $(l_1,\dots,l_{p-q+1})$ appartient à $J^1_{q+1}$. Alors cette suite admet exactement un antécédent, à savoir $(j,K)$, où $j=l_{p-q+1}-(p-q)$ et $K=(l_1,\dots,l_{p-q})$. Comme les parités dans la suite $(l_1,\dots,l_{p-q+1})$ sont alternées, et comme $l_1$ est pair, on a $(-1)^{l_{p-q+1}}=(-1)^{p-q}$, et par suite $j$ est pair. On en déduit que le signe du terme correspondant dans la somme de la formule $(*)$ est $+1$.
\smallbreak

En vertu de ces considérations, on a donc l'égalité
$$d_q\atom{i_0i_1\cdots i_p}^1_q=\sum_{\kern 8pt L\in J^1_{q+1}}\!\!d^L_p(i_0i_1\cdots i_p)\,-\kern -7pt\sum_{\kern 8pt L\in J^0_{q+1}}\!\!d^L_p(i_0i_1\cdots i_p)\ ,$$
ce qui prouve le lemme.
\end{proof}

\begin{rem}
La démonstration du théorème~\ref{csunitsansboucles} est directement inspirée de la preuve du cas particulier où $E=\smp{n}$ et $\varPhi=\xi\smp{n}$ esquissée par Steiner~\cite[exemple~3.8]{Steiner}. Une preuve détaillée du lemme~\ref{lemmeaprouver}, pour ce cas particulier, figure dans un texte non publié de Burroni et Penon~\cite{B-P}.
\end{rem}

\begin{rem}
En gardant les notations de la preuve du théorème~\ref{csunitsansboucles}, il faut se garder de croire que pour un complexe simplicial général, la relation d'ordre $\preccurlyeq$ sur la base $B$ coïncide avec la relation $\leqbb$ définie dans le paragraphe~\ref{deffortsansboucles}. En effet, la relation $\leqbb$ est obtenue par clôture par transitivité et réflexivité à partir des relations \og élémentaires\fg{}
$$i_0i_1\cdots\widehat{i}_k\cdots i_p\leqbb i_0i_1\cdots i_p\ ,\qquad\hbox{pour $k$ impair,}\leqno(1)$$
$$i_0i_1\cdots i_p\leqbb i_0i_1\cdots\widehat{i}_k\cdots i_p\ ,\qquad\hbox{pour $k$ pair,}\phantom{\hbox{im}}\leqno(2)$$
pour $p\geq1$ et $i_0i_1\cdots i_p\in B$. Ainsi, si par exemple 
$$E=\smp{1}=\{0<1\}\quad \hbox{et}\quad \varPhi=\{\{0\},\{1\}\}\ ,$$ 
l'ensemble des relations \og élémentaires\fg{} ci-dessus est vide et la relation $\leqbb$ est l'égalité sur $B$, tandis que par définition de $\preccurlyeq$ on a $0\preccurlyeq1$.
\smallbreak

En revanche, si $E$ est totalement ordonné et $\varPhi=\xi E$, alors la relation d'ordre $\leqbb$ coïncide avec la relation $\preccurlyeq$ et est une relation d'ordre total. Pour le voir, il suffit de démontrer cette dernière assertion. En effet, dans la preuve du théorème~\ref{csunitsansboucles} on a montré que pour tous $p,q\geq0$ et $i_0i_1\cdots i_p,\,j_0j_1\cdots j_q\in B$, on a
$$i_0i_1\cdots i_p\,\leqbb\, j_0j_1\cdots j_q\ \Longrightarrow\ i_0i_1\cdots i_p\,\preccurlyeq\, j_0j_1\cdots j_q\ .$$
Réciproquement, si $i_0i_1\cdots i_p\,\preccurlyeq\, j_0j_1\cdots j_q$ 
il en résulte qu'on n'a pas l'inégalité stricte
$j_0j_1\cdots j_q\leqbbs i_0i_1\cdots i_p$, et si la relation d'ordre $\leqbb$ est total, on en déduit que $i_0i_1\cdots i_p\,\leqbb\, j_0j_1\cdots j_q$.
\smallbreak

Pour montrer que si $E$ est totalement ordonné et $\varPhi=\xi E$, alors la relation d'ordre $\leqbb$ est un ordre total, on observe d'abord (sans aucune hypothèse sur $(E,\varPhi)$) que pour tous $p\geq0$ et $i_0i_1\cdots i_p\in B$, et tout entier $k$, $0\leq k\leq p$, on a par une application répétée de (1) si $k$ est pair
$$i_0i_1\cdots i_k\leqbb i_0i_1\cdots i_ki_p\leqbb i_0i_1\cdots i_ki_{p-1}i_p\leqbb\cdots\leqbb i_0i_1\cdots\widehat{i}_{k+1}\cdots i_p\leqbb i_0i_1\cdots i_p$$
et par une application répétée de (2) si $k$ est impair
$$i_0i_1\cdots i_p\leqbb i_0i_1\cdots\widehat{i}_{k+1}\cdots i_p\leqbb\cdots\leqbb i_0i_1\cdots i_ki_{p-1}i_p\leqbb i_0i_1\cdots i_ki_p\leqbb i_0i_1\cdots i_k\,.$$
On a donc
$$i_0i_1\cdots i_k\leqbb i_0i_1\cdots i_p\ ,\qquad\hbox{pour $k$ pair,\phantom{im}}\leqno(3)$$
$$i_0i_1\cdots i_p\leqbb i_0i_1\cdots i_k\ ,\qquad\hbox{pour $k$ impair.}\leqno(4)$$
\smallbreak

Soient maintenant $p,q\geq0$ et $i_0i_1\cdots i_p,\,j_0j_1\cdots j_q$ deux éléments de la base $B$. Il s'agit de montrer (sous l'hypothèse $E$ totalement ordonné et $\varPhi=\xi E$) que
$$i_0i_1\cdots i_p\leqbb j_0j_1\cdots j_q\qquad\hbox{ou}\qquad j_0j_1\cdots j_q\leqbb i_0i_1\cdots i_p\ .$$
Si l'un d'eux est une section commençante de l'autre, cela résulte des relations $(3)$ et~$(4)$. On peut donc supposer qu'il existe un entier $k$, $0\leq k\leq\min\{p,q\}$, tel que 
$$i_{k'}=j_{k'}\,,\quad0\leq k'<k\ ,\qquad\hbox{et}\qquad i_k\neq j_k\ .$$
Comme $E$ est totalement ordonné, on a $i_k<j_k$ ou $j_k<i_k$, et par symétrie, on peut supposer que $i_k<j_k$. Supposons d'abord que $k$ soit pair. On distingue plusieurs cas.
\begin{itemize}
\item $k=p$. Alors on a
$$\begin{aligned}
i_0\cdots i_p=i_0\cdots i_{k-1}i_k\,\leqbb\,i_0\cdots i_{k-1}i_kj_k\,\leqbb\,i_0\cdots i_{k-1}j_k&\cr
=j_0\cdots j_{k-1}j_k&\,\leqbb\,j_0\cdots j_q\ ,
\end{aligned}$$
la première inégalité résultant de $(1)$ et du fait que comme $\varPhi=\xi E$ on a $\{i_0,\dots,i_{k-1},i_k,j_k\}\in\varPhi$, la deuxième résultant de $(2)$ et la troisième de $(3)$.
\item $k<p$. Alors on a
$$i_0\cdots i_p\,\leqbb\,i_0\cdots i_{k-1}i_ki_{k+1}\,\leqbb\,i_0\cdots i_{k-1}i_{k+1}$$
la première inégalité résultant de $(4)$ et la deuxième de $(2)$. Comme $E$ est totalement ordonné, on distingue trois cas.
\begin{itemize}
\item $i_{k+1}=j_k$. Alors en vertu de $(3)$, on a
$$i_0\cdots i_{k-1}i_{k+1}=j_0\cdots j_{k-1}j_k\,\leqbb\,j_0\cdots j_q\ .$$
\item $i_{k+1}<j_k$. Alors en vertu du cas $k=p$, on a
$$i_0\cdots i_{k-1}i_{k+1}\,\leqbb\,j_0\cdots j_q\ .$$
\item $j_k<i_{k+1}$. Alors on a
$$\begin{aligned}
i_0\cdots i_{k-1}i_ki_{k+1}\,\leqbb\,{}&i_0\cdots i_{k-1}i_kj_ki_{k+1}\,\leqbb\,i_0\cdots i_{k-1}i_kj_k\,\cr
\leqbb\,{}&i_0\cdots i_{k-1}j_k=j_0\cdots j_{k-1}j_k\,\leqbb\,j_0\cdots j_q\ ,
\end{aligned}$$
la première inégalité résultant de $(1)$ et du fait que comme $\varPhi=\xi E$ on a $\{i_0,\dots,i_{k-1},i_k,j_k,i_{k+1}\}\in\varPhi$, la deuxième et la troisième résultant de $(2)$ et la quatrième de $(3)$.
\end{itemize}
\end{itemize}
Dans le cas où $k$ est impair, on procède exactement de la même façon, toutes les inégalités étant inversées et les applications de $(1)$ et $(2)$ et de $(3)$ et $(4)$ étant échangées.
\end{rem}

\begin{cor}\label{oocatcspolygr}
Pour tout complexe simplicial $(E,\varPhi)$, la $\infty$\nbd-catégorie $\Stnu\Chcs(E,\varPhi)$ est librement engendrée au sens des polygraphes par les atomes
$$\atom{i_0i_1\cdots i_p}\ ,\qquad p\geq 0\ ,\quad i_0<i_1<\cdots<i_p\ ,\quad\{i_0,i_1,\dots,i_p\}\in\varPhi\ .$$
\end{cor}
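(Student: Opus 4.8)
The plan is to obtain this corollary as an essentially immediate consequence of the results already in place, with no new computation required. First I would invoke Theorem~\ref{csunitsansboucles}, which asserts that the set
$$B=\{(i_0,i_1,\dots,i_p)\,|\,p\geq0\,,\ \{i_0,i_1,\dots,i_p\}\in\varPhi\,,\ i_0<i_1<\cdots<i_p\}$$
is a unitary, strongly loop-free basis of the augmented directed complex $\Chcs(E,\varPhi)$. Then, by Proposition~\ref{propfortsansboucles}, a strongly loop-free basis is in particular loop-free, so $B$ is a unitary loop-free basis of $\Chcs(E,\varPhi)$.

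At this point the hypotheses of Theorem~\ref{unitsansbouclespolygr} are met: $\Chcs(E,\varPhi)$ admits a unitary loop-free basis, namely $B$. Applying that theorem yields that the $\infty$-category $\Stnu\Chcs(E,\varPhi)$ is freely generated in the polygraph sense by the set of atoms $\atom{b}$, $b\in B$. It then remains only to note that, by the description of $\Chcs(E,\varPhi)$ recalled just before Theorem~\ref{csunitsansboucles}, the elements of $B$ are exactly the tuples $(i_0,i_1,\dots,i_p)$ with $p\geq0$, $i_0<i_1<\cdots<i_p$ and $\{i_0,i_1,\dots,i_p\}\in\varPhi$, so that the atoms $\atom{b}$ for $b\in B$ coincide with the atoms $\atom{i_0i_1\cdots i_p}$ listed in the statement.

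I do not anticipate any genuine obstacle here: the proof is a concatenation of Theorem~\ref{csunitsansboucles}, Proposition~\ref{propfortsansboucles} and Theorem~\ref{unitsansbouclespolygr}. The only point deserving a word of care is the bookkeeping identification of the basis $B$ produced by Theorem~\ref{csunitsansboucles} with the index set labelling the atoms in the statement, which is immediate once one unwinds the definitions; everything substantive has already been done in the proof of Theorem~\ref{csunitsansboucles} (the \emph{strongly loop-free} and \emph{unitary} properties) and in Steiner's Theorem~\ref{unitsansbouclespolygr} (the passage from a unitary loop-free basis to a polygraphic presentation).
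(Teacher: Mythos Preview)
Your proposal is correct and matches the paper's own proof exactly: the paper states that the corollary is an immediate consequence of Theorem~\ref{csunitsansboucles}, Proposition~\ref{propfortsansboucles}, and Theorem~\ref{unitsansbouclespolygr}, which is precisely the chain of implications you spell out.
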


\begin{proof}
Le corollaire est conséquence immédiate des théorèmes~\ref{unitsansbouclespolygr} et~\ref{csunitsansboucles}, et de la proposition~\ref{propfortsansboucles}.
\end{proof}

\section{La \pdfinfty-catégorie associée à un complexe simplicial}

\begin{paragr}\label{foncteurO}
On dispose de deux foncteurs de source la catégorie des complexes simpliciaux et but celle des $\infty$\nbd-catégories, à savoir le composé $\OrCs=\Stnu\Chcs$
$$\xymatrix{
\cs\ar[r]^{\Chcs}
&\ADC\ar[r]^-{\Stnu}
&\ooCat
}\ ,$$
et le composé
$$\xymatrix{
\cs\ar[r]^-{\kappa^*}
&\simpl\ar[r]^-{\cStreet}
&\ooCat
}\ .$$
On va montrer que ces deux composés sont canoniquement isomorphes. Plus précisément, on a un morphisme d'adjonction $1_{\ooCat}\to\Stnu\Stlambda$ (cf. proposition~\ref{adjSteiner}), d'où en vertu de la proposition~\ref{commnerf}, un morphisme de foncteurs
$$\cStreet\kappa^*\toto\Stnu\Stlambda\cStreet\kappa^*\simeq\Stnu\Chnorm{}\kappa^*=\Stnu\Chcs=\OrCs\ .$$
On va montrer que ce dernier est un isomorphisme. La $\infty$\nbd-catégorie associée ainsi à un complexe simplicial $(E,\varPhi)$ sera parfois appelée \emph{l'oriental} de $(E,\varPhi)$.
\end{paragr}

\begin{paragr}
Soit $(E,\varPhi)$ un complexe simplicial. Pour tout $S\in\varPhi$, on note $i_S$ l'inclusion \hbox{$i_S:S\to E$} et pour tout $S'\subset S$, on note $i_{S,S'}$ l'inclusion \hbox{$i_{S,S'}:S'\to S$.} En considérant l'ensemble $\varPhi$ ordonné par inclusion comme une catégorie, on définit un foncteur $\varPhi\to\cs$ en associant à $S\in\varPhi$ le complexe simplicial $(S,\xi S)$ et à une inclusion $S'\subset S$ dans $\varPhi$, le morphisme de complexes simpliciaux \hbox{$i_{S,S'}:(S',\xi S')\to(S,\xi S)$.} 
Comme $i_Si_{S,S'}=i_{S'}$, les morphismes $i_S:(S,\xi S)\to(E,\varPhi)$, $S\in\varPhi$, définissent un cône inductif, d'où un morphisme canonique de complexes simpliciaux 
$$\limind_{S\in\varPhi}(S,\xi S)\to(E,\varPhi)\ .$$
Bien que ce morphisme induise une bijection $\limind_{S\in\varPhi}\xi S\to\varPhi$, il faut se garder de croire qu'il est toujours un isomorphisme. Par exemple, ce n'est pas le cas si \hbox{$\varPhi=\{\{x\}\,|\,x\in E\}$} et si la relation d'ordre sur $E$ n'est pas l'égalité. En revanche, il est facile de vérifier qu'il est bien un isomorphisme si la relation d'ordre sur $E$ est engendrée par les couples $x,y\in E$ tels que $x<y$ et $\{x,y\}\in \varPhi$ (et à plus forte raison si $\varPhi=\xi E$), mais on n'aura pas besoin de ce résultat.
\end{paragr}

\begin{lemme}\label{limindkappa}
Le morphisme canonique $\limind_{S\in\varPhi}\kappa^*(S,\xi S)\to\kappa^*(E,\varPhi)$ est un isomorphisme d'ensembles simpliciaux. En particulier, 
l'ensemble sous-jacent à $E$ s'identifie à la limite inductive des ensembles sous-jacents aux $S\in\varPhi$.
\end{lemme}

\begin{proof}
En gardant les notations du paragraphe précédent, on va montrer que pour tout $n\geq0$, l'application canonique $\limind_{S\in\varPhi}\kappa^*(S,\xi S)_n\to\kappa^*(E,\varPhi)_n$ est bijective, ce qui prouvera la première assertion. Un $n$\nbd-simplexe de $\kappa^*(E,\varPhi)$ est une application croissante $f:\smp{n}\to E$ dont l'image $S$ est dans $\varPhi$. L'application $f$ induit une application croissante $f':\smp{n}\to S$ qui est un $n$\nbd-simplexe de $(S,\xi S)$ dont l'image par~$i_S$ est $f$, ce qui prouve la surjectivité. Pour montrer l'injectivité, soient \hbox{$f_1:\smp{n}\to S_1$} et~$f_2:\smp{n}\to S_2$ des $n$\nbd-simplexes de $\kappa^*(S_1,\xi S_1)$ et $\kappa^*(S_2,\xi S_2)$ respectivement ayant même image $f$ dans $\kappa^*(E,\varPhi)_n$. Alors $f=i_{S_1}f_1=i_{S_2}f_2$, et si $S$ est l'image de $f$, et $f':\smp{n}\to S$ l'application induite par $f$, on a $S\subset S_1\cap S_2$, et $f_1$ et~$f_2$ sont l'image de $f'$ par $i_{S_1,S}$ et $i_{S_2,S}$ respectivement, ce qui prouve qu'ils ont même image dans la limite inductive.
Enfin, en remarquant que pour tout complexe simplicial $(E,\varPhi)$, l'ensemble $(\kappa^*(E,\varPhi))_0$ des $0$\nbd-simplexes de $\kappa^*(E,\varPhi)$ s'identifie à l'ensemble sous-jacent à l'ensemble ordonné $E$, la dernière assertion en résulte.
\end{proof}

\begin{prop}\label{limindOr}
Le morphisme canonique $\limind_{S\in\varPhi}\OrCs(S,\xi S)\to\OrCs(E,\varPhi)$ est un isomorphisme de $\infty$\nbd-catégories.
\end{prop}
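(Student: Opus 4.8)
The plan is to deduce this from Proposition~\ref{englibrclef}. Take for $A$ the poset $\varPhi$ (ordered by inclusion, viewed as a category), for $P:\varPhi\to\ooCat$ the functor $S\mapsto\OrCs(S,\xi S)=\Stnu\Chcs(S,\xi S)$, where a morphism $i_{S,S'}:(S',\xi S')\to(S,\xi S)$ is sent to $\OrCs(i_{S,S'})$, for $C$ the $\infty$\nbd-cat�gorie $\OrCs(E,\varPhi)$, and for $\alpha:P\to C$ the c�ne de sommet $C$ whose component $\alpha_S$ is the morphism $\OrCs(i_S):\OrCs(S,\xi S)\to\OrCs(E,\varPhi)$ induced by the inclusion $i_S:(S,\xi S)\to(E,\varPhi)$. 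Since $i_Si_{S,S'}=i_{S'}$, this is indeed a c�ne inductif, and $\limind_{S\in\varPhi}\alpha$ is precisely the canonical morphism of the statement. By Corollary~\ref{oocatcspolygr}, $\OrCs(E,\varPhi)$ is librement engendr�e au sens des polygraphes by the set $G$ of atoms $\atom{i_0i_1\cdots i_p}$ with $i_0<i_1<\cdots<i_p$ and $\{i_0,\dots,i_p\}\in\varPhi$, and likewise each $\OrCs(S,\xi S)$ by the set $G_S$ of atoms $\atom{i_0i_1\cdots i_p}$ with $i_0<\cdots<i_p$ and $\{i_0,\dots,i_p\}\subseteq S$. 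It remains to verify conditions (a), (b), (c) of Proposition~\ref{englibrclef} for the data $G$, $(G_S)_{S\in\varPhi}$.

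The crucial observation for (a) and (b) is an elementary remark about the functor $\Stnu$: if $f:K\to K'$ is a morphism of complexes dirig�s augment�s admitting unitary bases $B$, $B'$ and such that $f(B)\subseteq B'$, then, $f$ being a chain map compatible with the augmentations and carrying each basis element to a basis element, it commutes with the decomposition $x\mapsto(\Pos{x},\Neg{x})$ of~\ref{baseunitADC}; hence the downward recursion defining the atoms gives $\Stnu(f)(\atom{b})=\atom{f(b)}$ for every $b\in B$. Now, by the description of $\Chcs$ recalled before the statement, the chain maps $\Chcs(i_S)$ and $\Chcs(i_{S,S'})$ carry the basis element $(i_0,\dots,i_p)$ to the basis element with the same entries viewed in the larger ordered set, so in particular basis into basis; by the remark, $\alpha_S(G_S)\subseteq G$ and $\OrCs(i_{S,S'})(G_{S'})\subseteq G_S$, which are (a) and (b). (In particular the diagram $S\mapsto G_S$ in $\ens$ is well defined, with structure maps induced by the $\OrCs(i_{S,S'})$.)

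For (c) we must show that $\limind_{S\in\varPhi}G_S\to G$ is bijective. It is surjective: an atom $\atom{i_0i_1\cdots i_p}\in G$ has underlying set $T=\{i_0,\dots,i_p\}\in\varPhi$, and it already lies in $G_T$, with $\alpha_T(\atom{i_0i_1\cdots i_p})=\atom{i_0i_1\cdots i_p}$. It is injective: suppose $\atom{b}\in G_{S_1}$ and $\atom{b'}\in G_{S_2}$ have the same image in $G$; since an atom is entirely determined by its top-degree component, $b$ and $b'$ are the same tuple $(i_0,\dots,i_p)$, and its underlying set $T$ lies in $\varPhi$ with $T\subseteq S_1$ and $T\subseteq S_2$; thus $T\leq S_1$ and $T\leq S_2$ in $\varPhi$, and both $\atom{b}$ and $\atom{b'}$ are the images of the single element $\atom{i_0i_1\cdots i_p}\in G_T$ under the structure maps $G_T\to G_{S_1}$ and $G_T\to G_{S_2}$, hence they coincide in $\limind_{S\in\varPhi}G_S$ (this is the usual description of a colimit of sets indexed by a poset, using that $T$ is a common minorant of $S_1$ and $S_2$). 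Proposition~\ref{englibrclef} then yields that $\limind_{S\in\varPhi}\OrCs(S,\xi S)\to\OrCs(E,\varPhi)$ is an isomorphisme de $\ooCat$. The only point requiring a little care is the remark of the second paragraph on the compatibility of $\Stnu$ with base-preserving morphisms; granting it, the rest is bookkeeping about the combinatorics of $\Chcs$ and colimits of sets over~$\varPhi$.
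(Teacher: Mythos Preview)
Your proof is correct and follows essentially the same approach as the paper: both apply Proposition~\ref{englibrclef} with the generating sets of atoms supplied by Corollary~\ref{oocatcspolygr}, identifying these sets with $\varPhi$ and the $\xi S$ respectively and checking that $\limind_{S\in\varPhi}\xi S\to\varPhi$ is a bijection. You spell out explicitly why base-preserving morphisms send atoms to atoms (conditions (a) and (b)) and give a careful verification of condition~(c), details the paper leaves implicit, but the strategy is the same.
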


\begin{proof}
En vertu du corollaire~\ref{oocatcspolygr}, la $\infty$\nbd-catégorie $\OrCs(E,\varPhi)$ est librement engendrée au sens des polygraphes par l'ensemble des atomes
$$\atom{i_0i_1\cdots i_p}\ ,\qquad p\geq 0\ ,\quad i_0<i_1<\cdots<i_p\ ,\quad\{i_0,i_1,\dots,i_p\}\in\varPhi\ ,$$
qui s'identifie à l'ensemble $\varPhi$. De même, pour tout $S\in\varPhi$, la $\infty$\nbd-catégorie $\OrCs(S,\xi S)$ est librement engendrée au sens des polygraphes par l'ensemble des atomes
$$\atom{i_0i_1\cdots i_p}\ ,\qquad p\geq 0\ ,\quad i_0<i_1<\cdots<i_p\ ,\quad\{i_0,i_1,\dots,i_p\}\subset S\ ,$$
qui s'identifie à l'ensemble $\xi S$. Comme l'application $\limind_{S\in\varPhi}\xi S\to\varPhi$ est bijective, la proposition résulte donc de la proposition~\ref{englibrclef}.
\end{proof}

\begin{thm}\label{OrisoNkappa}
Le morphisme de foncteurs $\cStreet\kappa^*\to\OrCs$ \emph{(cf.~\ref{foncteurO})} est un isomorphisme. 
\end{thm}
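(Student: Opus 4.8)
The plan is to exploit that every simplicial complex $(E,\varPhi)$ is, canonically, a colimit of its faces, and that \emph{both} $\cStreet\kappa^*$ and $\OrCs$ send this particular colimit to a colimit; the comparison morphism of \ref{foncteurO} can then be checked face by face, i.e.\ on the simplices $\kappa(\smp{p})$. Write $\theta\colon\cStreet\kappa^*\to\OrCs$ for the morphism of functors constructed in \ref{foncteurO}.

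First I would fix $(E,\varPhi)$ and consider the functor $D\colon\varPhi\to\cs$, $S\mapsto(S,\xi S)$, together with the cocone $(i_S\colon(S,\xi S)\to(E,\varPhi))_{S\in\varPhi}$ introduced just before Lemma~\ref{limindkappa}. By Proposition~\ref{limindOr}, $\OrCs$ turns this cocone into a colimit cocone. By Lemma~\ref{limindkappa}, $\kappa^*$ turns it into a colimit cocone in $\simpl$, and since $\cStreet$ is a left adjoint (\ref{defnerfStreet}) it preserves colimits, so $\cStreet\kappa^*$ also turns it into a colimit cocone. Naturality of $\theta$ along the $i_S$ then identifies $\theta_{(E,\varPhi)}$ with the morphism induced on colimits by the natural transformation $\theta\ast D\colon\cStreet\kappa^*D\to\OrCs D$; hence it suffices to prove that $\theta_{(S,\xi S)}$ is an isomorphism for every $S\in\varPhi$. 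Since a nonempty finite totally ordered set with $\card(S)=p+1$ elements is order-isomorphic to $\smp{p}$, we get $(S,\xi S)\simeq\kappa(\smp{p})$ in $\cs$, and by naturality of $\theta$ the problem reduces to showing that $\theta_{\kappa(\smp{p})}$ is an isomorphism for every $p\geq0$.

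Next, using $\kappa^*\kappa(\smp{p})=\kappa^*(\smp{p},\xi\smp{p})=\smp{p}$ (\ref{kappastarcs}), I would unwind the definition of $\theta$: on $\kappa(\smp{p})$ it is the composite $\cStreet(\smp{p})\xrightarrow{\eta}\Stnu\Stlambda\cStreet(\smp{p})\xrightarrow{\Stnu(\phi_{\smp{p}})}\Stnu\Chnorm{}(\smp{p})=\OrCs\kappa(\smp{p})$, where $\eta$ is the unit of the adjunction $(\Stlambda,\Stnu)$ (Proposition~\ref{adjSteiner}) and $\phi\colon\Stlambda\cStreet\xrightarrow{\sim}\Chnorm{}$ is the isomorphism of Proposition~\ref{commnerf}; the second arrow is therefore an isomorphism. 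For the first arrow I would invoke that $\cStreet(\smp{p})$ is canonically $\Stnu\Chnorm{}(\smp{p})=\Or{p}$ by the defining property of $\cStreet$ (\ref{defnerfStreet}), that $\Chnorm{}(\smp{p})$ has a unitary loop-free basis by Example~\ref{exADCstandard} (equivalently Theorem~\ref{csunitsansboucles} applied to $(\smp{p},\xi\smp{p})$, plus Proposition~\ref{propfortsansboucles}), so that Theorem~\ref{isoadjunitsansboucles} makes the counit $\Stlambda\Stnu\Chnorm{}(\smp{p})\to\Chnorm{}(\smp{p})$ an isomorphism; by the triangle identity the unit at $\Stnu\Chnorm{}(\smp{p})$ is then an isomorphism, and by naturality of $\eta$ along the canonical iso $\cStreet(\smp{p})\simeq\Stnu\Chnorm{}(\smp{p})$ so is $\eta$ at $\cStreet(\smp{p})$. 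Hence $\theta_{\kappa(\smp{p})}$ is an isomorphism, which finishes the argument.

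The main obstacle I anticipate is not homotopical or combinatorial — all the real input is already at hand (Proposition~\ref{limindOr}, Lemma~\ref{limindkappa}, Theorem~\ref{isoadjunitsansboucles}) — but bookkeeping: one must check carefully that $\cStreet\kappa^*$ genuinely preserves the face colimit (this uses "$\cStreet$ is a left adjoint" together with Lemma~\ref{limindkappa}, and it is crucial that one does \emph{not} need $\OrCs=\Stnu\Chcs$ or $\Stnu\Chnorm{}$ to preserve arbitrary colimits, only this specific one — indeed $\Stnu$ does not preserve colimits in general), and that $\theta_{\kappa(\smp{p})}$ is really the composite described above, i.e.\ that the isomorphism of Proposition~\ref{commnerf} restricts on $\smp{p}$ to the Steiner counit under the identification $\cStreet(\smp{p})=\Stnu\Chnorm{}(\smp{p})$. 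Once these identifications are pinned down the proof is purely formal.
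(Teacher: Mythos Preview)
Your proposal is correct and follows essentially the same route as the paper: reduce to the faces $(S,\xi S)\simeq\kappa(\smp{p})$ via the colimit decomposition (Lemma~\ref{limindkappa} and Proposition~\ref{limindOr}, together with $\cStreet$ being a left adjoint), then check the comparison on representables. The paper dispatches the last step in one line by declaring $\cStreet(\smp{p})$ ``equal by definition'' to $\Stnu\Chnorm{\smp{p}}$; your unwinding via the unit, the isomorphism of Proposition~\ref{commnerf}, and the triangle identity is a more careful way of saying the same thing --- and indeed, since under the identification $\cStreet(\smp{p})=\Stnu\Chnorm{\smp{p}}$ the isomorphism $\phi_{\smp{p}}$ is the Steiner counit, the triangle identity shows $\theta_{\kappa(\smp{p})}$ is literally the identity, not merely an isomorphism.
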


\begin{proof}
Pour tout complexe simplicial $(E,\varPhi)$, on a un carré commutatif de $\ooCat$
$$
\xymatrixcolsep{3pc}
\xymatrix{
\limind_{S\in\varPhi}\cStreet\kappa^*(S,\xi S)\ar[r]\ar[d]
&\limind_{S\in\varPhi}\OrCs(S,\xi S)\ar[d]
\\
\cStreet\kappa^*(E,\varPhi)\ar[r]
&\OrCs(E,\varPhi)
}$$
dont les flèches verticales sont des isomorphismes, celle de droite en vertu de la proposition~\ref{limindOr}, et celle de gauche en vertu de la proposition~\ref{limindkappa} et du fait que~$\cStreet$, étant un adjoint à gauche, commute aux limites inductives. Pour montrer que $\cStreet\kappa^*(E,\varPhi)\to\OrCs(E,\varPhi)$ est un isomorphisme, il suffit donc de le montrer pour $(E,\varPhi)=(\smp{m},\xi\smp{m})$, $m\geq0$ (puisque pour tout $S\in\varPhi$, le complexe simplicial $(S,\xi S)$ est isomorphe à $(\smp{m},\xi\smp{m})$, pour $m=\card\, S-1$). Or, $\cStreet\kappa^*(\smp{m},\xi\smp{m})=\cStreet(\smp{m})$ est par définition égal à $\Stnu\Chnorm{(\smp{m})}=\Stnu\Chnorm\kappa^*(\smp{m},\xi\smp{m})=\OrCs(\smp{m},\xi\smp{m})$, ce qui prouve l'assertion.
\end{proof}

\begin{prop}\label{Orespmono}
Le foncteur $\OrCs$ respecte les monomorphismes.
\end{prop}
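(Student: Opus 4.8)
The plan is to reduce the statement to the case of the representable simplicial complexes $(\smp{m},\xi\smp{m})$ and to a compatibility of $\OrCs$ with suitable colimit presentations, exactly as in the proof of Theorem~\ref{OrisoNkappa}. First I would recall that a morphism of simplicial complexes $j\colon(E_0,\varPhi_0)\to(E_1,\varPhi_1)$ is a monomorphism in $\cs$ if and only if the underlying map $j\colon E_0\to E_1$ is injective and $\varPhi_0=\{S\in\xi E_0\,|\,j(S)\in\varPhi_1\}$; indeed, by the description of limits in $\cs$ (cf.~\ref{limindprojcs}) this is exactly the condition that $j$ be the equalizer of its cokernel pair, i.e.\ a strict mono, and since the underlying-poset functor and the forgetful functor $\cs\to\ens$ detect monos, an arbitrary mono is of this form. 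So it suffices to show that $\OrCs(j)$ is a monomorphism of $\ooCat$ for such a $j$.

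Next I would use that $\OrCs=\Stnu\Chcs$ and study each factor. The key point is that $\Chcs(j)\colon\Chcs(E_0,\varPhi_0)\to\Chcs(E_1,\varPhi_1)$ is, in each degree $p$, the map of free abelian groups induced by the injection of bases $B^{(0)}_p\hookrightarrow B^{(1)}_p$ (a $(p+1)$\nbd-tuple $i_0<\dots<i_p$ with $\{i_0,\dots,i_p\}\in\varPhi_0$ is sent to the tuple $j(i_0)<\dots<j(i_p)$, which lies in $B^{(1)}_p$ by injectivity of $j$ and the hypothesis on $\varPhi_0$). Hence $\Chcs(j)$ is degreewise a split injection of $\mathbb{Z}$\nbd-modules sending basis elements to basis elements, and moreover it is a \emph{strict} monomorphism of augmented directed complexes in the sense of~\ref{monoADC}: the positivity submonoid $\Chcs(E_0,\varPhi_0)^*_p$ is precisely the preimage of $\Chcs(E_1,\varPhi_1)^*_p$, since a $\mathbb{Z}$\nbd-linear combination of elements of $B^{(0)}_p$ has all coefficients $\geq 0$ iff its image does. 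Then I would invoke the explicit description of $\Stnu$ (cf.~\ref{defStnu}): an $i$\nbd-cell of $\Stnu(K)$ is a table $\cellule{x}=(x^\e_k)$ with $x^\e_k\in K^*_k$, satisfying the boundary, augmentation, and $x^0_i=x^1_i$ conditions, and $\Stnu(f)$ acts entrywise by $f$. Since $\Chcs(j)$ is injective in every degree, $\Stnu\Chcs(j)$ sends a cell $(x^\e_k)$ to the cell $(j(x^\e_k))$, and two cells with the same image have all entries equal, so they are equal; thus $\Stnu\Chcs(j)_i$ is injective for every $i\geq 0$, which is exactly what it means for $\OrCs(j)$ to be a monomorphism of $\ooCat$.

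The one place deserving care, and the step I expect to be the main obstacle, is verifying cleanly that a monomorphism of $\cs$ has the stated combinatorial form and that consequently $\Chcs$ preserves monomorphisms; everything after that is a formal consequence of the entrywise descriptions of $\Chcs$ and $\Stnu$. An alternative, perhaps slicker, route that avoids analyzing monos in $\cs$ directly: use the colimit presentation $(E,\varPhi)\simeq\limind_{S\in\varPhi}(S,\xi S)$ is \emph{not} available in general (as the excerpt warns), so instead I would observe that $\Chcs$ commutes with filtered colimits and with the relevant coproducts, reduce to a mono between representables, where $\kappa^*$ of a mono of complexes $(\smp{m},\xi\smp{m})\to(\smp{n},\xi\smp{n})$ is a monomorphism of $\simpl$, and then use that $\Chnorm{}$ is faithful on such maps. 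But the direct argument via strict monos and the entrywise formula for $\Stnu$ is the shortest and is the one I would write up.
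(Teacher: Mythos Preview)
Your core argument is correct, but the opening paragraph contains an error that, fortunately, the rest of the proof does not rely on. You claim that a monomorphism $j$ in $\cs$ must satisfy $\varPhi_0=\{S\in\xi E_0\mid j(S)\in\varPhi_1\}$; this is the condition for $j$ to be a \emph{strict} monomorphism, and it is not automatic. For instance, the identity of $\smp{1}$ viewed as a morphism $(\smp{1},\{\{0\},\{1\}\})\to(\smp{1},\xi\smp{1})$ is a non-strict monomorphism. Since the forgetful functor $\cs\to\ens$ is faithful, a morphism of $\cs$ is a monomorphism if and only if its underlying map of sets is injective; no condition on $\varPhi_0$ is needed. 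Happily, your analysis of $\Chcs(j)$ only uses injectivity of $j$ together with $j(\varPhi_0)\subset\varPhi_1$ (which is part of being a morphism), so once the opening is corrected the argument goes through.

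The paper's proof is considerably shorter and proceeds differently: it factors $\OrCs=\Stnu\circ\Chnorm{}\circ\kappa^*$ and observes that $\kappa^*$ and $\Stnu$ are both right adjoints (the latter by Proposition~\ref{adjSteiner}), hence preserve monomorphisms automatically; for the middle factor $\Chnorm{}$, one notes that a monomorphism of simplicial sets sends non-degenerate simplices to non-degenerate simplices, so the induced map on normalised chains is degreewise injective. Your hands-on route via the explicit bases of $\Chcs$ and the entrywise description of $\Stnu$ reaches the same conclusion but with more work; the adjunction shortcut is what you are missing.
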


\begin{proof}
Par définition, $\OrCs=\Stnu\Chcs=\Stnu\Chnorm{}\kappa^*$. Les foncteurs $\kappa^*$ et $\Stnu$ étant des adjoints à droite ils respectent les monomorphismes. Comme l'image par un monomorphisme d'ensembles simpliciaux d'un simplexe non dégénéré est un simplexe non dégénéré, le foncteur $\Chnorm{}$ respecte aussi les monomorphismes, ce qui prouve la proposition.
\end{proof}

\section{Le type d'homotopie de la \pdfinfty-catégorie associée\\ à un ensemble ordonné}

\begin{paragr}
Dans cette section on s'intéresse à la restriction du foncteur $\OrCs$ à la catégorie $\ord$, considérée comme sous catégorie pleine de $\cs$ \emph{via} le foncteur $\kappa$ (cf.~\ref{kappacs}). Cette restriction sera notée aussi $\OrCs:\ord\to\ooCat$.
\end{paragr}

\begin{lemme}\label{Ormono}
Le foncteur $\OrCs:\ord\to\ooCat$ respecte les monomorphismes.
\end{lemme}

\begin{proof}
Comme le foncteur $\kappa$ est un adjoint à droite (cf.~\ref{kappacs}), il respecte les monomorphismes, et l'assertion résulte de la proposition~\ref{Orespmono}.
\end{proof}
\goodbreak

\emph{Dans la suite de cette section on se fixe un ensemble ordonné $E$.}

\begin{paragr}\label{untronqueor}
En vertu du corollaire~\ref{oocatcspolygr}, $\OrCS{E}$ est la $\infty$\nbd-catégorie librement engendrée au sens des polygraphes par les atomes
$$\atom{i_0i_1\cdots i_k}\ ,\qquad k\geq 0\ ,\quad i_0<i_1<\cdots<i_k\ ,\quad\{i_0,i_1,\dots,i_k\}\in\xi E\ .$$
En particulier, le $1$\nbd-tronqué bête de $\OrCS{E}$ est la catégorie librement engendrée par le graphe dont les sommets sont les éléments de $E$ et dont les arêtes sont les couples $(i,j)$, $i<j$. Les $1$\nbd-flèches de $\OrCS{E}$ s'identifient donc aux éléments $S$ de $\xi E$, la source et le but de la flèche $S$ étant définis respectivement par $s(S)=\min(S)$ et $t(S)=\max(S)$, la composition de deux flèches composables étant leur réunion, et l'identité d'un objet $i\in E$ étant le singleton $\{i\}$. Par cette identification, si $S=\{i_0,\dots,i_n\}$, $i_0<\cdots<i_n$, alors $S$ représente la $1$\nbd-flèche
$$
\begin{pmatrix}
(i_0)
&(i_0,i_1)+(i_1,i_2)+\cdots+(i_{n-1},i_n)
\cr
(i_n)
&(i_0,i_1)+(i_1,i_2)+\cdots+(i_{n-1},i_n)
\end{pmatrix}
$$
de $\OrCS{E}=\Stnu\Chcs(E)$.
\end{paragr}

\begin{lemme}\label{cnsexfl}
Soient $S,S'\in\xi E$ deux $1$\nbd-flèches parallèles de $\OrCS{E}$. Pour qu'il existe une $2$\nbd-flèche de $S$ vers $S'$, il faut et il suffit que $S\subset S'$.
\end{lemme}

\begin{proof}
Comme les $1$\nbd-flèches $S$ et $S'$ sont parallèles, on a \hbox{$\min(S)=\min(S')$} et $\max(S)=\max(S')$. Pour montrer que si $S'\subset S$, alors il existe une $2$\nbd-flèche de $S$ vers $S'$, il suffit (par récurrence sur $\card(S)-\card(S')$) de le montrer quand \hbox{$\card(S)=\card(S')+1$}. Soit $j$ l'unique élément de $S\sauf S'$. On a \hbox{$\min(S')<j<\max(S')$}, et si l'on pose
$$S_1=\{l\in S\,|\,l<j\}\ ,\quad S_2=\{l\in S\,|\,l>j\}\ ,\quad i=\max(S_1)\ ,\quad k=\min(S_2)\ ,$$
on a une $2$\nbd-flèche
$$S_2\comp{0}\atom{ijk}\comp{0}S_1:S'\toto S\ .$$
La réciproque résulte du lemme plus précis suivant.
\end{proof}

\begin{lemme}\label{surjcel}
Soit $\alpha$ une $n$\nbd-flèche de $\OrCS{E}$, $n\geq2$, de $1$-cellule but itéré $S$ et de $1$-cellule source itérée $S'$.
Alors $S'\subset S$, et si l'on note $i_S:S\to E$ l'inclusion d'ensembles ordonnés de $S$ dans $E$, $\alpha$ est l'image d'une $n$\nbd-flèche de $\OrCS{S}$ par le $\infty$\nbd-foncteur $\OrCS{S}\to\OrCS{E}$ induit par $i_S$.
\end{lemme}

\begin{proof}
On raisonne par récurrence sur le nombre minimum $l_\alpha$ d'occurrences de générateurs dans une expression de $\alpha$ comme composé de générateurs (ou d'identités itérées de générateurs). Si $l_\alpha=1$, alors $\alpha$ est un générateur ou une identité itérée d'un générateur et si ce générateur est $\atom{i_0i_1\cdots i_p}$, alors il résulte du lemme~\ref{lemmeaprouver} et de la description du foncteur $\Stnu$, donnée dans le paragraphe~\ref{defStnu}, que $S=\{i_0,i_1,\dots,i_p\}$ et $S'=\{i_0,i_p\}$. On a donc $S'\subset S$ et $\atom{i_0i_1\cdots i_p}$ est un atome de~$\OrCS{S}$, et par suite $\alpha$ est l'image d'une $n$\nbd-flèche de $\OrCS{S}$, ce qui prouve dans ce cas les deux assertions. Si $l_\alpha>1$, alors $\alpha$ se décompose en \smash{$\alpha=\alpha_2\comp{j}\alpha_1$}, $0\leq j<n$, où $\alpha_1,\alpha_2$ sont des $n$\nbd-flèches $j$\nbd-composables de $\OrCS{E}$ telles que $l_{\alpha_1},l_{\alpha_2}<l_\alpha$. On distingue plusieurs cas.
\smallbreak

1) $j\geq2$. Alors on a $S=t_1(\alpha_1)=t_1(\alpha_2)$, $S'=s_1(\alpha_1)=s_1(\alpha_2)$, et par l'hypothèse de récurrence on a $S'\subset S$, et $\alpha_1$ et $\alpha_2$ sont des images de $n$\nbd-flèches de $\OrCS{S}$ qui sont, en vertu du lemme~\ref{Ormono}, $j$\nbd-composables. La $n$\nbd-flèche $\alpha$ est l'image de leur composé.
\smallbreak

2) $j=1$. Alors on a $S'=s_1(\alpha_1)$, $S=t_1(\alpha_2)$ et on pose $S''=s_1(\alpha_2)=t_1(\alpha_1)$. Par l'hypothèse de récurrence, on a $S'\subset S''\subset S$ et $\alpha_2$ (resp. $\alpha_1$) est l'image, par le $\infty$\nbd-foncteur induit par l'inclusion $S\subset E$ (resp. $S''\subset E$), d'une $n$\nbd-flèche de $\OrCS{S}$ (resp. de $\OrCS{S''}$). Comme on a un triangle commutatif de $\infty$\nbd-foncteurs
$$
\xymatrixrowsep{.5pc}
\xymatrix{
\OrCS{S''}\ar[dd]\ar[dr]
\\
&\OrCS{E}
\\
\OrCS{S}\ar[ur]
}$$
induit par les inclusions, $\alpha_1$ est aussi l'image d'une $n$\nbd-flèche de $\OrCS{S}$. En vertu du lemme~\ref{Ormono}, ces deux $n$\nbd-flèches de $\OrCS{S}$ sont $1$\nbd-composables, et par suite $\alpha$ est l'image de leur composé.
\smallbreak

3) $j=0$. Alors on a \smash{$S'=s_1(\alpha_2)\comp{0}s_1(\alpha_1)$ et $S=t_1(\alpha_2)\comp{0}t_1(\alpha_1)$}, autrement dit, en posant 
$$S'_1=s_1(\alpha_1)\ ,\quad S'_2=s_1(\alpha_2)\ ,\quad S_1=t_1(\alpha_1)\ ,\quad S_2=t_1(\alpha_2)\ ,$$
on a $S'=S'_2\cup S'_1$ et $S=S_2\cup S_1$. Par l'hypothèse de récurrence, on a $S'_1\subset S_1$ et $S'_2\subset S_2$, d'où $S'\subset S$, et $\alpha_2$ (resp. $\alpha_1$) est l'image, par le $\infty$\nbd-foncteur induit par l'inclusion $S_2\subset E$ (resp. $S_1\subset E$), d'une $n$\nbd-flèche de $\OrCS{S_2}$ (resp. de $\OrCS{S_1}$). Comme on a des triangles commutatifs de $\infty$\nbd-foncteurs
$$
\xymatrixrowsep{.5pc}
\xymatrix{
\OrCS{S_2}\ar[dd]\ar[dr]
\\
&\OrCS{E}
&\kern -20pt,\kern 20pt
\\
\OrCS{S}\ar[ur]
}\qquad\quad
\xymatrixrowsep{.5pc}
\xymatrix{
\OrCS{S_1}\ar[dd]\ar[dr]
\\
&\OrCS{E}
\\
\OrCS{S}\ar[ur]
}$$
induits par les inclusions, $\alpha_1$ et $\alpha_2$ sont aussi des images de $n$\nbd-flèches de $\OrCS{S}$. En vertu du lemme~\ref{Ormono}, ces deux $n$\nbd-flèches de $\OrCS{S}$ sont $0$\nbd-composables, et par suite $\alpha$ est l'image de leur composé.
\end{proof}

\begin{prop}\label{incliso}
Soient $E$ un ensemble ordonné, $S,S'\in\xi E$ deux $1$\nbd-flèches parallèles de $\OrCS{E}$ et $i_S:S\to E$ l'inclusion. Alors le $\infty$\nbd-foncteur
$$\sHom_{\OrCS{S}}(S',S)\toto\sHom_{\OrCS{E}}(S',S)\ ,$$
induit par $i_S$, est un isomorphisme.
\end{prop}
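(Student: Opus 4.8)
The plan is to deduce Proposition~\ref{incliso} immediately from Lemma~\ref{Ormono} (the functor $\OrCs\colon\ord\to\ooCat$ preserves monomorphisms) and Lemma~\ref{surjcel} (the structural description of the arrows of $\OrCS{E}$ of dimension~$\geq 2$): the statement splits into an injectivity assertion, which is the monomorphism part, and a surjectivity assertion on cells, which is exactly what Lemma~\ref{surjcel} provides.

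First I would record that $S'\subset S$ --- this is in any case needed for $\sHom_{\OrCS{S}}(S',S)$ to make sense (both $S$ and $S'$ must belong to $\xi S$ in order to be $1$\nbd-cells of $\OrCS{S}$), and it is moreover forced by Lemma~\ref{cnsexfl} as soon as $\sHom_{\OrCS{E}}(S',S)$ is non-empty. Then I would let $F\colon\OrCS{S}\to\OrCS{E}$ be the $\infty$\nbd-functor induced by $i_S$. Since $i_S$ is a monomorphism of $\ord$, Lemma~\ref{Ormono} makes $F$ a monomorphism of $\ooCat$, that is, $F_k$ is injective for every $k\geq 0$. Next, using the description of the $1$\nbd-arrows of $\OrCS{S}$ and $\OrCS{E}$ recalled in~\ref{untronqueor} (they identify with $\xi S$, resp.\ $\xi E$, with source and target given by $\min$ and $\max$, composition by union, identities by singletons), together with the fact --- already used in the proof of Lemma~\ref{surjcel} --- that $F$ carries the atom $\atom{i_0i_1\cdots i_p}$ of $\OrCS{S}$ to the atom $\atom{i_0i_1\cdots i_p}$ of $\OrCS{E}$, I get that $F$ fixes the $1$\nbd-arrows $S$ and $S'$ (each being obtained from such atoms and iterated units by the operation $\comp{0}$). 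As $F$ commutes with iterated sources and targets, it therefore restricts to an $\infty$\nbd-functor $\sHom_F\colon\sHom_{\OrCS{S}}(S',S)\to\sHom_{\OrCS{E}}(S',S)$, which is injective on cells because $F$ is.

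For the surjectivity of $\sHom_F$ I would pick an $n$\nbd-cell $\alpha$ of $\sHom_{\OrCS{E}}(S',S)$, that is, an $(n+2)$\nbd-arrow of $\OrCS{E}$ with iterated $1$\nbd-source $S'$ and iterated $1$\nbd-target $S$, and apply Lemma~\ref{surjcel} to the integer $n+2\geq 2$: this yields an $(n+2)$\nbd-arrow $\beta$ of $\OrCS{S}$ with $F(\beta)=\alpha$. Then $F(s_1(\beta))=s_1(\alpha)=S'=F(S')$ and $F(t_1(\beta))=t_1(\alpha)=S=F(S)$, so the injectivity of $F$ on $1$\nbd-cells gives $s_1(\beta)=S'$ and $t_1(\beta)=S$; hence $\beta$ is an $n$\nbd-cell of $\sHom_{\OrCS{S}}(S',S)$ lying over $\alpha$. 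Thus $\sHom_F$ is bijective on cells in every dimension, and a morphism of $\ooCat$ bijective on all cells is an isomorphism.

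I do not expect a genuine obstacle: the real content has been front-loaded into Lemma~\ref{surjcel} and its induction on the number of occurrences of generators. The only points needing a little care are the dimension shift between the $j$\nbd-cells of a $\sHom$ between two $1$\nbd-cells and the $(j+2)$\nbd-arrows of the ambient $\infty$\nbd-category, and checking scrupulously that $F$ genuinely fixes $S$ and $S'$ (not merely up to relabelling), so that $\sHom_F$ is literally well defined and is the morphism appearing in the statement.
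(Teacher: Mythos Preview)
Your proposal is correct and follows exactly the paper's approach: the proof in the paper consists of the single sentence ``La proposition est cons\'equence des lemmes~\ref{Ormono} et~\ref{surjcel}'', and you have simply unpacked this --- Lemma~\ref{Ormono} for injectivity on cells, Lemma~\ref{surjcel} for surjectivity, with the dimension bookkeeping and the identification of $F(S)=S$, $F(S')=S'$ made explicit.
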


\begin{proof}
La proposition est conséquence des lemmes~\ref{Ormono} et~\ref{surjcel}.
\end{proof}

\begin{cor}\label{locpseudoin}
\emph{a)} Pour tout objet $i_0$ de $\OrCS{E}$, la $\infty$\nbd-catégorie $\sHom_{\OrCS{E}}(i_0,i_0)$ est un objet final de $\ooCat$.
\smallbreak
\emph{b)} Pour tous $i_0,i_1\in E$ tels que $i_0<i_1$, et toute $1$\nbd-flèche $S\in\xi E$ de $\OrCS{E}$ de source~$i_0$ et but $i_1$, si l'on pose $S_0=\{i_0,i_1\}$ et $n=\card(S)-3$, alors 

\indent\indent \kern 2.8pt\emph{i)} si $S=S_0$, $\sHom_{\OrCS{E}}(S_0,S)$ est un objet final de $\ooCat$;

\indent\indent \emph{ii)} si $S\neq S_0$, $\sHom_{\OrCS{E}}(S_0,S)$ est une $n$\nbd-catégorie admettant à la fois un objet \phantom{a\kern 29pt} quasi-initial et un objet quasi-final.
\end{cor}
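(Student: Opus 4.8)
The plan is to assemble Proposition~\ref{incliso}, the identification of the oriental of a standard simplex, and Theorem~\ref{bigchief}~c); no new idea is needed, only bookkeeping. For~a), I first note that a $1$-cell of $\OrCS{E}$ with source and target both $i_0$ is, by~\ref{untronqueor}, the singleton $\{i_0\}$, i.e. the identity $1_{i_0}$, so $\sHom_{\OrCS{E}}(i_0,i_0)$ has a single object; and if $\alpha$ is an $m$-cell of $\OrCS{E}$, $m\geq2$, lying in this $\sHom$, then its iterated $1$-cell source $S'$ and target $S$ satisfy $\min S'=\max S'=\min S=\max S=i_0$, hence $S'=S=\{i_0\}$, so by Lemma~\ref{surjcel} $\alpha$ is the image of an $m$-cell of $\OrCS{\{i_0\}}\simeq\Or{0}\simeq\smp{0}$, a terminal object of $\ooCat$; thus $\alpha$ is an identity, and $\sHom_{\OrCS{E}}(i_0,i_0)$ is a terminal object of $\ooCat$.

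For~b), the sets $S_0=\{i_0,i_1\}$ and $S$ are parallel $1$-cells of $\OrCS{E}$ (both of source $i_0$ and target $i_1$) with $S_0=\{\min S,\max S\}\subset S$, so Proposition~\ref{incliso} applied to the inclusion $i_S:S\to E$ reduces the problem to $\sHom_{\OrCS{S}}(S_0,S)$. Writing $p=\card(S)-1$, the unique order isomorphism $\smp{p}\simeq S$ (sending $0$ to $i_0$ and $p$ to $i_1$) induces, after applying $\kappa$ and then $\OrCs$, an isomorphism $\OrCS{\smp{p}}\simeq\OrCS{S}$ which, in the subset description of $1$-cells from~\ref{untronqueor}, carries $\{0,p\}$ to $S_0$ and $\{0,1,\dots,p\}$ to $S$; moreover $\OrCS{\smp{p}}=\Stnu\,\Chnorm{\smp{p}}=\Or{p}$ by~\ref{kappastarcs},~\ref{exADCstandard} and~\ref{defor}. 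Hence $\sHom_{\OrCS{E}}(S_0,S)\simeq\sHom_{\Or{p}}(\{0,p\},\{0,1,\dots,p\})$.

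Finally, if $S=S_0$ then $p=1$, $\Or{1}\simeq\smp{1}$ is a $1$-category, $\{0,1\}$ is its unique non-identity $1$-cell, and $\sHom_{\smp{1}}(\{0,1\},\{0,1\})$ has a single object and only trivial higher cells, hence is terminal, which gives~i); if $S\neq S_0$ then $p\geq2$, and in the notation of~\ref{untronqueor} the $1$-cells $\{0,p\}$ and $\{0,1,\dots,p\}$ of $\Or{p}$ are exactly the two arguments of the $\sHom$ appearing in Theorem~\ref{bigchief}~c), which (with the $n$ of that statement equal to our $p$) asserts precisely that $\sHom_{\Or{p}}(\{0,p\},\{0,1,\dots,p\})$ is a $(p-2)$-category admitting both a quasi-initial and a quasi-final object; since $p-2=\card(S)-3=n$, this gives~ii). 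The entire argument is a chaining of results already available, so I expect no real obstacle; the only thing requiring care is the bookkeeping that matches $S_0$ and $S$ inside $\OrCS{S}\simeq\Or{p}$ with the explicit tableaux of Theorem~\ref{bigchief}~c) and the dimension count $\card(S)-3=p-2$.
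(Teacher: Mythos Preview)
Your proof is correct and follows essentially the same route as the paper's: reduce via Proposition~\ref{incliso} (or directly via Lemma~\ref{surjcel} for part~a)) to the oriental $\Or{p}$ of the totally ordered set $S$ with $p=\card(S)-1$, and then invoke Theorem~\ref{bigchief}~c) for part~b)(ii). The only cosmetic difference is that for~a) the paper packages the appeal to Lemma~\ref{surjcel} through Proposition~\ref{incliso} applied with $S=S'=\{i_0\}$, whereas you invoke the lemma directly; the content is the same.
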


\begin{proof}
Pour montrer (a), on remarque que le seul objet de $\sHom_{\OrCS{E}}(i_0,i_0)$ est la $1$\nbd-flèche $S=\{i_0\}$ de $\OrCS{E}$, identité de l'objet $i_0$. Or, en vertu de la proposition précédente, la $\infty$\nbd-catégorie $\sHom_{\OrCS{E}}(S,S)$ est isomorphe à $\sHom_{\OrCS{S}}(S,S)$. Comme $\OrCS{S}$ est isomorphe à l'oriental $\Or{0}\simeq\smp{0}$, cela prouve l'assertion. De même, sous les hypothèses et notations de (b), il résulte de la proposition précédente que la $\infty$\nbd-catégorie $\sHom_{\OrCS{E}}(S_0,S)$ est isomorphe à $\sHom_{\OrCS{S}}(S_0,S)$. Le cas (i) est trivial puisque si $S=S_0$, alors $\OrCS{S}$ est isomorphe à $\Or{1}\simeq\smp{1}$.  Si $S\neq S_0$, on a $n\geq0$ et $\sHom_{\OrCS{S}}(S_0,S)$ est isomorphe à la $n$\nbd-catégorie
$$\sHom_{\Or{n+2}}\Biggl(
\begin{pmatrix}
(0)
&(0,n+2)
\cr
(n+2)
&(0,n+2)
\end{pmatrix}
\,,\,
\begin{pmatrix}
(0)
&(0,1)+(1,2)+\cdots+(n+1,n+2)
\cr
(n+2)
&(0,1)+(1,2)+\cdots+(n+1,n+2)
\end{pmatrix}
\Biggr)\ $$
qui admet bien, en vertu du théorème~\ref{bigchief}, à la fois un objet quasi-initial et un objet quasi-final, ce qui prouve le cas (ii).
\end{proof}

\begin{lemme}
Le $1$\nbd-tronqué intelligent de $\OrCS{E}$ s'identifie à $E$, et par cette identification le morphisme d'adjonction
$$\OrCS{E}\toto i^{}_1\ti{1}(\OrCS{E})\simeq E$$
s'identifie à l'unique $\infty$\nbd-foncteur $\OrCS{E}\to E$ induisant l'identité sur les ensembles des objets.
\end{lemme}

\begin{proof}
Vu la description du $1$\nbd-tronqué bête de $\OrCS{E}$ présentée au paragraphe~\ref{untronqueor}, pour montrer la première assertion, il suffit de montrer que si $i_0,i_1\in E$ sont deux objets de $\OrCS{E}$ et $S_1,S_2\in\xi E$  deux $1$\nbd-flèches de $\OrCS{E}$ de source $i_0$ et de but $i_1$, autrement dit telles que $\min(S_1)=\min(S_2)=i_0$ et $\max(S_1)=\max(S_2)=i_1$, alors $S_1$ et $S_2$ sont reliées par un zigzag de $2$\nbd-flèches. Or, $S=\{i_0,i_1\}$ est une $1$\nbd-flèche de $\OrCS{E}$ de source $i_0$ et de but $i_1$, et $S\subset S_1$ et $S\subset S_2$. En vertu du lemme~\ref{cnsexfl}, il existe donc deux $2$\nbd-flèches de $\OrCS{E}$ de source $S$ et de but respectivement $S_1$ et $S_2$, ce qui prouve l'assertion. La deuxième assertion résulte du fait que les morphismes d'adjonction de la 1-troncation intelligente induisent l'identité sur les objets et du fait qu'un $\infty$\nbd-foncteur de but un ensemble ordonné est déterminé par sa restriction aux objets.
\end{proof}

\begin{thm}
Pour tout ensemble ordonné $E$, l'unique $\infty$\nbd-foncteur $\OrCS{E}\to E$ induisant l'identité sur les objets est une équivalence faible de $\ooCat$. De plus, pour tout entier $n\geq1$, le $n$\nbd-tronqué intelligent $\ti{n}\OrCS{E}\to E$ de ce $\infty$\nbd-foncteur est une équivalence faible de $\nCat{n}$.
\end{thm}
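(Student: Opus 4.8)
L'approche consiste à appliquer le critère d'équivalence faible de type Dwyer--Kan (théorème~\ref{eqfDwyerKaneqf}). Comme le $\infty$\nbd-foncteur $\OrCS{E}\to E$ induit l'identité sur les objets, il suffit de montrer que pour tout couple d'objets $x,y$ de $\OrCS{E}$ --- c'est-à-dire tout couple d'éléments de $E$ --- le $\infty$\nbd-foncteur induit $\sHom_{\OrCS{E}}(x,y)\to\sHom_E(x,y)$ est une équivalence faible. De même, pour l'assertion relative à $\nCat{n}$, on a $\WnCat{n}=\fl{}\nCat{n}\cap\WooCat$ (cf.~\ref{eqfnCat}) et $\sHom_{\ti{n}\OrCS{E}}(x,y)=\ti{n-1}\sHom_{\OrCS{E}}(x,y)$, de sorte qu'il suffit de voir que $\ti{n-1}\sHom_{\OrCS{E}}(x,y)\to\sHom_E(x,y)$ est une équivalence faible (le cas $n=1$ étant d'ailleurs trivial, puisque alors $\ti{1}\OrCS{E}$ s'identifie à $E$ en vertu du lemme précédent et le morphisme est un isomorphisme). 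Vue comme $\infty$\nbd-catégorie, $\sHom_E(x,y)$ est vide si $x\not\leq y$ et est la $\infty$\nbd-catégorie finale $e$ si $x\leq y$. Si $x\not\leq y$, la description des $1$\nbd-flèches de $\OrCS{E}$ de~\ref{untronqueor} montre qu'il n'y a pas de $1$\nbd-flèche de $x$ vers $y$, et le morphisme considéré est l'identité de la $\infty$\nbd-catégorie vide. Si $x=y$, on a $\sHom_{\OrCS{E}}(x,x)=e$ en vertu du corollaire~\ref{locpseudoin}(a), et le morphisme est un isomorphisme.

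Le cœur de la preuve est le cas $x<y$: il s'agit de montrer que la $\infty$\nbd-catégorie $\sHom_{\OrCS{E}}(x,y)$, ou son $(n-1)$\nbd-tronqué intelligent, est asphérique. Je montrerais que la $1$\nbd-flèche $S_0=\{x,y\}$ de $\OrCS{E}$ joue le rôle d'objet \og quasi-initial\fg. Compte tenu de l'égalité $\sHom_{\sHom_{\OrCS{E}}(x,y)}(S_0,S)=\sHom_{\OrCS{E}}(S_0,S)$ (cf.~\ref{defcelparal}), le corollaire~\ref{locpseudoin}(b) assure que pour tout objet $S$ de $\sHom_{\OrCS{E}}(x,y)$, soit $S=S_0$ et $\sHom_{\OrCS{E}}(S_0,S)=e$, soit $\sHom_{\OrCS{E}}(S_0,S)$ est une $m$\nbd-catégorie ($m=\card(S)-3$) admettant un objet quasi-initial. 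Dans les deux cas, cette $\infty$\nbd-catégorie et tous ses tronqués intelligents sont asphériques, en vertu du corollaire~\ref{pseudoasph} et de la proposition~\ref{tronqpseudofinal}. J'appliquerais alors le théorème~\ref{consthA} (conséquence du théorème~A $\infty$\nbd-catégorique) à l'objet $S_0$ de $C:=\sHom_{\OrCS{E}}(x,y)$ (resp.\ de $C:=\ti{n-1}\sHom_{\OrCS{E}}(x,y)$): pour tout objet $S$ de $C$, la $\infty$\nbd-catégorie $\sHom_C(S_0,S)$, qui s'identifie à $\sHom_{\OrCS{E}}(S_0,S)$ (resp.\ à $\ti{n-2}\sHom_{\OrCS{E}}(S_0,S)$), est asphérique d'après ce qui précède; donc $C$ est asphérique. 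Comme un morphisme d'une $\infty$\nbd-catégorie asphérique vers $e$ est une équivalence faible, ceci achève la vérification du critère~\ref{eqfDwyerKaneqf}.

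Le véritable travail est déjà accompli dans les sections antérieures: tout repose sur l'identification de $\sHom_{\OrCS{E}}(S_0,S)$ avec un $\sHom$ d'oriental (proposition~\ref{incliso}, via les lemmes~\ref{Ormono} et~\ref{surjcel}) et sur le théorème~\ref{bigchief} assurant l'existence d'objets quasi-initiaux et quasi-finaux dans les $\sHom$ pertinents des orientaux $\Or{n}$, lui-même issu de l'existence de contractions du complexe dirigé augmenté $\Chnorm{\smp{n}}$. Dans cette dernière étape, le seul point demandant un peu de soin est la compatibilité des troncations intelligentes avec le foncteur $\sHom$ (l'identité $\sHom_{\ti{n}C}(x,y)=\ti{n-1}\sHom_C(x,y)$ et son itérée), ainsi que le traitement uniforme du cas $n=\infty$ et du cas $n$ fini, rendu possible par le fait qu'on passe systématiquement par le théorème~A $\infty$\nbd-catégorique plutôt que par la définition récursive d'objet quasi-initial.
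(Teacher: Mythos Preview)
Your proposal is correct and follows essentially the same route as the paper's proof: the Dwyer--Kan criterion (th\'eor\`eme~\ref{eqfDwyerKaneqf}), the case split $x\not\leq y$, $x=y$, $x<y$, and in the last case the use of $S_0=\{x,y\}$ together with th\'eor\`eme~\ref{consthA}, corollaire~\ref{locpseudoin}(b), proposition~\ref{tronqpseudofinal} and corollaire~\ref{pseudoasph}. The paper streamlines the presentation by setting $\ti{\infty}=1_{\ooCat}$ to treat the finite and infinite cases at once, but the argument is otherwise identical to yours.
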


\begin{proof}
Pour présenter une preuve uniforme des deux assertions, on notera $\ti{\infty}$ l'endofoncteur identité de $\ooCat$. Il s'agit donc de montrer que pour tout $n$, $1\leq n\leq\infty$, l'unique $\infty$\nbd-foncteur $\ti{n}\OrCS{E}\to E$ induisant l'identité sur les objets est une équivalence faible. Si $n=1$, il résulte du lemme précédent qu'il s'agit d'un isomorphisme. On peut donc supposer que $n\geq2$. En vertu du théorème~\ref{eqfDwyerKaneqf}, il suffit de montrer que pour tous $i_0,i_1\in E$, le $\infty$\nbd-foncteur induit
$$\sHom_{\ti{n}\OrCS{E}}(i_0,i_1)\toto\sHom_E(i_0,i_1)$$
est une équivalence faible. Si $i_0\nleqslant i_1$, alors la source et le but de ce $\infty$\nbd-foncteur sont vides, et il n'y a rien à démontrer. Supposons donc que $i_0\leq i_1$. Comme $E$ est un ensemble ordonné, la $\infty$\nbd-catégorie $\sHom_E(i_0,i_1)$ est un objet final de $\ooCat$. Il s'agit donc de prouver que $\sHom_{\ti{n}\OrCS{E}}(i_0,i_1)$ est asphérique. Si $i_1=i_0$, en vertu du corollaire~\ref{locpseudoin}, (a), cette $\infty$\nbd-catégorie est un objet final de $\ooCat$, ce qui prouve dans ce cas l'assertion. On peut donc supposer que $i_0<i_1$. Comme $n\geq2$, les $1$\nbd-flèches de $\ti{n}\OrCS{E}$ sont les mêmes que celles de $\OrCS{E}$, autrement dit l'ensemble des objets de $\sHom_{\ti{n}\OrCS{E}}(i_0,i_1)$ s'identifie à l'ensemble des $S\in\xi E$ tels que $\min(S)=i_0$ et $\max(S)=i_1$. En particulier, $S_0=\{i_0,i_1\}$ est un objet de $\sHom_{\ti{n}\OrCS{E}}(i_0,i_1)$. En vertu du théorème~\ref{consthA}, il suffit de montrer que pour tout objet $S$ de $\sHom_{\ti{n}\OrCS{E}}(i_0,i_1)$, la $\infty$\nbd-catégorie
$$\sHom_{\sHom_{\ti{n}\OrCS{E}}(i_0,i_1)}(S_0,S)=\sHom_{\ti{n}\OrCS{E}}(S_0,S)\simeq\ti{n-2}\bigl(\sHom_{\OrCS{E}}(S_0,S)\bigr)$$
est asphérique. Or, en vertu du corollaire~\ref{locpseudoin}, (b), si $S=S_0$, la $\infty$\nbd-catégorie $\sHom_{\OrCS{E}}(S_0,S)$ est un objet final de $\ooCat$ et si $S\neq S_0$, cette $\infty$\nbd-catégorie est une $(\card(S)-3)$\nbd-catégorie admettant un objet quasi-initial, et l'assertion résulte de la proposition~\ref{tronqpseudofinal} et du corollaire~\ref{pseudoasph}.
\end{proof}

\begin{cor}
Pour tout ensemble ordonné $E$, et tout $n$, $1\leq n\leq\infty$, le morphisme d'adjonction $\cStr{n}\NStr{n}(E)\to E$ est une équivalence faible de $\nCat{n}$.
\end{cor}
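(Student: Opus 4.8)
Le plan est de ramener le corollaire au th\'eor\`eme pr\'ec\'edent en identifiant le morphisme d'adjonction $\cStr{n}\NStr{n}(E)\to E$ au $\infty$\nbd-foncteur canonique $\ti{n}\OrCS{E}\to E$, avec la convention de la preuve du th\'eor\`eme pr\'ec\'edent selon laquelle $\ti{\infty}$ est l'endofoncteur identit\'e de $\ooCat$ et $\WnCat{\infty}=\WooCat$. On rappelle que pour $1\leq n\leq\infty$, l'adjoint \`a gauche de $\NStr{n}$ est $\cStr{n}=\ti{n}\cStreet$ (cf.~\ref{nnerf} et~\ref{defnerfStreet}).

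D'abord, je remarquerais que pour un ensemble ordonn\'e $E$, consid\'er\'e comme $1$\nbd-cat\'egorie donc comme $n$\nbd-cat\'egorie, on a $\NStr{n}(E)=\NStreet(E)=\NStr{1}(E)$ (puisque $\NStr{n}=\NStreet\,\incl{n}$ et $\incl{n}(E)=E$), et que ce dernier ensemble simplicial est le nerf usuel de la cat\'egorie $E$ (cf.~\ref{nnerf}), c'est-\`a-dire l'ensemble simplicial $\kappa^*\kappa(E)$ (cf.~\ref{kappastarcs}). En utilisant alors la commutation de $\cStreet$ aux petites limites inductives et le th\'eor\`eme~\ref{OrisoNkappa} (isomorphisme $\cStreet\kappa^*\simeq\OrCs$), on obtient des isomorphismes de $\nCat{n}$
$$\cStr{n}\NStr{n}(E)=\ti{n}\cStreet(\NStr{n}(E))\simeq\ti{n}\cStreet\kappa^*\kappa(E)\simeq\ti{n}\OrCs(\kappa(E))=\ti{n}\OrCS{E}\,,$$
tous induisant l'identit\'e sur les ensembles d'objets.

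Ensuite, je v\'erifierais qu'\`a travers ces isomorphismes le morphisme d'adjonction $\cStr{n}\NStr{n}(E)\to E$ se confond avec le $\infty$\nbd-foncteur $\ti{n}\OrCS{E}\to E$ du th\'eor\`eme pr\'ec\'edent. Comme ce dernier est l'\emph{unique} $n$\nbd-foncteur de source $\ti{n}\OrCS{E}$ et de but l'ensemble ordonn\'e $E$ induisant l'identit\'e sur les objets (un $\infty$\nbd-foncteur de but un ensemble ordonn\'e \'etant enti\`erement d\'etermin\'e par sa restriction aux objets), il suffit de constater que le morphisme d'adjonction induit lui aussi l'identit\'e sur les objets: pour $n=\infty$ cela est \'etabli en~\ref{objnerfadj}, et pour $n$ fini cela en r\'esulte, le morphisme d'adjonction de $(\cStr{n},\NStr{n})$ s'obtenant en appliquant $\ti{n}$ \`a celui de $(\cStreet,\NStreet)$ (compos\'e avec l'identification canonique $\ti{n}E\simeq E$, valide car $E$ est une $1$\nbd-cat\'egorie), la troncation intelligente ne modifiant ni les ensembles d'objets ni l'action des $\infty$\nbd-foncteurs sur ceux-ci. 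Les deux $\infty$\nbd-foncteurs sont donc \'egaux.

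Il ne reste plus qu'\`a invoquer le th\'eor\`eme pr\'ec\'edent, qui affirme que $\ti{n}\OrCS{E}\to E$ est une \'equivalence faible de $\nCat{n}$ (pour $n=\infty$, que $\OrCS{E}\to E$ est une \'equivalence faible de $\ooCat$). Le seul point r\'eclamant un peu de soin est la v\'erification de cette identification du morphisme d'adjonction avec le $\infty$\nbd-foncteur canonique; elle ne pr\'esente toutefois pas de difficult\'e s\'erieuse, le caract\`ere universel des ensembles ordonn\'es parmi les $\infty$\nbd-cat\'egories r\'eduisant tout \`a l'action sur les objets, et l'essentiel du r\'esultat r\'esidant dans le th\'eor\`eme pr\'ec\'edent.
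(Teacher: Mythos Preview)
Your proof is correct and follows essentially the same approach as the paper: identify $\cStr{n}\NStr{n}(E)$ with $\ti{n}\OrCS{E}$ via the isomorphism $\cStreet\kappa^*\simeq\OrCs$ (th\'eor\`eme~\ref{OrisoNkappa}) and the fact that $\NStr{n}(E)$ is the nerf usuel, then use that a $\infty$\nbd-foncteur vers un ensemble ordonn\'e is determined by its action on objects to conclude that the morphisme d'adjonction coincides with the unique $\infty$\nbd-foncteur $\ti{n}\OrCS{E}\to E$ inducing the identity on objects, and finally invoke the preceding theorem. The paper's proof is slightly more concise, treating the finite case directly by saying the adjunction morphism $\cStr{n}\NStr{n}(E)\to E$ is the $n$\nbd-tronqu\'e intelligent of $\cStreet\NStreet(E)\to E$, but the substance is the same.
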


\begin{proof}
Pour tout entier $n\geq1$, le morphisme d'adjonction $\cStr{n}\NStr{n}(E)\to E$ est le $n$\nbd-tronqué intelligent du morphisme d'adjonction $\cStr{\infty}\NStr{\infty}(E)\to E$. Or, ce dernier induit l'identité sur les objets (cf.~\ref{objnerfadj}) et en vertu du théorème~\ref{OrisoNkappa}, on a un isomorphisme $\cStr{\infty}\NStr{\infty}(E)=\cStr{\infty}\kappa^*(E,\xi E)\simeq\OrCS{E,\xi E}=\OrCS{E}$. On en déduit que le morphisme d'adjonction $\cStr{\infty}\NStr{\infty}(E)\to E$ s'identifie à l'unique $\infty$\nbd-foncteur $\OrCS{E}\to E$ induisant l'identité sur les objets. L'assertion résulte donc du théorème précédent.
\end{proof}

\begin{cor}\label{corconde}
Pour tout ensemble ordonné $E$, et tout $n$, $1\leq n\leq\infty$, le morphisme d'adjonction $\NStr{n}(E)\to\NStr{n}\cStr{n}\NStr{n}(E)$ est une équivalence faible de $\simpl$.
\end{cor}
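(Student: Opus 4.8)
La strat\'egie est purement formelle : tout le contenu g\'eom\'etrique a d\'ej\`a \'et\'e absorb\'e dans le corollaire pr\'ec\'edent, et il ne reste plus qu'un jeu d'adjonction. Le plan consiste \`a partir de l'une des deux identit\'es triangulaires de l'adjonction $(\cStr{n},\NStr{n})$. Notons $\eta : \NStr{n}(E)\to\NStr{n}\cStr{n}\NStr{n}(E)$ le morphisme d'adjonction consid\'er\'e, c'est-\`a-dire l'unit\'e de cette adjonction \'evalu\'ee en l'ensemble simplicial $\NStr{n}(E)$, et $\varepsilon : \cStr{n}\NStr{n}(E)\to E$ l'autre morphisme d'adjonction, \'evalu\'e en l'ensemble ordonn\'e $E$. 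L'identit\'e triangulaire affirme alors que le compos\'e
$$
\NStr{n}(E)\ \xrightarrow{\ \eta\ }\ \NStr{n}\cStr{n}\NStr{n}(E)
\ \xrightarrow{\ \NStr{n}(\varepsilon)\ }\ \NStr{n}(E)
$$
est l'identit\'e de $\NStr{n}(E)$.

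On invoque ensuite le corollaire pr\'ec\'edent, selon lequel $\varepsilon$ est une \'equivalence faible de $\nCat{n}$, autrement dit $\varepsilon\in\WnCat{n}=\NStr{n}^{-1}(\Wsimpl)$ (cf.~\ref{eqfnCat}) ; par cons\'equent $\NStr{n}(\varepsilon)$ appartient \`a $\Wsimpl$. Comme la classe $\Wsimpl$ contient les identit\'es et satisfait \`a la propri\'et\'e de deux sur trois, l'\'egalit\'e du triangle ci-dessus implique que $\eta$ appartient \`a $\Wsimpl$, ce qui est pr\'ecis\'ement l'\'enonc\'e voulu. Cet argument vaut uniform\'ement pour $1\leq n\leq\infty$, sans qu'il soit n\'ecessaire de distinguer le cas $n=\infty$ (o\`u l'on a $\cStr{\infty}=\cStreet$, $\NStr{\infty}=\NStreet$ et $\WnCat{\infty}=\WooCat$).

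Il n'y a donc pas d'obstacle propre \`a cette d\'eduction : toute la difficult\'e se situe en amont, dans le corollaire pr\'ec\'edent et, en remontant, dans le th\'eor\`eme d\'ecrivant le type d'homotopie de $\OrCS{E}$ ainsi que dans les propri\'et\'es de contraction des orientaux (th\'eor\`eme~\ref{bigchief}). Le seul point \`a signaler pour \^etre complet est que $\Wsimpl$ satisfait \`a la propri\'et\'e de deux sur trois, ce qui est imm\'ediat, cette classe \'etant l'image r\'eciproque par la r\'ealisation topologique de la classe des \'equivalences d'homotopie d'espaces topologiques, qui poss\`ede cette propri\'et\'e.
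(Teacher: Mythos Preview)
Your proof is correct and follows exactly the same approach as the paper: both use the triangle identity for the adjunction $(\cStr{n},\NStr{n})$, combined with the previous corollary and the two-out-of-three property of $\Wsimpl$. You have simply made explicit the two-out-of-three step that the paper leaves implicit in its ``r\'esulte aussit\^ot''.
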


\begin{proof}
L'assertion résulte aussitôt du corollaire précédent et de l'égalité du triangle
$$
\xymatrix{
\NStr{n}(E)\ar[r]\ar@/_2em/[rr]_{1_{\NStr{n}(E)}}
&\NStr{n}\cStr{n}\NStr{n}(E)\ar[r]
&\NStr{n}(E)\ \ ,
}
$$
pour les morphismes d'adjonction du couple de foncteurs adjoints $(\cStr{n},\NStr{n})$.
\end{proof}

\begin{sch}
Comme pour tout ensemble ordonné $E$ et pour tout $n$, \hbox{$1\leq n\leq\infty$,} l'ensemble simplicial $\NStr{n}(E)$ n'est autre que le nerf ordinaire de $E$, le corollaire ci-dessus prouve la condition (e) de~\cite[scholie 5.14]{DG}. Ainsi, en vertu de \emph{loc.~cit.}, pour avoir une structure de catégorie de modèles de Quillen à la Thomason sur $\nCat{n}$, il suffit de démontrer la condition (d$'$) de \emph{loc.~cit.}:
\begin{enumerate}
\item[(d${}'$)] \emph{si $E'\to E$ est un crible de $\ord$ admettant une rétraction qui est aussi un adjoint à droite, son image par le foncteur $\cStr{n}\NStr{n}$ est une équivalence faible de $\nCat{n}$ et le reste après tout cochangement de base.}
\end{enumerate}
\end{sch}

\begin{cor}
Le foncteur $\loc{\NerfADC}:\WADC^{-1}\ADC\to\Wsimpl^{-1}\simpl\simeq\Hot$ \emph{(cf.~\ref{eqfADC})} est essentiellement surjectif.
\end{cor}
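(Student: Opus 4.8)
Le plan consiste à se ramener au cas des nerfs d'ensembles ordonnés, pour lesquels tout a déjà été établi. On utiliserait d'abord le fait classique que tout objet de $\Wsimpl^{-1}\simpl\simeq\Hot$ est isomorphe à la classe du nerf d'un petit ensemble ordonné : tout type d'homotopie est celui d'un $\CW$\nbd-complexe, donc d'un complexe simplicial, et un complexe simplicial a le type d'homotopie du nerf de son ensemble ordonné de faces (de façon équivalente, la subdivision barycentrique double d'un ensemble simplicial quelconque est le nerf d'un ensemble ordonné et lui est faiblement équivalente). Il suffirait donc de produire, pour tout ensemble ordonné $E$, un objet $K$ de $\ADC$ tel que $\loc{\NerfADC}(K)$ soit isomorphe dans $\Hot$ à la classe du nerf $\kappa^*(E,\xi E)$ de $E$, qui n'est autre que $\NStreet(E)$ puisque la restriction du nerf de Street aux ensembles ordonnés est le nerf usuel (cf.~\ref{nnerf}).

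Étant donné $E$, on prendrait $K=\Chnorm{}\kappa^*(E,\xi E)$. En utilisant l'isomorphisme $\NerfADC\cong\NStreet\Stnu$ de la proposition~\ref{commnerf} et la définition de $\OrCS{E}$ (cf.~\ref{foncteurO}), on obtient
$$\NerfADC(K)=\NStreet\Stnu\Chnorm{}\kappa^*(E,\xi E)=\NStreet\OrCS{E}\ .$$
En vertu du théorème~\ref{OrisoNkappa}, le morphisme canonique $\cStreet\kappa^*\to\OrCs$ est un isomorphisme, d'où $\OrCS{E}\cong\cStreet\kappa^*(E,\xi E)=\cStreet\NStreet(E)$, et par suite $\NerfADC(K)\cong\NStreet\cStreet\NStreet(E)$. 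Or le corollaire~\ref{corconde}, appliqué avec $n=\infty$, affirme précisément que le morphisme d'adjonction $\NStreet(E)\to\NStreet\cStreet\NStreet(E)$ est une équivalence faible simpliciale ; ainsi $\NerfADC(K)$ est faiblement équivalent à $\NStreet(E)$, c'est-à-dire que $\loc{\NerfADC}(K)$ est la classe du nerf de $E$ dans $\Hot$. Joint à la réduction du premier alinéa, ceci prouverait la surjectivité essentielle.

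Le seul ingrédient qui ne soit pas une conséquence purement formelle de résultats antérieurs est la réduction aux nerfs d'ensembles ordonnés du premier alinéa ; c'est là, semble-t-il, le seul point méritant un énoncé soigneux ou une référence explicite, le reste n'étant que l'assemblage ci-dessus fondé sur la proposition~\ref{commnerf}, le théorème~\ref{OrisoNkappa} et le corollaire~\ref{corconde}. On remarquera de plus que, la surjectivité essentielle portant sur la catégorie $\Wsimpl^{-1}\simpl$ dont tous les objets sont de petits ensembles simpliciaux, l'ensemble ordonné $E$ peut être choisi petit, de sorte qu'aucune difficulté ensembliste ne se présente.
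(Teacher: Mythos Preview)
Your proof is correct and follows essentially the same approach as the paper: reduce to nerves of ordered sets, take $K=\Chnorm{}\kappa^*(E,\xi E)$, and chain together the proposition~\ref{commnerf}, the th\'eor\`eme~\ref{OrisoNkappa} and the corollaire~\ref{corconde} to identify $\NerfADC(K)$ with the homotopy type of~$E$. The only cosmetic difference is in the justification of the reduction step: the paper invokes the equivalences $\W_\ord^{-1}\ord\simeq\W_\cat^{-1}\cat\simeq\Wsimpl^{-1}\simpl$ (with references to Illusie and del~Hoyo), while you argue via barycentric subdivision, which is an equivalent classical fact.
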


\begin{proof}
On rappelle que le foncteur nerf usuel $\NStr{1}:\nCat{1}=\cat\to\simpl$ (cf.~\ref{nnerf}) définit une équivalence des catégories homotopiques $\W_\cat^{-1}\cat\to\Wsimpl^{-1}\simpl$, où $\W_\cat=\WnCat{1}=\NStr{1}^{-1}(\Wsimpl)$~\cite[chapitre VI, corollaire 3.3.1]{IL}. Par ailleurs, l'inclusion $\ord\to\cat$ induit aussi une équivalence des catégories homotopiques $\W_\ord^{-1}\ord\to\W_\cat^{-1}\cat$, où $\W_\ord=\W_\cat\cap\fl{}\ord$ (voir par exemple~\cite[théorème~41]{delHoyo}). Pour montrer que le foncteur $\loc{\NerfADC}:\WADC^{-1}\ADC\to\Wsimpl^{-1}\simpl$ est essentiellement surjectif, il suffit donc de montrer que pour tout ensemble ordonné $E$, l'ensemble simplicial $\NStr{1}(E)=\NStreet(E)$ appartient à son image essentiel. Or, on a une équivalence faible $\NStreet(E)\to\NStreet\cStreet\NStreet(E)$ (corollaire~\ref{corconde}) et des isomorphismes d'ensembles simpliciaux
$$
\NStreet\cStreet\NStreet(E)=\NStreet\cStreet\NStr{1}(E)=\NStreet\cStreet\kappa^*(E,\xi E)\simeq\NStreet\Stnu\Chnorm{\kappa^*(E,\xi E)}\simeq\NerfADC\Chnorm{\kappa^*(E,\xi E)}
$$
(le premier en vertu du théorème~\ref{OrisoNkappa} et le deuxième en vertu de la proposition~\ref{commnerf}), ce qui prouve l'assertion.
\end{proof}

\begin{rem}
Vu que pour tout ensemble ordonné $E$ et pour $1\leq n\leq\infty$, on a l'égalité \hbox{$\NStr{n}(E)=\kappa^*(E,\xi E)$}, le corollaire~\ref{corconde} signifie exactement que le morphisme d'adjonction $\kappa^*(E,\xi E)\to\NStr{n}\cStr{n}(\kappa^*(E,\xi E))$ est une équivalence faible simpliciale et que, en particulier, le type d'homotopie de la $n$\nbd-catégorie $\cStr{n}(\kappa^*(E,\xi E))$ est le même que celui du complexe simplicial $(E,\xi E)$. 
On peut se demander si cette propriété reste vraie pour un complexe simplicial général. Pour $n$ fini, ce n'est pas le cas. En effet, il est facile de montrer que pour tout ensemble simplicial $X$, la $n$\nbd-catégorie $\cStr{n}(X)$ ne dépend que du $(n+1)$\nbd-squelette de $X$. Ainsi, pour tout $p>n+1$, si l'on considère le bord $\partial\smp{p}$ du simplexe standard $\smp{p}$,
$$\partial\smp{p}=\kappa^*(\smp{p},\varPhi_p)\ ,\quad\hbox{où}\quad\varPhi_p=\{S\in\xi\smp{p}\,|\,S\neq\smp{p}\}\ ,$$
la $n$\nbd-catégorie $\cStr{n}(\kappa^*(\smp{p},\varPhi_p))$ est isomorphe à $\cStr{n}(\kappa^*(\smp{p},\xi\smp{p}))$ qui a, en vertu du corollaire~\ref{corconde}, le type d'homotopie du point, tandis que le complexe simplicial $(\smp{p},\varPhi_p)$ a le type d'homotopie du bord de $\smp{p}$, autrement dit de la sphère $S^{p-1}$. Néanmoins, pour $n=\infty$, on propose la conjecture suivante:
\end{rem}

\begin{conj}
Pour tout complexe simplicial $(E,\varPhi)$, le morphisme d'adjonction $\kappa^*(E,\varPhi)\to\NStreet\cStreet(\kappa^*(E,\varPhi))$ est une équivalence faible simpliciale. En particulier, le type d'homotopie de la $\infty$\nbd-catégorie $\cStreet(\kappa^*(E,\varPhi))$ est le même que celui du complexe simplicial $(E,\varPhi)$.
\end{conj}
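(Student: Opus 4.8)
The plan is to reduce the statement, using the cellular filtration of $(E,\varPhi)$, to the already\nbd-settled case of a single simplex, and to propagate the result along the attaching pushouts; the entire difficulty is concentrated in a homotopy\nbd-colimit\nbd-preservation property of the nerve $\NStreet$.

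By Theorem~\ref{OrisoNkappa} the $\infty$\nbd-category $\cStreet\kappa^*(E,\varPhi)$ is canonically isomorphic to $\OrCS{E,\varPhi}=\Stnu\Chcs(E,\varPhi)$, so the adjunction unit under consideration identifies with a morphism $\kappa^*(E,\varPhi)\to\NStreet\OrCS{E,\varPhi}$ and, by Proposition~\ref{commnerf}, with the morphism $\kappa^*(E,\varPhi)\to\NerfADC\Chcs(E,\varPhi)$. For $(E,\varPhi)=(\smp{p},\xi\smp{p})$ one has $\OrCS{\smp{p},\xi\smp{p}}=\Or{p}$, which admits a quasi\nbd-initial object by Theorem~\ref{bigchief} and is therefore aspherical by Corollary~\ref{pseudoasph}; since $\smp{p}$ is weakly contractible as well, the morphism $\smp{p}\to\NStreet\Or{p}$ is a simplicial weak equivalence. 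This settles the simplices.

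Next, adjoin the simplices of $\varPhi$ one at a time (transfinitely if $\varPhi$ is infinite). Since every nonempty face of a simplex of $\varPhi$ again lies in $\varPhi$, adjoining a $p$\nbd-dimensional simplex $S$ to a subcomplex $\varPhi'$ realizes $\kappa^*(E,\varPhi'\cup\{S\})$ as the pushout in $\simpl$ of $\partial\smp{p}\to\kappa^*(E,\varPhi')$ along $\partial\smp{p}\hookrightarrow\smp{p}$, where $\partial\smp{p}=\kappa^*(\smp{p},\varPhi_p)$ with $\varPhi_p=\xi\smp{p}\sauf\{\smp{p}\}$; and $\kappa^*(E,\varPhi)$ is the (transfinite) composition of all these pushouts (Lemma~\ref{limindkappa}). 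Applying the left adjoint $\cStreet$, which commutes with colimits, and Theorem~\ref{OrisoNkappa} together with Propositions~\ref{limindOr} and~\ref{Orespmono}, one obtains the analogous presentation of $\OrCS{E,\varPhi}$ by pushouts built from the monomorphisms $\cStreet(\partial\smp{p})\simeq\OrCS{\smp{p},\varPhi_p}\hookrightarrow\Or{p}$, which are inclusions of $\infty$\nbd-categories freely generated in the sense of polygraphs (Corollary~\ref{oocatcspolygr}). Arguing by induction on the cells adjoined — so that the adjunction morphism is already known to be a weak equivalence for $\kappa^*(E,\varPhi')$ and for $\partial\smp{p}$, and by the previous paragraph for $\smp{p}$ — it would then suffice to know that $\NStreet$ carries each of these pushouts along polygraphically free inclusions, and their composition, to a homotopy colimit of simplicial sets; the gluing lemma in $\simpl$, applied to the resulting ladder of cocartesian squares, would then conclude.

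The main obstacle is precisely this homotopical input. The functor $\NStreet$ — equivalently $\NerfADC$, \emph{via} Proposition~\ref{commnerf} and the fact that $\Chcs$ commutes with colimits (Proposition~\ref{complcommlimind}) — is a right adjoint, hence does not commute with the colimits above, and there is at present no general reason why the canonical comparison map from the colimit of the $\NStreet\cStreet\kappa^*(E,\varPhi')$ to $\NStreet\cStreet\kappa^*(E,\varPhi)$ should be a simplicial weak equivalence. What one really needs is the homotopical statement that $\cStreet$ is left Quillen for a Thomason\nbd-type model structure on $\ooCat$ whose cofibrant objects include the $\infty$\nbd-categories freely generated in the sense of polygraphs — precisely the structure the programme of~\cite{DG} aims to produce, whose remaining axiom, condition~(d$'$) of~\cite{DG}, is still open; and Conjectures~\ref{thinair} and~\ref{conjfond} do not by themselves suffice, since one would in addition need $\cStreet$ to preserve weak equivalences. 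An alternative route, modelled on the treatment of the poset case in the preceding section, would analyse $\OrCS{E,\varPhi}$ cell by cell through its polygraphic presentation (Corollary~\ref{oocatcspolygr}) and compare, \emph{via} Theorems~\ref{consthA} and~\ref{eqfDwyerKaneqf}, the mapping categories $\sHom_{\OrCS{E,\varPhi}}(x,y)$ with those of a suitable subdivision of $(E,\varPhi)$; but whereas for a poset $E$ these mapping categories were identified with mapping categories of orientals (Proposition~\ref{incliso}), for a general simplicial complex no such identification is available, and supplying one is the heart of the difficulty the conjecture leaves open.
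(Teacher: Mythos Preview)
The statement you are asked to prove is a \emph{conjecture} in the paper, not a theorem: the authors explicitly leave it open and prove it only in the special case where the simplicial complex is of the form $(E,\xi E)$ for a poset $E$ (this is the content of the preceding section and of Corollary~\ref{corconde}). There is therefore no proof in the paper to compare your proposal against.

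Your write-up is in fact already structured as an honest account of why the general case resists the methods of the paper: you set up the cellular induction, observe that the base case (simplices) is known, and then correctly isolate the missing ingredient, namely that $\NStreet$ should send the relevant polygraphic pushouts to homotopy pushouts. You also rightly note that this would follow from the Thomason-type model structure whose construction is the goal of the programme of~\cite{DG}, and that the hands-on approach via Proposition~\ref{incliso} does not extend because the mapping categories of $\OrCS{E,\varPhi}$ are no longer controlled by a single oriental. This diagnosis matches the paper's own discussion in the remark immediately preceding the conjecture.

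In short: your proposal is not a proof, and you say so yourself; the paper does not claim one either. What you have written is a reasonable sketch of a possible strategy together with an accurate identification of the obstruction.
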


\appendix

\section{La description du $2$-tronqué intelligent de $\OrCS{E}$}\label{AppA}

Le but de cette section est de donner une description explicite du $2$\nbd-tronqué intelligent de $\OrCS{E}$, pour un ensemble ordonné $E$.

\begin{paragr}
Dans la suite, on se fixe un entier $n$, et on note $B$ la base du complexe dirigé augmenté $\Chnorm{\smp{n}}$ (cf.~\ref{exADCstandard}),
$$B=\{(j_0\cdots j_p)\,|\,0\leq j_0<\cdots<j_p\leq n\,,\ 0\leq p\leq n\}\ .$$
On aura besoin des lemmes techniques suivants.
\end{paragr}

\begin{lemme}\label{avantdecasser}
Soient $k$ et $p$ des entiers tels que $0<k<n$, $p\geq2$,
$$\textstyle a=\sum\limits_{(i_0\cdots i_p)\in B_p}a_{i_0\cdots i_p}(i_0\cdots i_p)$$
un élément de $(\Chnorm{\smp{n}})^*_p$, et
$$\textstyle\Neg{(d_pa)}=\sum\limits_{(j_0\cdots j_{p-1})\in B_{p-1}}b_{j_0\cdots j_{p-1}}(j_0\cdots j_{p-1})\ $$
\emph{(pour la notation $\Neg{x}$, cf.~\ref{baseunitADC}).} S'il existe $(i_0\cdots i_p)\in B_p$ tel que $i_0<k<i_p$ et $a_{i_0\cdots i_p}\neq0$, alors il existe $(j_0\cdots j_{p-1})\in B_{p-1}$ tel que $j_0<k<j_{p-1}$ et $b_{j_0\cdots j_{p-1}}\neq0$.
\end{lemme}

\begin{proof}
On définit les ensembles
$$\begin{aligned}
&A=\{(i_0\cdots i_p)\in B_p\,|\,i_0<k<i_p\quad\hbox{et}\quad a_{i_0\cdots i_p}\neq0\}\ ,\cr
&A'=\{(i_0\cdots i_p)\in A\,|\,i_p-i_0=m_0\}\ ,\quad\hbox{où}\quad m_0=\max\{i_p-i_0\,|\,(i_0\cdots i_p)\in A\}\ ,\cr
&A''=\{(i_0\cdots i_p)\in A'\,|\,i_2-i_0=m_1\}\ ,\quad\hbox{où}\quad m_1=\max\{i_2-i_0\,|\,(i_0\cdots i_p)\in A'\}\ .
\end{aligned}$$
S'il existe $(i_0\cdots i_p)\in B_p$ tel que $i_0<k<i_p$ et $a_{i_0\cdots i_p}\neq0$, alors ces ensembles sont non vides. Soit $(i_0\cdots i_p)\in A''$. On va prouver que $b_{i_0i_2\cdots i_p}\neq0$, ce qui impliquera le lemme. Pour cela, il suffit de montrer que pour tout $(j_0\cdots j_p)\in A$ et tout $l$ \emph{pair}, $0\leq l\leq p$, on a $(j_0\cdots\widehat{j}_l\cdots j_p)\neq(i_0i_2\cdots i_p)$. Si $l=0$ ou $l=p$, cela résulte de la maximalité de $i_p-i_0$. Si $0<l<p$, pour que l'on ait $(j_0\cdots\widehat{j}_l\cdots j_p)=(i_0i_2\cdots i_p)$, il faudrait déjà que $j_0=i_0$, $j_1=i_2$ et $j_p=i_p$ et en particulier que $(j_0\cdots j_p)$ soit dans $A'$. Comme $(i_0\cdots i_p)$ est dans $A''$, on aurait donc $i_2-i_0\geq j_2-j_0$. Or, $i_2-i_0=j_1-j_0<j_2-j_0$, ce qui est absurde.
\end{proof}

\begin{lemme}\label{oncommenceacasser}
Soient $m\ge1$ un entier, $i_0,i_1\dots,i_m$ des entiers tels que 
$$0=i_0<i_1<\cdots<i_{m-1}<i_m=n\ ,$$ 
et
$$\cellule{x}=\tablg{x}{q}\,,\qquad q\geq2\ ,$$
une $q$\nbd-cellule de $\Or{n}$ telle que $x^0_0=(0)$, $x^1_0=(n)$,
$$\textstyle x^0_1=\sum\limits^m_{k=1}(i_{k-1},i_k)\qquad\hbox{et}\qquad x^1_1=\sum\limits^n_{l=1}(l-1,l)\ .$$
Pour tous $p,\e$ tels que $0\leq p\leq q$, $\e\in\{0,1\}$, si
$$\textstyle x^\e_p=\sum\limits_{(j_0\dots j_p)\in B_p}(x^\e_p)_{j_0\dots j_p}{(j_0\dots j_p)}$$
et si $(x^\e_p)_{j_0\dots j_p}\neq0$, alors il existe $k$, $1\leq k\leq m$, tel que
$$i_{k-1}\leq j_0<j_1<\cdots<j_p\leq i_k\ .$$
\end{lemme}

\begin{proof}
On raisonne par l'absurde. Supposons qu'il existe $p,\e$, $0\leq p\leq q$, $\e\in\{0,1\}$, tels que $x^\e_p$ ne satisfasse pas à la conclusion du lemme, et choisissons un tel couple avec $p$ minimal. Alors on a $p\geq2$, et il existe $(j_0\dots j_p)\in B_p$ tel que $(x^\e_p)_{j_0\dots j_p}\neq0$ et tel qu'il existe $k$, $0<k<m$, tel que $j_0<i_k<j_p$. En vertu du lemme précédent, si 
$$\textstyle\Neg{(d_px^\e_p)}=\sum\limits_{(j'_0\cdots j'_{p-1})\in B_{p-1}}y^{}_{j'_0\cdots j'_{p-1}}(j'_0\cdots j'_{p-1})\ ,$$
alors il existe $(j'_0\cdots j'_{p-1})\in B_{p-1}$ tel que $y^{}_{j'_0\cdots j'_{p-1}}\neq0$ et $j'_0<i_k<j'_{p-1}$. Or, on a $d_px^\e_p=x^1_{p-1}-x^0_{p-1}$ et $x^1_{p-1},x^0_{p-1}\in(\Chnorm{\smp{n}})^*_{p-1}$, ce qui implique que $\Neg{(d_px^\e_p)}\leq x^0_{p-1}$, et par suite que $(x^0_{p-1})_{j'_0\cdots j'_{p-1}}\neq0$, ce qui contredit la minimalité de $p$, et prouve le lemme.
\end{proof}

\begin{prop}\label{oncasse} 
Soient $m\ge1$ un entier, et $i_0,i_1\dots,i_m$ des entiers tels que \hbox{$0=i_0<i_1<\cdots<i_{m-1}<i_m=n$.} Alors le $\infty$\nbd-foncteur
$$\textstyle\prod\limits^m_{k=1}\sHom_{\Or{n}}(a_k,b_k)\toto\sHom_{\Or{n}}(a,b)\ ,$$
où
$$a_k=
\begin{pmatrix}
(i_{k-1})
&(i_{k-1},i_k)
\cr
\noalign{\vskip 3pt}
(i_k)
&(i_{k-1},i_k)
\end{pmatrix}\,,\quad
b_k=
\begin{pmatrix}
(i_{k-1})
&\textstyle\sum\limits_{i_{k-1}<l\leq i_k}(l-1,l)
\cr
\noalign{\vskip 3pt}
(i_k)
&\sum\limits_{i_{k-1}<l\leq i_k}(l-1,l)
\end{pmatrix}\,,\ \ 1\leq k\leq m\ ,
$$
$$\kern -75pt
a=
\begin{pmatrix}
(0)
&\textstyle\sum\limits^m_{k=1}\kern -3pt(i_{k-1},i_k)
\cr
\noalign{\vskip 3pt}
(n)
&\sum\limits^m_{k=1}\kern -3pt(i_{k-1},i_k)
\end{pmatrix}\,,\quad
b=
\begin{pmatrix}
(0)
&\sum\limits^n_{l=1}(l-1,l)
\cr
\noalign{\vskip 3pt}
(n)
&\sum\limits^n_{l=1}(l-1,l)
\end{pmatrix}\,,
$$
défini par la composition \og horizontale\fg{} $\comp{0}$ de $\Or{n}$
$$(x_1,x_2,\dots,x_m)\longmapsto x_1\comp{0}x_2\comp{0}\cdots\comp{0}x_m$$
est un isomorphisme de $\infty$\nbd-catégories. En particulier, $\sHom_{\Or{n}}(a,b)$ admet à la fois un objet quasi-initial et un objet quasi-final.
\end{prop}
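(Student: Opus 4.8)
La strat\'egie est d'exhiber un inverse explicite du $\infty$\nbd-foncteur de composition horizontale $H$, obtenu en \emph{d\'ecoupant} chaque cellule de $\sHom_{\Or{n}}(a,b)$ le long des sommets $i_1,\dots,i_{m-1}$. Pour $1\le k\le m$, disons qu'un \'el\'ement de base $(j_0\cdots j_p)$ de $\Chnorm{\smp{n}}$ est \emph{support\'e dans le $k$\nbd-i\`eme segment} si $i_{k-1}\le j_0<\cdots<j_p\le i_k$, et pour $y\in(\Chnorm{\smp{n}})^*_p$ notons $y^{(k)}$ la somme des mon\^omes de $y$ support\'es dans le $k$\nbd-i\`eme segment (pour $p\ge1$, un mon\^ome appartient \`a au plus un segment, de sorte que les $y^{(k)}$ sont d\'etermin\'es par $y$). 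L'ingr\'edient essentiel est le lemme~\ref{oncommenceacasser}\,: pour toute cellule $\cellule{x}$ de $\sHom_{\Or{n}}(a,b)$ et tous $p,\e$, on a $x^\e_p=\sum_{k=1}^m(x^\e_p)^{(k)}$, autrement dit tout mon\^ome figurant dans $\cellule{x}$ est support\'e dans un segment. On notera aussi que la diff\'erentielle de $\Chnorm{\smp{n}}$ pr\'eserve le fait d'\^etre support\'e dans le $k$\nbd-i\`eme segment, les faces d'un simplexe \`a sommets dans $\{i_{k-1},\dots,i_k\}$ ayant encore leurs sommets dans $\{i_{k-1},\dots,i_k\}$.

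On commencera par d\'efinir, pour $\cellule{x}$ une cellule de $\sHom_{\Or{n}}(a,b)$, une famille de cellules $\cellule{x}^{(k)}$, $1\le k\le m$\,: on impose les donn\'ees en degr\'es $0$ et $1$ dict\'ees par $s_1(\cellule{x}^{(k)})=a_k$ et $t_1(\cellule{x}^{(k)})=b_k$, et on pose $(\cellule{x}^{(k)})^\e_p=(x^\e_p)^{(k)}$ pour $p\ge2$. En utilisant que $d$ respecte les segments, ainsi que le fait que la somme sur les segments des composantes en degr\'e~$1$ de $a$, resp.\ de $b$, reconstitue celles-ci, une v\'erification directe des conditions (a)--(d) de~\ref{defStnu} montre que $\cellule{x}^{(k)}$ est une cellule bien d\'efinie de $\Or{n}$, appartenant \`a $\sHom_{\Or{n}}(a_k,b_k)$\,; comme la restriction aux segments commute aux compositions $\comp{j}$, $j\ge2$, intervenant dans $\sHom_{\Or{n}}(a,b)$ --- on le lit sur la formule de~\ref{defStnu} --- ainsi qu'aux identit\'es, l'application $\cellule{x}\mapsto(\cellule{x}^{(k)})_{1\le k\le m}$ est un $\infty$\nbd-foncteur $D:\sHom_{\Or{n}}(a,b)\to\prod_{k=1}^m\sHom_{\Or{n}}(a_k,b_k)$. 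Que $D$ soit inverse de $H$ r\'esultera alors d'un calcul avec la formule de~\ref{defStnu} pour $\comp{0}$\,: le compos\'e $\cellule{x}^{(1)}\comp{0}\cdots\comp{0}\cellule{x}^{(m)}$ a, en chaque degr\'e $p\ge1$, pour composantes les sommes $\sum_k(\cellule{x}^{(k)})^\e_p$ et, aux extr\'emit\'es, les $0$\nbd-cellules prescrites, si bien que $H\circ D=\mathrm{id}$ d\'ecoule de la d\'ecomposition $x^\e_p=\sum_k(x^\e_p)^{(k)}$ du lemme~\ref{oncommenceacasser} (les degr\'es $0$ et $1$ \'etant de toute fa\c{c}on impos\'es)\,; r\'eciproquement, les cellules de $\sHom_{\Or{n}}(a_k,b_k)$ ont toutes leurs composantes support\'ees dans le $k$\nbd-i\`eme segment --- car, via l'inclusion $\Or{i_k-i_{k-1}}\hookrightarrow\Or{n}$, $l\mapsto l+i_{k-1}$ (cf.\ remarque~\ref{remthin}), cette $\infty$\nbd-cat\'egorie est l'image d'une $\infty$\nbd-cat\'egorie form\'ee de cellules dont les composantes sont des combinaisons de simplexes \`a sommets dans $\{i_{k-1},\dots,i_k\}$ ---, de sorte que restreindre aux segments les composantes de $\cellule{x}^{(1)}\comp{0}\cdots\comp{0}\cellule{x}^{(m)}$ redonne chaque $\cellule{x}^{(k)}$, d'o\`u $D\circ H=\mathrm{id}$. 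Ainsi $H$ est un isomorphisme de $\infty$\nbd-cat\'egories.

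Pour la derni\`ere assertion, posons $n_k=i_k-i_{k-1}\ge1$. D'apr\`es la remarque~\ref{remthin}, l'inclusion $\Or{n_k}\hookrightarrow\Or{n}$ donn\'ee par $l\mapsto l+i_{k-1}$ induit un isomorphisme $\sHom_{\Or{n_k}}(a'_k,b'_k)\to\sHom_{\Or{n}}(a_k,b_k)$, o\`u $a'_k$ et $b'_k$ sont les ant\'ec\'edents de $a_k$ et $b_k$, \`a savoir les $1$\nbd-fl\`eches
$$
a'_k=\begin{pmatrix}(0)&(0,n_k)\cr(n_k)&(0,n_k)\end{pmatrix}
\qquad\hbox{et}\qquad
b'_k=\begin{pmatrix}(0)&(0,1)+\cdots+(n_k-1,n_k)\cr(n_k)&(0,1)+\cdots+(n_k-1,n_k)\end{pmatrix}
$$
de $\Or{n_k}$. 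Si $n_k=1$, cette $\infty$\nbd-cat\'egorie de morphismes est un objet final de $\ooCat$\,; si $n_k\ge2$, elle admet \`a la fois un objet quasi-initial et un objet quasi-final en vertu du th\'eor\`eme~\ref{bigchief}, (c). Dans les deux cas, $\sHom_{\Or{n}}(a_k,b_k)$ poss\`ede ces deux propri\'et\'es, donc, en vertu de~\ref{prodpseudo}, le produit $\prod_{k=1}^m\sHom_{\Or{n}}(a_k,b_k)$ aussi\,; en transportant par l'isomorphisme $H$, on en d\'eduit que $\sHom_{\Or{n}}(a,b)$ admet \`a la fois un objet quasi-initial et un objet quasi-final.

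Le point le plus technique sera la v\'erification --- un peu fastidieuse --- que $D$ est bien d\'efini, fonctoriel et inverse de $H$, l'existence m\^eme de la d\'ecomposition \'etant fournie par le lemme~\ref{oncommenceacasser}, d\'ej\`a \'etabli. L'unique subtilit\'e conceptuelle est l'ambigu\"it\'e du segment contenant un mon\^ome de degr\'e~$0$, inoffensive parce que les $0$\nbd- et $1$\nbd-squelettes des cellules en jeu sont rigidement d\'etermin\'es par $a$, $b$, $a_k$ et $b_k$.
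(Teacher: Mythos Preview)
Your proof is correct and follows essentially the same approach as the paper: both use the lemme~\ref{oncommenceacasser} to show that every component $x^\e_p$ of a cell of $\sHom_{\Or{n}}(a,b)$ decomposes as a sum of pieces supported in the segments $[i_{k-1},i_k]$, define the $k$-th factor by taking these pieces (with the degree~$0$ and~$1$ data imposed by $a_k,b_k$), and verify the cell conditions via the compatibility of $d$ with segments. The only cosmetic differences are that the paper phrases this as existence and uniqueness of a $\comp{0}$-decomposition rather than as an explicit inverse functor $D$, and that the paper invokes the proposition~\ref{incliso} where you invoke the remarque~\ref{remthin} (which is the same statement specialized to orientals); your explicit treatment of the case $n_k=1$ and your precise reference to the point~(c) of the th\'eor\`eme~\ref{bigchief} are in fact slightly more careful than the paper's.
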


\begin{proof}
Il s'agit de montrer que pour tout $q\geq2$, et toute $q$\nbd-cellule 
$$\cellule{x}=\tablg{x}{q}\,$$
de $\Or{n}$ telle que 
$$x^0_0=(0)\ ,\quad x^1_0=(n)\ ,\quad\textstyle x^0_1=\sum\limits^m_{k=1}(i_{k-1},i_k)\ ,\quad x^1_1=\sum\limits^n_{l=1}(l-1,l)\ ,$$
il existe une famille unique $(x_k)_{1\leq k\leq m}$, où
$$x_k=\tablg{(x_k)}{q}\ ,\qquad1\leq k\leq m\ ,$$
est une $q$\nbd-cellule de $\Or{n}$ telle que
$$(x_k)^0_0=(i_{k-1})\,,\quad (x_k)^1_0=(i_{k})\,,\quad (x_k)^0_1=(i_{k-1},i_k)\,,\quad (x_k)^1_1=\kern -6pt\textstyle\sum\limits_{i_{k-1}<l\leq i_k}\kern -5pt(l-1,l)\,,$$
satisfaisant $x=x_1\comp{0}\dots\comp{0}x_m$, autrement dit telle que
$$x^\e_p=\textstyle\sum\limits^m_{k=1}(x_k)^\e_p\ ,\qquad p\geq2\ ,\ \ \e\in\{0,1\}\ .$$
L'unicité résulte du fait que $B$ est une base, en remarquant qu'en vertu de la proposition~\ref{incliso}, les $(x_k)^\e_p$ sont forcément de la forme
$$(x_k)^\e_p=\textstyle\sum\limits_{i_{k-1}\leq j_0<\dots< j_p\leq i_k}((x_k)^\e_p)_{j_0\dots j_p}{(j_0\dots j_p)}\ .$$
Montrons l'existence. En vertu du lemme précédent, pour tout $p\geq2$ et $\e\in\{0,1\}$, $x^\e_p$~est de la forme
$$x^\e_p=\textstyle\sum\limits^m_{k=1}\sum\limits_{\kern 5pt i_{k-1}\leq j_0<\dots< j_p\leq i_k}(x^\e_p)_{j_0\dots j_p}{(j_0\dots j_p)}\ ,$$
et par hypothèse, il en est de même pour $p=1$. Définissons $x_k$ par 
$$\begin{aligned}
&(x_k)^0_0=(i_{k-1})\ ,\qquad\kern 12.5pt (x_k)^1_0=(i_{k})\ ,\cr 
\noalign{\vskip 4pt}
&(x_k)^0_1=(i_{k-1},i_k)\ ,\qquad (x_k)^1_1=\kern -6pt\textstyle\sum\limits_{i_{k-1}<l\leq i_k}\kern -5pt(l-1,l)\ ,\cr
&(x_k)^\e_p=\kern -1.5pt\textstyle\sum\limits_{i_{k-1}\leq j_0<\dots< j_p\leq i_k}\kern -2pt(x^\e_p)_{j_0\dots j_p}{(j_0\dots j_p)}\ ,\qquad p\geq2\ ,\ \e\in\{0,1\}\ ,
\end{aligned}$$
(la formule de la troisième ligne étant également vraie pour $p=1$).
Le tableau ainsi défini est une $p$\nbd-cellule de $\Or{n}$. En effet, on remarque qu'il suffit de montrer que pour $p\geq1$ et $\e\in\{0,1\}$, on a $d_p((x_k)^\e_p)=(x_k)^1_{p-1}-(x_k)^0_{p-1}$. Or, cela est évident pour $p=1$, et pour $p\geq2$, on a
$$\begin{aligned}
d_p(x^\e_p)&=x^1_{p-1}-x^0_{p-1}\cr
\noalign{\vskip 3pt}
&=\textstyle\sum\limits^m_{k=1}\sum\limits_{\kern 5pt i_{k-1}\leq j_0<\dots< j_{p-1}\leq i_k}\bigl((x^1_{p-1})_{j_0\dots j_{p-1}}-(x^0_{p-1})_{j_0\dots j_{p-1}}\bigr){(j_0\dots j_{p-1})}
\end{aligned}$$
et
$$\begin{aligned}
d_p(x^\e_p)&=d_p\Bigl(\textstyle\sum\limits^m_{k=1}\sum\limits_{\kern 5pt i_{k-1}\leq j_0<\dots< j_p\leq i_k}(x^\e_p)_{j_0\dots j_p}{(j_0\dots j_p)}\Bigr)\cr
\noalign{\vskip 3pt}
&=\textstyle\sum\limits^m_{k=1}\sum\limits_{\kern 5pt i_{k-1}\leq j_0<\dots< j_p\leq i_k}\kern 3pt\sum\limits^p_{\kern 2ptl=0}(-1)^l\,(x^\e_p)_{j_0\dots j_p}\,{(j_0\dots\widehat{j_l}\dots j_p)}\ ,\kern 43pt
\end{aligned}$$
et comme $B$ est une base, on en déduit que pour tout $k$ tel que $1\leq k\leq m$, on a
$$\begin{aligned}
\textstyle\sum\limits_{\kern 5pt i_{k-1}\leq j_0<\dots< j_{p-1}\leq i_k}&\bigl((x^1_{p-1})_{j_0\dots j_{p-1}}-(x^0_{p-1})_{j_0\dots j_{p-1}}\bigr){(j_0\dots j_{p-1})}\cr
&=\sum\limits_{\kern 5pt i_{k-1}\leq j_0<\dots< j_p\leq i_k}\kern 3pt\sum\limits^p_{\kern 2ptl=0}(-1)^l\,(x^\e_p)_{j_0\dots j_p}\,{(j_0\dots\widehat{j_l}\dots j_p)}\cr
\noalign{\vskip 5pt}
&=d_p\Bigl(\sum\limits_{\kern 5pt i_{k-1}\leq j_0<\dots< j_p\leq i_k}(x^\e_p)_{j_0\dots j_p}{(j_0\dots j_p)}\Bigr)\ ,
\end{aligned}$$
c'est-à-dire $(x_k)^1_{p-1}-(x_k)^0_{p-1}=d_p((x_k)^\e_p)$, ce qui prouve l'assertion.
\smallbreak

Pour montrer la dernière assertion, on remarque qu'en vertu de la proposition~\ref{incliso}, pour tout $k$, $1\leq k\leq m$, on a un isomorphisme $\sHom_{\Or{n}}(a_k,b_k)\simeq\sHom_{\Or{n_k}}(a'_k,b'_k)$, où $n_k=i_k-i_{k-1}$,
$$a'_k=\begin{pmatrix}
(0)
&(0,n_k)
\cr
(n_k)
&(0,n_k)
\end{pmatrix}
\qquad\hbox{et}\qquad b'_k=
\begin{pmatrix}
(0)
&(0,1)+(1,2)+\cdots+(n_k-1,n_k)
\cr
(n_k)
&(0,1)+(1,2)+\cdots+(n_k-1,n_k)
\end{pmatrix}\ .
$$
L'assertion résulte donc du théorème~\ref{bigchief} (cf.~\ref{prodpseudo}).
\end{proof}

Le théorème suivant généralise la partie (b) du corollaire~\ref{locpseudoin}.

\begin{thm}\label{thconjor1}
Soient $E$ un ensemble ordonné, et $S,S'\in\xi E$ deux $1$\nbd-flèches parallèles de $\OrCS{E}$ \emph{(cf.~\ref{untronqueor})}. Alors la $\infty$\nbd-catégorie $\sHom_{\OrCS{E}}(S',S)$ est
\begin{itemize}
\item vide si $S'\not\subset S$;
\item un objet final si $S'=S$;
\item une $n$\nbd-catégorie admettant à la fois un objet quasi-initial et un objet quasi-final, où $n=\card(S)-3$, si $S'\subsetneqq S$.
\end{itemize}
\end{thm}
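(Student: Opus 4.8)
The plan is to reduce the statement, by means of Propositions~\ref{incliso} and~\ref{oncasse}, to properties of $\sHom_{\Or{p}}$ of a standard pair of cells already established in Theorem~\ref{bigchief}. First I would dispose of the case $S'\not\subset S$: if $\sHom_{\OrCS{E}}(S',S)$ were nonempty it would possess a $j$\nbd-cell for some $j\geq0$, i.e.\ $\OrCS{E}$ would contain a $(j+2)$\nbd-arrow with iterated $1$\nbd-source $S'$ and iterated $1$\nbd-target $S$, whence $S'\subset S$ by Lemma~\ref{surjcel} (applied with $n=j+2\geq2$). So $S'\not\subset S$ forces $\sHom_{\OrCS{E}}(S',S)$ to be empty, and from now on we assume $S'\subset S$.

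Next I would apply Proposition~\ref{incliso} to the inclusion $i_S\colon S\to E$, obtaining an isomorphism $\sHom_{\OrCS{S}}(S',S)\simeq\sHom_{\OrCS{E}}(S',S)$, which reduces us to $E=S$. Identifying the finite totally ordered set $S$ with $\smp{p}$, where $p=\card(S)-1$, one has $\OrCS{S}\simeq\Or{p}$ (cf.~\ref{defor}), which is a $p$\nbd-category; hence for any two parallel $1$\nbd-cells of $\OrCS{S}$ the associated $\sHom$ is a $(p-2)$\nbd-category, i.e.\ a $(\card(S)-3)$\nbd-category, which already accounts for the dimension asserted. Under this identification $S$ corresponds to the $1$\nbd-arrow whose second component is $\sum_{l=1}^{p}(l-1,l)$, and, $S'$ being parallel to $S$, one may write $S'=\{i_0,\dots,i_m\}$ with $0=i_0<i_1<\cdots<i_m=p$ and $m\geq1$.

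Now I would invoke Proposition~\ref{oncasse}, which yields an isomorphism $\sHom_{\OrCS{S}}(S',S)\simeq\prod_{k=1}^{m}\sHom_{\Or{p}}(a_k,b_k)$; applying Proposition~\ref{incliso} once more to each inclusion $\{i_{k-1},\dots,i_k\}\to\smp{p}$ identifies the $k$\nbd-th factor with $\sHom_{\Or{n_k}}(a'_k,b'_k)$, where $n_k=i_k-i_{k-1}$ and $(a'_k,b'_k)$ is the pair of $1$\nbd-cells of $\Or{n_k}$ occurring in Theorem~\ref{bigchief}~(c). When $n_k=1$ this factor equals $\sHom_{\Or{1}}\bigl((0,1),(0,1)\bigr)$, which is the final object $\smp{0}$ since $\Or{1}\simeq\smp{1}$ is a $1$\nbd-category; when $n_k\geq2$ it is, by Theorem~\ref{bigchief}~(c), an $(n_k-2)$\nbd-category admitting both a quasi-initial and a quasi-final object. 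If $S'=S$, then every $n_k$ equals $1$, so the product is $\smp{0}$, a final object of $\ooCat$ — the second case of the statement. If $S'\subsetneq S$, then at least one $n_k\geq2$; since a final object trivially admits quasi-initial and quasi-final objects and these notions are stable under products (cf.~\ref{prodpseudo}), the product admits both, and together with the dimension count this gives the third case.

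I do not expect a genuine obstacle: the argument is essentially an assembly of Propositions~\ref{incliso} and~\ref{oncasse}, Lemma~\ref{surjcel} and Theorem~\ref{bigchief}. The points that require a little care are that nonemptiness of $\sHom_{\OrCS{E}}(S',S)$ already forces $S'\subset S$ — which legitimizes both the first case and the reduction to $E=S$ — and the handling of the degenerate factors $n_k=1$ together with the case $S'=S$, where one must observe that the $\sHom$ in question is \emph{literally} the final $\infty$\nbd-category $\smp{0}$ and not merely one possessing quasi-initial and quasi-final objects.
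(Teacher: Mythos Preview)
Your proof is correct and follows essentially the same route as the paper's: the paper cites Lemma~\ref{cnsexfl} for the empty case (whose ``only if'' direction is precisely Lemma~\ref{surjcel}, which you invoke directly) and then Propositions~\ref{incliso} and~\ref{oncasse} for the case $S'\subset S$. The only difference is expository: the paper packages the product decomposition, the second application of Proposition~\ref{incliso}, Theorem~\ref{bigchief} and~\ref{prodpseudo} into the final assertion of Proposition~\ref{oncasse}, whereas you unpack these ingredients explicitly and treat the degenerate case $S'=S$ with more care.
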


\begin{proof}
L'assertion relative au cas $S'\not\subset S$ résulte du lemme~\ref{cnsexfl}. En vertu de la proposition~\ref{incliso}, dans le cas où $S'\subset S$, le théorème résulte aussitôt de la proposition précédente.
\end{proof}

\begin{cor}
Pour tout ensemble ordonné $E$, le $2$\nbd-tronqué intelligent $\ti{2}(\OrCS{E})$ de $\OrCS{E}$ est la $2$\nbd-catégorie dont les objets sont les éléments de $E$, et pour tout couple $i_0,i_1$ d'objets, $\sHom_{\ti{2}(\OrCS{E})}(i_0,i_1)$ est l'ensemble ordonné par inclusion des parties finies totalement ordonnées non vides $S$ de $E$ telles que $\min(S)=i_0$ et $\max(S)=i_1$, la composition de deux $1$\nbd-flèches composables étant leur réunion.
\end{cor}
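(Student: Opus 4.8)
The strategy is to extract the description of $\ti{2}(\OrCS{E})$ from the structural results already established, namely the description of the $1$\nbd-tronqu� b�te of $\OrCS{E}$ in~\ref{untronqueor} together with Theorem~\ref{thconjor1}. First I would recall that, by~\ref{untronqueor}, the $1$\nbd-cellules of $\OrCS{E}$ are exactly the elements $S$ of $\xi E$, with $s(S)=\min(S)$, $t(S)=\max(S)$, composition given by union and identities given by singletons; since $\ti{2}$ does not alter cells in dimension $<2$, the objects of $\ti{2}(\OrCS{E})$ are the elements of $E$ and its $1$\nbd-cellules are the elements of $\xi E$, with the stated sources, targets, identities and composition. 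This identifies the underlying $1$\nbd-cat�gorie; it remains to describe the $2$\nbd-cellules, i.e., for each parallel pair $S',S$ of $1$\nbd-fl�ches, the set $\Hom_{\ti{2}(\OrCS{E})}(S',S)$, which by the definition of the intelligent truncation (cf.~\ref{tronq}) is the set of $2$\nbd-cellules of $\OrCS{E}$ from $S'$ to $S$ modulo the equivalence relation identifying two of them whenever they are joined by a zigzag of $3$\nbd-fl�ches.

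The key point is then that this quotient set has at most one element, and is nonempty precisely when $S'\subset S$. Both facts follow from Theorem~\ref{thconjor1}: the $\infty$\nbd-cat�gorie $\sHom_{\OrCS{E}}(S',S)$ is empty if $S'\not\subset S$ (so there are no $2$\nbd-cellules, matching the fact that in the ordered set $\sHom_{\ti{2}(\OrCS{E})}(i_0,i_1)$ one has no arrow $S'\to S$), whereas if $S'\subset S$ this $\infty$\nbd-cat�gorie admits a quasi-initial object, hence by Corollaire~\ref{pseudoasph} it is asph�rique, and in particular connected; therefore any two of its objects (which are exactly the $2$\nbd-cellules of $\OrCS{E}$ from $S'$ to $S$) are joined by a zigzag of $1$\nbd-fl�ches of $\sHom_{\OrCS{E}}(S',S)$, that is, by a zigzag of $3$\nbd-fl�ches of $\OrCS{E}$. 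Passing to the quotient defining $\ti{2}$ collapses $\Hom_{\ti{2}(\OrCS{E})}(S',S)$ to a single element when $S'\subsetneq S$, and when $S'=S$ the object in question is the identity $2$\nbd-cellule $1_S$ (again a single element, since $\sHom_{\OrCS{E}}(S,S)$ is a final object of $\ooCat$ by Corollaire~\ref{locpseudoin}(a) when $S$ has one element, and more generally is asph�rique and hence connected).

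Assembling these observations, $\ti{2}(\OrCS{E})$ is the $2$\nbd-cat�gorie with object set $E$, with a unique $2$\nbd-fl�che $S'\to S$ whenever $S'\subset S$ (and none otherwise), i.e. $\sHom_{\ti{2}(\OrCS{E})}(i_0,i_1)$ is the ordered set of finite nonempty totally ordered subsets $S\subset E$ with $\min(S)=i_0$, $\max(S)=i_1$, ordered by inclusion, the composition of $1$\nbd-fl�ches being union. One should finally check that the $0$\nbd-composition and $1$\nbd-composition of $2$\nbd-cellules in this truncated category are the correct ones for the poset-enriched $2$\nbd-cat�gorie described in the statement; since a $2$\nbd-cat�gorie whose Hom-cat�gories are posets is determined by its underlying $1$\nbd-cat�gorie together with those posets, and since the $1$\nbd-composition is forced (uniqueness of $2$\nbd-cellules) while the $0$\nbd-composition of $2$\nbd-cellules lies over the union of $1$\nbd-cellules and is again unique, there is nothing more to verify. \emph{The main obstacle} is purely bookkeeping: one must be careful that the equivalence relation in the definition of $\ti{2}$ genuinely identifies \emph{all} $2$\nbd-cellules between a fixed parallel pair — this is exactly what connectivity of $\sHom_{\OrCS{E}}(S',S)$ (via Theorem~\ref{thconjor1} and Corollaire~\ref{pseudoasph}) provides, so no further argument is needed.
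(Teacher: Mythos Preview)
Your approach is essentially the same as the paper's: the paper's proof is the one-sentence observation that, given the description of the $1$\nbd-tronqu� b�te in~\ref{untronqueor}, the corollary is an immediate consequence of Theorem~\ref{thconjor1}. You have correctly unpacked this, noting that the $2$\nbd-cellules of $\ti{2}(\OrCS{E})$ from $S'$ to $S$ are the objects of $\sHom_{\OrCS{E}}(S',S)$ modulo zigzags of its $1$\nbd-fl�ches, and that Theorem~\ref{thconjor1} forces this quotient to be empty or a singleton.

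Two minor remarks. First, the detour through Corollaire~\ref{pseudoasph} is unnecessary: an $\infty$\nbd-cat�gorie with a quasi-initial object $x_0$ is connected directly, since every object receives a $1$\nbd-fl�che from $x_0$ by the very definition. Second, your invocation of Corollaire~\ref{locpseudoin}(a) for the case $S'=S$ is misplaced (that result concerns $\sHom_{\OrCS{E}}(i_0,i_0)$ for \emph{objects} $i_0$, not $1$\nbd-cellules); but Theorem~\ref{thconjor1} already states that $\sHom_{\OrCS{E}}(S,S)$ is a final object of $\ooCat$, so you may cite it directly.
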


\begin{proof}
Vu la description du $1$\nbd-tronqué bête de $\OrCS{E}$ donnée dans le paragraphe~\ref{untronqueor}, le corollaire est conséquence immédiate du théorème précédent.
\end{proof}

\section{Transformations et contractions de \pdfinfty-catégories}\label{AppB}

La notion de $n$\nbd-catégorie, pour $0\leq n<\infty$, admettant un objet quasi-final (resp.~quasi-initial), introduite dans le paragraphe~\ref{pseudofinal}, s'étend sans difficulté au cadre des $\infty$\nbd-catégories. On peut définir cette notion par \emph{coïnduction} en disant qu'une $\infty$\nbd-catégorie $C$ admet un objet quasi-final (resp.~quasi-initial) s'il existe un objet $x$ de~$C$ tel que pour tout objet $y$ de~$C$, la $\infty$\nbd-catégorie $\sHom_C(y,x)$ (resp. $\sHom_C(x,y)$) admette un objet quasi-final (resp. quasi-initial). Néanmoins, on adoptera une définition plus classique dans sa forme. Pour tout ensemble $E$ de cellules d'une $\infty$\nbd-catégorie~$C$, et tout entier $i\geq0$, on note $E_i$ l'ensemble des $i$\nbd-cellules de $C$ appartenant à $E$.

\begin{paragr}\label{oopseudofinal}
On dit qu'un ensemble $E$ de cellules d'une $\infty$\nbd-catégorie $C$ est \emph{quasi-final} (resp.~\emph{quasi-initial}) si pour tout $i\geq0$, tout $x\in E_i$, et toute $i$\nbd-cellule $y$ de $C$ parallèle à $x$ (cf.~\ref{defcelparal}), il existe $z\in E_{i+1}$ de source $y$ et but $x$ (resp.~de source $x$ et but $y$). On dit qu'un objet $x$ de $C$ est \emph{quasi-final} (resp.~\emph{quasi-initial}) s'il appartient à un ensemble quasi-final (resp. quasi-initial) de cellules de $C$. Ainsi, pour qu'une $\infty$\nbd-catégorie $C$ admette un objet quasi-final (resp.~quasi-initial), il faut et il suffit qu'elle admette un ensemble quasi-final (resp.~quasi-initial) $E$ de cellules tel que $E_0$ soit non vide. On vérifie aussitôt qu'une $\infty$\nbd-catégorie $C$ admet un objet quasi-final si et seulement si la $\infty$\nbd-catégorie duale $\op{C}$ (cf.~\ref{dualooCat}) admet un objet quasi-initial, et qu'un objet de $C$ est quasi-final si et seulement si il est un objet quasi-initial de $\op{C}$.
\end{paragr}

\begin{prop}\label{carlocpseudo}
Soit $C$ une $\infty$\nbd-catégorie et $x$ un objet de $C$. Pour que $x$ soit un objet quasi-final \emph{(resp.}~quasi-initial\emph{)} de $C$, il faut et il suffit que pour tout objet $y$ de $C$, la $\infty$\nbd-catégorie $\sHom_C(y,x)$ \emph{(resp.} $\sHom_C(x,y)$\emph{)} admette un objet quasi-final \emph{(resp.} quasi-initial\emph{).}
\end{prop}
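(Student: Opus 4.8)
The plan is to unwind both notions — "quasi-final object" as defined via quasi-final \emph{sets} of cells in \ref{oopseudofinal}, and the "classical" characterization via the $\sHom$-categories — and show they match, working coinductively. I treat the quasi-final case; the quasi-initial case follows by applying it to $\op{C}$, using the last sentence of \ref{oopseudofinal} together with the (immediate) fact that $\sHom_{\op{C}}(x,y) = \op{(\sHom_C(y,x))}$.

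First, the necessity. Suppose $x$ is quasi-final in $C$, so $x \in E_0$ for some quasi-final set $E$ of cells. Fix an object $y$ of $C$. I would define $E^{(y)}$ to be the set of cells of $\sHom_C(y,x)$ obtained from $E$: a $j$-cell of $\sHom_C(y,x)$ is an $(j+1)$-cell $w$ of $C$ with $s_0(w) = y$, $t_0(w) = x$, and I put $w \in E^{(y)}_j$ exactly when $w \in E_{j+1}$. The claim is that $E^{(y)}$ is a quasi-final set of cells of $\sHom_C(y,x)$ with $E^{(y)}_0$ containing the $1$-cell witnessing that $x$ is reachable from $y$ inside $E$ (which exists since $y$ is parallel to $x$ as $0$-cells, and $x \in E_0$). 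Given a $j$-cell $w \in E^{(y)}_j$ and a parallel $j$-cell $w'$ of $\sHom_C(y,x)$, note $w$ and $w'$ are parallel $(j+1)$-cells of $C$; since $w \in E_{j+1}$, quasi-finality of $E$ in $C$ produces $z \in E_{j+2}$ with source $w'$ and target $w$, and by construction $z$ has iterated $0$-source $y$ and iterated $0$-target $x$, hence lies in $E^{(y)}_{j+1}$ with the right source and target. So $\sHom_C(y,x)$ admits a quasi-final set of cells with nonempty $0$-part, i.e.\ a quasi-final object.

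Next, the sufficiency — the step I expect to be the main obstacle, since it requires assembling a single quasi-final set of cells of $C$ from quasi-final sets chosen in all the $\sHom_C(y,x)$, and this in turn recurses: each such quasi-final object of $\sHom_C(y,x)$ is certified by quasi-final objects in \emph{its} $\sHom$-categories, which are the $\sHom_C$ of higher cells. The clean way to handle this is to define $E$ directly as a "coinductively closed" set: let $E$ be the set of all cells $c$ of $C$ (of every dimension $i \geq 0$) such that $t_0(c) = x$ and such that, viewing $c$ as an object of $\Par_C(c)$ when $i>0$ (resp.\ as the object $c$ of $C$ when $i = 0$), $c$ is a quasi-final object of the appropriate $\sHom$-category $\sHom_C(s_{i-1}(c), x)$ — more precisely, one defines $E$ as the largest set of cells $F$ with the property that for every $c \in F_i$ and every $i$-cell $c'$ of $C$ parallel to $c$ there is $z \in F_{i+1}$ from $c'$ to $c$. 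Such a largest set exists (union of all such $F$), and the hypothesis that each $\sHom_C(y,x)$ has a quasi-final object is exactly what is needed to see $E_0 \ni x$: one shows by the greatest-fixed-point argument that the objects $y$, chains of witnessing cells, etc., all land in $E$. Then $E$ is quasi-final by construction and $x \in E_0$, so $x$ is a quasi-final object of $C$.

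The only delicate point in the last step is the bookkeeping identifying $\sHom_{\Par_C(c)}(c',c)$ with $\sHom_C(c',c)$ (valid by the final sentence of \ref{defcelparal}) and checking that "parallel in $\sHom_C(y,x)$" corresponds to "parallel in $C$ with the correct iterated $0$-source and $0$-target", which is routine from the definitions in \ref{defcelparal}. I would phrase the sufficiency argument via the greatest-fixed-point characterization precisely to avoid an awkward induction on dimension that never terminates; with that in hand the verification is a direct unwinding. Finally, as noted, the quasi-initial statement is obtained by dualizing, with no extra work.
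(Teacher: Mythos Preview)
Your necessity argument matches the paper's: restrict the witnessing quasi-final set $E$ to the cells lying in $\sHom_C(y,x)$.

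For sufficiency you have overcomplicated matters, and as written there is a gap. You anticipate a recursion (``each such quasi-final object of $\sHom_C(y,x)$ is certified by quasi-final objects in \emph{its} $\sHom$-categories''), but there is none: by definition~\ref{oopseudofinal}, a quasi-final object of $\sHom_C(y,x)$ already comes packaged with a quasi-final \emph{set} $F(y)$ of cells of $\sHom_C(y,x)$, and that set is closed ``all the way up'' --- nothing further needs to be unfolded. Passing to the largest quasi-final set $E$ of $C$ does not help either: to prove $x \in E_0$ you must still exhibit one quasi-final set containing $x$ (the coinduction principle asks precisely for a post-fixed point containing $x$), so the sentence ``one shows by the greatest-fixed-point argument'' is not yet an argument. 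Incidentally, ``the objects $y$\dots\ all land in $E$'' is wrong: an arbitrary object $y$ has no reason to lie in any quasi-final set.

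The paper simply puts
\[
E \;=\; \{x\} \;\cup\; \bigcup_{y \in \ob C} F(y),
\]
viewing each $F(y)$ as a set of cells of $C$, and checks directly that $E$ is quasi-final in $C$. For $c = x \in E_0$ and any object $y$, an element of $F(y)_0$ is a $1$-cell $y \to x$ lying in $E$. For $c \in F(y)$ an $i$-cell of $C$ with $i \geq 1$ and $c'$ parallel to $c$ in $C$, parallelism forces $s_0(c') = y$ and $t_0(c') = x$, so $c'$ is an $(i-1)$-cell of $\sHom_C(y,x)$ parallel to $c$ there, and quasi-finality of $F(y)$ supplies the required $z \in F(y) \subset E$. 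That is the entire verification; no recursion or fixed-point machinery is needed.
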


\begin{proof}
Supposons que $x$ soit un objet quasi-final (resp.~quasi-initial) de~$C$, et soient $E$ un ensemble quasi-final (resp.~quasi-initial) de cellules de $C$ tel que $x\in E_0$, et $y$ un objet arbitraire de $C$. Alors, par définition, il existe une $1$\nbd-cellule~$z$ de~$C$ appartenant à $E$ de source $y$ et but $x$ (resp.~de source $x$ et but $y$). Si pour $i\geq0$, on note $F_i$ l'ensemble des $i$\nbd-cellules de $\sHom_C(y,x)$ (resp.~de $\sHom_C(x,y)$) telles que la $(i+1)$\nbd-cellule correspondante de~$C$ soit dans $E_{i+1}$, alors on vérifie aussitôt que l'ensemble \hbox{$F=\cup_{\,i\geq0}F_i$} est un ensemble quasi-final (resp.~quasi-initial) de cellules de $\sHom_C(y,x)$ (resp.~de $\sHom_C(x,y)$). Comme $z$ appartient à $F_0$, on en déduit qu'il est un objet quasi-final (resp. quasi-initial) de la $\infty$\nbd-catégorie $\sHom_C(y,x)$ (resp.~$\sHom_C(x,y)$).
\smallbreak

Réciproquement, supposons que pour tout objet $y$ de $C$, la $\infty$\nbd-catégorie $\sHom_C(y,x)$ (resp.~$\sHom_C(x,y)$) admette un objet quasi-final (resp.~quasi-initial), et soit $F(y)$ un ensemble quasi-final (resp.~quasi-initial) de cellules de $\sHom_C(y,x)$ (resp.~de $\sHom_C(x,y)$) tel que $F(y)_0$ soit non vide. On vérifie aussitôt que si l'on pose
$$E=\{x\}\cup\textstyle\bigcup\limits_{y\in\ob C}F(y)\ ,$$
alors $E$ est un ensemble quasi-final (resp.~quasi-initial) de cellules de $C$, ce qui prouve que $x$ est un objet quasi-final (resp.~quasi-initial) de $C$.
\end{proof}

\begin{prop}\label{eq2pseudo}
Soient $n\geq0$ un entier, et $C$ une $n$\nbd-catégorie. Pour qu'un objet de $C$ soit un objet quasi-final \emph{(resp.}~quasi-initial\emph{)} au sens de la définition du paragraphe~\emph{\ref{pseudofinal},} il faut et il suffit qu'il le soit au sens de la définition du paragraphe~\emph{\ref{oopseudofinal}.}
\end{prop}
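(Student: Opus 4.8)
The plan is to argue by induction on $n$, the base case being a direct verification and the inductive step an immediate consequence of Proposition~\ref{carlocpseudo}. Throughout I only treat the quasi-final case: both~\ref{oopseudofinal} (for the notion of that paragraph) and the end of~\ref{dualooCat} (for the notion of~\ref{pseudofinal}) already record that $x$ is a quasi-final object of $C$ if and only if $x$ is a quasi-initial object of $\op{C}$, so the quasi-initial statement follows by applying the quasi-final one to $\op{C}$.

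For $n=0$, a $0$\nbd-category $C$ is just a set. In the sense of~\ref{pseudofinal}, an object $x$ of $C$ is quasi-final precisely when $C=\{x\}$. For the sense of~\ref{oopseudofinal}: if $E$ is a quasi-final set of cells of $C$ with $x\in E_0$, then, since all $0$\nbd-cells of $C$ are parallel and every $1$\nbd-cell of $C$ is an identity $1_w$ with $s(1_w)=t(1_w)=w$, the defining condition in degree $0$ forces $y=x$ for every object $y$, so $C=\{x\}$; conversely, when $C=\{x\}$ the set of all cells of $C$ is quasi-final. Hence the two notions agree for $n=0$.

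Now let $n\geq1$, assume the proposition for $(n-1)$\nbd-categories, and let $x$ be an object of an $n$\nbd-category $C$. By the recursive definition~\ref{pseudofinal}, $x$ is quasi-final in the sense of~\ref{pseudofinal} if and only if, for every object $y$ of $C$, the $(n-1)$\nbd-category $\sHom_C(y,x)$ (cf.~\ref{defnCat}) has an object quasi-final in the sense of~\ref{pseudofinal}. Viewing $C$ as an $\infty$\nbd-category, Proposition~\ref{carlocpseudo} says that $x$ is quasi-final in the sense of~\ref{oopseudofinal} if and only if, for every object $y$, the $\infty$\nbd-category $\sHom_C(y,x)$ has an object quasi-final in the sense of~\ref{oopseudofinal}. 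Since $\sHom_C(y,x)$ is an $(n-1)$\nbd-category, the induction hypothesis makes these two conditions on $\sHom_C(y,x)$ equivalent, and the proposition follows. The only delicate point is the compatibility, on passing from $C$ to the hom\nbd-categories, between the notion of parallel cells (cf.~\ref{defcelparal}) and the $\infty$\nbd-category structures implicit in~\ref{oopseudofinal}; but this is exactly what is checked inside the proof of Proposition~\ref{carlocpseudo} (through the correspondence there between quasi-final sets of cells of $\sHom_C(y,x)$ and of $C$), so nothing new is required here.
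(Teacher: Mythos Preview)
Your proof is correct and follows essentially the same approach as the paper: induction on $n$, with the base case verified directly (identities force $y=x$) and the inductive step reduced to Proposition~\ref{carlocpseudo}. The only cosmetic differences are that the paper treats the quasi-final and quasi-initial cases in parallel rather than first reducing by duality, and it does not include your final remark about the ``delicate point'' (which is indeed already absorbed in the proof of~\ref{carlocpseudo}).
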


\begin{proof}
On raisonne par récurrence sur $n$. Soit $x$ un objet de $C$. Si $n=0$, et s'il existe un ensemble quasi-final $E$ (resp.~quasi-initial) tel que $x\in E$, alors tout objet $y$ de $C$ est relié à $x$ par une $1$\nbd-flèche, et comme les seules $1$\nbd-flèches de $C$ sont les identités, la $0$\nbd-catégorie $C$ s'identifie à l'ensemble $\{x\}$. La réciproque étant évidente, ceci prouve l'assertion pour $n=0$. Si $n>0$, en vertu de la proposition précédente, pour que $x$ soit un objet quasi-final (resp.~quasi-initial) de $C$ au sens de la définition du paragraphe~\ref{oopseudofinal}, il faut et il suffit que pour tout objet $y$ de $C$, la $(n-1)$\nbd-catégorie $\sHom_C(y,x)$ (resp.~$\sHom_C(x,y)$) admette un objet quasi-final (resp.~quasi-initial) au sens de cette même définition. On conclut donc par l'hypothèse de récurrence.
\end{proof}

\begin{rem}
En vertu de la proposition précédente, il n'y aura donc par lieu de distinguer, pour une $n$\nbd-catégorie, $0\leq n<\infty$, les deux notions d'objet quasi-final (resp.~quasi-initial).
\end{rem}

\begin{prop}\label{tronqpseudoinf}
Soient $C$ une $\infty$\nbd-catégorie, et $x$ un objet quasi-final \emph{(resp.}~quasi-initial\emph{)} de $C$. Alors pour tout entier $n\geq0$, l'image de $x$ dans la $n$\nbd-catégorie $\ti{n}(C)$, $n$\nbd-tronqué intelligent de $C$ \emph{(cf.~\ref{tronq}),} est un objet quasi-final \emph{(resp.}~quasi-initial\emph{)} de~$\ti{n}(C)$.
\end{prop}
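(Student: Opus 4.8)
The plan is to work with the description of quasi-final objects through quasi-final \emph{sets of cells} given in \ref{oopseudofinal}. Write $\pi\colon C\to\ti{n}(C)$ for the adjunction morphism of \ref{tronq}, and recall that $\pi$ is bijective on $j$\nbd-cells for $j<n$ (indeed $\ti{n}(C)_j=C_j$ in these degrees), surjective on $n$\nbd-cells, and that $\ti{n}(C)$ is an $n$\nbd-category, so all its cells of degree $>n$ are identities. Since $x$ is quasi-final, there is a quasi-final set $E$ of cells of $C$ with $x\in E_0$; set $E'=\pi(E)$, so that $\pi(x)\in E'_0$. It then suffices to prove that $E'$ is a quasi-final set of cells of $\ti{n}(C)$, i.e.\ that it satisfies the lifting condition of \ref{oopseudofinal}.

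So fix $i\geq0$, an element $x'\in E'_i$, written $x'=\pi(x_0)$ with $x_0\in E_i$, and an $i$\nbd-cell $y'$ of $\ti{n}(C)$ parallel to $x'$; we must find $z'\in E'_{i+1}$ of source $y'$ and target $x'$. First suppose $i<n$. Identifying the $i$\nbd-cells and $(i-1)$\nbd-cells of $\ti{n}(C)$ with those of $C$, we have $x'=x_0$, and $y'$ is an $i$\nbd-cell of $C$ parallel to $x_0$ in $C$ (for $i=0$ this is automatic; for $i>0$, $s(y')$ and $s(x_0)$, resp.\ $t(y')$ and $t(x_0)$, are $(i-1)$\nbd-cells with the same image under the injective map $\pi$, hence equal). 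As $E$ is quasi-final there is $z_0\in E_{i+1}$ of source $y'$ and target $x_0$; then $z'=\pi(z_0)\in E'_{i+1}$ has source $\pi(y')=y'$ and target $\pi(x_0)=x'$, since $i+1\leq n$ so $\pi$ still preserves and essentially fixes sources and targets in these degrees, as required.

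Now suppose $i\geq n$. If $i>n$, then $x'$ and $y'$ are identity $i$\nbd-cells of $\ti{n}(C)$ with the same source, hence $y'=x'$; using that $x_0$ is parallel to itself, $E$ quasi-final yields $z_0\in E_{i+1}$ of source and target $x_0$, and $z'=\pi(z_0)\in E'_{i+1}$ has source and target $x'=y'$. If $i=n$, pick any $y_0\in C_n$ with $\pi(y_0)=y'$, using the surjectivity of $\pi$ on $n$\nbd-cells; as in the case $i<n$ one checks, via the injectivity of $\pi$ on $(n-1)$\nbd-cells, that $y_0$ is parallel to $x_0$ in $C$, so $E$ quasi-final gives $z_0\in E_{n+1}$ of source $y_0$ and target $x_0$; then $\pi(z_0)$ is a cell of degree $n+1$ in $\ti{n}(C)$, hence an identity $1_u$ whose source $\pi(y_0)=y'$ and target $\pi(x_0)=x'$ must both equal $u$, so $y'=x'$ and $z'=\pi(z_0)=1_{x'}\in E'_{i+1}$ works. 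This proves $E'$ quasi-final, hence $\pi(x)$ is a quasi-final object of $\ti{n}(C)$. The quasi-initial statement follows by the same argument with sources and targets interchanged (equivalently, by applying the quasi-final case to $\op{C}$, since $\ti{n}$ evidently commutes with the duality $\Dual$ and $x$ is quasi-initial in $C$ iff it is quasi-final in $\op{C}$, cf.\ \ref{dualooCat}). The only delicate point is the behaviour in degrees $\geq n$: one has to observe that the collapse of all $(n+1)$\nbd-cells of $\ti{n}(C)$ to identities is exactly compatible with the existence of the witnessing cell $z_0\in E_{n+1}$, forcing any cell of $\ti{n}(C)$ parallel to an element of $E'_n$ to coincide with it.
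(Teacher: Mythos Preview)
Your proof is correct, but it follows a genuinely different route from the paper's. The paper argues by induction on $n$: for $n=0$ it observes that $\ti{0}(C)$ collapses to a singleton (since every object is connected to $x$ by a $1$\nbd-cell coming from $E$); for $n>0$ it invokes the characterisation of Proposition~\ref{carlocpseudo}, which says $x$ is quasi-final iff each $\sHom_C(y,x)$ admits a quasi-final object, together with the isomorphism $\ti{n-1}(\sHom_C(y,x))\simeq\sHom_{\ti{n}(C)}(y,x)$, and then applies the induction hypothesis to each $\sHom_C(y,x)$.

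By contrast, you work directly with the set-of-cells definition from~\ref{oopseudofinal}, pushing a quasi-final set $E$ forward along $\pi$ and doing a case split on the degree. This avoids both the induction on $n$ and any appeal to Proposition~\ref{carlocpseudo} or to the compatibility of $\ti{n}$ with $\sHom$; instead you use only the concrete description of $\pi$ (bijective below degree $n$, surjective in degree $n$, collapsing above). Your argument is thus more elementary and self-contained, while the paper's is shorter to write because it leverages the local characterisation already established. One minor stylistic remark: your treatment of the case $i>n$ can be streamlined by noting that once you have shown $y'=x'$, the identity $1_{x'}=1_{y'}$ is automatically $\pi(z_0)$ for the $z_0\in E_{i+1}$ you exhibit, so the detour through ``$x_0$ is parallel to itself'' is the cleanest way to guarantee $z'\in E'_{i+1}$---which you do correctly.
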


\begin{proof}
On raisonne par récurrence sur $n$. Si $n=0$, la $0$\nbd-catégorie $\ti{0}(C)$ est le quotient de l'ensemble des objets de $C$ par la relation d'équivalence 
$$y\sim y'\ \Longleftrightarrow\ \hbox{il existe un zigzag de $1$\nbd-flèches de $C$ reliant $y$ et $y'$}.$$
S'il existe un ensemble quasi-final (resp. quasi-initial) $E$ de cellules de $C$ tel que $x\in E_0$, alors
ce quotient se réduit à la classe d'équivalence de $x$, ce qui prouve l'assertion dans ce cas. Si $n>0$, en vertu de la proposition~\ref{carlocpseudo} et de l'hypothèse de récurrence, pour tout objet $y$ de $C$, la $(n-1)$\nbd-catégorie 
$$\begin{aligned}
&\ti{n-1}(\sHom_C(y,x))\simeq\sHom_{\ti{n}(C)}(y,x)\cr 
\hbox{(resp.}\ \ &\ti{n-1}(\sHom_C(x,y))\simeq\sHom_{\ti{n}(C)}(x,y)\ )
\end{aligned}$$ 
admet  un objet quasi-final (resp.~quasi-initial). 
Par une nouvelle application de la proposition~\ref{carlocpseudo}, on en déduit que $x$ est un objet quasi-final (resp.~quasi-initial) de la $n$\nbd-catégorie $\ti{n}(C)$.
\end{proof}

\begin{lemme}
Soit $C$ une $\infty$\nbd-catégorie. Si pour tout entier $n\geq0$, le $n$\nbd-tronqué intelligent $\ti{n}(C)$ est asphérique, alors $C$ est asphérique.
\end{lemme}

\begin{proof}
Il s'agit de montrer que sous les hypothèses du lemme, l'ensemble simplicial $\NStreet(C)$ est faiblement contractile, autrement dit que $\pi_0(C)$ est un singleton, et que pour tout $i>0$ et tout $0$\nbd-simplexe $x$ de $\NStreet(C)$, le groupe d'homotopie $\pi_i(\NStreet(C),x)$ est trivial. Comme pour tout $n\geq0$, l'oriental $\Or{n}$ est une $n$\nbd-catégorie, on a par adjonction une bijection naturelle
$$(\NStreet(C))_n=\Hom_{\ooCat}(\Or{n},C)\simeq\Hom_{\ooCat}(\Or{n},\ti{n}(C))=(\NStreet\ti{n}(C))_n\ ,$$
autrement dit le $n$\nbd-squelette de $\NStreet(C)$ coïncide avec celui de $\NStreet\ti{n}(C)$.
Vu que les~$\pi_i$ d'un ensemble simplicial ne dépendent que de son $(i+1)$-squelette, cela prouve l'assertion.
\end{proof}

\begin{prop}\label{asphpseudoinf}
Soit $C$ une $\infty$\nbd-catégorie admettant un objet quasi-final \emph{(resp.}~quasi-initial\emph{)}. Alors $C$ est asphérique.
\end{prop}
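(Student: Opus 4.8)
The plan is to deduce the statement from the already-settled finite-dimensional case by passing through the intelligent truncations of $C$. First I would fix a quasi-final (resp. quasi-initial) object $x$ of the $\infty$-category $C$, in the sense of~\ref{oopseudofinal}. By Proposition~\ref{tronqpseudoinf}, for every integer $n \geq 0$ the image of $x$ in the $n$-category $\ti{n}(C)$ is again a quasi-final (resp. quasi-initial) object of $\ti{n}(C)$.

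The next step is to reconcile the two available definitions of a quasi-final object. Since $\ti{n}(C)$ is an $n$-category with $n$ finite, Proposition~\ref{eq2pseudo} guarantees that the image of $x$ is also a quasi-final (resp. quasi-initial) object of $\ti{n}(C)$ in the sense of the inductive definition~\ref{pseudofinal}. Corollaire~\ref{pseudoasph} then applies and shows that the $n$-category $\ti{n}(C)$ is aspherical.

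Finally, since $\ti{n}(C)$ is aspherical for every $n \geq 0$, the preceding lemma immediately yields that $C$ itself is aspherical, which concludes the argument. The proof is thus a direct assembly of results already in place; the only point demanding a little attention is to invoke Proposition~\ref{eq2pseudo} so that the $n$-categorical result Corollaire~\ref{pseudoasph} — which ultimately rests on the $\infty$-categorical Theorem~A, Theorem~\ref{consthA} — can legitimately be applied to the truncations. I do not anticipate any genuine obstacle beyond this bookkeeping, all the homotopical substance being already encapsulated in Corollaire~\ref{pseudoasph} and in the truncation lemma.
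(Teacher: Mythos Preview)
Your proposal is correct and follows essentially the same route as the paper: apply Proposition~\ref{tronqpseudoinf} to get a quasi-final (resp.\ quasi-initial) object in each $\ti{n}(C)$, invoke Corollaire~\ref{pseudoasph} to conclude each truncation is aspherical, and finish with the preceding lemma. Your explicit appeal to Proposition~\ref{eq2pseudo} is harmless but unnecessary, since the paper has already declared (in the remark following that proposition) that the two notions of quasi-final object need not be distinguished for $n$-categories with $n$ finite.
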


\begin{proof}
En vertu de la proposition~\ref{tronqpseudoinf}, pour tout entier $n\geq0$, la $n$\nbd-catégorie $\ti{n}(C)$ admet un objet quasi-final (resp. quasi-initial), et par suite, il résulte du corollaire~\ref{pseudoasph} qu'elle est asphérique. La proposition résulte donc du lemme précédent.
\end{proof}

\begin{paragr}\label{deftransf}
Soient $F,G:C\todouble D$ deux $\infty$\nbd-foncteurs de même source et même but. Une \emph{prétransformation} $\alpha$ de $F$ vers $G$ est la donnée pour tout $i\geq0$, et toute $i$\nbd-cellule $x$ de $C$, d'une $(i+1)$\nbd-flèche
$$\transf{\alpha}{x}:\transf{\alpha}{t_{i-1}(x)}\comp{i-1}\cdots\comp{1}\transf{\alpha}{t_{0}(x)}\comp{0}F(x)\toto G(x)\comp{0}\transf{\alpha}{s_{0}(x)}\comp{1}\cdots\comp{i-1}\transf{\alpha}{s_{i-1}(x)}$$
de $D$ (en se rappelant que si $k<j$ l'opération $\comp{k}$ est prioritaire sur l'opération $\comp{j}$ (cf.~\ref{notooCat})). Par exemple, si $x$ est un objet de $C$,
$$\transf{\alpha}{x}:F(x)\toto G(x)$$
est une $1$\nbd-flèche de $D$, si $x$ est une $1$\nbd-flèche de $C$,
$$\transf{\alpha}{x}:\transf{\alpha}{t_{0}(x)}\comp{0}F(x)\toto G(x)\comp{0}\transf{\alpha}{s_{0}(x)}$$
est une $2$\nbd-flèche de $D$, et si $x$ est une $2$\nbd-flèche de $C$,
$$\transf{\alpha}{x}:\transf{\alpha}{t_{1}(x)}\comp{1}\bigl(\transf{\alpha}{t_{0}(x)}\comp{0}F(x)\bigr)\toto \bigl(G(x)\comp{0}\transf{\alpha}{s_{0}(x)}\bigr)\comp{1}\transf{\alpha}{s_{1}(x)}$$
est une $3$\nbd-flèche de $D$. En revenant au cas général d'une $i$\nbd-cellule $x$, pour que cette définition soit licite, il faut vérifier que pour $i\geq0$, les $i$\nbd-cellules
$$\transf{\alpha}{t_{i-1}(x)}\comp{i-1}\cdots\comp{1}\transf{\alpha}{t_{0}(x)}\comp{0}F(x)\quad\hbox{et}\quad G(x)\comp{0}\transf{\alpha}{s_{0}(x)}\comp{1}\cdots\comp{i-1}\transf{\alpha}{s_{i-1}(x)}$$
sont parallèles et les compositions qui y figurent sont bien définies, ce qu'on démontre par récurrence, en observant que pour $i=0$, il n'y a rien à prouver et que pour $i>0$, on a un carré de $i$\nbd-flèches
$$
\xymatrixrowsep{3.4pc}
\xymatrixcolsep{2.7pc}
\xymatrix{
\transf{\alpha}{t_{i-2}x}\kern-2pt\comp{i-2}\cdots\comp{1}\kern-2pt\transf{\alpha}{t_{0}x}\kern-2pt\comp{0}\kern-2pt F(s_{i-1}x)\ar[r]^{\transf{\alpha}{s_{i-1}x}}
\ar@<5ex>[d]_{\transf{\alpha}{t_{i-2}x}\kern-2pt\comp{i-2}\cdots\comp{1}\kern-2pt\transf{\alpha}{t_{0}x}\kern-2pt\comp{0}\kern-2ptF(x)}
&G(s_{i-1}x)\kern-2pt\comp{0}\kern-2pt\transf{\alpha}{s_{0}x}\kern-2pt\comp{1}\cdots\comp{i-2}\kern-2pt\transf{\alpha}{s_{i-2}x}
\ar@<-5ex>[d]^{G(x)\kern-2pt\comp{0}\kern-2pt\transf{\alpha}{s_{0}x}\kern-2pt\comp{1}\cdots\comp{i-2}\kern-2pt\transf{\alpha}{s_{i-2}x}}
\\
\transf{\alpha}{t_{i-2}x}\kern-2pt\comp{i-2}\cdots\comp{1}\kern-2pt\transf{\alpha}{t_{0}x}\kern-2pt\comp{0}\kern-2pt F(t_{i-1}x)\ar[r]_{\transf{\alpha}{t_{i-1}x}}
&G(t_{i-1}x)\kern-2pt\comp{0}\kern-2pt\transf{\alpha}{s_{0}x}\kern-2pt\comp{1}\cdots\comp{i-2}\kern-2pt\transf{\alpha}{s_{i-2}x}\kern 5pt .\kern -7pt
}$$
On en déduit aussi par une récurrence immédiate que pour tout $j$, $0\leq j\leq i$, on~a 
\begin{equation}\label{sourcetransf}\begin{aligned}
&s_j\transf{\alpha}{x}=\transf{\alpha}{t_{j-1}x}\comp{j-1}\cdots\comp{1}\transf{\alpha}{t_{0}x}\comp{0}F(s_jx)=s_j\transf{\alpha}{s_jx}\ ,\cr
\noalign{\vskip 3pt}
&t_j\transf{\alpha}{x}=G(t_jx)\comp{0}\transf{\alpha}{s_{0}x}\comp{1}\cdots\comp{j-1}\transf{\alpha}{s_{j-1}x}=t_j\transf{\alpha}{t_jx}\ .
\end{aligned}\end{equation}
On dit que la prétransformation $\alpha$ est une \emph{transformation}\footnote{\,Il s'agit de la version \og oplax\fg{} de cette notion.} si les deux condition suivantes sont satisfaites.
\smallbreak

a) \textsc{Compatibilité aux unités.} Pour tout $i\geq0$ et toute $i$\nbd-cellule $x$ de $C$, on a 
\[ \transf{\alpha}{1_x}=1_{\transf{\alpha}{x}}. \]
\smallbreak

b) \textsc{Compatibilité aux compositions.} Pour tous $i,j$, $0\leq j<i$, et tout couple de $i$\nbd-cellules $j$\nbd-composables $x,y$ de $C$, on a
$$\transf{\alpha}{x\kern-2pt\comp{j}\kern-2pty}=G(t_{j+1}x)\kern-2pt\comp{0}\kern-2pt\transf{\alpha}{s_0}\kern-4pt\comp{1}\kern-2pt\cdots\kern-1pt\comp{j-1}\kern-2pt\transf{\alpha}{s_{j-1}}\kern-4pt\comp{j}\kern-2pt\transf{\alpha}{y}\kern-2pt\comp{j+1}\kern-2pt\transf{\alpha}{x}\kern-2pt\comp{j}\kern-2pt\transf{\alpha}{t_{j-1}}\kern-4pt\comp{j-1}\kern-2pt\cdots\kern-1pt\comp{1}\kern-2pt\transf{\alpha}{t_0}\kern-4pt\comp{0}\kern-2ptF(s_{j+1}y),$$
où pour tout $k$, $0\leq k<j$, on note
$$s_k=s_kx=s_ky\qquad\hbox{et}\qquad t_k=t_kx=t_ky\ .$$
Par exemple, si $x,y$ sont deux $1$\nbd-flèches $0$\nbd-composables de $C$, on a
$$\transf{\alpha}{x\kern-2pt\comp{0}\kern-2pty}=\bigl(G(x)\comp{0}\transf{\alpha}{y}\bigr)\comp{1}\bigl(\transf{\alpha}{x}\comp{0} F(y)\bigr)\ ,$$
si $x,y$ sont deux $2$\nbd-flèches $0$\nbd-composables de $C$, on a
$$\transf{\alpha}{x\kern-2pt\comp{0}\kern-2pty}=\bigl(G(t_1x)\comp{0}\transf{\alpha}{y}\bigr)\comp{1}\bigl(\transf{\alpha}{x}\comp{0} F(s_1y)\bigr)\ ,$$
et si $x,y$ sont deux $2$\nbd-flèches $1$\nbd-composables de $C$, on a
$$\transf{\alpha}{x\kern-2pt\comp{1}\kern-2pty}=\bigl((G(x)\comp{0}\transf{\alpha}{s_0x})\comp{1}\transf{\alpha}{y}\bigr)\comp{2}\bigl(\transf{\alpha}{x}\comp{1}(\transf{\alpha}{t_0y}\comp{0}F(y))\bigr)\ .$$
\end{paragr}

\begin{paragr}\label{defoocontraction}
En gardant les notations du paragraphe précédent, on s'intéresse plus particulièrement au cas où $D=C$, et où $G$ est l'endomorphisme identité de $C$ et $F$ un endomorphisme constant, autrement dit tel qu'il existe un objet $c_0$ de $C$ tel que pour tout objet $x$ de $C$, $F(x)=c_0$, et pour tout $i>0$ et toute $i$\nbd-flèche $x$ de $C$, $F(x)$ est la $i$\nbd-flèche identité itérée de $c_0$ (on dit alors que $F$ est le $\infty$\nbd-foncteur \emph{constant de valeur~$c_0$}). Une prétransformation $\alpha$ de $F$ à $G$ est alors la donnée, pour tout objet $x$ de $C$, d'une $1$\nbd-flèche
$$\transf{\alpha}{c}:c_0\toto x\ ,$$
et pour tout $i>0$ et toute $i$\nbd-flèche $x$ de $C$, d'une $(i+1)$\nbd-flèche
$$\transf{\alpha}{x}:\transf{\alpha}{t_{i-1}(x)}\toto x\comp{0}\transf{\alpha}{s_{0}(x)}\comp{1}\cdots\comp{i-1}\transf{\alpha}{s_{i-1}(x)}\ .$$
En effet, comme $F(x)$ est alors la $i$\nbd-flèche identité itérée de $c_0$, la $i$\nbd-flèche $\transf{\alpha}{t_{i-2}(x)}\comp{i-2}\cdots\comp{1}\transf{\alpha}{t_{0}(x)}\comp{0}F(x)$ est une identité, et par suite
$$\transf{\alpha}{t_{i-1}(x)}\comp{i-1}\transf{\alpha}{t_{i-2}(x)}\comp{i-2}\cdots\comp{1}\transf{\alpha}{t_{0}(x)}\comp{0}F(x)=\transf{\alpha}{t_{i-1}(x)}\ .$$
On remarque qu'en vertu de~\ref{sourcetransf}, on a donc pour tous $i,j$, $0\leq j\leq i$, et toute $i$\nbd-cellule $x$ de $C$,
\begin{equation}\label{sourcecontr}
s_j\transf{\alpha}{x}=s_j\transf{\alpha}{s_jx}=
\left\{\begin{aligned}
&c_0\kern10pt\phantom{\transf{\alpha}{t_{j-1}x}}\hbox{si }j=0\ ,\cr
&\transf{\alpha}{t_{j-1}x}\kern10pt\phantom{c_0}\hbox{si }j>0\ .
\end{aligned}\right.\end{equation}
Une telle prétransformation est une transformation si et seulement si les deux conditions suivantes sont satisfaites:
\smallbreak 

a) pour tout $i\geq0$ et toute $i$\nbd-cellule $x$ de $C$, on a $\transf{\alpha}{1_x}=1_{\transf{\alpha}{x}}$;
\smallbreak

b) pour tous $i,j$, $0\leq j<i$, et tout couple de $i$\nbd-cellules $j$\nbd-composables $x,y$, on a
$$\transf{\alpha}{x\kern-2pt\comp{j}\kern-2pty}=t_{j+1}x \comp{0} \transf{\alpha}{s_0x}\comp{1} \cdots\comp{j-1} \transf{\alpha}{s_{j-1}x}\comp{j} \transf{\alpha}{y} \comp{j+1} \transf{\alpha}{x}\ .$$
En effet, comme $F(s_{j+1}y)$ est alors la $(j+1)$\nbd-flèche identité itérée de $c_0$, la $(j+1)$\nbd-flèche $\transf{\alpha}{t_{j-1}y}\comp{j-1}\cdots\comp{1}\transf{\alpha}{t_0y}\comp{0}F(s_{j+1}y)$ est une identité. On dit qu'une telle transformation est une \emph{contraction de $C$ centrée en $c_0$} si elle satisfait aux deux conditions suivantes:
\smallbreak

c) $\transf{\alpha}{c_0}=1_{c_0}$;
\smallbreak

d) pour tout $i\geq0$ et toute $i$\nbd-cellule $x$, on a l'égalité $\transf{\alpha}{\transf{\alpha}{x}}=1_{\transf{\alpha}{x}}$.
\smallbreak

\noindent
Pour que la condition (d) fasse sens, il faut vérifier que $s(\transf{\alpha}{\transf{\alpha}{x}})=t(\transf{\alpha}{\transf{\alpha}{x}})=\transf{\alpha}{x}$. Cela est conséquence du lemme suivant.
\end{paragr}

\begin{lemme}\label{licite}
En gardant les hypothèses et les notations ci-dessus, si la transformation $\alpha$ satisfait à la condition \emph{(c),} et à la condition \emph{(d)} pour les $j$\nbd-cellules pour $0\leq j<i$, alors pour toute $i$\nbd-cellule $x$, on a $s(\transf{\alpha}{\transf{\alpha}{x}})=t(\transf{\alpha}{\transf{\alpha}{x}})=\transf{\alpha}{x}$.
\end{lemme}

\begin{proof}
Soit $x$ une $i$\nbd-cellule de $C$. En vertu de~\ref{sourcecontr}, on a
$$\begin{aligned}
t(\transf{\alpha}{\transf{\alpha}{x}})&= \transf{\alpha}{x}\comp{0}\transf{\alpha}{s_{0}( \transf{\alpha}{x})}\comp{1}\transf{\alpha}{s_{1}( \transf{\alpha}{x})}\comp{2}\cdots\comp{i}\transf{\alpha}{s_{i}( \transf{\alpha}{x})}\cr
&= \transf{\alpha}{x}\comp{0}\transf{\alpha}{c_0}\comp{1}\transf{\alpha}{\transf{\alpha}{t_0x}}\comp{2}\cdots\comp{i}\transf{\alpha}{\transf{\alpha}{t_{i-1}x}}=\transf{\alpha}{x}\ ,
\end{aligned}$$
la dernière égalité résultant de la condition (c) et de la condition (d) pour les $j$\nbd-cellules pour $0\leq j<i$.
\smallbreak

D'autre part, pour tout $j$ tel que $0\leq j<i$, toute $i$\nbd-cellule $y$ et toute $j$\nbd-cellule~$z$, si~$y$ et $\transf{\alpha}{z}$ sont $j$\nbd-composables, alors on a $\transf{\alpha}{y\kern-2pt\comp{j}\kern-2pt\transf{\alpha}{z}}=\transf{\alpha}{y}$. En effet, en vertu de la condition~(b), on a
$$\transf{\alpha}{y\kern-2pt\comp{j}\kern-2pt\transf{\alpha}{z}}=t_{j+1}y \comp{0} \transf{\alpha}{s_0y}\comp{1} \cdots\comp{j-1} \transf{\alpha}{s_{j-1}y}\comp{j} \transf{\alpha}{1^{i}_{\transf{\alpha}{z}}} \comp{j+1} \transf{\alpha}{y}\ ,$$
où $1^{i}_{\transf{\alpha}{z}}$ désigne la $i$\nbd-cellule identité itérée de $\transf{\alpha}{z}$ (égale à $\transf{\alpha}{z}$ si $j=i-1$). Or, il résulte de la condition (a) que $\transf{\alpha}{1^{i}_{\transf{\alpha}{z}}}=1^{i+1}_{\transf{\alpha}{\transf{\alpha}{z}}}$ et de la condition (d) pour les $j$\nbd-cellules pour $0\leq j<i$ que $\transf{\alpha}{1^{i}_{\transf{\alpha}{z}}}=1^{i+1}_{\transf{\alpha}{z}}$. Comme $\transf{\alpha}{z}$ est une $(j+1)$\nbd-cellule, on en déduit que $t_{j+1}y \comp{0} \transf{\alpha}{s_0y}\comp{1} \cdots\comp{j-1} \transf{\alpha}{s_{j-1}y}\comp{j} \transf{\alpha}{1^{i}_{\transf{\alpha}{z}}}$ est une unité pour la composition $\comp{j+1}$, ce qui prouve l'assertion. Pour toute $i$\nbd-cellule $x$ de $C$, on a donc
$$\begin{aligned}
s(\transf{\alpha}{\transf{\alpha}{x}})&=\transf{\alpha}{t(\transf{\alpha}{x})}=\transf{\alpha}{x\comp{0}\transf{\alpha}{s_{0}(x)}\comp{1}\cdots\comp{i-2}\transf{\alpha}{s_{i-2}(x)}\comp{i-1}\transf{\alpha}{s_{i-1}(x)}}\cr
&=\transf{\alpha}{x\comp{0}\transf{\alpha}{s_{0}(x)}\comp{1}\cdots\comp{i-2}\transf{\alpha}{s_{i-2}(x)}}=\cdots=\transf{\alpha}{x\comp{0}\transf{\alpha}{s_{0}(x)}}=\transf{\alpha}{x}\ ,
\end{aligned}$$
ce qui prouve le lemme.
\end{proof}

\begin{paragr}\label{oocontrduale}
Soient $C$ une $\infty$\nbd-catégorie et $c_0$ un objet de $C$. Une \emph{contraction duale} de $C$ centrée en $c_0$ est une contraction de la $\infty$\nbd-catégorie duale $\op{C}$ de $C$ (cf.~\ref{dualooCat}), centrée en $c_0$.
\end{paragr}

\begin{lemme}\label{eqbete}
Soient  $C$ une $\infty$\nbd-catégorie, $\alpha$ une contraction de $C$ centrée en $c_0$, $i\geq0$, et $x$ une $i$\nbd-cellule de $C$. Les conditions suivantes sont équivalentes:
\begin{itemize}
\item[\emph{i)}] $\transf{\alpha}{x}=1_x$;
\item[\emph{ii)}] ou bien $i=0$ et $x=c_0$, ou bien $i>0$ et il existe une $(i-1)$\nbd-cellule de $C$ telle que $x=\transf{\alpha}{y}$.
\end{itemize}
\end{lemme}

\begin{proof}
L'implication (ii) $\Rightarrow$ (i) est évidente. Réciproquement, supposons  que $x$ soit une $i$\nbd-cellule telle que $\transf{\alpha}{x}=1_x$. Alors on a
$$x=s(\transf{\alpha}{x})=\left\{\begin{aligned}
&c_0\kern10pt\phantom{\transf{\alpha}{t(x)}}\hbox{si}\ i=0\,,\cr
&\transf{\alpha}{t(x)}\kern10pt\phantom{c_0}\hbox{si}\ i>0\,,
\end{aligned}\right.$$
ce qui prouve le lemme.
\end{proof}

\begin{prop}\label{contrpseudoin}
Soient $C$ une $\infty$\nbd-catégorie, et $\alpha$ une contraction de $C$ centrée en $c_0$. Alors $c_0$ est un objet quasi-initial de $C$. Plus généralement, pour tout $i\geq0$, si $x$ est une $i$\nbd-cellule de $C$ telle que $\transf{\alpha}{x}=1_x$, alors $x$ est un objet quasi-initial de la $\infty$\nbd-catégorie $\Par_C(x)$ \emph{(cf.~\ref{defcelparal}).}
\end{prop}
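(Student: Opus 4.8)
The plan is to deduce the whole statement from a single lemma, which is an $\infty$-categorical analogue of Lemme~\ref{lemmeclef1}: \emph{if $\alpha$ is a contraction of $C$ centered at $c_0$, if $x$ is an $i$-cell of $C$ with $\transf{\alpha}{x}=1_x$, and if $w$ is an $i$-cell of $C$ parallel to $x$ (which, for $i=0$, means an arbitrary object of $C$), then the $(i+1)$-cell $\transf{\alpha}{w}$ has source $x$, target $w$, and satisfies $\transf{\alpha}{\transf{\alpha}{w}}=1_{\transf{\alpha}{w}}$.} The last equality is condition~(d) of~\ref{defoocontraction} applied to $w$, so the real content is to compute $s(\transf{\alpha}{w})$ and $t(\transf{\alpha}{w})$. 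For $i=0$ this is immediate: $\transf{\alpha}{w}$ runs from $F(w)=c_0$ to $G(w)=w$, and $\transf{\alpha}{x}=1_x$ forces $x=c_0$ by Lemme~\ref{eqbete}. For $i>0$, I would first extract from $\transf{\alpha}{x}=1_x$ a description of the iterated sources of $x$: comparing the iterated sources of the $(i+1)$-cell $1_x$ with the formulas~\eqref{sourcecontr} for those of $\transf{\alpha}{x}$, one gets $s_0(x)=c_0$, $s_k(x)=\transf{\alpha}{t_{k-1}(x)}$ for $0<k<i$, and $x=\transf{\alpha}{t(x)}$. Since $w$ is parallel to $x$, one has $t(w)=t(x)$ and $s_k(w)=s_k(x)$ for $k<i$; hence, by~\eqref{sourcecontr}, $s(\transf{\alpha}{w})=\transf{\alpha}{t(w)}=\transf{\alpha}{t(x)}=x$, while, by the formula for the target of $\transf{\alpha}{w}$ in~\ref{defoocontraction}, $t(\transf{\alpha}{w})=w\comp{0}\transf{\alpha}{s_0(w)}\comp{1}\cdots\comp{i-1}\transf{\alpha}{s_{i-1}(w)}$. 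Re-injecting the idempotence condition~(d) of~\ref{defoocontraction}, together with condition~(c), one sees that $\transf{\alpha}{s_0(w)}=\transf{\alpha}{c_0}=1_{c_0}$ and $\transf{\alpha}{s_k(w)}=\transf{\alpha}{\transf{\alpha}{t_{k-1}(x)}}=1_{s_k(w)}$ for $0<k<i$; all the cells being composed into $w$ are therefore iterated identities of the corresponding iterated sources of $w$, so the right-hand side collapses by the unit laws and $t(\transf{\alpha}{w})=w$.

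Granting this lemma, I would introduce the graded set $E$ of cells of the $\infty$-category $\Par_C(x)$ defined by $E_0=\{x\}$ and, for $j\geq1$, by letting $E_j$ be the set of $j$-cells $z$ of $\Par_C(x)$ — i.e. $(i+j)$-cells $z$ of $C$ whose iterated $i$-source $s_i(z)$ and iterated $i$-target $t_i(z)$ are parallel to $x$ — such that $\transf{\alpha}{z}=1_z$, and I would check that $E$ is a quasi-initial set of cells of $\Par_C(x)$ in the sense of~\ref{oopseudofinal}. For the case $j=0$: given an object $w$ of $\Par_C(x)$, the lemma yields the $(i+1)$-cell $\transf{\alpha}{w}$, of source $x$ and target $w$, with $\transf{\alpha}{\transf{\alpha}{w}}=1_{\transf{\alpha}{w}}$; its iterated $i$-source $x$ and iterated $i$-target $w$ are parallel to $x$, so it lies in $E_1$. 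For the case $j\geq1$: given $z\in E_j$ and a $j$-cell $w$ of $\Par_C(x)$ parallel to $z$ in $\Par_C(x)$ — equivalently, $z$ and $w$ parallel as $(i+j)$-cells of $C$ — the lemma applied to the $(i+j)$-cell $z$ (which satisfies $\transf{\alpha}{z}=1_z$) and to $w$ yields the $(i+j+1)$-cell $\transf{\alpha}{w}$, of source $z$ and target $w$, with $\transf{\alpha}{\transf{\alpha}{w}}=1_{\transf{\alpha}{w}}$; its iterated $i$-source $s_i(z)$ and iterated $i$-target $t_i(w)=t_i(z)$ are parallel to $x$, so it lies in $E_{j+1}$. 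Thus $E$ is quasi-initial and $x\in E_0$, whence $x$ is a quasi-initial object of $\Par_C(x)$.

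The first assertion of the proposition is the special case $i=0$, $x=c_0$: the hypothesis $\transf{\alpha}{c_0}=1_{c_0}$ is precisely condition~(c) of~\ref{defoocontraction}, and $\Par_C(c_0)=C$ (cf.~\ref{defcelparal}), so $c_0$ is a quasi-initial object of $C$.

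The main obstacle is the lemma, and inside it the identity $t(\transf{\alpha}{w})=w$: this requires first reading off the iterated sources of $x$ from $\transf{\alpha}{x}=1_x$, then feeding the idempotence condition back in to recognize the relevant cells $\transf{\alpha}{s_k(w)}$ as iterated identities, and finally unwinding the iterated parenthesized composite of~\ref{defoocontraction} by the unit laws — all while keeping careful track of the conventions of~\ref{notooCat} for iterated sources and targets and for the priority of the operations $\comp{k}$. Everything else, namely the verification that $E$ is quasi-initial and the specialization giving the first assertion, is routine bookkeeping. One could alternatively organize the second paragraph coinductively, using Proposition~\ref{carlocpseudo} and the identification $\sHom_{\Par_C(x)}(x,w)=\sHom_C(x,w)=\Par_C(v)$ for any $(i+1)$-cell $v\colon x\to w$, but the explicit quasi-initial set $E$ avoids an unfounded induction on dimension and makes the argument manifestly correct.
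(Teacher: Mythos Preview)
Your proof is correct and follows essentially the same route as the paper's. The paper builds a single quasi-initial set $E=\{z\mid\transf{\alpha}{z}=1_z\}=\{c_0\}\cup\{\transf{\alpha}{z}\mid z\}$ in $C$ (using Lemme~\ref{eqbete}), checks it is quasi-initial via the same computation of $s(\transf{\alpha}{y})$ and $t(\transf{\alpha}{y})$, and then restricts to $\Par_C(x)$; you instead isolate the source/target computation as a preliminary lemma and build the quasi-initial set directly inside $\Par_C(x)$, deriving the relations $s_k(x)=\transf{\alpha}{t_{k-1}(x)}$ straight from $\transf{\alpha}{x}=1_x$ rather than passing through the form $x=\transf{\alpha}{z}$ --- but the substance is identical.
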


\begin{proof}
On va montrer que l'ensemble $E$ formé des cellules $x$ de $C$ telles que $\transf{\alpha}{x}=1_x$ est un ensemble quasi-initial de cellules de $C$. En vertu du lemme précédent,
$$E=\{c_0\}\cup\{\transf{\alpha}{z}\,|\,z\hbox{ cellule de }C\}\ .$$
Soient $i\geq0$, $x$ une $i$\nbd-cellule de $C$ appartenant à $E$ et $y$ une $i$\nbd-cellule parallèle à $x$. Il s'agit de prouver qu'il existe une $(i+1)$\nbd-cellule appartenant à $E$ de source $x$ et de but~$y$. Montrons que la $(i+1)$\nbd-cellule $\transf{\alpha}{y}$ convient. Elle est bien dans $E$. Si $i=0$, on a $x=c_0$ et $\transf{\alpha}{y}$ est par définition une $1$\nbd-flèche de $c_0$ vers $y$. Si $i>0$, il existe une $(i-1)$\nbd-cellule $z$ telle que $x=\transf{\alpha}{z}$, et on a
$$s(\transf{\alpha}{y})=\transf{\alpha}{t(y)}=\transf{\alpha}{t(x)}=s(\transf{\alpha}{x})=s(1_x)=x$$
et
$$\begin{aligned}
t(\transf{\alpha}{y})&=y\comp{0}\transf{\alpha}{s_{0}(y)}\comp{1}\transf{\alpha}{s_{1}(y)}\comp{2}\cdots\comp{i-1}\transf{\alpha}{s_{i-1}(y)}\cr
&=y\comp{0}\transf{\alpha}{s_{0}(\transf{\alpha}{z})}\comp{1}\transf{\alpha}{s_{1}(\transf{\alpha}{z})}\comp{2}\cdots\comp{i-1}\transf{\alpha}{s_{i-1}(\transf{\alpha}{z})}\cr
&=y\comp{0}\transf{\alpha}{c_0}\comp{1}\transf{\alpha}{\transf{\alpha}{t_0z}}\comp{2}\cdots\comp{i-1}\transf{\alpha}{\transf{\alpha}{t_{i-2}z}}=y\ ,
\end{aligned}$$
l'avant dernière égalité résultant de~\ref{sourcecontr} et la dernière des conditions (c) et (d) du paragraphe~\ref{defoocontraction}. Comme $c_0\in E$, on en déduit que $c_0$ est un objet quasi-initial de $C$. De même, on en déduit que pour toute cellule $x$ de $C$ telle que $\transf{\alpha}{x}=1_x$, l'ensemble des cellules de $\Par_C(x)$ appartenant à $E$ est un ensemble quasi-initial de cellules de $\Par_C(x)$, et $x$ appartient à cet ensemble, ce qui prouve la dernière assertion.
\end{proof}

\begin{paragr}\label{hmtptransf}
Soient $(K,K^*,\augm)$ et $(K',K'^*,\augm')$ deux complexes dirigés augmentés, notés plus simplement respectivement $K$ et $K'$, $f,g:K\todouble K'$ deux morphismes de complexes dirigés augmentés, et $h$ une homotopie de morphismes de complexes dirigés augmentés de $f$ vers $g$ (cf.~\ref{ADChmt}). On en déduit deux $\infty$\nbd-catégories $\Stnu(K)$ et $\Stnu(K')$, deux $\infty$\nbd-foncteurs $\Stnu(f),\Stnu(g):\Stnu(K)\todouble\Stnu(K')$ (cf.~\ref{defStnu}), et on définit une transformation $\Stnu(h)$ de $\Stnu(f)$ vers $\Stnu(g)$ comme suit. Pour toute $i$\nbd-cellule
$$\cellule{x}=\tablnu{x}{i-1}{x_i}\ $$
de $\Stnu(K)$, on définit une $(i+1)$\nbd-cellule $\transf{\Stnu(h)}{x}$ de $\Stnu(K')$ par la formule
$$
\transf{\Stnu(h)}{\cellule{x}}=
\begin{pmatrix}
f_0x^0_0 &f_1x^0_1+h_0x^1_0 &\dots &f_ix^0_i+h_{i-1}x^1_{i-1} &h_ix_i\cr
\noalign{\vskip 3pt}
g_0x^1_0 &g_1x^1_1+h_0x^0_0 &\dots &g_ix^1_i+h_{i-1}x^0_{i-1} &h_ix_i
\end{pmatrix}\ ,
$$
où $x^0_i=x^1_i=x^{}_i$. En effet, on a
$$\begin{aligned}
d'_{i+1}h_ix_i&=g_ix_i-f_ix_i-h_{i-1}d_ix_i\cr
&=g_ix_i-f_ix_i-h_{i-1}(x^1_{i-1}-x^0_{i-1})=(g_ix^1_i+h_{i-1}x^0_{i-1})-(f_ix^0_i+h_{i-1}x^1_{i-1})\,;
\end{aligned}$$
pour $0<j<i$, on a
$$\begin{aligned}
d'_{j+1}(f_{j+1}x^0_{j+1}+h_{j}x^1_{j})&=f_jd_{j+1}x^0_{j+1}+g_jx^1_{j}-f_jx^1_{j}-h_{j-1}d_jx^1_{j}\cr
&=f_j(x^1_j-x^0_j)+g_jx^1_{j}-f_jx^1_{j}-h_{j-1}(x^1_{j-1}-x^0_{j-1})\cr
&=(g_jx^1_j+h_{j-1}x^0_{j-1})-(f_jx^0_j+h_{j-1}x^1_{j-1})\ ;
\end{aligned}$$
enfin, on a
$$\begin{aligned}
d'_1(f_1x^0_1+h_0x^1_0)&=f_0d_1x^0_1+g_0x^1_0-f_0x^1_0\cr
&=f_0(x^1_0-x^0_0)+g_0x^1_0-f_0x^1_0=g_0x^1_0-f_0x^0_0\ .
\end{aligned}$$
Comme on a $\augm'f_0=\augm$, $\augm'g_0=\augm$, et pour tout $j\geq0$, $f_j(K^*_j)\subset K'^*_j$, $g_j(K^*_j)\subset K'^*_j$ et $h_j(K^*_j)\subset K'^*_{j+1}$, ceci prouve que $\transf{\Stnu(h)}{x}$ est bien une $(i+1)$\nbd-cellule de $\Stnu(K')$.
\smallbreak

Pour montrer qu'on définit ainsi une transformation, on commence par vérifier que la source de la $(i+1)$\nbd-cellule $\transf{\Stnu(h)}{\cellule{x}}$ est 
$$\transf{\Stnu(h)}{t_{i-1}(\cellule{x})}\comp{i-1}\cdots\comp{1}\transf{\Stnu(h)}{t_{0}(\cellule{x})}\comp{0}\Stnu(f)(\cellule{x})\ \phantom{.}$$
et son but
$$\Stnu(g)(\cellule{x})\comp{0}\transf{\Stnu(h)}{s_{0}(\cellule{x})}\comp{1}\cdots\comp{i-1}\transf{\Stnu(h)}{s_{i-1}(\cellule{x})}\ .$$
Pour $i=0$, en tenant compte de l'identification de l'ensemble des objets de $\Stnu(K)$ aux éléments $x$ de $K^*_0$ tels que $\augm(x)=1$, on remarque que la $1$\nbd-flèche  
$$\transf{\Stnu(h)}{x}=
\begin{pmatrix}f_0(x)&h_0(x)\cr
\noalign{\vskip 3pt}
g_0(x)&h_0(x)
\end{pmatrix}$$
de $\Stnu(K')$ est de source $f_0(x)=\Stnu(f)(x)$ et de but $g_0(x)=\Stnu(g)(x)$, ce qui prouve l'assertion dans ce cas. Pour $i>0$, l'assertion résulte du lemme suivant, appliqué à~$j=i-1$, en tenant compte que $x^0_i=x^1_i$.
\end{paragr}

\begin{lemme}\label{hmtppseudotransf}
En gardant les notations ci-dessus, pour tout entier $j$, $0\leq j<i$, on a \emph{(}en supprimant pour abréger les indices de $f$, $g$ et $h$\emph{)}
$$\begin{aligned}
\kern 2.5pt\transf{\Stnu(h)}{t_{j}(\cellule{x})}&\comp{j}\cdots\comp{1}\transf{\Stnu(h)}{t_{0}(\cellule{x})}\comp{0}\Stnu(f)(\cellule{x})\cr
\noalign{\vskip 3pt}
&\kern -15pt=\begin{pmatrix}
fx^0_0
&fx^0_1+hx^1_0
&\dots
&fx^0_j+hx^1_{j-1}
&fx^0_{j+1}+hx^1_j
&fx^0_{j+2}
&\cdots
&fx^0_i\cr
\noalign{\vskip 3pt}
gx^1_0
&gx^1_1+hx^0_0
&\dots
&gx^1_j+hx^0_{j-1}
&fx^1_{j+1}+hx^1_j
&fx^1_{j+2}
&\dots
&fx^1_i
\end{pmatrix}\,,\cr
\noalign{\vskip 5pt}
&\kern -37.35pt\Stnu(g)(\cellule{x})\comp{0}\transf{\Stnu(h)}{s_{0}(\cellule{x})}\comp{1}\cdots\comp{j}\transf{\Stnu(h)}{s_{j}(\cellule{x})}\cr
\noalign{\vskip 3pt}
&\kern -15pt=\begin{pmatrix}
fx^0_0
&fx^0_1+hx^1_0
&\dots
&fx^0_j+hx^1_{j-1}
&gx^0_{j+1}+hx^0_j
&gx^0_{j+2}
&\dots
&gx^0_i\cr
\noalign{\vskip 3pt}
gx^1_0
&gx^1_1+hx^0_0
&\cdots
&gx^1_j+hx^0_{j-1}
&gx^1_{j+1}+hx^0_j
&gx^1_{j+2}
&\dots
&gx^1_i
\end{pmatrix}\,.
\end{aligned}$$
\end{lemme}

\begin{proof}
Montrons la première égalité. On procède par récurrence sur $j$. Pour $j=0$, on a 
$$\begin{aligned}
\transf{\Stnu(h)}{t_{0}(\cellule{x})}\comp{0}\Stnu(f)(\cellule{x})&=
\begin{pmatrix}
fx^1_0
&hx^1_0\cr
\noalign{\vskip 3pt}
gx^1_0
&hx^1_0
\end{pmatrix}
\comp{0}
\tablg{fx}{i}\cr
\noalign{\vskip 3pt}
&=\begin{pmatrix}
fx^0_0
&fx^0_1+hx^1_0
&fx^0_2
&\dots
&fx^0_i\cr
\noalign{\vskip 3pt}
gx^1_0
&fx^1_1+hx^1_0
&fx^1_2
&\dots
&fx^1_i
\end{pmatrix}\ ,
\end{aligned}$$
ce qui prouve l'assertion dans ce cas. Supposons-la pour $j-1$, $j\geq1$, et montrons-la pour $j$. On a
$$\begin{aligned}
&\transf{\Stnu(h)}{t_{j}(\cellule{x})}\comp{j}\transf{\Stnu(h)}{t_{j-1}(\cellule{x})}\comp{j-1}\cdots\comp{1}\transf{\Stnu(h)}{t_{0}(\cellule{x})}\comp{0}\Stnu(f)(\cellule{x})\cr
\noalign{\vskip 3pt}
&\kern 10pt=\begin{pmatrix}
fx^0_0 &fx^0_1+hx^1_0 &\dots&fx^0_{j-1}+hx^1_{j-2} &fx^1_j+hx^1_{j-1} &hx^1_j\cr
\noalign{\vskip 3pt}
gx^1_0 &gx^1_1+hx^0_0 &\dots&gx^1_{j-1}+hx^0_{j-2} &gx^1_j+hx^0_{j-1} &hx^1_j
\end{pmatrix}\cr
&\kern 20pt\comp{j}
\begin{pmatrix}
fx^0_0
&fx^0_1+hx^1_0
&\dots
&fx^0_{j-1}+hx^1_{j-2}
&fx^0_{j}+hx^1_{j-1}
&fx^0_{j+1}
&\cdots
&fx^0_i\cr
\noalign{\vskip 3pt}
gx^1_0
&gx^1_1+hx^0_0
&\dots
&gx^1_{j-1}+hx^0_{j-2}
&fx^1_{j}+hx^1_{j-1}
&fx^1_{j+1}
&\dots
&fx^1_i
\end{pmatrix}\cr
\noalign{\vskip 3pt}
&\kern 10pt=
\begin{pmatrix}
fx^0_0
&fx^0_1+hx^1_0
&\dots
&fx^0_j+hx^1_{j-1}
&fx^0_{j+1}+hx^1_j
&fx^0_{j+2}
&\cdots
&fx^0_i\cr
\noalign{\vskip 3pt}
gx^1_0
&gx^1_1+hx^0_0
&\dots
&gx^1_j+hx^0_{j-1}
&fx^1_{j+1}+hx^1_j
&fx^1_{j+2}
&\dots
&fx^1_i
\end{pmatrix}\,,
\end{aligned}$$
ce qui prouve l'assertion. La deuxième égalité se démontre de façon analogue.
\end{proof}

\begin{paragr}\label{hmtptransfsuite}
En gardant toujours les notations du paragraphe~\ref{hmtptransf}, la vérification pour~$\Stnu(h)$ de la condition (a) de la définition des transformations (cf.~\ref{deftransf}) est immédiate. Pour prouver la condition (b), soit $j$ un entier tel que $0\leq j<i$, et
$$\cellule{y}=\tabld{y}{i}$$
une $i$\nbd-cellule de $\Stnu(K)$ (de sorte que $y^0_i=y^1_i$) dont la $j$\nbd-cellule but itérée est égale à la $j$\nbd-cellule source itérée de $x$, autrement dit, telle que
$$x^\e_k=y^\e_k\,,\ \e\in\{0,1\}\,,\ 0\leq k<j\qquad\hbox{et}\qquad x^0_j=y^1_j\ .$$
Il s'agit de prouver l'égalité
$$\transf{\Stnu(h)}{\cellule{x}\kern-2pt\comp{j}\kern-2pt\cellule{y}}=\bigl(B\comp{j}\transf{\Stnu(h)}{\cellule{y}}\bigr)\comp{j+1}\bigl(\transf{\Stnu(h)}{\cellule{x}}\comp{j}A\bigr)\ ,$$
où
$$\begin{aligned}
A&=\transf{\Stnu(h)}{t_{j-1}y}\comp{j-1}\cdots\comp{1}\transf{\Stnu(h)}{t_0y}\comp{0}\Stnu(f)(s_{j+1}y)\ ,\cr
\noalign{\vskip 3pt}
B&=\Stnu(g)(t_{j+1}x)\comp{0}\transf{\Stnu(h)}{s_0x}\comp{1}\cdots\comp{j-1}\transf{\Stnu(h)}{s_{j-1}x}\ .
\end{aligned}
$$
On note $C$ la matrice
$$\begin{aligned}
C&=\begin{pmatrix}
fx^0_0
&fx^0_1+hx^1_0
&\dots
&fx^0_{j-1}+hx^1_{j-2}
\cr
\noalign{\vskip 3pt}
gx^1_0
&gx^1_1+hx^0_0
&\dots
&gx^1_{j-1}+hx^0_{j-2}
\end{pmatrix}\cr
\noalign{\vskip 3pt}
&=\begin{pmatrix}
fy^0_0
&fy^0_1+hy^1_0
&\dots
&fy^0_{j-1}+hy^1_{j-2}
\cr
\noalign{\vskip 3pt}
gy^1_0
&gy^1_1+hy^0_0
&\dots
&gy^1_{j-1}+hy^0_{j-2}
\end{pmatrix}
\end{aligned}$$
(vide pour $j=0$), et pour toute matrice 
$$z=\tablg{z}{k}\ ,$$
et $l$, $0\leq l\leq k$, on pose
$$z^{}_{<l}=\tablg{z}{l-1}\qquad\hbox{et}\qquad z^{}_{\geq l}=
\begin{pmatrix}
z^0_l
&z^0_{l+1}
&\dots
&z^0_k\cr
\noalign{\vskip 3pt}
z^1_l
&z^1_{l+1}
&\dots
&z^1_k
\end{pmatrix}\ .
$$
Par définition, on a
$$\bigl(\transf{\Stnu(h)}{\cellule{x}\kern-2pt\comp{j}\kern-2pt\cellule{y}}\bigr)^{}_{<j}=\bigl(\transf{\Stnu(h)}{\cellule{x}}\bigr)^{}_{<j}=\bigl(\transf{\Stnu(h)}{\cellule{y}}\bigr)^{}_{<j}=C\ ,$$
et en vertu du lemme~\ref{hmtppseudotransf}, appliqué à $s_{j+1}y$ et $t_{j+1}x$, on a aussi $A_{<j}=C=B_{<j}$, et par suite,
$$\bigl(\transf{\Stnu(h)}{\cellule{x}\kern-2pt\comp{j}\kern-2pt\cellule{y}}\bigr)_{<j}=\bigl(B\comp{j}\transf{\Stnu(h)}{\cellule{y}}\comp{j+1}\transf{\Stnu(h)}{\cellule{x}}\comp{j}A\bigr)_{<j}\ .$$
D'autre part, toujours en vertu du lemme~\ref{hmtppseudotransf}, appliqué à $s_{j+1}y$ et $t_{j+1}x$, on a
$$
A_{\geq j}=\begin{pmatrix}
fy^0_{j}+hy^1_{j-1}
&fy^0_{j+1}
\cr
\noalign{\vskip 3pt}
fy^1_{j}+hy^1_{j-1}
&fy^0_{j+1}
\end{pmatrix}\ ,\quad
B_{\geq j}=\begin{pmatrix}
gx^0_{j}+hx^0_{j-1}
&gx^1_{j+1}
\cr
\noalign{\vskip 3pt}
gx^1_{j}+hx^0_{j-1}
&gx^1_{j+1}
\end{pmatrix}\ ,
$$
et comme
$$\begin{aligned}
\bigl(\transf{\Stnu(h)}{\cellule{x}}\bigr)^{}_{\geq j}&=
\begin{pmatrix}
fx^0_j+hx^1_{j-1}
&fx^0_{j+1}+hx^1_{j}
&\dots
&fx^0_{i}+hx^1_{i-1}
&hx^1_i\cr
\noalign{\vskip 3pt}
gx^1_j+hx^0_{j-1}
&gx^1_{j+1}+hx^0_{j}
&\dots
&gx^1_{i}+hx^0_{i-1}
&hx^0_i
\end{pmatrix}\ ,
\cr
\noalign{\vskip 5pt}
\bigl(\transf{\Stnu(h)}{\cellule{y}}\bigr)^{}_{\geq j}&=
\begin{pmatrix}
fy^0_j+hy^1_{j-1}
&fy^0_{j+1}+hy^1_{j}
&\dots
&fy^0_{i}+hy^1_{i-1}
&hy^1_i\cr
\noalign{\vskip 3pt}
gy^1_j+hy^0_{j-1}
&gy^1_{j+1}+hy^0_{j}
&\dots
&gy^1_{i}+hy^0_{i-1}
&hy^0_i
\end{pmatrix}\ ,
\end{aligned}
$$
on a
$$\begin{aligned}
&\bigl(\transf{\Stnu(h)}{\cellule{x}}\comp{j}A\bigr)_{\geq j}=\cr
&\kern 3pt\begin{pmatrix}
fy^0_j+hy^1_{j-1}
&fx^0_{j+1}+fy^0_{j+1}+hx^1_{j}
&fx^0_{j+2}+hx^1_{j+1}
&\dots
&fx^0_{i}+hx^1_{i-1}
&hx^1_i\cr
\noalign{\vskip 3pt}
gx^1_j+hx^0_{j-1}
&gx^1_{j+1}+fy^0_{j+1}+hx^0_{j}
&gx^1_{j+2}+hx^0_{j+1}
&\dots
&gx^1_{i}+hx^0_{i-1}
&hx^0_i
\end{pmatrix}\,,
\cr
\noalign{\vskip 6pt}
&\bigl(B\comp{j}\transf{\Stnu(h)}{\cellule{y}}\bigr)_{\geq j}=\cr
&\kern 3pt\begin{pmatrix}
fy^0_j+hy^1_{j-1}
&fy^0_{j+1}+gx^1_{j+1}+hy^1_{j}
&fy^0_{j+2}+hy^1_{j+1}
&\dots
&fy^0_{i}+hy^1_{i-1}
&hy^1_i\cr
\noalign{\vskip 3pt}
gx^1_j+hx^0_{j-1}
&gx^1_{j+1}+gy^1_{j+1}+hy^0_{j}
&gy^1_{j+2}+hy^0_{j+1}
&\dots
&gy^1_{i}+hy^0_{i-1}
&hy^0_i
\end{pmatrix}\,.
\end{aligned}
$$
On en déduit que
$$\begin{aligned}
&\bigl(B\comp{j}\transf{\Stnu(h)}{\cellule{y}}\comp{j+1}\transf{\Stnu(h)}{\cellule{x}}\comp{j}A\bigr)_{\geq j}=\cr
\noalign{\vskip 5pt}
&\left(
\begin{matrix}
fy^0_j+hy^1_{j-1}
&fx^0_{j+1}+fy^0_{j+1}+hx^1_{j}
&fx^0_{j+2}+fy^0_{j+2}+hx^1_{j+1}+hy^1_{j+1}
&\dots
\cr
\noalign{\vskip 3pt}
gx^1_j+hx^0_{j-1}
&gx^1_{j+1}+gy^1_{j+1}+hy^0_{j}
&gx^1_{j+2}+gy^1_{j+2}+hx^0_{j+1}+hy^0_{j+1}
&\dots
\end{matrix}
\right.\cr
\noalign{\vskip 3pt}
&\kern 150pt\left.
\begin{matrix}
\dots
&fx^0_{i}+fy^0_{i}+hx^1_{i-1}+hy^1_{i-1}
&hx^1_{i}+hy^1_{i}
\cr
\noalign{\vskip 3pt}
\dots
&gx^1_{i}+gy^1_{i}+hx^0_{i-1}+hy^0_{i-1}
&hx^0_{i}+hy^0_{i}
\end{matrix}
\right)\cr
&=\bigl(\transf{\Stnu(h)}{\cellule{x}\kern-2pt\comp{j}\kern-2pt\cellule{y}}\bigr)^{}_{\geq j}\ ,
\end{aligned}$$
ce qui prouve la condition (b).
\end{paragr}

\begin{paragr}\label{contrcontr}
Soient $(K,K^*,\augm)$ un complexe dirigé augmenté décent (cf.~\ref{ADCdesc}), noté plus simplement $K$, $f$ un endomorphisme constant de $K$ de valeur $c_0$ (cf.~\ref{ADCcons}), et $h$ une homotopie de morphismes de complexes dirigés augmentés de $f$ vers le morphisme identité de $K$ (cf.~\ref{ADChmt}). On suppose que l'homotopie $h$ est de carré nul, autrement dit que pour tout $j\geq0$, $h_{j+1}h_j=0$, définissant ainsi une contraction de $K$ (cf.~\ref{remcontraction}). On rappelle qu'alors $c_0$ est dans $K^*_0$, on a $\augm(c_0)=1$, $h_0(c_0)=0$, pour tout $x\in K_0$, $f_0(x)=\augm(x).c_0$ et pour tout $i>0$ et tout $x\in K_i$, $f_i(x)=0$. L'endomorphisme $\Stnu(f)$ de la $\infty$\nbd-catégorie $\Stnu(K)$ est alors un $\infty$\nbd-foncteur constant de valeur $c_0$ (et tenant compte de l'identification de l'ensemble des objets de $\Stnu(K)$ avec l'ensemble des $x\in K^*_0$ tels que $\augm(x)=1$). L'homotopie $h$ définit une transformation $\Stnu(h)$ de $\Stnu(f)$ vers l'identité de $\Stnu(K)$ (cf.~\ref{hmtptransf}).
\end{paragr}

\begin{prop}\label{propcontrcontr}
En gardant les hypothèses et les notations du paragraphe précédent, la transformation $\Stnu(h)$ est une contraction centrée en $c_0$ de la $\infty$\nbd-catégorie~$\Stnu(K)$. 
\end{prop}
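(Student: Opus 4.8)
The plan is as follows. By the construction of~\ref{hmtptransf} together with the verification of the transformation axioms (a) and (b) carried out in~\ref{hmtptransfsuite}, the family $\Stnu(h)$ is already a transformation from $\Stnu(f)$ to $\Stnu(1_K)=1_{\Stnu(K)}$; and by~\ref{contrcontr} the $\infty$\nbd-foncteur $\Stnu(f)$ is the constant one of value $c_0$. Hence it only remains to check the two normalisation conditions (c) and (d) of~\ref{defoocontraction}. Condition (c) is immediate: since $c_0$, seen as an object of $\Stnu(K)$, is the element $c_0\in K^*_0$ with $\augm(c_0)=1$, the formula of~\ref{hmtptransf} gives for $\transf{\Stnu(h)}{c_0}$ the $1$\nbd-fl�che whose degree\nbd-$0$ entries are $f_0(c_0)=\augm(c_0)c_0=c_0$ and $g_0(c_0)=c_0$ and whose top entry is $h_0(c_0)$; as $h_0(c_0)=0$ (cf.~\ref{contrcontr}), this $1$\nbd-fl�che is $1_{c_0}$.

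For condition (d) the point is to pin down the shape of the cells in the image of $\transf{\Stnu(h)}{-}$. Let $\cellule{x}$ be an $i$\nbd-cellule of $\Stnu(K)$ and put $\cellule{w}=\transf{\Stnu(h)}{\cellule{x}}$, an $(i+1)$\nbd-cellule. Reading off the formula of~\ref{hmtptransf} and using that $f_k=0$ for $k\geq1$ (cf.~\ref{contrcontr}), one gets $w^0_0=f_0(x^0_0)=\augm(x^0_0)c_0=c_0$ (since $\augm(x^0_0)=1$), $w^0_k=h_{k-1}(x^1_{k-1})$ for $1\leq k\leq i$, top entry $h_i(x_i)$, and $w^1_0=x^1_0$, $w^1_k=x^1_k+h_{k-1}(x^0_{k-1})$ for $1\leq k\leq i$. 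Since $h_{k}h_{k-1}=0$ for all $k\geq0$, this yields $w^0_k=h_{k-1}(w^1_{k-1})$ for every $k$ with $1\leq k\leq i+1$ (writing $w^1_{i+1}$ for the top entry). So I would isolate and prove by a direct matrix computation the following statement: \emph{if $\cellule{z}$ is a $j$\nbd-cellule of $\Stnu(K)$ with $j\geq1$ such that $z^0_0=c_0$ and $z^0_k=h_{k-1}(z^1_{k-1})$ for $1\leq k\leq j$, then $\transf{\Stnu(h)}{\cellule{z}}=1_{\cellule{z}}$.} Granting it, applying it to $\cellule{z}=\cellule{w}=\transf{\Stnu(h)}{\cellule{x}}$ (which satisfies the hypothesis by the previous computation) gives $\transf{\Stnu(h)}{\transf{\Stnu(h)}{\cellule{x}}}=1_{\transf{\Stnu(h)}{\cellule{x}}}$, i.e. condition (d); since this also covers $\transf{\Stnu(h)}{\transf{\Stnu(h)}{c_0}}=1_{\transf{\Stnu(h)}{c_0}}$ via~(c), the proposition follows.

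The proof of that statement is routine bookkeeping with the formula of~\ref{hmtptransf}. For $\cellule{z}$ as above, the top ($(j+1)$\nbd-degree) entry of $\transf{\Stnu(h)}{\cellule{z}}$ is $h_j(z_j)=h_j(h_{j-1}(z^1_{j-1}))=0$; the degree\nbd-$0$ entries are $f_0(z^0_0)=f_0(c_0)=\augm(c_0)c_0=c_0=z^0_0$ and $g_0(z^1_0)=z^1_0$; and for $1\leq k\leq j$ the row\nbd-$0$ entry equals $f_k(z^0_k)+h_{k-1}(z^1_{k-1})=h_{k-1}(z^1_{k-1})=z^0_k$ (using $f_k=0$), while the row\nbd-$1$ entry equals $z^1_k+h_{k-1}(z^0_{k-1})=z^1_k$, since $h_{k-1}(z^0_{k-1})=h_{k-1}(h_{k-2}(z^1_{k-2}))=0$ when $k\geq2$ and $h_0(z^0_0)=h_0(c_0)=0$ when $k=1$. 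Altogether $\transf{\Stnu(h)}{\cellule{z}}$ is exactly the unit $1_{\cellule{z}}$. The only ingredients used are $\augm(c_0)=1$, $f_k=0$ for $k\geq1$, the square\nbd-zero relations $h_{k}h_{k-1}=0$, and $h_0(c_0)=0$, all recalled in~\ref{contrcontr}. I do not expect any real difficulty here: the whole argument is a computation with the explicit tableaux, the one point demanding care being the correct identification of the shape of $\transf{\Stnu(h)}{\cellule{x}}$ — without it, the self\nbd-application would not close up.
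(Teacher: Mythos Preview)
Your proof is correct and follows essentially the same approach as the paper: both verify conditions (c) and (d) of~\ref{defoocontraction} by a direct tableau computation using the explicit formula of~\ref{hmtptransf}, together with $f_k=0$ for $k\geq1$, $h_0(c_0)=0$, and the square-zero relations $h_{k}h_{k-1}=0$. The only difference is organizational: you factor condition (d) through an auxiliary statement characterizing the ``shape'' of cells in the image of $\transf{\Stnu(h)}{-}$ (namely $z^0_0=c_0$ and $z^0_k=h_{k-1}(z^1_{k-1})$), whereas the paper simply writes out $\transf{\Stnu(h)}{\cellule{x}}$ and then $\transf{\Stnu(h)}{\transf{\Stnu(h)}{\cellule{x}}}$ in one pass and observes the simplifications term by term; the underlying computation is identical.
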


\begin{proof}
On a
$$\transf{\Stnu(h)}{\cellule{c_0}}=
\begin{pmatrix}
c_0
&h_0(c_0)
\cr
\noalign{\vskip 3pt}
c_0
&h_0(c_0)
\end{pmatrix}=
\begin{pmatrix}
c_0
&0
\cr
\noalign{\vskip 3pt}
c_0
&0
\end{pmatrix}=1_{c_0}\ ,
$$
ce qui prouve la condition (c) de la définition d'une contraction (cf.~\ref{defoocontraction}). Démontrons la condition (d). Pour tout $i\geq0$, et toute $i$\nbd-cellule 
$$\cellule{x}=\tabll{x}{i}$$
de $\Stnu(K)$, on a (en omettant les indices de $h$)
$$\transf{\Stnu(h)}{\cellule{x}}=
\begin{pmatrix}
c_0
&hx^1_0
&hx^1_1
&\dots
&hx^1_{i-1}
&hx^1_i
\cr
\noalign{\vskip 3pt}
x^1_0
&x^1_1+hx^0_0
&x^1_2+hx^0_1
&\dots
&x^1_i+hx^0_{i-1}
&hx^0_i
\end{pmatrix}\ ,$$
et par suite,
$$\begin{aligned}
\transf{\Stnu(h)}{\cellule{\transf{\Stnu(h)}{\cellule{x}}}}&=
\left(\begin{matrix}
c_0
&hx^1_0
&h(x^1_1+hx^0_0)
&\dots
\cr
\noalign{\vskip 3pt}
x^1_0
&x^1_1+hx^0_0+hc_0
&x^1_2+hx^0_1+hhx^1_0
&\dots
\end{matrix}\right.
\cr
\noalign{\vskip 3pt}
&\kern 50pt\left.\begin{matrix}
&\dots
&h(x^1_{i-1}+hx^0_{i-2})
&h(x^1_i+hx^0_{i-1})
&hhx^0_i
\cr
\noalign{\vskip 3pt}
&\dots
&x^1_i+hx^0_{i-1}+hhx^1_{i-2}
&hx^0_i+hhx^1_{i-1}
&hhx^1_i
\end{matrix}\right)
\cr
\noalign{\vskip 3pt}
&=
\begin{pmatrix}
c_0
&hx^1_0
&hx^1_1
&\dots
&hx^1_{i-1}
&hx^1_i
&0
\cr
\noalign{\vskip 3pt}
x^1_0
&x^1_1+hx^0_0
&x^1_2+hx^0_1
&\dots
&x^1_i+hx^0_{i-1}
&hx^0_i
&0
\end{pmatrix}
=1_{\transf{\Stnu(h)}{\cellule{x}}}
\ ,
\end{aligned}$$
ce qui prouve l'assertion.
\end{proof}

Le corollaire suivant est une variante de la proposition~\ref{existpsin2}.

\begin{cor}
Soit $K$ un complexe dirigé augmenté décent. S'il existe une homotopie de carré nul d'un endomorphisme constant de $K$, de valeur $c_0$, vers l'endo\-mor\-phisme identité de~$K$, alors $c_0$ est un objet quasi-initial de la $\infty$\nbd-catégorie~$\Stnu(K)$. Dualement, s'il existe une homotopie de carré nul de l'endo\-morphisme identité de~$K$ vers un endomorphisme constant, de valeur $c_0$,  alors $c_0$ est un objet quasi-final de~$\Stnu(K)$.
\end{cor}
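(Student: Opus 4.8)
The plan is to assemble the corollary directly from the three immediately preceding results, since all the real work has already been done. First I would observe that, $K$ being decent, Remark~\ref{remcontraction} identifies the data of a square-zero homotopy from a constant endomorphism of value $c_0$ to the identity of $K$ with a contraction of $K$ centered (in the terminology of~\ref{contrcontr}) at $c_0$; in particular the hypothesis of the first assertion is precisely that such a contraction $h$ exists. Then I would invoke Proposition~\ref{propcontrcontr}: the transformation $\Stnu(h)$ is a contraction of the $\infty$\nbd-cat�gorie $\Stnu(K)$ centered at $c_0$ (recalling that $\Stnu(f)$ is the constant $\infty$\nbd-foncteur of value $c_0$ under the identification of~$\ob(\Stnu(K))$ with the $x\in K^*_0$ such that $\augm(x)=1$, cf.~\ref{contrcontr}). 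Finally Proposition~\ref{contrpseudoin} applied to this contraction yields that $c_0$ is a quasi-initial object of $\Stnu(K)$, which is the first assertion.

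For the dual assertion I would pass to the dual complex $\op{K}$ (cf.~\ref{dualADC}), which is again decent, and note that a square-zero homotopy from the identity of $K$ to a constant endomorphism of value $c_0$ is, via~\ref{dualhmtpADC}, the same thing as a square-zero homotopy from that constant endomorphism of $\op{K}$ to the identity of $\op{K}$; by~\ref{remcontraction} this is a contraction of $\op{K}$, i.e.\ (cf.~\ref{oocontrduale}) a contraction duale of $K$ centered at $c_0$. Applying the first assertion to $\op{K}$ gives that $c_0$ is a quasi-initial object of $\Stnu(\op{K})$. Using the canonical isomorphism $\Stnu(\op{K})\simeq\op{\Stnu(K)}$ of Proposition~\ref{dualStnu} and the fact (cf.~\ref{oopseudofinal}) that an object is quasi-initial in $\op{C}$ if and only if it is quasi-final in $C$, one concludes that $c_0$ is a quasi-final object of $\Stnu(K)$.

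There is essentially no obstacle here: the statement is a packaging corollary, and the only points requiring a line of care are the bookkeeping about which direction of homotopy corresponds to a contraction versus a contraction duale, and checking that the object $c_0$ furnished by~\ref{remcontraction} (an element of $K_0$ with $\augm(c_0)=1$ and $c_0\in K^*_0$) is indeed the same as the center of the contraction $\Stnu(h)$ produced by Proposition~\ref{propcontrcontr} — which is immediate from the formula $\transf{\Stnu(h)}{\cellule{c_0}}=1_{c_0}$ established in the proof of that proposition. I would therefore keep the proof to a few sentences, citing Remark~\ref{remcontraction}, Proposition~\ref{propcontrcontr}, Proposition~\ref{contrpseudoin}, and, for the dual part, Proposition~\ref{dualStnu} together with~\ref{oopseudofinal} and~\ref{oocontrduale}.
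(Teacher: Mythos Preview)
Your proposal is correct and follows essentially the same route as the paper's proof: the first assertion is obtained by combining Proposition~\ref{propcontrcontr} with Proposition~\ref{contrpseudoin}, and the dual assertion by passing to $\op{K}$ via~\ref{dualhmtpADC}, applying the first assertion, and then using Proposition~\ref{dualStnu} together with~\ref{oopseudofinal}. Your explicit mention of Remark~\ref{remcontraction} and the bookkeeping about $c_0$ only makes the argument more transparent without changing its substance.
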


\begin{proof}
La première assertion est conséquence immédiate des propositions~\ref{contrpseudoin} et~\ref{propcontrcontr}. En vertu de la proposition~\ref{dualStnu}
et conformément au paragraphe~\ref{oopseudofinal}, la deuxième assertion résulte de la première, appliquée au complexe dirigé augmenté dual $\op{K}$ (cf.~\ref{dualADC}) et à l'homotopie de l'endomorphisme constant de~$\op{K}$, de valeurs $c_0$, vers l'endo\-mor\-phisme identité de $\op{K}$, induite par $h$ (cf.~\ref{dualhmtpADC}).
\end{proof}

\begin{rem}
Il résulte des propositions~\ref{hmtpsimpl} et~\ref{propcontrcontr} que pour tout $n\geq0$, l'oriental $\Or{n}$ admet une contraction centrée en $(0)$. De même, il résulte des propositions~\ref{hmtpcompl}, \ref{dualStnu}, \ref{propcontrcontr}, et du paragraphe~\ref{dualhmtpADC} que $\Or{n}$ admet une contraction duale centrée en $(n)$ (cf.~\ref{oocontrduale}). Ainsi, on peut retrouver facilement le théorème~\ref{bigchief} en utilisant la proposition~\ref{contrpseudoin}.

\end{rem}

\backmatter


\bibliography{biblio}
\bibliographystyle{mysmfplain}

\end{document}